
\documentclass{birkjour}

\usepackage{amssymb}
\usepackage{hyperref}

%
%
%
 \newtheorem{thm}{Theorem}[section]
 \newtheorem{cor}[thm]{Corollary}
 \newtheorem{lem}[thm]{Lemma}
 \newtheorem{prop}[thm]{Proposition}
 \theoremstyle{definition}
 \newtheorem{defn}[thm]{Definition}
 \theoremstyle{remark}
 \newtheorem{rem}[thm]{Remark}
 \newtheorem*{ex}{Example}
 \numberwithin{equation}{section}

\newcommand\wtsmash[1]{{\smash{\widetilde{#1}}\rlap{$\phantom{#1}$}}}
\newcommand\whsmash[1]{{\smash{\widehat{#1}}\rlap{$\phantom{#1}$}}}

\newcommand{\ud}{\,{\mathrm d}}

\newcommand{\spn}{{\rm span}}

\newcommand \R{{\mathbb R}}
\newcommand \C{{\mathbb C}}
\newcommand \D{{\mathbb D}}

\newcommand \Z{{\mathbb Z}}
\newcommand \rplus{{\R^{+}}}
\newcommand \cplus{{\C^{+}}}
\newcommand \zplus{{\Z^{+}}}

\newcommand \zero{\set{0}}

\newcommand{\BLO}{\mathcal B}

\newcommand{\Bscr}{\mathcal B}

\newcommand{\Dscr}{\mathcal D}
\newcommand{\Escr}{\mathcal E}
\newcommand{\Fscr}{\mathcal F}
\newcommand{\Gscr}{\mathcal G}
\newcommand{\Hscr}{\mathcal H}

\newcommand{\Rscr}{\mathcal R}
\newcommand{\Sscr}{\mathcal S}

\newcommand{\Uscr}{\mathcal U}
\newcommand{\Xscr}{\mathcal X}
\newcommand{\Yscr}{\mathcal Y}

\newcommand{\AB}{{A\& B}}
\newcommand{\CD}{{C\& D}}

\newcommand{\SysNode}{\bbm{\AB \cr \CD}}
\newcommand{\SmallSysNode}{\sbm{\AB \cr \CD}}

\newcommand{\dom}[1]{{{\rm dom}}\left(#1\right)}

\newcommand{\range}[1]{{\rm im}\bigl(#1\bigr)}

\newcommand{\Ker}[1]{{\rm ker}\left(#1\right)}

\newcommand{\Ipd}[2]{\left\langle #1 , #2 \right\rangle}

\newcommand{\Ipdp}[2]{\left( #1 , #2 \right)}

\newcommand{\set}[1]{\left\lbrace #1 \right\rbrace}

\newcommand{\bigmid}{\bigm \vert}

\newcommand{\biggmid}{\biggm\vert}

\newcommand{\bi}{\begin{itemize}}
\newcommand{\ei}{\end{itemize}}
\newcommand{\be}{\begin{enumerate}}
\newcommand{\ee}{\end{enumerate}}

\newcommand{\Afrak}{\mathfrak A}

\newcommand{\Dfrak}{\mathfrak D}

\newcommand{\Wfrak}{\mathfrak W}

\newcommand{\sbm}[1]{\left[\begin{smallmatrix}#1\end{smallmatrix}\right]}

\newcommand{\bbm}[1]{\begin{bmatrix}#1\end{bmatrix}}

\newcommand{\re}[1]{\mathrm{Re}\,#1}

\newcommand{\cspn}{\overline{\rm span}}

\newcommand{\cB}{{\mathcal B}}

\newcommand{\cD}{{\mathcal D}}

\newcommand{\cF}{{\mathcal F}} 
\newcommand{\cG}{{\mathcal G}}
\newcommand{\cH}{{\mathcal H}}

\newcommand{\cS}{{\mathcal S}}

\newcommand{\cU}{{\mathcal U}}
\newcommand{\cX}{{\mathcal X}}
\newcommand{\cY}{{\mathcal Y}}
\newcommand{\cW}{{\mathcal W}}

\newcommand{\bU}{{\mathbf U}}

\newcommand{\dA}{\mathbf A}
\newcommand{\dB}{\mathbf B}
\newcommand{\dC}{\mathbf C}
\newcommand{\dD}{\mathbf D}

\newcommand{\res}[1]{\mathrm{res}\left(#1\right)}


\makeatletter

\def\etv{& \hskip-.3em\vrule\hskip-.3em &} 
\def\smalletv{&\vrule&} 
\def\smallcrh{\vrule height0pt depth2\ex@ width0pt
\cr\noalign{\hrule}
\vrule height6.5\ex@ depth0pt width0pt}
\newbox\smallstrutbox
\setbox\smallstrutbox=\hbox{\vrule height6pt depth1.5pt width\z@}
\def\smallstrut{\relax\ifmmode\copy\smallstrutbox\else\unhcopy\smallstrutbox\fi}

\newenvironment{sysmatrix}{
\let\|=\etv
\hskip \arraycolsep
\begin{matrix}}
{\end{matrix}
\hskip \arraycolsep
}       

\newenvironment{smallsysmatrix}{\null\,\vcenter\bgroup
\let\|=\smalletv

\def\\{\smallstrut\math@cr}
\restore@math@cr\default@tag
\baselineskip\z@skip \lineskip\z@skip \lineskiplimit\lineskip
\ialign\bgroup\hfil$\m@th\scriptstyle##$\hfil&&\thickspace\hfil
$\m@th\scriptstyle##$\hfil\crcr
\crcr\noalign{\vskip -.3\ex@}%
}{\crcr\noalign{\vskip -.2\ex@}%
\crcr\egroup\egroup\,%
}

\makeatother

\begin{document}

%
%
%
%
%
%
%
%
%

\title[De Branges-Rovnyak realizations on the right half-plane]{De Branges-Rovnyak realizations of ope\-ra\-tor-valued Schur functions on the comp\-lex right half-plane}

\author{Joseph A.\ Ball}

\address{%
Department of Mathematics\\
Virginia Tech\\
Blacksburg, VA 24061\\
USA}

\email{joball@math.vt.edu}

\author{Mikael Kurula}
\address{\AA bo Akademi Mathematics\br
F\"anriksgatan 3B\br
FIN-20500 \AA bo\br
Finland}
\email{mkurula@abo.fi}

\thanks{The second author gratefully acknowledges support from the foundations of \AA bo Akademi and Ruth och Nils-Erik Stenb\"ack.}

\author{Olof J.\ Staffans}
\address{\AA bo Akademi Mathematics\br
F\"anriksgatan 3B\br
FIN-20500 \AA bo\br
Finland}
\email{staffans@abo.fi}

\author{Hans Zwart}
\address{University of Twente\br
Department of Applied Mathematics\br
P.O.\ Box 217\br
7500 AE Enschede\br
The Netherlands}
\email{h.j.zwart@math.utwente.nl}

\subjclass{Primary 47A48,93B15,47B32; Secondary 93C25,47A57}

\keywords{Schur function, right half-plane, continuous time, functional model, de Branges-Rovnyak space, reproducing kernel}


\begin{abstract}
 We give a controllable energy-preserving and an observable co-energy-preserving de 
Branges-Rovnyak functional model realization of an arbitrary given operator Schur function defined on the complex right-half plane. We work the theory out fully in the right-half plane, without using results for the disk case, in order to expose the technical details of continuous-time systems theory. At the end of the article, we make explicit the connection to the corresponding classical de Branges-Rovnyak realizations for Schur functions on the complex unit disk.
\end{abstract}

\maketitle
\tableofcontents

\section{Introduction}

It essentially goes back to Kalman (with earlier roots in circuit theory from the middle of the twentieth 
century) that any rational function $\phi$ 
holomorphic in a neighborhood of the origin with values in 
the space $\cB(\cU, \cY)$ of bounded linear operators between two Hilbert spaces $\cU$ 
(the input space) and 
$\cY$ (the output space)  can be realized
as the transfer function of an input/state/output linear system, i.e., there is a Hilbert 
space $\cX$ 
(the state space) and 
a bounded operator system matrix $\bU: = \left[ \begin{smallmatrix} A & B \\ C & D \end{smallmatrix} \right] 
: \sbm{\cX \\ \cU} \to \sbm{\cX \\ \cY}$
so that $\phi(z)$ has the representation 
\begin{equation}   \label{transfunc}
    \phi(z) = D + z C (1 - zA)^{-1}B. 
\end{equation}If we associate with 
$\bU$ the discrete-time input/state/output system
\begin{equation}   \label{sys}
    \Sigma_\bU :\quad \left\{ \begin{array}{rcl}  x(t+1) & = & A x(t) + B u(t) \\
    y(t) & = & C x(t) + D u(t) \end{array}, \right.
\end{equation}
the meaning of \eqref{transfunc} is that $\phi$ is the {\em transfer 
function} of the i/s/o system $\Sigma_{\bU}$ in the following sense:
whenever the input string 
$\{u_{n}\}_{n \in {\mathbb Z}_{+}}$ is fed into the system 
\eqref{sys} with the initial condition $x(0) = 0$ on the state vector, 
the output string $\{y(n)\}_{n \in {\mathbb Z}_{+}}$ is produced, such that $ \widehat y(z) = \phi(z) \widehat u(z)$, where
$\widehat u$ and $\widehat y$ denote the $Z$-transform of 
$\{u(n)\}_{n \in {\mathbb Z}_{+}}$ and $\{y(n)\}_{n \in 
{\mathbb Z}_{+}}$:
\begin{equation}\label{eq:ztransf} 
  \widehat u(z) = \sum_{n=0}^{\infty} u(n) z^{n}, \quad \widehat y(z) 
  =\sum_{n=0}^{\infty} y_{n} z^{n}.
\end{equation}

In the infinite-dimensional setting, the fact that any contractive 
holomorphic operator-valued function can be represented in the form 
\eqref{transfunc} with $\bU$ unitary comes out of the Sz.-Nagy-Foia{\c s} 
model theory for completely non-unitary contraction operators; see 
\cite{SzNaFoBook10}).
 
There is a closely related but somewhat different theory of canonical 
functional models due to de Branges and Rovnyak \cite{deBrRo66, deBrRoBook} 
which relies on reproducing kernel Hilbert spaces. This is the direction we pursue in the present paper, assuming throughout that $\Uscr$ and $\Yscr$ are separable.

Let $\Gscr$ be a Hilbert space and let $\Bscr(\Gscr)$ denote the space of bounded linear operators on $\Gscr$. In general we say 
that a function $K : \Omega \times \Omega \to \cB(\cG)$ is a 
{\em positive kernel} on $\Omega$ if 
\begin{equation}   \label{posker}
 \sum_{i,j=1}^{N} \langle K(\omega_{i}, \omega_{j}) g_{j}, g_{i} 
 \rangle_{\cG} \ge 0
\end{equation}
for all choices of points $\omega_{1}, \dots, \omega_{N}$ in $\Omega$ 
and vectors $g_{1}, \dots, g_{N}\in\cG$.  The 
following theorem summarizes some useful equivalent characterizations 
of a positive $\cB(\cG)$-valued kernel on $\Omega$.

\begin{thm}  \label{T:RK}
    Given a Hilbert space $\cG$ and a function $K : \Omega 
    \times \Omega \to \cB(\cG)$, the following are equivalent:
    
    \begin{enumerate}
	\item The function $K$ is a positive kernel, i.e., condition 
	\eqref{posker} holds for all $\omega_{1}, \dots, \omega_{N}$ in $\Omega$ 
	and $g_{1}, \dots, g_{N} \in \cG$ for $N=1,2,\dots$.
	
	\item The function  $K$ is the \emph{reproducing kernel} of a reproducing kernel Hilbert 
	space $\cH(K)$, i.e., there is a Hilbert space $\cH(K)$ whose elements are functions 
	$f : \Omega \to \cG$ such that:
	\begin{enumerate}
	    \item For each $\omega \in \Omega$ and $g \in \cG$, the 
	    function $\zeta \mapsto K(\zeta, \omega) g$, $\zeta\in\Omega$, belongs to 
	    $\cH(K)$, and
	    \item the reproducing property
\begin{equation}\label{eq:reprodprop}
	    \langle f, K(\cdot, \omega) g \rangle_{\cH(K)} = \langle 
	    f(\omega), g \rangle_{\cG}
\end{equation}
	    holds for all $f \in \cH(K)$, $\omega\in\Omega$, and $g\in\cG$.
	     \end{enumerate}
	     
	     \item The function $K$ has a \emph{Kolmogorov decomposition}, i.e., there is a Hilbert space $\cF$ and 
	     a function $H 
	     : \Omega \to \cB(\cF, \cG)$ such that $K$ has the 
	     factorization
	     \begin{equation}    \label{Koldecom}
	     K(\zeta, \omega) = H(\zeta) H(\omega)^{*},\quad \zeta,\omega\in\Omega.
	     \end{equation}
	     Furthermore, one such factorization (often called canonical) is produced by 
	     taking $\cF = \cH(K)$ as defined in item 2 and $H(\zeta)$ equal to the 
	     point-evaluation map
	     $$ H(\zeta) = e(\zeta) : f \mapsto f(\zeta) ,\quad f \in \cH(K),\, \zeta\in\Omega.
	     $$
\end{enumerate}
\end{thm}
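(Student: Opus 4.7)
The plan is to prove the cyclic chain of implications (2) $\Longrightarrow$ (3) $\Longrightarrow$ (1) $\Longrightarrow$ (2). The implications (2) $\Longrightarrow$ (3) and (3) $\Longrightarrow$ (1) are short, while (1) $\Longrightarrow$ (2) requires the actual construction of $\mathcal{H}(K)$ and is the main content.

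First, for (2) $\Longrightarrow$ (3): assume the reproducing kernel Hilbert space $\mathcal{H}(K)$ exists. Define $H(\zeta) = e(\zeta) : \mathcal{H}(K) \to \mathcal{G}$ as point evaluation. The reproducing property \eqref{eq:reprodprop} says exactly that $H(\zeta)^* g = K(\cdot, \zeta)g$, so $H(\zeta)H(\omega)^* g = \bigl(K(\cdot,\omega)g\bigr)(\zeta) = K(\zeta,\omega)g$, which is the claimed factorization \eqref{Koldecom} with $\mathcal{F} = \mathcal{H}(K)$. This simultaneously establishes the canonical factorization claim.

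Next, for (3) $\Longrightarrow$ (1): given $K(\zeta,\omega) = H(\zeta)H(\omega)^*$, compute directly
\begin{equation*}
 \sum_{i,j=1}^{N} \ipd{K(\omega_i,\omega_j)g_j}{g_i}_{\mathcal{G}}
 = \sum_{i,j=1}^{N} \ipd{H(\omega_j)^* g_j}{H(\omega_i)^* g_i}_{\mathcal{F}}
 = \biggnorm{\sum_{i=1}^N H(\omega_i)^* g_i}_{\mathcal{F}}^2 \ge 0,
\end{equation*}
which gives \eqref{posker}.

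The main work is (1) $\Longrightarrow$ (2). I would introduce the linear space $\mathcal{H}_0$ of finite sums $f = \sum_{i=1}^N K(\cdot,\omega_i) g_i$ and equip it with the sesquilinear form
\begin{equation*}
\Ipd{\sum_i K(\cdot,\omega_i) g_i}{\sum_j K(\cdot,\omega'_j) g'_j}_0 := \sum_{i,j} \ipd{K(\omega'_j,\omega_i) g_i}{g'_j}_{\mathcal{G}}.
\end{equation*}
The positivity hypothesis \eqref{posker} makes this form positive semi-definite, and a short computation shows the reproducing identity $\ipd{f}{K(\cdot,\omega) g}_0 = \ipd{f(\omega)}{g}_{\mathcal{G}}$ already at the pre-Hilbert-space level; in particular the form depends only on $f$ and not on its representation, so it is well-defined, and the Cauchy--Schwarz inequality then yields $\abs{f(\omega)}_{\mathcal{G}}^2 \le \ipd{f}{f}_0 \, \ipd{K(\cdot,\omega)g}{K(\cdot,\omega)g}_0$ for all unit $g \in \mathcal{G}$, so a zero-norm element of $\mathcal{H}_0$ is identically zero as a function. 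Thus $\ipd{\cdot}{\cdot}_0$ is already an inner product on $\mathcal{H}_0$.

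Finally one completes $\mathcal{H}_0$ to a Hilbert space $\mathcal{H}(K)$. The anticipated obstacle is to verify that the abstract completion can genuinely be identified with a space of $\mathcal{G}$-valued functions on $\Omega$ together with the claimed reproducing property; I would handle this by noting that for every $\omega \in \Omega$ and $g \in \mathcal{G}$ the linear functional $f \mapsto \ipd{f(\omega)}{g}_{\mathcal{G}}$ on $\mathcal{H}_0$ is bounded (by Cauchy--Schwarz with bound $\norm{K(\cdot,\omega)g}_0 \norm{g}_{\mathcal{G}}$), hence extends continuously to the completion. Defining $f(\omega) \in \mathcal{G}$ via the Riesz representation of this extended functional produces a consistent point evaluation, and taking limits of the identity valid on $\mathcal{H}_0$ yields that $K(\cdot,\omega)g \in \mathcal{H}(K)$ together with \eqref{eq:reprodprop} on all of $\mathcal{H}(K)$. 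A Cauchy sequence in $\mathcal{H}_0$ that is pointwise zero must have zero limit, since continuity of point evaluation forces the limit's values to vanish, which shows that the completion is faithfully represented as functions and not as abstract equivalence classes.
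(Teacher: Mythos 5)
The paper offers no proof of this theorem: it is quoted as standard background, with the construction of $\cH(K)$ from a positive kernel attributed to the Moore--Aronszajn theorem \cite[\S 2]{Aronsz50}, so there is nothing in the text to compare your argument against. Your cyclic chain (2) $\Rightarrow$ (3) $\Rightarrow$ (1) $\Rightarrow$ (2) is the standard and correct proof; the two short implications are fine (for (2) $\Rightarrow$ (3) you implicitly use that $e(\zeta)$ is bounded, which follows from \eqref{eq:reprodprop} and Cauchy--Schwarz since $\|K(\cdot,\omega)g\|^2 = \ipd{K(\omega,\omega)g}{g}_{\cG}$), and the construction in (1) $\Rightarrow$ (2) contains all the necessary ingredients. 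The one place where your wording is looser than the argument it gestures at is the faithfulness of the completion: the clean statement is that an element $F$ of the abstract completion whose induced function vanishes identically satisfies $\ipd{F}{K(\cdot,\omega)g} = \ipd{F(\omega)}{g}_{\cG} = 0$ for all $\omega$ and $g$, hence $F=0$ by density of the span of kernel functions --- your phrasing in terms of "a Cauchy sequence that is pointwise zero" only explains why the limit's values vanish, not why the limit itself is the zero element, but the ingredients you list (extended reproducing property plus density) do close this gap.
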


We will make frequent use of the following observation which is an immediate consequence of the reproducing property \eqref{eq:reprodprop}:

\begin{rem}\label{rem:kernelsdense}
In the notation of Theorem \ref{T:RK}, assume that $K$ is a reproducing kernel for the Hilbert space $\Hscr(K)$. Then the linear span 
$$\spn\set{\zeta \mapsto K(\zeta, \omega) g\mid \omega\in\Omega,\,g\in\Gscr}$$
is dense in $\Hscr(K)$.
\end{rem}

Given two separable Hilbert spaces $\cU$ and $\cY$,
we let $\cS(\D;\cU, \cY)$ denote the Schur class over the unit disk ${\mathbb D}$ consisting of functions
 $\phi : {\mathbb D} \to \cB(\cU, \cY)$ which are holomorphic on ${\mathbb D}$ with values $\phi(z)$  equal to contraction operators 
 from $\cU$ into $\cY$.
Given the Schur-class function $\phi$ on ${\mathbb D}$,  we associate the 
kernel
\begin{equation}\label{eq:discrDBobs}
  {\mathrm K}_{o}(z,w) = \frac{ 1 - \phi(z) \phi(w)^{*}}{1 - z \overline{w}}
\end{equation}
for $z, w $ in the unit disk ${\mathbb D}$.  It is well known that 
${\mathrm K}_{o}$ is a positive kernel; the proof is similar to Section 2 below. By the 
Moore-Aronszajn Theorem \cite[\S 2]{Aronsz50} one can associate the so-called reproducing-kernel Hilbert 
space ${\mathrm H}_{o}: = \cH({\mathrm K}_{o})$ to the kernel function ${\mathrm K}_o$. 
This space plays the role of the state space in the observable co-isometric (co-energy 
preserving) de Branges-Rovnyak canonical functional model for a Schur 
class function $\phi$.  We note that this functional model is of interest not only as an alternative to the Sz.-Nagy-Foia{\c s} model \cite{SzNaFoBook10}  for contraction operators 
(see \cite{deBrRo66, deBrangesEntire, BaKr87}), but also has found applications in the context of Lax-Phillips scattering theory \cite{NiVa89} and inverse scattering theory
\cite{AlDy84, AlDy85} as well as boundary Nevanlinna-Pick interpolation \cite{SaraBook, BoKh08}.
The following result can be found at least 
implicitly in the work of de Branges-Rovnyak and is given explicitly 
in this form in \cite{ADRSBook} and in \cite{BallBolo10}.

\begin{thm}  \label{T:coiso-real}
Suppose that the function $\phi$ is in the Schur class $\cS(\D;\cU, \cY)$ and let 
${\mathrm H_o} = \cH(\mathrm K_{o})$ be the associated de Branges-Rovnyak space with reproducing kernel \eqref{eq:discrDBobs}.  
Define operators $\mathrm A_o$, $\mathrm B_o$, $\mathrm C_o$, and $\mathrm D_o$ by
\begin{equation}\label{eq:deBobs}
 \begin{array}{ll}
      \mathrm A_o f:= z \mapsto {\displaystyle\frac{f(z) - f(0)}{z}}, & \quad 
      \mathrm B_ou := z \mapsto {\displaystyle\frac{\phi(z) - \phi(0)}{z}} u, \\
     \mathrm C_o f := f(0), & \quad \mathrm D_o u := \phi(0) u, \\ & f\in\mathrm H_o,\,u\in\Uscr,\,z\in\D.
  \end{array}
\end{equation}
  Then the operator matrix $\bU_o := \left[   \begin{smallmatrix} \mathrm A_o 
  & \mathrm B_o  \\ \mathrm C_o & \mathrm D_o \end{smallmatrix} \right]$ has the following 
  properties:
  \begin{enumerate} 
      \item The operator $\bU_o$ defines a co-isometry from
  $\sbm{{\mathrm H}_{o} \\ \cU}$ to  $\sbm{{\mathrm H}_{o} \\ \cY}$. 
  \item The pair $(\mathrm C_o,\mathrm A_o)$ is an {\em observable pair}, i.e.,
  $$
    \mathrm C_o \mathrm A_o^{n} f = 0 \text{ for all } n = 0 ,1,2, \dots \quad\Longrightarrow \quad
    f = 0 \text{ as an element of } {\mathrm H}_{o}.
  $$
  \item We recover $\phi(z)$ as $\phi(z) = \mathrm D_o + z \mathrm C_o (1 - z\mathrm A_o)^{-1} \mathrm B_o$, $z\in\D$.
  
  \item If $\left[ \begin{smallmatrix} A & B \\ C & D 
\end{smallmatrix} \right] : \sbm{\cX \\ \cU} \to \sbm{\cX \\ \cY}$ is another operator matrix with properties 1--3
above (with $\cX$ in place of $\mathrm H_{o}$), 
then there is a unitary operator $\Delta_o : {\mathrm H}_o \to \cX$ so 
that
$$
  \begin{bmatrix} \Delta & 0 \\ 0 & 1_{\cY} \end{bmatrix} \begin{bmatrix} 
      \mathrm A_o & \mathrm B_o \\ \mathrm C_o & \mathrm D_o \end{bmatrix} = \begin{bmatrix} A & B \\ C & 
      D \end{bmatrix} \begin{bmatrix} \Delta & 0 \\ 0 & 1_{\cU} 
  \end{bmatrix}.
  $$
\end{enumerate}
\end{thm}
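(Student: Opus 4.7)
The entire argument rests on the kernel identity
\[ (1-z\overline{w})\,\mathrm K_o(z,w)=1_\cY-\phi(z)\phi(w)^*,\qquad z,w\in\D, \]
or equivalently $\mathrm K_o(z,w)+\phi(z)\phi(w)^*=1_\cY+z\overline{w}\,\mathrm K_o(z,w)$, which is immediate from \eqref{eq:discrDBobs}. For part (1) the plan is to work with $\bU_o^*$ on the dense span (Remark \ref{rem:kernelsdense}) of the kernel vectors $\sbm{\mathrm K_o(\cdot,w)y\\0}$ and $\sbm{0\\y}$ in $\sbm{\mathrm H_o\\\cY}$. Using the reproducing property \eqref{eq:reprodprop} together with the definitions \eqref{eq:deBobs}, a direct inner-product calculation gives
\[ \bU_o^*\sbm{\mathrm K_o(\cdot,w)y\\0}=\sbm{\overline{w}^{-1}(\mathrm K_o(\cdot,w)-\mathrm K_o(\cdot,0))y\\\overline{w}^{-1}(\phi(w)^*-\phi(0)^*)y},\qquad \bU_o^*\sbm{0\\y}=\sbm{\mathrm K_o(\cdot,0)y\\\phi(0)^*y}. \]
Expanding $\langle \bU_o^*\sbm{\mathrm K_o(\cdot,w)y\\0},\bU_o^*\sbm{\mathrm K_o(\cdot,w')y'\\0}\rangle$ by the reproducing property yields a four-term combination of values of $\mathrm K_o(\zeta,\omega)+\phi(\zeta)\phi(\omega)^*$ at $(\zeta,\omega)\in\{0,w'\}\times\{0,w\}$; the kernel identity collapses three of those values to $1_\cY$ and turns the fourth into $1_\cY+w'\overline{w}\,\mathrm K_o(w',w)$. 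After division by $\overline{w}w'$ one is left with $\langle \mathrm K_o(w',w)y,y'\rangle_\cY$, which matches $\langle\sbm{\mathrm K_o(\cdot,w)y\\0},\sbm{\mathrm K_o(\cdot,w')y'\\0}\rangle$. Together with the analogous (and easier) identities involving the $\sbm{0\\y}$ vectors, this shows $\bU_o^*$ is isometric on the dense span; it therefore extends to a bounded isometry, and $\bU_o$ is the co-isometry claimed. A by-product is that this step certifies the right-hand sides of \eqref{eq:deBobs} as genuine $\mathrm H_o$-valued operators rather than merely formal expressions.

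Parts (2) and (3) follow directly from the explicit definitions. Writing $f\in\mathrm H_o$ as its Taylor series $\sum_{n\ge 0} a_n z^n$ around $0$, the operator $\mathrm A_o$ acts as the backward shift $a_n\mapsto a_{n+1}$, so $\mathrm C_o\mathrm A_o^n f=a_n$; vanishing of all $a_n$ forces $f\equiv 0$, giving observability. For (3), solving $(1-z\mathrm A_o)g=\mathrm B_o u$ pointwise in $\zeta$ produces $g(\zeta)=(\zeta(\mathrm B_o u)(\zeta)-z(\mathrm B_o u)(z))/(\zeta-z)$, so $\mathrm C_o g=g(0)=(\mathrm B_o u)(z)=(\phi(z)-\phi(0))u/z$, and therefore $\mathrm D_o u+z\mathrm C_o(1-z\mathrm A_o)^{-1}\mathrm B_o u=\phi(z)u$.

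For the uniqueness statement (4) I would first combine the three co-isometry identities $AA^*+BB^*=1_\cX$, $AC^*+BD^*=0$, $CC^*+DD^*=1_\cY$ with the transfer formula \eqref{transfunc} to deduce the Kolmogorov factorization
\[ \mathrm K_o(z,w)=C(1-zA)^{-1}(1-\overline{w}A^*)^{-1}C^*,\qquad z,w\in\D. \]
Comparing this with the canonical factorization from part (3) of Theorem \ref{T:RK} produces an isometry $\Delta:\mathrm H_o\to\cX$ determined by $\Delta\mathrm K_o(\cdot,w)y=(1-\overline{w}A^*)^{-1}C^*y$; observability of $(C,A)$ is equivalent to density of $\{(1-\overline{w}A^*)^{-1}C^*y:w\in\D,\,y\in\cY\}$ in $\cX$, so $\Delta$ is unitary. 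The intertwining relations $C\Delta=\mathrm C_o$, $\Delta\mathrm B_o=B$, and $\Delta\mathrm A_o=A\Delta$ (with $\mathrm D_o=D=\phi(0)$ automatic) are then verified on the dense set of kernel vectors by short resolvent identities of the type $(1-w'A)^{-1}w'A=(1-w'A)^{-1}-1$.

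The main technical obstacle is the co-isometry verification in part (1): the $1/\overline{w}$ factors and the four-term kernel-identity bookkeeping must be combined carefully, and the computation has to be carried out in polarized form (with distinct $w,w'$) rather than only diagonally in order to cover the full dense span.
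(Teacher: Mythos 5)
The paper itself gives no proof of Theorem \ref{T:coiso-real}: it is quoted as a known result from \cite{ADRSBook} and \cite{BallBolo10}, so there is no in-paper argument to compare against line by line. Your proposal is, however, correct, and it is the natural discrete-time counterpart of the strategy the paper uses for the half-plane analogues: your polarized kernel-identity computation for part (1) is exactly the analogue of \eqref{eq:isomkernelisom} in Lemma \ref{lem:contrclosable}, and your map $\Delta \mathrm K_o(\cdot,w)y = (1-\overline{w}A^*)^{-1}C^*y$ in part (4) is the discrete analogue of the intertwining operator $\Delta : e_o(\lambda)^*y \mapsto (\overline\lambda - A^*|_{\cX})^{-1}C^*y$ of Theorem \ref{thm:secondsimilar}. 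I verified the key identities: the four-term collapse via $\mathrm K_o(\zeta,\omega)+\phi(\zeta)\phi(\omega)^* = 1_\cY + \zeta\overline\omega\,\mathrm K_o(\zeta,\omega)$ does produce $\mathrm K_o(w',w)$ after dividing by $\overline{w}w'$; the cross terms vanish; the Kolmogorov factorization $\mathrm K_o(z,w) = C(1-zA)^{-1}(1-\overline{w}A^*)^{-1}C^*$ follows from the three co-isometry identities by the $[\cdots]=1-z\overline{w}$ telescoping; and observability of $(C,A)$ is what makes $\Delta$ onto.

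One point of logical hygiene: as written, you compute $\bU_o^*$ on kernel vectors before you know that $\bU_o$ is a bounded operator into $\sbm{\mathrm H_o \\ \cY}$, which is circular if taken literally (a priori $\mathrm A_o f$ and $\mathrm B_o u$ are only formal expressions). The standard repair, which your closing remark gestures at but should be made explicit, is to \emph{define} an operator $V$ on the dense span of $\sbm{\mathrm K_o(\cdot,w)y\\0}$ and $\sbm{0\\y}$ by your displayed formulas, prove $V$ is isometric there, extend it to an isometry on $\sbm{\mathrm H_o\\\cY}$, and only then compute $\bigl\langle V^*\sbm{f\\u},\sbm{\mathrm K_o(\cdot,w)y\\0}\bigr\rangle$ to identify $V^*$ with the operator matrix \eqref{eq:deBobs}; this simultaneously certifies that $\mathrm A_o$ and $\mathrm B_o$ map into $\mathrm H_o$ boundedly. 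With that reordering the argument is complete; parts (2) and (3) are fine as stated (for (3) one should note $\|z\mathrm A_o\|<1$ for $z\in\D$ so that $1-z\mathrm A_o$ is invertible, which follows once contractivity of $\bU_o$ is in hand).
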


If $\phi$ is in the Schur class $\cS(\D;\cU, \cY)$, then the function $\wtsmash\phi$ defined by 
$\wtsmash \phi(z):=\phi(\overline z)^*$, $z\in\D$, lies in $\cS(\D;\cY, \cU)$. Replacing $\phi$ by $\wtsmash\phi$ in \eqref{eq:discrDBobs} 
leads to the dual de Branges-Rovnyak kernel given by
\begin{equation}\label{eq:discrDBcontr}
  {\mathrm K}_{c}(z,w):= \frac{1 - \phi(\overline{z})^{*} 
\phi(\overline{w})}{ 1 - z \overline{w}}.
\end{equation}
The Hilbert space associated to this kernel plays the role of the state-space in the following controllable, 
isometric (energy-preserving) de Branges-Rovnyak canonical functional 
model:

\begin{thm}\label{T:iso-real}
Suppose that the function $\phi$ is in the Schur class $\cS(\D;\cU, \cY)$ 
and let ${\mathrm H}_{c} = \cH({\mathrm K}_{c})$ be the associated dual de Branges-Rovnyak 
space.  Define operators $\mathrm A_c$, $\mathrm B_c$, $\mathrm C_c$, and $\mathrm D_c$ by
\begin{equation}\label{eq:deBcontr}
\begin{array}{ll}
 \mathrm A_cg := z \mapsto z g(z) - \phi(\overline{z})^{*} \wtsmash g(0), 
    & \quad \mathrm B_cu := z \mapsto \big(1 - \phi(\overline{z})^{*} \phi(0)\big)u, \\
 \mathrm C_cg := \wtsmash g(0), & \quad \mathrm D_c u:= \phi (0) u,\\ 
 & g\in\mathrm H_c,\,u\in\Uscr,\, z\in\D,
\end{array}
\end{equation}
where $\wtsmash g(0)$ is the unique vector in $\cY$ such that
     \begin{equation} \label{tildeg(0)}
     \langle \wtsmash g(0), y \rangle_{\cY} = \left\langle g,
     z\mapsto\frac{\phi(\overline{z})^{*} - \phi(0)^{*}}{z} y 
     \right\rangle_{{\mathrm H}_{c}} \text{ for all } y \in \cY.
     \end{equation}
     Then the operator matrix $\bU_c := \left[   \begin{smallmatrix} 
     \mathrm A_c & \mathrm B_c \\ \mathrm C_c & \mathrm D_c \end{smallmatrix} \right]$ has the following 
     properties:
     \begin{enumerate} 
	 \item The operator $\bU_c$ defines an isometry from 
     $\sbm{{\mathrm H}_{c} \\ \cU}$ to  $\sbm{{\mathrm H}_{c} \\ \cY}$.
     
     \item The pair $(\mathrm A_c,\mathrm B_c)$ is a {\em controllable pair}, i.e.,
     $$\cspn\set{\mathrm A_c^{n} \mathrm B_c u\mid u\in\Uscr,\,n\geq0}  = {\mathrm H}_{c}.$$

     \item We recover $\phi(z)$ as $\phi(z) = \mathrm D_c + z \mathrm C_c (1 - z\mathrm A_c)^{-1} \mathrm B_c$, $z\in\D$.
     
     \item If $\left[ \begin{smallmatrix} A & B \\ C & D 
   \end{smallmatrix} \right] : \sbm{\cX \\ \cU} \to \sbm{\cX \\ \cY} $ is another operator matrix with properties 1--3 
   above (with $\cX$ in place of ${\mathrm H}_{c}$), 
   then there is a unitary operator $\Delta : 
   {\mathrm H}_{c} \to \cX$ so 
   that 
$$
\begin{bmatrix} \Delta  & 0 \\ 0 & 1_{\cY} \end{bmatrix} \begin{bmatrix} 
\mathrm A_c & \mathrm B_c \\ \mathrm C_c & \mathrm D_c \end{bmatrix} = \begin{bmatrix} A & 
B \\ C & D \end{bmatrix} \begin{bmatrix} \Delta & 0 \\ 0 & 1_{\cU} 
\end{bmatrix}.
$$
  \end{enumerate}
\end{thm}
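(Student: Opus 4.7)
The plan is to deduce Theorem \ref{T:iso-real} from Theorem \ref{T:coiso-real} by duality: apply the co-isometric model theorem to the dual Schur function $\wtsmash\phi(z):=\phi(\overline z)^* \in \cS(\D;\cY,\cU)$, and then pass back to $\phi$ by taking adjoints. The approach works cleanly because the observable kernel of $\wtsmash\phi$ is exactly the controllable kernel of $\phi$,
\[
\mathrm K_o(\wtsmash\phi)(z,w) = \frac{1 - \wtsmash\phi(z)\wtsmash\phi(w)^*}{1 - z\overline w} = \frac{1 - \phi(\overline z)^*\phi(\overline w)}{1-z\overline w} = \mathrm K_c(z,w),
\]
so the associated reproducing kernel Hilbert space is $\mathrm H_c$ itself. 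Theorem \ref{T:coiso-real} applied to $\wtsmash\phi$ then produces a co-isometry $\wtsmash{\bU}_o = \sbm{\wtsmash{\mathrm A}_o & \wtsmash{\mathrm B}_o \\ \wtsmash{\mathrm C}_o & \wtsmash{\mathrm D}_o}:\sbm{\mathrm H_c \\ \cY} \to \sbm{\mathrm H_c \\ \cU}$ that is observable and realizes $\wtsmash\phi$. The heart of the proof is to identify $\bU_c = \wtsmash{\bU}_o^*$; once that identification is in hand, properties (1)--(4) follow by dualizing the corresponding properties of $\wtsmash{\bU}_o$.

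Three of the four block entries are immediate: $\wtsmash{\mathrm D}_o^* = \phi(0) = \mathrm D_c$ is trivial; $\wtsmash{\mathrm C}_o^*u = \mathrm K_c(\cdot, 0)u = \mathrm B_c u$ follows from the reproducing property at $\omega=0$; and $\wtsmash{\mathrm B}_o^* g = \wtsmash g(0) = \mathrm C_c g$ is literally the defining characterization \eqref{tildeg(0)} of $\wtsmash g(0)$ (noting that the function in \eqref{tildeg(0)} belongs to $\mathrm H_c$ as $\wtsmash{\mathrm B}_o y$). The remaining identification $\wtsmash{\mathrm A}_o^* = \mathrm A_c$ is the principal obstacle. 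By Remark \ref{rem:kernelsdense} it suffices to check the identity after pairing with kernel elements $\mathrm K_c(\cdot, w) y$, and a partial-fraction manipulation of $(\mathrm K_c(z,w) - \mathrm K_c(0, w))/z$ yields the key formula
\[
\wtsmash{\mathrm A}_o\bigl(\mathrm K_c(\cdot, w)y\bigr)(z) = \overline w\,\mathrm K_c(z,w)y - \frac{\phi(\overline z)^* - \phi(0)^*}{z}\phi(\overline w) y.
\]
Pairing with $g \in \mathrm H_c$ and invoking both the reproducing property and \eqref{tildeg(0)} produces $\langle w g(w) - \phi(\overline w)^* \wtsmash g(0), y\rangle_{\cY}$, which matches $\langle (\mathrm A_c g)(w), y\rangle_{\cY} = \langle \mathrm A_c g, \mathrm K_c(\cdot, w)y\rangle_{\mathrm H_c}$, completing the identification $\wtsmash{\mathrm A}_o^* = \mathrm A_c$.

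With $\bU_c = \wtsmash{\bU}_o^*$ secured, the four properties transfer formally. Property (1) is just the fact that the adjoint of a co-isometry is an isometry. Property (3) is obtained by evaluating the transfer-function identity $\wtsmash\phi(\zeta) = \wtsmash{\mathrm D}_o + \zeta\wtsmash{\mathrm C}_o(1 - \zeta\wtsmash{\mathrm A}_o)^{-1}\wtsmash{\mathrm B}_o$ at $\zeta = \overline z$ and taking adjoints. For property (2), taking orthogonal complements converts the desired controllability of $(\mathrm A_c, \mathrm B_c) = (\wtsmash{\mathrm A}_o^*, \wtsmash{\mathrm C}_o^*)$ into the observability of $(\wtsmash{\mathrm C}_o, \wtsmash{\mathrm A}_o)$, which is granted by Theorem \ref{T:coiso-real}. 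For the uniqueness clause (4), given any other realization $\sbm{A & B \\ C & D}$ satisfying (1)--(3), one forms its adjoint $\sbm{A^* & C^* \\ B^* & D^*}$; dualizing the transfer-function identity and the controllability condition shows it is a co-isometric, observable realization of $\wtsmash\phi$, so the uniqueness part of Theorem \ref{T:coiso-real} yields a unitary $\wtsmash\Delta : \mathrm H_c \to \cX$ intertwining $\wtsmash{\bU}_o$ with that adjoint, and rearranging the intertwining relation shows that the same operator $\Delta = \wtsmash\Delta$ serves as the intertwining unitary required for $\bU_c$.
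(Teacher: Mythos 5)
Your proposal is correct and follows exactly the route the paper intends: the paper presents Theorem \ref{T:iso-real} as the dual of Theorem \ref{T:coiso-real} obtained by passing to $\wtsmash\phi(z)=\phi(\overline z)^*$ and taking adjoints (the same duality trick it carries out explicitly in the continuous-time setting in Section \ref{sec:coenpres}, and alludes to after Theorem \ref{thm:discrconschar} with ``$\bU_c^*$ in place of $\bU_o$''). All four block identifications and the transfer of properties 1--4 check out; the only blemish is notational (the vector paired against $\mathrm K_c(\cdot,w)$ lives in $\cU$, so the final inner product should carry the subscript $\cU$ rather than $\cY$), which does not affect the argument.
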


The cases where the canonical model $\mathbf U_o$ and/or $\mathbf U_c$ is unitary can be characterized as follows:

\begin{thm}\label{thm:discrconschar}
The following assertions are equivalent:
\begin{enumerate}
\item The co-isometric observable canonical model ${\mathbf U}_{o}$ is unitary.
\item The following two conditions both hold:
\begin{align}   
 & {\rm H}_{o} \cap \{ \phi(\cdot) u \mid u \in \cU \} = \{0\}\quad\text{and}\quad
 \label{Uounitary1}  \\
 & \phi(z) u = 0 \text{ for all } z \in {\mathbb D} \quad\Longrightarrow\quad u = 0.
 \label{Uounitary2}
\end{align}
\item The maximal factorable minorant of $1 - \phi(z)^{*} \phi(z)$ is $0$, i.e., the only holomorphic $a \colon {\mathbb D} \to \cB(\cU, \cU')$ with the property
$$
  a(z)^{*}a(z) \le 1 - \phi(z)^{*} \phi(z),\quad z \in \C,~|z|=1
$$ 
is $a=0$.
\end{enumerate}

\noindent 
The following assertions are also equivalent:
\begin{enumerate}
\item The isometry ${\mathbf U}_{c}$ is unitary.
\item The following two conditions both hold:
\begin{align*}   
 & {\rm H}_{c} \cap \{ z\mapsto\phi(\overline z)^* y \mid y \in \cY \} = \{0\}\quad\text{and}\quad \\
 & \phi(\overline z)^* y = 0 \text{ for all } z \in {\mathbb D} \quad\Longrightarrow\quad y = 0.
\end{align*}
\item The maximal factorable minorant 
of $z\mapsto 1 - \phi(\overline z) \phi(\overline z)^{*}$ is $0$.
\end{enumerate}
\end{thm}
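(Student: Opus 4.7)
The second equivalence block is the dual of the first. A direct comparison of \eqref{eq:discrDBobs} with \eqref{eq:discrDBcontr} shows that the state space $\mathrm H_c$ associated to $\phi$ coincides with the state space $\mathrm H_o$ associated to the dual Schur function $\widetilde\phi(z) := \phi(\overline z)^*$, and a routine check against the formulas \eqref{eq:deBobs} and \eqref{eq:deBcontr} identifies $\mathbf U_c$ for $\phi$ with the Hilbert-space adjoint of $\mathbf U_o$ for $\widetilde\phi$ (after the canonical identification of state spaces). Since unitarity is preserved under taking adjoints, and the three conditions in the second block are the literal translations of those in the first block under $\phi \leftrightarrow \widetilde\phi$, the second block follows from the first. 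I therefore prove only the first.

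For (1) $\Leftrightarrow$ (2): by Theorem \ref{T:coiso-real}, $\mathbf U_o$ is a co-isometry, so it is unitary precisely when $\ker \mathbf U_o = \set{0}$. Reading off \eqref{eq:deBobs}, the equation $\mathbf U_o \sbm{f \\ u} = 0$ translates to $f(0) + \phi(0) u = 0$ together with $f(z) - f(0) + (\phi(z) - \phi(0)) u = 0$ for all $z \in \D$, which collapse to the single requirement $f(z) = -\phi(z) u$ throughout $\D$ with $f \in \mathrm H_o$. Hence $\ker \mathbf U_o$ is trivial if and only if the only $u \in \cU$ with $\phi(\cdot) u \in \mathrm H_o$ is $u = 0$. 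Splitting this condition on whether the function $z \mapsto \phi(z) u$ vanishes identically yields exactly the two requirements \eqref{Uounitary1} (the nontrivial-function case) and \eqref{Uounitary2} (the identically-zero case).

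For (2) $\Leftrightarrow$ (3), which is the main obstacle, the plan is to invoke the operator-valued factorization theory of de Branges and Rovnyak. The first step is to realize $\mathrm H_o$ concretely as the range of $(I - M_\phi M_\phi^*)^{1/2}$ inside $H^2(\cY)$ with its pullback defect norm; in that model the $\mathrm H_o$-norm of $\phi(\cdot) u$ (whenever it is defined) can be expressed through boundary integrals involving $1 - \phi^*\phi$. Given a nonzero $u$ witnessing failure of \eqref{Uounitary1} or \eqref{Uounitary2}, an outer factorization of the associated defect produces a nonzero analytic $a \colon \D \to \cB(\cU, \cU')$ satisfying $a^*a \le 1 - \phi^*\phi$ on the unit circle. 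Conversely, from a nontrivial minorant $a$ one extracts, via the lattice structure underlying maximal factorable minorants together with $H^2$ boundary limits, a $u \in \cU$ for which $\phi(\cdot) u$ lies nontrivially in $\mathrm H_o$. The delicate ingredients here are the operator-valued Szeg\H{o}-type outer factorization (due to Lowdenslager, as adapted in Sz.-Nagy-Foia\c s theory) and the lattice-theoretic construction of the maximal factorable minorant; for these technical details I would appeal to the exposition in \cite{ADRSBook} and the original work \cite{deBrRo66, deBrRoBook}.

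The step (1) $\Leftrightarrow$ (2) is a routine algebraic unwinding of the canonical model formulas. The genuinely hard content of the theorem lies in (2) $\Leftrightarrow$ (3), where both the existence of the maximal factorable minorant and its precise connection with elements of $\mathrm H_o$ of the form $\phi(\cdot) u$ are nontrivial function-theoretic facts; this is the step I expect to be the main obstacle.
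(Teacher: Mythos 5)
Your reduction of the second block to the first (identifying $\mathrm H_c$ for $\phi$ with $\mathrm H_o$ for $\widetilde\phi$ and $\mathbf U_c$ with $\mathbf U_o(\widetilde\phi)^*$), and your proof of (1)$\Leftrightarrow$(2) by computing that $\ker \mathbf U_o$ consists of the pairs $\sbm{-\phi(\cdot)u\\u}$ with $\phi(\cdot)u\in\mathrm H_o$, are correct and coincide with what the paper does: it observes that \eqref{Uounitary1} and \eqref{Uounitary2} together are equivalent to $\Ker{\mathbf U_o}=\set 0$, and otherwise refers to \cite[Thms 3.2.3 and 3.3.3]{ADRSBook}.

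The equivalence with assertion (3) is where you and the paper part ways, and where your proposal has a genuine gap. The paper does not attempt a direct function-theoretic link between the maximal factorable minorant and elements of $\mathrm H_o$ of the form $\phi(\cdot)u$. Instead it combines Lemma 8.2, Theorem 8.7, Corollary 8.8 and Theorem 9.1 of \cite{NiVa86bieb} to get that the maximal factorable minorant of $1-\phi^*\phi$ vanishes if and only if each column $\sbm{\mathrm A_o\\\mathrm C_o}$ and $\sbm{\mathrm B_o\\\mathrm D_o}$ of $\mathbf U_o$ is isometric, and then closes the loop with the elementary fact that a contraction with isometric columns is isometric (the cross term $2\,\mathrm{Re}\,\langle \mathrm A_o f+\mathrm B_o u\rangle$-type contribution is forced to vanish), so that $\mathbf U_o$, already a co-isometry, is unitary. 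Your proposed route --- "an outer factorization of the associated defect produces a nonzero $a$" in one direction and "from a nontrivial minorant one extracts \dots\ a $u$ for which $\phi(\cdot)u$ lies nontrivially in $\mathrm H_o$" in the other --- is the classical scalar argument (non-extremality $\Leftrightarrow$ $\log(1-|b|^2)\in L^1$ $\Leftrightarrow$ $b$ belongs to its own de Branges--Rovnyak space), but in the operator-valued setting neither implication is a citable lemma in the form you state it, and the second is particularly problematic: a nonzero minorant $a$ need not single out any vector $u\in\cU$ with $\phi(\cdot)u\in\mathrm H_o\setminus\set 0$; the correct intermediate object is the isometry defect of the column $\sbm{\mathrm B_o\\\mathrm D_o}$, not an individual vector. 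Since you explicitly defer exactly these two steps to the literature without identifying results that deliver them, the (2)$\Leftrightarrow$(3) portion is a plan rather than a proof; the cleanest repair is to prove (1)$\Leftrightarrow$(3) via the Nikolski\u\i--Vasyunin column-isometry characterization, as the paper does.
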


The equivalences of the conditions one and two can be found in \cite[Thms 3.2.3 and 3.3.3]{ADRSBook}. For instance, one easily sees that the conditions \eqref{Uounitary1} and \eqref{Uounitary2} both hold if and only if $\Ker{\mathbf U_o}=\zero$. In order to prove that the third assertion is equivalent to unitarity in the case of $\bU_o$, as a first step combine Lemma 8.2, Theorem 8.7, Corollary 8.8, and Theorem 9.1 in \cite{NiVa86bieb} to see that the zero-maximal-factorable-minorant condition on $1 - \phi(\cdot)^* \phi(\cdot)$ is equivalent to each column $\left[ \begin{smallmatrix} \mathrm A_o \\ \mathrm C_o \end{smallmatrix} \right]$ and $\left[ \begin{smallmatrix} \mathrm B_o \\ \mathrm D_o \end{smallmatrix}\right]$ of $\bU_o$ being isometric. It is then an elementary exercise to argue that the whole matrix $\bU_o = \left[ \begin{smallmatrix} \mathrm A_o & \mathrm B_o \\ \mathrm C_o & \mathrm D_o \end{smallmatrix} \right]$ is isometric if it is known to be contractive with each column isometric. The proof for the case of $\bU_c$ is the same, but with $\widetilde \phi$ in place of $\phi$ and with $\bU_c^*$ in place of $\bU_o$.

In addition to the functional models in Theorems \ref{T:coiso-real} and \ref{T:iso-real}, there is also a unitary functional model which combines $\mathbf U_o$ and $\mathbf U_c$; see e.g.\ Brodski\u\i\ \cite{Brod78}.

There is a parallel but less well developed theory 
 for the Schur class ${\mathcal S}({\mathbb C}^+; \cU, \cY)$ over the right half plane 
consisting of holomorphic functions on the right half plane ${\mathbb C}^+$ with values equal to contraction operators between the coefficient 
Hilbert spaces $\cU$ and $\cY$. See however \cite{Kuz96, Kr76} as well as \cite{LKMV95, BaVi03} for a more general algebraic curve setting.
In general, if  the $\cB(\cU, \cY)$-valued function $\varphi$ has the property that $\varphi$ extends to be holomorphic in a 
neighborhood of infinity  rather than in a neighborhood of the origin, 
it is natural to work with realizations of the form
\begin{equation}   \label{transfunc'}
\varphi(\mu) = D + C (\mu - A)^{-1} B.
\end{equation}
It is well known that,
given any $\cB(\cU, \cY)$-valued  function holomorphic on a  neighborhood of 
$\infty$ in the complex plane, there is a Hilbert space $\cX$ (the 
state space) and a system matrix
$$
\bU = \begin{bmatrix} A & B \\ C & D \end{bmatrix} : 
    \begin{bmatrix} \cX \\ \cU \end{bmatrix} \to \begin{bmatrix} \cX \\ \cY \end{bmatrix}
$$
so that $\varphi$ has a representation as in \eqref{transfunc'}. 
If we introduce the continuous-time input/state/output linear system
\begin{equation}   \label{CT-sys}
\Sigma_{\bU} : \left\{ \begin{array}{rcl} 
\dot x(t) & = & A x(t) + B u(t) \\
  y(t) & = & C x(t) + D u(t), \end{array} \right.
\end{equation}
then application of the Laplace transform
\begin{equation}\label{eq:laptransf}
  \widehat x(\mu) = \int_{0}^{\infty} e^{-\mu t} x(t)\,{\tt d}t
\end{equation}
leads to the relation
$$ 
   \widehat y(\mu) = \varphi(\mu) \widehat u(\mu)
$$
whenever $\big(u(\cdot), x(\cdot), y(\cdot)\big)$ is a trajectory of the system 
\eqref{CT-sys} with state-vector $x$ satisfying the zero initial 
condition $x(0) = 0$.

The generalized form for the operator matrix 
$\bU$ appropriate for the Schur class over ${\mathbb C}^{+}$ was 
first worked out by independently by \v{Smuljan} \cite{Smul86} and Salamon \cite{Sala87,Sala89}. Salamon gave a well-posed realization of an holomorphic function on $\cplus$ which is bounded on some complex right-half plane. Later, in \cite{ArNu96}, Arov-Nudelman specialized to the case of a Schur function, giving a passive realization. The generalized form for $\bU$ has since been 
refined into the notion of scattering 
conservative/energy-preserving/co-energy-preserving system node; see 
\cite{StafBook} for a comprehensive treatment, and also \cite{StafMTNS02, BaSt06}.   
  The analogue for the continuous-time 
setting of {\em co-isometric system matrix}
occurring in the discrete-time setting is a {\em co-energy preserving system 
node} while 
the analogue for the continuous-time setting of {\em isometric system 
matrix} occurring in the discrete-time setting is {\em energy 
preserving system node}  (precise definitions to come in Sections \ref{sec:sysnode}  below).

However, what has not been done to this point for the realization 
theory is the analogues of Theorems \ref{T:coiso-real} 
and \ref{T:iso-real} for $\varphi$ in the Schur class over 
${\mathbb C}^{+}$.  By using the right-half plane versions of the 
de Branges-Rovnyak kernels ${\mathrm K}_{o}$ and ${\mathrm K}_{c}$, 
namely,
\begin{equation}   \label{o/ckernels}
 K_{o}(\mu, \lambda) = \frac{1 - \varphi(\mu) \varphi(\lambda)^{*}}{ \mu + \overline{\lambda}},
 \quad
 K_{c}(\mu, \lambda) = \frac{ 1 - \varphi(\overline{\mu})^{*} 
 \varphi(\overline{\lambda})}{\mu + \overline{\lambda}},
\end{equation}
combined with the precise formalism of scattering energy-conserving 
and scattering co-energy-conserving operator nodes, in this 
paper we obtain complete analogues of Theorems \ref{T:coiso-real} and 
\ref{T:iso-real} for the continuous-time setting. Due to complications with unbounded operators and rigged Hilbert 
spaces, the formulas and analysis have a quite different flavor from 
that in the discrete-time/unit-disk setting.

The positivity of the kernels \eqref{o/ckernels} is proved in Section \ref{sec:statespaces}, and in Section \ref{sec:coenpres} we establish the following continuous-time analogue of Theorem \ref{T:coiso-real}:

\begin{thm}  \label{T:coenpres-real}
Suppose that the function $\varphi$ is in the Schur class $\cS(\cplus;\cU, \cY)$ and let 
$\cH_o = \cH(K_{o})$ be the associated de Branges-Rovnyak space with reproducing kernel $K_o$ in \eqref{o/ckernels}.
Define the following unbounded operator, which maps a dense subspace of $\sbm{\cH_{o}\\\Uscr}$ into $\sbm{\cH_{o}\\\Yscr}$:
 \begin{equation}\label{eq:coisomSdefIntro}
  \SysNode_o  : \bbm{x\\u}\mapsto\bbm{z\\y},\quad\text{where}
 \end{equation}
  \begin{equation} \label{defz'Intro}
  z(\mu) := \mu x(\mu)+\varphi(\mu)u-y,\quad \mu\in\cplus,\quad\text{and}
  \end{equation}
  \begin{equation}   \label{defy'Intro}
  y  := \lim_{\re\eta\to\infty} \eta x(\eta)+\varphi(\eta)u,\quad\text{defined on}
  \end{equation}
  \begin{equation*}
  \dom{ \SmallSysNode_o }  := \left\{  \bbm{x\\u}\in\bbm{\Hscr_o\\\Uscr}\bigmid 
  \exists y\in\Yscr:~ z \text{ defined in \eqref{defz'Intro} lies in } \cH_o \right\}.
  \end{equation*}
  Then for every $\sbm{ x \\ u } \in \dom{ \SmallSysNode_o}$, the $y \in \cY$ such that $z$ given in \eqref{defz'Intro} lies in $\cH_o$ is unique and it is given by \eqref{defy'Intro}. Moreover, the operator $\SmallSysNode_o$ has the following properties:
  \begin{enumerate} 
  \item The operator $\SmallSysNode_o$ is an observable co-energy-preserving system node.

  \item The operator $\SmallSysNode_o$ is a realization of $\varphi$, i.e., we recover $\varphi(\mu)$ through an appropriate generalization of \eqref{transfunc'}.
  
  \item If $\SmallSysNode : \sbm{\cX \\ \cU}\supset \dom{\SmallSysNode} \to \sbm{\cX \\ \cY}$ is another operator with properties 1--2
above (with $\cX$ in place of $\cH_{o}$), then there is a unitary operator $\Delta : \cH_o \to \cX$ so that $\sbm{ \Delta & 0 \\ 0 & 1_{\cU} }$ maps $\dom{\SmallSysNode_o}$ one-to-one onto $\dom{\SmallSysNode}$ and 
$$
  \bbm{\Delta & 0 \\ 0 & 1_{\cY}}\SysNode_o = \SysNode \bbm{ \Delta & 0 \\ 0 & 1_{\cU} }.
$$
Hence the system nodes $\sbm{ A \& B \\ C \& D }$ and $\SmallSysNode_o$ are unitarily similar.
\end{enumerate}
\end{thm}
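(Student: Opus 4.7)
The plan is to mirror the four-step structure of Theorem \ref{T:coiso-real}: first set up the node and verify that it is a co-energy-preserving observable system node, then recover the transfer function, and finally establish uniqueness up to unitary similarity. The master identity throughout will be
$$(\mu + \overline{\lambda})\,K_o(\mu,\lambda) = 1 - \varphi(\mu)\varphi(\lambda)^*,$$
which plays the role that the disk identity $(1-z\overline{w})\mathrm K_o(z,w) = 1 - \phi(z)\phi(w)^*$ played in the discrete case. Kernel functions $K_o(\cdot,\nu)g$, whose span is dense in $\Hscr_o$ by Remark \ref{rem:kernelsdense}, will serve as a set of test vectors on which every assertion can be checked algebraically via the master identity, and then extended to the full domain by closedness and continuity arguments.

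First I would settle the uniqueness of $y$ and the limit formula \eqref{defy'Intro}. Uniqueness amounts to showing that $\Hscr_o$ contains no nonzero constant function $c \in \Yscr$: if both $y_1$ and $y_2$ produced a $z \in \Hscr_o$ from the same $\sbm{x \\ u}$, the constant $y_2 - y_1$ would lie in $\Hscr_o$. The immediate bound $\norm{K_o(\eta,\eta)}_{\Bscr(\Yscr)} \le 1/(2\re \eta)$, together with the reproducing property, gives both the nonexistence of such constants and the pointwise decay $\norm{z(\eta)}_\Yscr \le \norm{z}_{\Hscr_o}/\sqrt{2\re \eta}$ for every $z \in \Hscr_o$. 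Substituting this decay into $z(\eta) = \eta x(\eta) + \varphi(\eta)u - y$ forces $y = \lim_{\re\eta\to\infty}[\eta x(\eta) + \varphi(\eta)u]$ whenever $z \in \Hscr_o$, which is \eqref{defy'Intro}.

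For item 1, I would identify the main operator as $A_o x (\mu) = \mu x(\mu) - C_o x$, with $C_o x := \lim_{\re\eta\to\infty}\eta x(\eta)$, defined on those $x \in \Hscr_o$ for which this expression again lies in $\Hscr_o$. To show that $A_o$ generates a contraction $C_0$-semigroup I would compute $(\lambda - A_o)^{-1}$ on the kernel functions and extract Hille--Yosida type resolvent estimates via the master identity. Control and feedthrough must be packaged jointly because $\varphi(\infty)$ need not exist; the formulas \eqref{defz'Intro}--\eqref{defy'Intro} already encode this packaging. Co-energy-preservation is equivalent to the power balance
$$2\,\re \ipd{z}{x}_{\Hscr_o} + \norm{y}_\Yscr^2 = \norm{u}_\Uscr^2$$
for every $\sbm{x \\ u} \in \dom{\SmallSysNode_o}$; it too reduces to an algebraic identity on kernel elements via the master identity and then extends by continuity. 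Observability follows from $C_o(\lambda - A_o)^{-1}x = x(\lambda)$ (again verified on kernel functions), since the observability hypothesis then forces $x$ to vanish as an analytic function on $\cplus$. Item 2 is a similar direct computation.

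For item 3, given a competing observable co-energy-preserving realization $\SmallSysNode$ of $\varphi$ on $\cX$, I would define $\Delta : \Hscr_o \to \cX$ by sending each $K_o(\cdot,\lambda)g$ to the state in $\cX$ produced by applying the rigged-space analogues of $C^*$ and the $A$-resolvent at $\lambda$ to $g$; co-energy-preservation of both realizations expresses both source and target norms through the same kernel $K_o$, making $\Delta$ isometric, and observability of both forces it to extend to a unitary. The intertwining relation is then verified on $\dom{\SmallSysNode_o}$. \textbf{Main obstacle.} I expect the decisive work to be the continuous-time setup: proving that $A_o$ actually generates a $C_0$-semigroup of contractions on $\Hscr_o$, packaging the combined $C\&D$ operator into a bona fide system node with its associated rigged spaces, and verifying that the limit in \eqref{defy'Intro} exists strongly along every sequence with $\re\eta \to \infty$. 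These are precisely the points at which the bounded block-matrix reasoning underlying Theorem \ref{T:coiso-real} breaks down and where the unbounded nature of $A_o$, $B_o$ and $C_o$ must be controlled with care.
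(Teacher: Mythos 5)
Your preliminary step (uniqueness of $y$ and the limit formula \eqref{defy'Intro} via the decay estimate $\norm{z(\eta)}_\Yscr\le\norm{z}_{\Hscr_o}/\sqrt{2\re\eta}$) is exactly the paper's Proposition \ref{prop:xtendstozero}, and your definition of $\Delta$ in item 3 matches Theorem \ref{thm:secondsimilar}. The gap is in item 1. The power balance you propose to verify,
$$\Ipdp{z}{x}_{\Hscr_o}+\Ipdp{x}{z}_{\Hscr_o}=\Ipdp{u}{u}_\Uscr-\Ipdp{y}{y}_\Yscr\quad\text{on }\dom{\SmallSysNode_o},$$
is the \emph{energy-preserving} (scattering-isometric) identity for $\SmallSysNode_o$ itself; co-energy-preservation means instead that this identity holds for the \emph{dual} node $\SmallSysNode_o^*:\sbm{x\\y}\mapsto\sbm{z\\u}$, with $y\in\Yscr$ as input and $u\in\Uscr$ as output. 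For the observable model the equality on $\dom{\SmallSysNode_o}$ is false in general --- it holds precisely in the conservative case characterized in Theorem \ref{thm:assume2} (take $\varphi$ with $\norm{\varphi}_{H^\infty}<1$ for a counterexample); only the dissipation inequality is available there. Relatedly, your test vectors are mismatched: the kernel elements $K_o(\cdot,\lambda)g=e_o(\lambda)^*g$ carry $g\in\Yscr$ and hence pair naturally with the input space of $\SmallSysNode_o^*$, not of $\SmallSysNode_o$; the elements of $\dom{\SmallSysNode_o}$ generated by inputs $u\in\Uscr$ are of the form $\sbm{(\lambda-A_o|_{\Hscr_o})^{-1}B_ou\\u}$ with $(\lambda-A_o|_{\Hscr_o})^{-1}B_ou=\mu\mapsto\frac{\varphi(\mu)-\varphi(\lambda)}{\lambda-\mu}\,u$, which are not kernel functions, so the master identity does not hand you an algebraic identity on $\dom{\SmallSysNode_o}$. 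The same mismatch undercuts the proposed direct Hille--Yosida estimates for $A_o$: the dissipativity that the master identity produces lives on the adjoint side.

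The repair is the route the paper takes: define the adjoint first, on $\spn\bigset{\sbm{e_o(\overline\lambda)^*y\\y}}$, by $\SysNode_o^*:\sbm{e_o(\overline\lambda)^*y\\y}\mapsto\sbm{\lambda e_o(\overline\lambda)^*y\\\wtsmash\varphi(\lambda)y}$. On these vectors the master identity is exactly the polarized isometric identity, and the whole of Section \ref{sec:enpres} (well-definedness and closability as in Lemma \ref{lem:contrclosable}, passivity via Lemma \ref{lem:dissmaxpass}, controllability and the transfer-function computation as in Theorem \ref{thm:firstmodel}) applies verbatim with $\wtsmash\varphi$ in place of $\varphi$, showing that the closure is a controllable energy-preserving system node realizing $\wtsmash\varphi$. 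Taking adjoints then gives that $\SmallSysNode_o$ is an observable co-energy-preserving system node realizing $\varphi$ (Proposition \ref{prop:dualsysnode}); the explicit description \eqref{defz'Intro}--\eqref{defy'Intro} is the analogue of the dual-node computation in Theorem \ref{thm:contrdual}; semigroup generation for $A_o$ comes for free from passivity of the adjoint rather than from resolvent estimates on kernel functions; and item 3 follows by applying Theorem \ref{thm:firstsimilar} to $\SmallSysNode^*$ and $\SmallSysNode_o^*$.
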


It is also possible to decompose $\SmallSysNode_o$ into unbounded operators $A_o$, $B_o$, and $C_o$ which together with $\varphi$ determine $\SmallSysNode_o$ uniquely, similar to Theorem \ref{T:coiso-real}; see Section \ref{sec:sysnodedef} below. This involves a rigging of the state space and hence it is too technically involved to be presented in the introduction. We have the following analogue of Theorem \ref{T:iso-real}; the proofs and more details can be found in Section \ref{sec:enpres}:

\begin{thm}  \label{T:enpres-real}
Suppose that the function $\varphi$ is in the Schur class $\cS(\cplus;\cU, \cY)$ and let 
$\cH_c = \cH(K_{c})$ be the associated de Branges-Rovnyak space with reproducing kernel $K_c$ in \eqref{o/ckernels}.
There exists a system node $\SmallSysNode_c:\sbm{\Hscr_c\\\Uscr}\supset \dom{\SmallSysNode_c}\to\sbm{\Hscr_c\\\Yscr}$ that for arbitrary $\sbm{x\\u}$ in its domain and $\lambda\in\cplus$ satisfies
\begin{equation}\label{eq:contrrealgenIntro}
  \SysNode_c\bbm{x\\u} = \bbm{\mu\mapsto-\mu x(\mu)-\varphi(\overline\mu)^*\gamma_\lambda+\big(1-\varphi(\overline\mu)^*\varphi(\overline\lambda)\big)u
    \\\gamma_\lambda + \varphi(\overline\lambda)u},
\end{equation}
$\mu\in\cplus$, where $\gamma_\lambda\in\Yscr$ is uniquely determined by $\lambda$ and $\sbm{x\\u}$.

Moreover, the operator $\SmallSysNode_c$ has the following properties:
  \begin{enumerate} 
  \item The operator $\SmallSysNode_c$ is a controllable energy-preserving system node.

  \item The operator $\SmallSysNode_c$ is a realization of $\varphi$, i.e., we recover $\varphi(\mu)$ through the appropriate generalization of \eqref{transfunc'} mentioned earlier.
  
  \item If $\SmallSysNode : \sbm{\cX \\ \cU}\supset \dom{\SmallSysNode} \to \sbm{\cX \\ \cY}$ is another operator matrix with properties 1--2
above (with $\cX$ in place of $\cH_c$), then there is a unitary operator $\Delta : \cH_c \to \cX$ so that $\sbm{ \Delta & 0 \\ 0 & 1_{\cU} }$ maps $\dom{\SmallSysNode_c}$ one-to-one onto $\dom{\SmallSysNode}$ and 
$$
  \bbm{\Delta & 0 \\ 0 & 1_{\cY}}\SysNode_c = \SysNode \bbm{ \Delta & 0 \\ 0 & 1_{\cU} }.
$$
\end{enumerate}
\end{thm}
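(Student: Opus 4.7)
The plan is to derive Theorem \ref{T:enpres-real} by duality from Theorem \ref{T:coenpres-real}, in the same way that Theorem \ref{T:iso-real} is obtained from Theorem \ref{T:coiso-real} in the discrete-time setting. I would first set $\widetilde\varphi(\mu) := \varphi(\overline\mu)^*$, so that $\widetilde\varphi \in \cS(\cplus;\Yscr,\Uscr)$, and note that the definitions in \eqref{o/ckernels} give
\[
K_c^\varphi(\mu,\lambda) = \frac{1-\varphi(\overline\mu)^*\varphi(\overline\lambda)}{\mu+\overline\lambda} = \frac{1-\widetilde\varphi(\mu)\widetilde\varphi(\lambda)^*}{\mu+\overline\lambda} = K_o^{\widetilde\varphi}(\mu,\lambda),
\]
so the state space $\Hscr_c = \Hscr(K_c^\varphi)$ coincides, as a reproducing kernel Hilbert space, with the space to which Theorem \ref{T:coenpres-real} applied to $\widetilde\varphi$ (with input $\Yscr$ and output $\Uscr$) attaches an observable co-energy-preserving system node, which I shall denote by $\widetilde\SSS$.

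Next I would define $\SmallSysNode_c$ to be the dual system node of $\widetilde\SSS$ in the sense developed in \cite{StafBook}. Standard duality for continuous-time system nodes then yields at once that (i) the dual of a co-energy-preserving system node is energy-preserving; (ii) the dual of an observable system node is controllable; and (iii) the transfer function of the dual at $\mu$ is $\widetilde\varphi(\overline\mu)^* = \varphi(\mu)$. This establishes conclusions 1 and 2. For conclusion 3, if $\SmallSysNode$ is any competing realization with properties 1 and 2, then its dual is a second observable co-energy-preserving realization of $\widetilde\varphi$, so part 3 of Theorem \ref{T:coenpres-real} supplies a unitary $\widetilde\Delta : \Hscr_c \to \Xscr$ intertwining $\widetilde\SSS$ with that dual, and $\Delta := \widetilde\Delta^*$ intertwines $\SmallSysNode_c$ with $\SmallSysNode$ in the required sense.

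To derive the concrete formula \eqref{eq:contrrealgenIntro}, I would then compute the adjoint of the explicit description \eqref{eq:coisomSdefIntro}--\eqref{defy'Intro} of $\widetilde\SSS$ directly. The parameter $\lambda \in \cplus$ replaces the high-frequency limit in \eqref{defy'Intro}: it serves as a resolvent normalization point at which the adjoint relation is evaluated, and $\gamma_\lambda\in\Yscr$ emerges as the boundary datum of $x$ at $\lambda$ inherited from the rigging of $\Hscr_c$. Uniqueness of $\gamma_\lambda$ and $\lambda$-independence of $\SysNode_c\sbm{x\\u}$ both follow from the resolvent identity that is built into the definition of a system node. This explicit dualization is where the main technical burden lies: one must work inside the rigged triple $\Xscr_1\subset\Hscr_c\subset\Xscr_{-1}$ associated to the main operator of $\widetilde\SSS$, track how the combined observation/feedthrough operator of $\widetilde\SSS$ dualizes into a combined control/feedthrough operator from $\Uscr$ into $\Xscr_{-1}$, and verify that the resulting domain and action match \eqref{eq:contrrealgenIntro} precisely.
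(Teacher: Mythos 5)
Your strategy inverts the paper's logical order, and as written it is circular. The paper constructs $\SmallSysNode_c$ \emph{directly} in Section \ref{sec:enpres}: it is defined as the closure of the map $\sbm{e_c(\overline\lambda)^*u\\u}\mapsto\sbm{\lambda e_c(\overline\lambda)^*u\\\varphi(\lambda)u}$ on kernel-function pairs, with well-definedness and closability coming from the energy identity \eqref{eq:isomkernelisom}, system-node-hood from the range-density criterion of Lemma \ref{lem:dissmaxpass}, and uniqueness from Theorem \ref{thm:firstsimilar}. Theorem \ref{T:coenpres-real} is then obtained in Section \ref{sec:coenpres} by exactly the duality trick you propose, applied in the opposite direction. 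So you cannot invoke Theorem \ref{T:coenpres-real} as a black box: its only proof in the paper passes through the statement you are trying to establish. To repair this you would have to first prove Theorem \ref{T:coenpres-real} independently, which amounts to carrying out for the controllable energy-preserving model of $\wtsmash\varphi$ precisely the direct construction the paper performs for $\SmallSysNode_c$ --- at which point the duality step saves no work and you have in effect reproduced the paper's proof under a different name.

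Beyond the circularity, the part you defer as ``the main technical burden'' is the actual substance of the theorem, and your sketch of it is not accurate. The paper does not obtain \eqref{eq:contrrealgenIntro} by a raw adjoint computation: it first computes $\SmallSysNode_c^*$ explicitly via the graph characterization of the adjoint (Theorem \ref{thm:contrdual}), then transfers that formula back to $\SmallSysNode_c$ using the nontrivial identity $\SysNode_c^*\sbm{\sbm{1&0}\\\sbm{C_c\&D_c}}=\sbm{-\sbm{A_c\&B_c}\\\sbm{0&1}}$ valid for energy-preserving nodes (Theorem \ref{thm:msw}, imported from \cite{MaStWe06}), and finally uses the decomposition $\dom{\SmallSysNode_c}=\sbm{\dom{A_c}\\\zero}\dotplus\sbm{e_c(\overline\lambda)^*\\1}\Uscr$ to identify $\gamma_\lambda=C_c\big(x-e_c(\lambda)^*u\big)$ (Theorem \ref{thm:contrsemiexplicit}). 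In particular $\gamma_\lambda$ is a value of the observation operator $C_c$, not a ``boundary datum of $x$ at $\lambda$ inherited from the rigging,'' and no fully explicit formula for $C_c$ on generic elements of $\dom{A_c}$ is available (Remark \ref{rem:removecircle}). Moreover the domain cannot be read off from the formula: the inclusion \eqref{eq:contrdomcont} is in general strict, which is exactly why the auxiliary point $\lambda$ is needed. A minor further slip: the intertwining unitary you want in part 3 is $\wtsmash\Delta$ itself, mapping $\Hscr_c\to\Xscr$; its adjoint goes the wrong way.
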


While the papers \cite{ArNu96} and \cite{StafMTNS02} worked with 
linear-fractional change of variables to derive the continuous-time 
result from the discrete-time result, a more direct geometric 
approach based on the ``lurking isometry'' technique was used in \cite{BaSt06}.
The approach in the present paper is similar to the single-variable specialization of the work of Ball-Bolotnikov \cite{BallBolo10} for the discrete-time setting, to some extent using intuition from \cite{Kuru10}. The main difference compared to \cite{BaSt06} is that the canonical form of the Kolmogorov 
   factorization of the kernel $K_{c}$ (as given in part 3 of 
   Theorem \ref{T:RK}) leads to \emph{explicit functional formulas} for the system-nodes $\SmallSysNode_o$ and $\SmallSysNode_c$ above.
It should also be pointed out that \emph{conservative} realizations are presented in \cite{BaSt06} (and many of the other references below), but in the present paper we study energy preserving and co-energy preserving realizations, which are in a certain sense only semi-conservative.

We mention that other work of de Branges-Rovnyak (the first part of 
\cite{deBrRo66}) and  of de Branges \cite{deBrangesEntire} uses reproducing kernel 
Hilbert spaces consisting of entire functions based on positive 
kernels associated with Nevanlinna-class rather than Schur-class 
functions. (The Nevanlinna class consists of holomorphic, even entire, functions mapping the 
upper half plane into an operator with positive imaginary part.) This leads to models for symmetric 
operators with equal deficiency indices. See \cite{BeHaSn08,BeHaSn09} for recent developments in this direction, which is separate 
from what we pursue here.

Also in \cite{BeHaSn08,BeHaSn09} a linear-fractional transformation is used to transfer knowledge of Schur 
functions on $\D$ to Nevanlinna families on $\C\setminus\R$. In the present article we avoid the use of such 
transformations in the development of the realization theory in order to expose the intricacies 
of the continuous-time case; only in Section \ref{sec:recovering} we describe how to recover the original de Branges-Rovnyak 
models from the models we present in Sections \ref{sec:enpres} and \ref{sec:coenpres} using a linear-fractional transformation. A functional model (as a self-adjoint linear relation) for arbitrary normalized generalized Nevanlinna pairs has been worked out directly in $\C\setminus\R$ in \cite{Nei10}.

A general unifying formulation of the de Branges-Rovnyak models has recently been worked out by Arov-Kurula-Staffans (see \cite{ArKuStCanon})
for the continuous-time setting as an extension to continuous time of the earlier discrete-time realization results in \cite{ArSt2Canon,ArStConserv}. It is possible to derive Theorems \ref{T:coenpres-real} and \ref{T:enpres-real} from \cite{ArKuStCanon} and the method, outlined in Section \ref{sec:conc} below is in principle straightforward. However, filling in the details is a rather lengthy process, and for this reason we have chosen to give direct proofs of Theorems \ref{T:coenpres-real} and \ref{T:enpres-real} here that do not rely on \cite{ArKuStCanon}.

There have also been a number of extensions of Theorems 
\ref{T:coiso-real} and \ref{T:iso-real} to multi-variable settings;
see \cite{BallBolo10} for ball and polydisk versions and \cite{AMcCY12, 
BK-V12} for polyhalfplane versions.

\kern 5mm\noindent
{\bf Notation}.  

\newbox\BoxA

\newcommand{\Symb}[2]{\vskip 3pt\noindent
\setbox\BoxA=\hbox{$#1$:\quad}%
\hangindent=3 cm%
\ifdim\wd\BoxA > 3.3cm \unhbox\BoxA #2.\else\hbox to 3cm{$#1$:\hfil}#2.\fi}

\Symb{\cplus}{The complex right-half plane $\set{\lambda\in\C\mid \re\lambda>0}$}
\Symb{\overline X}{The closure of the normed space $X$}
\Symb{\Ipdp\cdot\cdot_\Xscr,\|\cdot\|_\Xscr}{The inner product and norm of $\Xscr$, respectively}
\Symb{\spn \,\Xi}{The linear span of the set $\Xi$}
\Symb{\BLO(\Uscr,\Yscr), \BLO(\Uscr)}{The space of bounded linear operators from
$\Uscr$ to $\Yscr$ and on $\Uscr$, respectively}
\Symb{\dom A,\range A}{The domain and range of the operator $A$}
\Symb{\Ker A,\res A}{The null-space and the resolvent set of the operator $A$}
\Symb{\Xscr_1\subset\Xscr\subset\Xscr_{-1}}{Rigged Hilbert spaces associated to $A:\Xscr\supset\dom A\to\Xscr$, with norms constructed using some $\beta\in\cplus$}
\Symb{\Xscr_1^d\subset\Xscr\subset\Xscr_{-1}^d}{The rigged Hilbert spaces associated to $A^*$, with norms constructed using $\overline\beta\in\cplus$, where $\beta$ is used in the rigging corresponding to $A$. $\Xscr_{\pm1}^d$ is identified with the dual of $\Xscr_{\mp1}$ using $\Xscr$ as pivot space}
\Symb{A|_\Xscr}{The unique extension of the operator $A\in\Bscr(\Xscr_1,\Xscr)$ to an operator in $\Bscr(\Xscr,\Xscr_{-1})$}
\Symb{1_\Xscr, 1}{The identity operator on $\Xscr$}
\Symb{\Uscr,\Yscr}{Separable Hilbert spaces, the input and output space, respectively}
\Symb{\sbm{\Xscr\\\Uscr}}{The orthogonal direct sum of the Hilbert spaces $\Xscr$ and $\Uscr$}
\Symb{e(\mu)}{The (bounded) point-evaluation operator in $H^2(\cplus;\Uscr)$ and $H^2(\cplus;\Yscr)$}
\Symb{e(\lambda)^*}{The (bounded) adjoint of $e(\lambda)$. Premultiplies an element of $\C$ or a vector space by the (scalar) kernel $k(\mu,\lambda)=\frac {1}{\mu+\overline\lambda}$ of $H^2(\C)$, so that $e(\lambda)^*u$ is the function $\mu\mapsto\frac{u}{\mu+\overline\lambda}$, $\mu,\lambda\in\cplus$, $u\in\Uscr$}
\Symb{\Sscr(\cplus;\Uscr,\Yscr)}{The Schur class on the right-half plane which consists of $\BLO(\Uscr,\Yscr)$-valued holomorphic functions whose values are contractions}
\Symb{M_\varphi}{The multiplication operator on $H^2(\cplus;\Uscr)$ with symbol $\varphi\in\Sscr(\cplus;\Uscr,\Yscr)$, i.e., $(M_\varphi f)(\lambda)=\varphi(\lambda)f(\lambda)$, $\lambda\in\cplus$}
\Symb{\SmallSysNode_o}{The observable co-energy-preserving functional model for $\varphi\in\Sscr(\cplus;\Uscr,\Yscr)$}
\Symb{K_o}{The reproducing kernel $K_o(\mu,\lambda)=\frac{1_\Yscr-\varphi(\mu)\varphi(\lambda)^*}{\mu+\overline\lambda}$; takes values in $\BLO(\Yscr)$}
\Symb{\Hscr_o}{The de Branges space with reproducing kernel $K_o$. This is the state space for $\SmallSysNode_o$ and it is contractively contained in $H^2(\cplus;\Yscr)$}
\Symb{e_o(\mu)}{The point-evaluation operator in $\Hscr_o$}
\Symb{e_o(\lambda)^*}{The adjoint of $e_o(\lambda)$, maps $y\in\Yscr$ into $K_o(\cdot,\lambda)y$, $\lambda\in\cplus$}
\Symb{\wtsmash\varphi}{The function $\wtsmash\varphi(\mu)=\varphi(\overline\mu)^*$, $\mu\in\cplus$, which is an element of $\Sscr(\cplus;\Yscr,\Uscr)$ if $\varphi\in\Sscr(\cplus;\Uscr,\Yscr)$}
\Symb{\iota}{The inclusion operator $\Hscr_o\to H^2(\cplus;\Yscr)$, with $\iota^*=(1-M_\varphi M_\varphi^*)$, or $\Hscr_c\to H^2(\cplus;\Uscr)$, with $\iota^*=(1-M_{\wtsmash\varphi} M_{\wtsmash\varphi}^*)$}
\Symb{\SmallSysNode_c}{The controllable energy-preserving functional model for $\varphi\in\Sscr(\cplus;\Uscr,\Yscr)$}
\Symb{K_c}{The reproducing kernel $K_c(\mu,\lambda)=\frac{1_\Uscr-\wtsmash\varphi(\mu)\wtsmash\varphi(\lambda)^*}{\mu+\overline\lambda}$; takes values in $\BLO(\Uscr)$}
\Symb{\Hscr_c}{The de Branges space with reproducing kernel $K_c$. This is the state space for the $\SmallSysNode_c$, contractively contained in $H^2(\cplus;\Uscr)$}
\Symb{e_c(\mu)}{The point-evaluation operator in $\Hscr_c$}
\Symb{e_c(\lambda)^*}{The adjoint of $e_c(\lambda)$, maps $u\in\Uscr$ into $K_c(\cdot,\lambda)u$, $\lambda\in\cplus$}
\Symb{\Delta}{Unitary intertwinement operator from $\Hscr_o$ or $\Hscr_c$ to $\Xscr$}
\Symb{\Xi_\alpha}{Unitary intertwinement operator from $\mathrm H_{o,\alpha}$ to $\Hscr_o$ or from $\mathrm H_{c,\overline\alpha}$ to $\Hscr_c$}

\section{The de Branges-Rovnyak spaces $\Hscr_o$ and $\Hscr_c$ over $\cplus$}\label{sec:statespaces}

The topic of this section is the development of the state spaces of the functional models presented in the 
introduction. We begin by proving that the kernels \eqref{o/ckernels} are positive kernels,
and therefore reproducing kernels of $\Hscr_o$ and $\Hscr_c$. The reader is assumed to be 
familiar with Hardy spaces over $\cplus$; otherwise see e.g.\ \cite[Sect. A.6]{CuZwBook}. It is 
important that $\Uscr$ and $\Yscr$ are separable.

Every $\varphi\in\Sscr(\cplus;\Uscr,\Yscr)$ lies in $H^\infty(\cplus;\Bscr(\Uscr,\Yscr))$ 
and therefore the multiplication operator $M_\varphi$ with symbol $\varphi$ maps 
$H^2(\cplus;\Uscr)$ into $H^2(\cplus;\Yscr)$, and $\|M_\varphi\|=\|\varphi\|_{H^\infty}$; 
see \cite[Theorem A.6.26]{CuZwBook}. We need the following lemma in order to show that the kernel $K_o(\mu,\lambda)$ is nonnegative:

\begin{lem}\label{lem:MphiProps}
Let $\varphi\in\Sscr(\cplus;\Uscr,\Yscr)$ and denote the point-evaluation operator in $H^2(\cplus;\Yscr)$ by $e_{H^2(\cplus;\Yscr)}(\cdot)$. 
The following claims are true:
\begin{enumerate}
\item The adjoint of $e_{H^2(\cplus;\Yscr)}(\lambda)$ is the operator of premultiplication with the reproducing kernel $k_\Yscr$ of $H^2(\cplus;\Yscr)$: 
$$
 e(\lambda)^*y=\mu\mapsto k_\Yscr(\mu,\lambda)y,\quad y\in\Yscr,\,\mu,\lambda\in\cplus, \quad k_\Yscr(\mu,\lambda) = \frac {1_\Yscr}{\mu+\overline\lambda}.
$$
\item The operator $M_{\varphi}^*$ has the following action on the kernel functions in $H^2(\cplus;\Yscr)$:
$$ M_\varphi^*\, e_{H^2(\cplus;\Yscr)}(\lambda)^*y=e_{H^2(\cplus;\Uscr)}(\lambda)^*\,\varphi(\lambda)^*y,\quad \lambda\in\cplus,\,y\in \Yscr.$$
\item The function  $K_o$ defined in \eqref{o/ckernels} can be factored as 
\begin{equation}\label{eq:CoEnPresKernOlofFact}
\begin{aligned}
  K_o(\mu,\lambda)&=e_{H^2(\cplus;\Yscr)}(\mu)\,(1_{H^2(\cplus;\Yscr)}-M_{\varphi}M_{\varphi}^*)\,e_{H^2(\cplus;\Yscr)}(\lambda)^*,\qquad\\&\mu,\lambda\in\cplus.
\end{aligned}
\end{equation}
\end{enumerate}
\end{lem}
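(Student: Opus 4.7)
The plan is to work through the three parts in order, using only the reproducing kernel structure of the vector-valued Hardy space $H^2(\cplus;\Yscr)$ together with the definition of the multiplication operator $M_\varphi$.

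For part (1), I would recall that the scalar Hardy space $H^2(\cplus)$ has reproducing kernel $(\mu,\lambda)\mapsto 1/(\mu+\overline\lambda)$, and tensor this with $\Yscr$ (using that $\Yscr$ is separable so $H^2(\cplus;\Yscr)\cong H^2(\cplus)\otimes\Yscr$) to obtain the $\Bscr(\Yscr)$-valued kernel $k_\Yscr(\mu,\lambda)=1_\Yscr/(\mu+\overline\lambda)$. The claim $e_{H^2(\cplus;\Yscr)}(\lambda)^*y=\mu\mapsto k_\Yscr(\mu,\lambda)y$ then amounts to applying Theorem \ref{T:RK}, taking adjoints in the reproducing identity $\ipd{f}{e(\lambda)^*y}=\ipd{f(\lambda)}{y}_\Yscr$.

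For part (2), the strategy is to verify the identity weakly against an arbitrary $f\in H^2(\cplus;\Uscr)$. Using part (1) together with the action $(M_\varphi f)(\lambda)=\varphi(\lambda)f(\lambda)$, one obtains the chain
\begin{equation*}
\bigipd{f}{M_\varphi^*e_{H^2(\cplus;\Yscr)}(\lambda)^*y} = \bigipd{(M_\varphi f)(\lambda)}{y}_\Yscr = \bigipd{f(\lambda)}{\varphi(\lambda)^*y}_\Uscr = \bigipd{f}{e_{H^2(\cplus;\Uscr)}(\lambda)^*\varphi(\lambda)^*y},
\end{equation*}
and since $f$ is arbitrary, the claim follows.

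For part (3), the idea is to expand $e_{H^2(\cplus;\Yscr)}(\mu)(1-M_\varphi M_\varphi^*)e_{H^2(\cplus;\Yscr)}(\lambda)^*$ as a difference of two terms. The first term $e(\mu)e(\lambda)^*$ evaluates to $1_\Yscr/(\mu+\overline\lambda)$ by part (1). For the second, apply part (2) to rewrite $M_\varphi^*e_{H^2(\cplus;\Yscr)}(\lambda)^*y=e_{H^2(\cplus;\Uscr)}(\lambda)^*\varphi(\lambda)^*y$, then apply $M_\varphi$ to this kernel function (a pointwise multiplication by $\varphi$), and finally evaluate at $\mu$. This yields $\varphi(\mu)\varphi(\lambda)^*y/(\mu+\overline\lambda)$, and subtraction gives exactly $K_o(\mu,\lambda)y$. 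None of the steps presents a genuine obstacle; the main thing to be careful about is keeping the input-space and output-space point-evaluations notationally distinct, since $M_\varphi^*$ carries kernel functions of $H^2(\cplus;\Yscr)$ to kernel functions of $H^2(\cplus;\Uscr)$.
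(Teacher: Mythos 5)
Your proposal is correct and follows essentially the same route as the paper: part (1) from the scalar $H^2(\cplus)$ kernel plus the reproducing property, part (2) by testing weakly against an arbitrary element of $H^2(\cplus;\Uscr)$, and part (3) by expanding $e(\mu)(1-M_\varphi M_\varphi^*)e(\lambda)^*$ using part (2). The only cosmetic difference is that the paper carries out the part (3) computation weakly against a second vector $\gamma\in\Yscr$, whereas you apply the operators directly to $y$ and evaluate; the content is identical.
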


In the sequel we simplify the notation, so that $k(\cdot,\lambda)$ denotes a kernel function in $H^2(\cplus;\Uscr)$, $H^2(\cplus;\Yscr)$, or $H^2(\cplus;\C)$, where it is clear from the context which one to choose. Similarly, the point-evaluation operator at $\mu$ on a possibly vector-valued $H^2$ space is simply denoted by $e(\mu)$.

\begin{proof}
We have the following short arguments:
\begin{enumerate}
\item It follows from residue calculus that $k$ is the reproducing kernel of $H^2(\cplus)$; see  \cite{HoffmanBook}.  That 
$e(\lambda)^*y=k(\cdot,\lambda)y$ then follows from the reproducing kernel property \eqref{eq:reprodprop}. 
\item As probably first observed in \cite{ShiWa}, by the reproducing kernel property \eqref{eq:reprodprop}, we have for all $u\in H^2(\cplus;\Uscr)$, $y\in\Yscr$, 
and $\lambda\in\cplus$:
$$
\begin{aligned}
  \Ipdp{u}{M_{\varphi}^*e(\lambda)^*y}_{H^2(\cplus;\Uscr)} &= \Ipdp{M_{\varphi}u}{e(\lambda)^*y}_{H^2(\cplus;\Yscr)} =
   \Ipdp{(M_{{\varphi}}u)(\lambda)}{y}_\Yscr \\
  &= \Ipdp{\varphi(\lambda)u(\lambda)}{y}_\Yscr = \Ipdp{u}{e(\lambda)^*\varphi(\lambda)^*y}_\Uscr.
\end{aligned}
$$

\item For all $\mu,\lambda\in \cplus$ and $y,\gamma\in\Yscr$, by using assertion 2 (in the fourth equality) we have:
\begin{equation}\label{eq:CoIsomKernOlofFactor}
\begin{aligned}
  \Ipdp{K_o(\mu,\lambda)y}{\gamma}_\Yscr &= \Ipdp{\frac{1}{\mu+\overline\lambda}y}{\gamma}_\Yscr-\Ipdp{\frac{\varphi(\mu)
  \varphi(\lambda)^*}{\mu+\overline\lambda}y}{\gamma}_\Yscr\\
  &= \Ipdp{k(\mu,\lambda)y}{\gamma}_\Yscr-\Ipdp{k(\mu,\lambda) \varphi(\lambda)^*y}{\varphi(\mu)^*\gamma}_\Uscr\\
  &= \Ipdp{e(\lambda)^*y}{e(\mu)^*\gamma}_{H^2(\cplus;\Yscr)}\\
  &\qquad -\Ipdp{e(\lambda)^*\varphi(\lambda)^*y}{e(\mu)^*\varphi(\mu)^*\gamma}_{H^2(\cplus;\Uscr)}\\
  &= \Ipdp{e(\lambda)^*y}{e(\mu)^*\gamma}_{H^2(\cplus;\Yscr)}\\
  &\qquad -\Ipdp{M_{\varphi}^*e(\lambda)^*y}{M_{\varphi}^*e(\mu)^*\gamma}_{H^2(\cplus;\Uscr)}\\
  &= \Ipdp{(1-M_{\varphi}M_{\varphi}^*)e(\lambda)^*y}{e(\mu)^*\gamma}_{H^2(\cplus;\Yscr)}\\
  &= \Ipdp{e(\mu)(1-M_{\varphi}M_{\varphi}^*)e(\lambda)^*y}{\gamma}_\Yscr,
\end{aligned}
\end{equation}
\end{enumerate}

\ and this completes the proof.
\end{proof}

From this lemma it is easy to show that $K_o$ is a positive kernel.

\begin{thm}\label{thm:coisomkernelnonneg}
If $\varphi\in \Sscr(\cplus;\Uscr,\Yscr)$, then the function $K_o(\mu, \lambda)$ defined in \eqref{o/ckernels} is a positive kernel.
\end{thm}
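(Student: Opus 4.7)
The plan is to reduce positivity of $K_o$ to positivity of the self-adjoint operator $1 - M_\varphi M_\varphi^*$ on $H^2(\cplus;\Yscr)$, using the factorization already established in part 3 of Lemma \ref{lem:MphiProps}.

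First, I would observe that since $\varphi\in\Sscr(\cplus;\Uscr,\Yscr)\subset H^\infty(\cplus;\Bscr(\Uscr,\Yscr))$ with $\|\varphi\|_{H^\infty}\le 1$, the cited result $\|M_\varphi\|=\|\varphi\|_{H^\infty}\le1$ from \cite[Theorem A.6.26]{CuZwBook} gives that $M_\varphi:H^2(\cplus;\Uscr)\to H^2(\cplus;\Yscr)$ is a contraction. Consequently $M_\varphi M_\varphi^*\in\BLO(H^2(\cplus;\Yscr))$ has norm at most one, so $1_{H^2(\cplus;\Yscr)}-M_\varphi M_\varphi^*$ is a bounded positive self-adjoint operator on $H^2(\cplus;\Yscr)$.

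Next, for an arbitrary finite family of points $\lambda_1,\dots,\lambda_N\in\cplus$ and vectors $y_1,\dots,y_N\in\Yscr$, I would use the factorization \eqref{eq:CoEnPresKernOlofFact} together with the identity $e(\mu)^*=(e(\mu))^*$ to write
$$
\sum_{i,j=1}^N\Ipdp{K_o(\lambda_i,\lambda_j)y_j}{y_i}_\Yscr
=\sum_{i,j=1}^N\Ipdp{(1-M_\varphi M_\varphi^*)\,e(\lambda_j)^*y_j}{e(\lambda_i)^*y_i}_{H^2(\cplus;\Yscr)}.
$$
Setting $f:=\sum_{j=1}^N e(\lambda_j)^*y_j\in H^2(\cplus;\Yscr)$ and using bilinearity of the inner product, the right-hand side collapses to $\Ipdp{(1-M_\varphi M_\varphi^*)f}{f}_{H^2(\cplus;\Yscr)}$, which is nonnegative by the previous step. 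This establishes \eqref{posker} for $K_o$, so $K_o$ is a positive kernel.

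I do not anticipate any obstacles: the entire argument is a routine two-line consequence of Lemma \ref{lem:MphiProps}(3) once contractivity of $M_\varphi$ has been recalled. The only minor point worth noting is that part 2 of Theorem \ref{T:RK} then immediately produces the reproducing kernel Hilbert space $\Hscr_o=\Hscr(K_o)$ announced in the section heading.
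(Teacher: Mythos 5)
Your proof is correct and rests on exactly the same ingredients as the paper's: contractivity of $M_\varphi$ and the factorization \eqref{eq:CoEnPresKernOlofFact} from Lemma \ref{lem:MphiProps}(3). The only (cosmetic) difference is that you verify condition \eqref{posker} directly by collapsing the double sum into $\Ipdp{(1-M_\varphi M_\varphi^*)f}{f}\ge 0$, whereas the paper inserts the positive square root $(1-M_\varphi M_\varphi^*)^{1/2}$ to exhibit a Kolmogorov decomposition and then invokes the equivalence in Theorem \ref{T:RK}; both routes are one-line consequences of the same factorization.
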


\begin{proof}
For $\varphi \in \Sscr(\cplus;\Uscr,\Yscr)$, the multiplication operator $M_\varphi : H^2({\mathbb C}^+; \cU) \to H^2({\mathbb C}^+; \cY)$ is contractive,
  $\|M_\varphi \| \le 1$, since $\|\varphi\|_{H^\infty(\cplus)}\leq1$.  Hence $1 - M_\varphi M_\varphi^* \ge 0$ as an operator on $H^2({\mathbb C}^+; \cY)$ and thus it has a bounded
   positive square root  $(1 - M_\varphi M_\varphi^*)^{1/2}$ on $H^2({\mathbb C}^+; \cY)$.  From the identity \eqref{eq:CoEnPresKernOlofFact} we see that 
   $K_o(\mu, \lambda)$ has a Kolmogorov decomposition \eqref{Koldecom} with
   $$
   H(\mu) = e(\mu) (1 - M_\varphi M_\varphi^*)^{1/2} : \cH(K_o) \to \cY.
   $$
   We conclude from Theorem \ref{T:RK} that $K_o$ is a positive kernel.
\end{proof}

We denote the Hilbert space with reproducing kernel $K_o$ by $\Hscr_o:=\Hscr(K_o)$. Replacing $\varphi$ by $\wtsmash\varphi(\mu):=\varphi(\overline\mu)^*$, $\mu\in\cplus$, and 
swapping the roles of $\Uscr$ and $\Yscr$, we turn the kernel $K_o$ into the kernel $K_c$ in
\eqref{o/ckernels}. Applying Lemma \ref{lem:MphiProps} and Theorem \ref{thm:coisomkernelnonneg} 
to $\wtsmash\varphi$, we obtain the following result:

\begin{cor}\label{cor:isomkernelpos}
If $\varphi\in\Sscr(\cplus;\Uscr,\Yscr)$ then the $\Bscr(\Uscr)$-valued function $K_c(\mu,\lambda)$ 
is a positive kernel on $\cplus\times\cplus$. Denoting $\Hscr_c:=\Hscr(K_c)$, we have that the kernel functions of $\Hscr_c$ and $H^2(\cplus;\Uscr)$ are related by $K_c(\cdot,\lambda)u=(1-M_{\wtsmash\varphi}M_{\wtsmash\varphi}^*)\,k(\cdot,\lambda)u$ for all $\lambda\in\cplus$ and $u\in\Uscr$.
\end{cor}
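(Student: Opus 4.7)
The plan is to derive the corollary as a direct application of Lemma \ref{lem:MphiProps} and Theorem \ref{thm:coisomkernelnonneg} to the function $\wtsmash\varphi$ instead of $\varphi$, essentially carrying out the substitution indicated in the sentence preceding the statement. Three preliminary observations are needed, and then the result drops out.

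First, I would verify that $\wtsmash\varphi$ belongs to $\Sscr(\cplus;\Yscr,\Uscr)$. Since $\re\mu>0$ implies $\re\overline\mu>0$, the map $\mu\mapsto\overline\mu$ preserves $\cplus$; contractivity of the values is immediate from $\|\wtsmash\varphi(\mu)\|=\|\varphi(\overline\mu)^*\|=\|\varphi(\overline\mu)\|\le1$. Holomorphy follows because pre-composition with the anti-holomorphic involution $\mu\mapsto\overline\mu$ combined with the conjugate-linear adjoint yields a holomorphic map; a direct difference-quotient computation gives $\wtsmash\varphi'(\mu)=\varphi'(\overline\mu)^*$.

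Second, I would record the algebraic identity
$$
\wtsmash\varphi(\mu)\,\wtsmash\varphi(\lambda)^* = \varphi(\overline\mu)^*\varphi(\overline\lambda),
$$
so that the kernel $K_c$ associated with $\varphi$ in \eqref{o/ckernels} coincides with the kernel $K_o$ from \eqref{o/ckernels} associated with $\wtsmash\varphi$, after swapping the roles of input and output spaces. Applying Theorem \ref{thm:coisomkernelnonneg} to $\wtsmash\varphi\in\Sscr(\cplus;\Yscr,\Uscr)$ therefore yields at once that $K_c$ is a positive $\Bscr(\Uscr)$-valued kernel on $\cplus\times\cplus$, establishing the first assertion and legitimizing the definition $\Hscr_c=\Hscr(K_c)$.

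Third, for the kernel-function identity I would apply Lemma \ref{lem:MphiProps}(3) to $\wtsmash\varphi$ to obtain
$$
K_c(\mu,\lambda) = e(\mu)\bigl(1_{H^2(\cplus;\Uscr)}-M_{\wtsmash\varphi}M_{\wtsmash\varphi}^*\bigr)e(\lambda)^*,\qquad \mu,\lambda\in\cplus,
$$
where $M_{\wtsmash\varphi}:H^2(\cplus;\Yscr)\to H^2(\cplus;\Uscr)$. Using Lemma \ref{lem:MphiProps}(1), $e(\lambda)^*u=k(\cdot,\lambda)u\in H^2(\cplus;\Uscr)$, so the displayed formula identifies the function $K_c(\cdot,\lambda)u$ pointwise with $(1-M_{\wtsmash\varphi}M_{\wtsmash\varphi}^*)k(\cdot,\lambda)u$ in $H^2(\cplus;\Uscr)$, which is the claimed relation. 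The only conceptual subtlety — and the one step to watch — is reconciling the ambient spaces: $K_c(\cdot,\lambda)u$ is a priori an element of $\Hscr_c$, whereas $(1-M_{\wtsmash\varphi}M_{\wtsmash\varphi}^*)k(\cdot,\lambda)u$ lives in $H^2(\cplus;\Uscr)$, but the equality of the pointwise values on $\cplus$ is what the statement asserts and what the factorization directly provides. No genuine obstacle arises; the whole argument is a clean transcription of the $\varphi$-case via the map $\varphi\mapsto\wtsmash\varphi$.
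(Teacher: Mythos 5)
Your proposal is correct and follows exactly the route the paper takes: the paper's (very brief) justification is precisely to replace $\varphi$ by $\wtsmash\varphi$, swap $\Uscr$ and $\Yscr$, and invoke Lemma \ref{lem:MphiProps} and Theorem \ref{thm:coisomkernelnonneg}. Your three preliminary observations (membership of $\wtsmash\varphi$ in $\Sscr(\cplus;\Yscr,\Uscr)$, the identity $\wtsmash\varphi(\mu)\wtsmash\varphi(\lambda)^*=\varphi(\overline\mu)^*\varphi(\overline\lambda)$ turning $K_o$ for $\wtsmash\varphi$ into $K_c$, and the factorization giving the kernel-function relation) merely spell out the details the paper leaves implicit.
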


An equivalent way of defining $\Hscr_o$ is to set
$$
  \Hscr_o := \{ f \colon \cplus \underset{\rm holomorphic}\to \cY \bigmid \|f\|_{\Hscr_o} < \infty\},
$$
and to define the norm in $\Hscr_o$ by
$$
  \| f\|^{2}_{\cH_o}: = \sup \{ \| f + M_\varphi \,g\|^{2}_{H^{2}(\cplus;\Yscr)} - \|g \|^{2}_{H^{2}(\cplus;\cU)} \bigmid g \in H^{2}(\cplus;\cU)\}.
$$
It can be shown that this norm equals the norm induced by the reproducing kernel $K_o$. This corresponds to the original definition of $\mathrm H_o$ by de Branges and Rovnyak. To give the uninitiated reader better perspective on de Branges-Rovnyak spaces, we further mention the following well-known operator-range characterization of these spaces. For further development of this point of view in the unit disk setting see e.g.~\cite{SaraBook}.

\begin{thm}   \label{T:operatorrange}
Let $\varphi$ be a function in the Schur class $\cS({\mathbb C}^+; \cU, \cY)$.  Then:
\begin{enumerate}
\item The space $\cH_o$ can be identified as a set with the operator range
\begin{equation}  \label{operatorrange}
  \cH_o = \range{(1 - M_\varphi M_\varphi^*)^{1/2}} \subset H^2({\mathbb C}^+; \cY)
 \end{equation}
 with norm given by
 \begin{equation}   \label{lifted norm}
 \| (1 - M_\varphi M_\varphi^*)^{1/2} g \|_{\cH_o} = \| Q g \|_{H^2(\cplus;\Yscr)},\quad g\in H^2(\cplus;\Yscr),
 \end{equation}
 where $Q$ is the orthoprojection of $H^2({\mathbb C}^+; \cY)$ onto $\left( \Ker{1 - M_\varphi M_\varphi^*} \right)^\perp$.
 
 \item The inclusion map
 $$
 \iota  : f \in \cH_o \mapsto f \in H^2({\mathbb C}^+;, \cY)
 $$
 is contractive, i.e.,
 $$
 \| f \|_{H^2({\mathbb C}^+; \cY)} \le \| f \|_{\cH_o} \text{ for all } f \in \cH_o,
 $$
 with adjoint $\iota^* : H^2({\mathbb C}^+; \cY)\to\Hscr_o$  given by
 $$
  \iota^*  = 1 - M_\varphi M_\varphi^*.
 $$
  \end{enumerate}
   Analogous results with $\cH_c$ in place of $\cH_o$ are obtained by replacing $\varphi$ by $\wtsmash \varphi$.
\end{thm}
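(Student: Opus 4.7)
The plan is to derive the entire theorem from the Kolmogorov factorization of $K_o$ established in Lemma~\ref{lem:MphiProps}(3), namely $K_o(\mu,\lambda)=H(\mu)H(\lambda)^*$ with $H(\mu):=e(\mu)(1-M_\varphi M_\varphi^*)^{1/2}$, viewed as an operator from $\Fscr:=H^2(\cplus;\Yscr)$ into $\Yscr$. The key tool is the following standard consequence of the Moore--Aronszajn construction: whenever a positive kernel factors as $K(\mu,\lambda)=H(\mu)H(\lambda)^*$ with $H(\mu)\in\Bscr(\Fscr,\cG)$, the reproducing kernel Hilbert space $\Hscr(K)$ coincides as a set with $\{\mu\mapsto H(\mu)g\mid g\in\Fscr\}$, and its norm is the quotient norm
\[
\|H(\cdot)g\|_{\Hscr(K)}=\inf\bigl\{\|g'\|_{\Fscr}\bigmid H(\cdot)g'=H(\cdot)g\bigr\}=\|Qg\|_{\Fscr},
\]
where $Q$ is the orthogonal projection of $\Fscr$ onto $\bigl(\bigcap_\mu\Ker{H(\mu)}\bigr)^\perp$.

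For part (1), I would observe that $H(\mu)g=\bigl[(1-M_\varphi M_\varphi^*)^{1/2}g\bigr](\mu)$, so identifying each function in $\Hscr_o$ with its pointwise values gives the set equality \eqref{operatorrange}. Since a function in $H^2(\cplus;\Yscr)$ that vanishes at every $\mu\in\cplus$ is identically zero, the intersection $\bigcap_{\mu\in\cplus}\Ker{H(\mu)}$ equals $\Ker{(1-M_\varphi M_\varphi^*)^{1/2}}$, and this coincides with $\Ker{(1-M_\varphi M_\varphi^*)}$ because a bounded positive operator and its positive square root share the same kernel. This yields the norm formula \eqref{lifted norm}.

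For part (2), contractivity of $\iota$ is immediate from \eqref{lifted norm}: writing $f\in\Hscr_o$ as $f=(1-M_\varphi M_\varphi^*)^{1/2}Qg$, one has $\|f\|_{H^2(\cplus;\Yscr)}\leq\|Qg\|_{H^2(\cplus;\Yscr)}=\|f\|_{\Hscr_o}$, since $\|1-M_\varphi M_\varphi^*\|\leq 1$. To identify $\iota^*$, I would test on the dense span of kernel functions $e(\lambda)^*y$ in $H^2(\cplus;\Yscr)$ (Remark~\ref{rem:kernelsdense} combined with Lemma~\ref{lem:MphiProps}(1)). For arbitrary $f\in\Hscr_o$, $y\in\Yscr$, and $\lambda\in\cplus$, the reproducing property gives
\[
\Ipdp{f}{\iota^* e(\lambda)^*y}_{\Hscr_o}=\Ipdp{\iota f}{e(\lambda)^*y}_{H^2(\cplus;\Yscr)}=\Ipdp{f(\lambda)}{y}_{\Yscr}=\Ipdp{f}{K_o(\cdot,\lambda)y}_{\Hscr_o},
\]
so $\iota^* e(\lambda)^*y=K_o(\cdot,\lambda)y=(1-M_\varphi M_\varphi^*)\,e(\lambda)^*y$ by \eqref{eq:CoEnPresKernOlofFact}. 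Boundedness of $\iota^*$ (which follows from the contractivity of $\iota$) together with continuity of $1-M_\varphi M_\varphi^*$ as a map from $H^2(\cplus;\Yscr)$ into $\Hscr_o$ (its range lies in $\range{(1-M_\varphi M_\varphi^*)^{1/2}}=\Hscr_o$) extend this identity by density to all of $H^2(\cplus;\Yscr)$, yielding $\iota^*=1-M_\varphi M_\varphi^*$. The analogous statements for $\Hscr_c$ follow verbatim after replacing $\varphi$ by $\wtsmash\varphi$ and swapping the roles of $\Uscr$ and $\Yscr$. The main subtlety I expect is invoking the quotient-norm characterization of the RKHS from its Kolmogorov decomposition carefully enough to ensure the correct identification of the annihilating subspace; the remaining steps are straightforward bookkeeping.
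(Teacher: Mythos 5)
Your proposal is correct and follows essentially the same route as the paper: both rest on the factorization $K_o(\mu,\lambda)=e(\mu)(1-M_\varphi M_\varphi^*)e(\lambda)^*$ from Lemma \ref{lem:MphiProps}, identify $\cH_o$ with the operator range carrying the lifted norm, and compute $\iota^*$ on the kernel functions $e(\lambda)^*y$ before extending by density. The only difference is cosmetic: where you invoke the general quotient-norm characterization of $\Hscr(K)$ arising from a Kolmogorov decomposition as a known fact, the paper proves that instance of it by checking directly that $K_o(\cdot,\lambda)y=(1-M_\varphi M_\varphi^*)e(\lambda)^*y$ reproduces point evaluations on $\range{(1-M_\varphi M_\varphi^*)^{1/2}}$ equipped with the lifted norm.
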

\begin{proof}  The result is well known among experts but we provide 
 a proof for the sake of completeness. The first step is to prove Assertion 1.

Define the space $\wtsmash \cH_{o}$ by 
$$
\wtsmash \cH_{o} := \range {1 - M_{\varphi} 
M_{\varphi}^{*})^{1/2}} \subset H^{2}({\mathbb C}^{+}; \cY)
$$
with norm given by \eqref{lifted norm} and let $f \in \wtsmash \cH_{o}$.
Set $W = 1 - M_{\varphi} M_{\varphi}^{*}$ on $H^{2}({\mathbb C}^{+}; 
\cY)$, so that $\wtsmash \cH_{o} = \range {W^{1/2}}$.  
From \eqref{eq:CoEnPresKernOlofFact} we see that
$e_{o}(\lambda)^{*} = W e(\lambda)^*$, so in 
particular $e_{o}(\lambda)^{*} y \in \wtsmash \cH_{o}$ for each 
$\lambda \in {\mathbb C}^{+}$ and $y \in \cY$.  Furthermore, for $f = 
W^{1/2} g \in \wtsmash \cH_{o}$, we compute using \eqref{lifted norm}:
\begin{align*}
    \langle f, e_{o}(\lambda)^{*} y \rangle_{\wtsmash \cH_{o}} & = 
    \langle W^{1/2} g, W e(\lambda)^{*}y \rangle_{\wtsmash \cH_{o}} = 
    \langle Qg, QW^{1/2} e(\lambda)^{*}y \rangle_{H^{2}({\mathbb C}^{+}; \cY)} \\
  & = \langle W^{1/2} g, e(\lambda)^{*}y \rangle_{H^{2}({\mathbb C}^{+}; \cY)}
  = \langle f(\lambda), y \rangle_{\cY}.
\end{align*}
This shows that $e_{o}(\lambda)^{*}=K_o(\cdot,\lambda)$ works as the reproducing 
kernel for the space $\wtsmash \cH_{o}$, and since the positive kernel $e_o(\lambda)^*$ determines its reproducing kernel Hilbert space uniquely, we conclude that $\cH_{o} = \wtsmash \cH_{o}$.

Contractive containment of $\Hscr_o$ in $H^2(\cplus;\Yscr)$ follows from the following observation:
\begin{align*}
    \| f \|_{\cH_{0}} & = \| g \|_{H^{2}({\mathbb C}^{+}; \cY)} \ge 
    \|(1 - M_{\varphi} M_{\varphi}^{*})^{1/2}\, g \|_{H^{2}({\mathbb 
    C}^{+}; \cY)} = \| f \|_{H^{2}({\mathbb C}^{+}; \cY)},
    \end{align*}
where we used that $1 - M_{\varphi} M_{\varphi}^{*}$ is contractive on $H^{2}({\mathbb C}^{+}; \cY)$.

Since $e_o(\lambda)$ is the restriction of $e_{H^2(\cplus;\Yscr)}$ to $\Hscr_o$, the identity \eqref{eq:CoEnPresKernOlofFact} amounts to the 
    operator identity
\begin{equation}\label{eq:HoEvalOp}
    e_{o}(\lambda)^{*} = (1 - M_{\varphi} M_{\varphi}^{*}) 
    e(\lambda)^{*},\quad \lambda\in\cplus.
\end{equation}
Using \eqref{eq:HoEvalOp}, we obtain that $\iota^* e(\lambda)^*y = (1 - M_\varphi M_\varphi^*)\,e(\lambda)^*y$ for all $\lambda\in\cplus$ and $y\in\Yscr$. Indeed, it holds for all $x\in\Hscr_o$ that
$$
  \Ipdp{(1 - M_\varphi M_\varphi^*)\,e(\lambda)^*y}{x}_{\Hscr_o}=\Ipd{y}{x(\lambda)}_\Yscr=\Ipdp{e(\lambda)^*y}{\iota_o x}_{H^2(\cplus;\Yscr)},
$$
and taking limits of finite linear combinations, we obtain that $\iota^*  = 1 - M_\varphi M_\varphi^*$.
\end{proof}

Recall that $\varphi\in\Sscr(\cplus;\Uscr,\Yscr)$ is called \emph{inner} if $\varphi$ has isometric boundary values a.e.\ on the imaginary line.

\begin{cor}\label{cor:inner}
If $\varphi$ is inner, then $M_\varphi$ is isometric from $H^2(\cplus;\Uscr)$ into $H^2(\cplus;\Yscr)$, and $(1-M_\varphi M_\varphi^*)^{1/2}=1-M_\varphi M_\varphi^*$. The operator $1-M_\varphi M_\varphi^*$ is the orthogonal projection of $H^2(\cplus;\Yscr)$ onto $H^2(\cplus;\Yscr)\ominus \big(M_\varphi\,H^2(\cplus;\Uscr)\big)$ and this orthogonal complement equals $\Hscr_o$ isometrically.
\end{cor}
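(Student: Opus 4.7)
The plan is to proceed in four short steps, each of which is essentially standard once the preceding machinery is in place.

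First I would verify that $M_\varphi$ is isometric. Since the $H^2(\cplus;\cdot)$-norm equals the $L^2(i\R;\cdot)$-norm of the non-tangential boundary values, and since the boundary values of $\varphi$ are a.e.\ isometries from $\Uscr$ into $\Yscr$, we have
$$
\|M_\varphi f\|_{H^2(\cplus;\Yscr)}^{2} = \int_{\R}\|\varphi(it)f(it)\|_\Yscr^{2}\,dt = \int_{\R}\|f(it)\|_\Uscr^{2}\,dt = \|f\|_{H^2(\cplus;\Uscr)}^{2}
$$
for every $f\in H^2(\cplus;\Uscr)$. Thus $M_\varphi^*M_\varphi = 1_{H^2(\cplus;\Uscr)}$.

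Second, set $P := M_\varphi M_\varphi^*$. Then $P$ is self-adjoint and satisfies $P^{2} = M_\varphi(M_\varphi^*M_\varphi)M_\varphi^* = M_\varphi M_\varphi^* = P$, so $P$ is an orthogonal projection on $H^2(\cplus;\Yscr)$; consequently $1-P = 1-M_\varphi M_\varphi^*$ is also an orthogonal projection. Any orthogonal projection equals its own positive square root, giving $(1-M_\varphi M_\varphi^*)^{1/2} = 1-M_\varphi M_\varphi^*$. Moreover, because $M_\varphi$ is an isometry between Hilbert spaces its range is closed, and the standard formula $V V^*$ for the projection onto the range of an isometry $V$ shows that $P$ projects onto $M_\varphi\, H^2(\cplus;\Uscr)$, so $1-P$ projects onto $H^2(\cplus;\Yscr)\ominus M_\varphi\,H^2(\cplus;\Uscr)$.

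Third and finally, I apply Theorem \ref{T:operatorrange}. As a set,
$$
\Hscr_o = \range{(1-M_\varphi M_\varphi^*)^{1/2}} = \range{1-P} = H^2(\cplus;\Yscr)\ominus M_\varphi\,H^2(\cplus;\Uscr).
$$
For the norm, use \eqref{lifted norm}: for every $g\in H^2(\cplus;\Yscr)$ one has $\|(1-P)g\|_{\Hscr_o} = \|Qg\|_{H^2(\cplus;\Yscr)}$, where $Q$ is the orthogonal projection onto $\bigl(\Ker{1-P}\bigr)^\perp = (\range P)^\perp = \range{1-P}$, so $Q = 1-P$. Hence $\|(1-P)g\|_{\Hscr_o} = \|(1-P)g\|_{H^2(\cplus;\Yscr)}$, which is precisely the isometric identification claimed.

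There is no real obstacle here; the only point that requires a bit of care is recognizing that in Theorem \ref{T:operatorrange} the auxiliary projection $Q$ degenerates in this special case to $1-P$ itself, which is exactly what forces the inclusion $\iota\colon\Hscr_o\hookrightarrow H^2(\cplus;\Yscr)$ to become isometric when $\varphi$ is inner (rather than merely contractive).
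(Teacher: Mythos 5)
Your argument is correct and follows essentially the same route as the paper's own proof: isometricity of $M_\varphi$ via the a.e.\ isometric boundary values, the observation that $1-M_\varphi M_\varphi^*$ is then an idempotent self-adjoint operator (hence its own square root and the orthogonal projection onto $\big(M_\varphi H^2(\cplus;\Uscr)\big)^\perp$), and the identification $Q=1-M_\varphi M_\varphi^*$ in Theorem \ref{T:operatorrange} to get the isometric equality with $\Hscr_o$. No gaps.
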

\begin{proof}
That $M_\varphi$ is isometric follows from
$$
\begin{aligned}
  \Ipdp{M_\varphi f}{M_\varphi f}_{H^2(\cplus;\Yscr)} &= \frac{1}{2\pi} \int_\R \Ipdp{\varphi(i\omega)f(i\omega)}{\varphi(i\omega)f(i\omega)}_\Yscr\ud\omega \\
   &= \Ipdp{f}{f}_{H^2(\cplus;\Uscr)}.
\end{aligned}
$$
From the isometricity of $M_\varphi$ it follows that $(1-M_\varphi M_\varphi^*)^2=1-M_\varphi M_\varphi^*\geq0$, so that $(1-M_\varphi M_\varphi^*)^{1/2}=1-M_\varphi M_\varphi^*$. This is the orthogonal projection onto $\big(M_\varphi\,H^2(\cplus;\Uscr)\big)^\perp$, since $M_\varphi M_\varphi^*$ is the orthogonal projection onto $M_\varphi\,H^2(\cplus;\Uscr)$. By \eqref{operatorrange}, $\Hscr_o = \range{1-M_\varphi M_\varphi^*}=\Ker{1-M_\varphi M_\varphi^*}^\perp$ and hence $Q$ in \eqref{lifted norm} coincides with $1-M_\varphi M_\varphi^*$. Then \eqref{lifted norm} precisely says that $\Hscr_o$ is isometrically contained in $H^2(\cplus;\Yscr)$.
\end{proof}

When $\varphi$ is not inner, $M_\varphi H^2(\cplus;\Uscr,\Yscr)$ and $\Hscr_o$ are not orthogonal, but more general \emph{complements in the sense of de Branges}, cf.\ \cite{Ando90} or \cite[\S 1.5]{ADRSBook}.

The following limits will be encountered frequently in the sequel.

\begin{prop}\label{prop:xtendstozero}
Every $x$ in $H^2(\cplus;\Yscr)$ satisfies $x(\mu)\to 0$ in $\Yscr$ 
as $\re \mu\to+\infty$. More precisely,
\begin{equation}\label{eq:H2Zspeed}
  \|x(\mu)\|_\Yscr \leq \frac{\|x\|_{H^2(\cplus;\Yscr)}}{\sqrt {2\re\mu}},\quad \mu\in\cplus.
\end{equation}
It also holds that
\begin{equation}\label{eq:HcZspeed}
  \|x(\mu)\|_\Yscr \leq \frac{\|x\|_{\Hscr_o}}{\sqrt {2\re\mu}},\quad \mu\in\cplus,
\end{equation}
and in particular the only constant function in $\Hscr_o$ is the zero function. The corresponding claims hold for $H^2(\cplus;\Uscr)$ and $\Hscr_c$.
\end{prop}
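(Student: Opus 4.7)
The proof has three small pieces, corresponding to the three claims.

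\textbf{The $H^2$ estimate.} The plan is to use the point-evaluation operator $e(\mu):H^2(\cplus;\Yscr)\to\Yscr$ together with Lemma \ref{lem:MphiProps}(1). For $x\in H^2(\cplus;\Yscr)$ we have $x(\mu)=e(\mu)x$, so Cauchy--Schwarz gives $\|x(\mu)\|_\Yscr\leq\|e(\mu)\|\,\|x\|_{H^2(\cplus;\Yscr)}$. To compute $\|e(\mu)\|$ I compute $\|e(\mu)\|^2=\|e(\mu)e(\mu)^*\|_{\BLO(\Yscr)}$, and by Lemma \ref{lem:MphiProps}(1) the operator $e(\mu)e(\mu)^*$ is pointwise multiplication by the scalar $k(\mu,\mu)=\frac{1}{\mu+\overline\mu}=\frac{1}{2\re\mu}$, so $\|e(\mu)e(\mu)^*\|=\frac{1}{2\re\mu}$. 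This yields \eqref{eq:H2Zspeed}, and the convergence $x(\mu)\to 0$ as $\re\mu\to\infty$ is then immediate.

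\textbf{The $\Hscr_o$ estimate.} The quickest route is to invoke the contractive containment $\Hscr_o\hookrightarrow H^2(\cplus;\Yscr)$ established in Theorem \ref{T:operatorrange}(2): for $x\in\Hscr_o$ one has $\|x\|_{H^2(\cplus;\Yscr)}\leq\|x\|_{\Hscr_o}$, and chaining this with \eqref{eq:H2Zspeed} gives \eqref{eq:HcZspeed}. Alternatively, and perhaps more illuminatingly, one can work directly with the reproducing kernel of $\Hscr_o$: since $e_o(\mu)e_o(\mu)^*$ acts as multiplication by the contraction $K_o(\mu,\mu)=\frac{1_\Yscr-\varphi(\mu)\varphi(\mu)^*}{2\re\mu}$, we have $\|e_o(\mu)\|^2\leq\frac{1}{2\re\mu}$, which again yields \eqref{eq:HcZspeed}.

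\textbf{Constant functions in $\Hscr_o$.} If $x\in\Hscr_o$ is identically equal to some $y\in\Yscr$, then \eqref{eq:HcZspeed} forces $\|y\|_\Yscr\leq\|x\|_{\Hscr_o}/\sqrt{2\re\mu}\to 0$ as $\re\mu\to\infty$, hence $y=0$.

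\textbf{The parallel statement for $\Hscr_c$.} The only step requiring care is the passage from $\Hscr_o$ to $\Hscr_c$: I would replace $\varphi$ by $\wtsmash\varphi\in\Sscr(\cplus;\Yscr,\Uscr)$ and swap the roles of $\Uscr$ and $\Yscr$, exactly as was done in Corollary \ref{cor:isomkernelpos} and the final sentence of Theorem \ref{T:operatorrange}. The argument is then verbatim the same. No serious obstacle is anticipated; the entire statement is a direct consequence of (i) the explicit form of the Szeg\H{o} kernel $k(\mu,\mu)=\frac{1}{2\re\mu}$ and (ii) the contractive inclusion $\iota:\Hscr_{o,c}\hookrightarrow H^2(\cplus;\cdot)$.
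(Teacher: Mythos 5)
Your proposal is correct and takes essentially the same route as the paper: the paper bounds $|\Ipdp{x(\mu)}{y}_\Yscr|$ via Cauchy--Schwarz against the kernel function $e(\mu)^*y$ (whose norm is $\|y\|/\sqrt{2\re\mu}$) and then takes a supremum over $y$, which is just your computation of $\|e(\mu)\|$ phrased vectorially, and it likewise deduces \eqref{eq:HcZspeed} from the contractive inclusion of Theorem \ref{T:operatorrange}. Your alternative derivation of \eqref{eq:HcZspeed} directly from $K_o(\mu,\mu)$ being a contraction divided by $2\re\mu$ is a valid minor variant but changes nothing of substance.
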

\begin{proof}
We verify the assertion only for $H^2(\cplus;\Yscr)$ and $\Hscr_o$. By the Cauchy-Schwarz inequality, we have for all $x\in H^2(\cplus;\Yscr)$ that
$$
\begin{aligned}
  |\Ipdp{x(\mu)}{y}_\Yscr| &= \big|\Ipdp{x}{e(\mu)^*y}_{H^2(\cplus;\Yscr)}\big| \\
  &\leq \|x\|_{H^2(\cplus;\Yscr)} \|e(\mu)^*y\|_{H^2(\cplus;\Yscr)} \\
  &= \|x\|_{H^2(\cplus;\Yscr)} \Ipdp{e(\mu)^*y}{e(\mu)^*y}_{H^2(\cplus;\Yscr)}^{1/2} \\
  &= \|x\|_{H^2(\cplus;\Yscr)} \Ipdp{\frac{1}{\mu+\overline\mu} y}{y}_{\Yscr}^{1/2} \\
  &\leq \frac{\|x\|_{H^2(\cplus;\Yscr)} \|y\|_{\Yscr}}{\sqrt {2\re\mu}}, \quad \mu\in\cplus.
\end{aligned}
$$
From here we obtain \eqref{eq:H2Zspeed}:
$$
  \|x(\mu)\|_\Yscr = \sup_{0\neq y\in\Yscr} \frac{|\Ipdp{x(\mu)}{y}_\Yscr|}{\|y\|}\leq 
  \frac{\|x\|_{H^2(\cplus;\Yscr)}}{\sqrt {2\re\mu}}, \quad \mu\in\cplus.
$$
Now \eqref{eq:HcZspeed} follows from \eqref{eq:H2Zspeed} combined with the facts  $\Hscr_o\subset H^2(\cplus;\Yscr)$ and $\|x\|_{H^2(\cplus;\Yscr)}\leq\|x\|_{\Hscr_o}$ for all $x\in\Hscr_o$; see Theorem \ref{T:operatorrange}.
\end{proof}

In the next section we recall the necessary background on continuous-time linear system nodes.

\section{Background on system nodes}\label{sec:sysnode}

\subsection{Definition of a system node and its transfer function}\label{sec:sysnodedef}

In this section we recall the needed concepts from the theory of infinite-dimensional 
linear systems in continuous time. A comprehensive exposition of this theory can be found
e.g.\ in \cite{StafBook}. For more details on the following few paragraphs, see 
Definition 3.2.7 and Section 3.6 of \cite{StafBook}. 

The \emph{resolvent set} $\res A$ of a closed operator $A$ on the Hilbert space $\Xscr$ 
is the set of all $\mu\in\C$ such that $\mu-A$ maps $\dom A$ one-to-one onto $\Xscr$.
The generator $A$ of a $C_0$ semigroup is closed and $\dom{A}$ dense in $\Xscr$; 
see e.g.\ \cite[Theorem 1.2.7]{PazyBook}. Moreover, the resolvent set of a $C_0$ 
semigroup generator contains some complex right-half plane. For such a generator, $\dom A$ 
is a Hilbert space with the inner product
\begin{equation}\label{eq:domAip}
	\Ipdp xz_{\dom A}=\Ipdp{(\beta-A)x}{(\beta-A)z}_\Xscr,
\end{equation}
where $\beta$ is some fixed but arbitrary complex number in $\res{A}$.

Thus $\Xscr_1:=\dom{A}$ with the norm $\|x\|_1:=\|(\beta-A)x\|_\Xscr$ is a dense 
subspace of $\Xscr$. It follows immediately from \eqref{eq:domAip} that $A$ maps 
$\dom A=\Xscr_1$ with this norm continuously into $\Xscr$. Denote by $\Xscr_{-1}$ the completion of $\Xscr$ with respect 
to the norm $\|x\|_{-1}=\|(\beta-A)^{-1}x\|_\Xscr$. The operator $A$ can then 
also be considered as a continuous operator which maps the dense subspace $\Xscr_1$ 
of $\Xscr$ into $\Xscr_{-1}$, and we denote the unique continuous extension of $A$ to
an operator $\Xscr\to\Xscr_{-1}$ by $A|_\Xscr$. Note that $\res A=\res{A|_\Xscr}$ and 
that $(\beta-A|_\Xscr)^{-1}$ maps $\Xscr_{-1}$ unitarily onto $\Xscr$.

The triple $\Xscr_1\subset\Xscr\subset\Xscr_{-1}$ is called a \emph{Gelfand triple}, 
and the three spaces are also said to be \emph{rigged}. The spaces $\Xscr_{-1}$ corresponding to two different choices of
$\beta \in \res A$ can be identified with each other as topological
vector spaces, and although the norms will be different they are
equivalent to each other. The norms of $\Xscr_1$ corresponding to two different choices of $\beta\in\res A$ will also be equivalent. Hence $(\alpha-A)^{-1}$ is an isomorphism from $\Xscr$ to $\Xscr_1$ and $(\alpha-A|_\Xscr)$ is an isomorphism from $\Xscr$ to $\Xscr_{-1}$ for all $\alpha\in\res{A}$, and these operators are unitary for $\alpha=\beta$.

\begin{defn}\label{def:altsysnode}
A linear operator
$$\SysNode:\bbm{\Xscr\\\Uscr}\supset\dom {\SmallSysNode}\to \bbm{\Xscr\\\Yscr}$$
is called a \emph{system node} on the triple $(\Uscr,\Xscr,\Yscr)$ of Hilbert spaces if it has all of the following properties:
\begin{enumerate}
\item The operator $\SmallSysNode$ is closed.
\item The operator 
\begin{equation}\label{eq:Adef}
\begin{aligned}
	Ax &:= \bbm{\AB}\bbm{x\\ 0}\quad\text{defined on}\\
	\dom A &:= \set{x \in \Xscr \bigmid \bbm{x \\ 0} \in \dom{\SmallSysNode}},
\end{aligned}
\end{equation}
is the generator of a $C_0$-semigroup on $\Xscr$.
\item The operator $\bbm{\AB}$ can be extended to an operator $\bbm{A|_\Xscr&B}$ that maps 
$\sbm{\Xscr\\\Uscr}$ continuously into $\Xscr_{-1}$.
\item The domain of $\SmallSysNode$ satisfies the condition
$$
	\dom {\SmallSysNode}=\left\{\bbm{x\\u}\in\bbm{\Xscr\\\Uscr}\bigmid A|_\Xscr 
	x+Bu\in\Xscr\right\}.
$$
\end{enumerate}
When these conditions are satisfied, $\Uscr$, $\Xscr$, and $\Yscr$ are called the \emph{input space}, \emph{state space}, and \emph{output space}, respectively, of the system node.
\end{defn}

It was mentioned in the introduction that the definition of the operator-valued function 
$\mu\mapsto C(\mu-A)^{-1}B+D$ can be extended to arbitrary system nodes. 
This is often done as follows. By \cite[Lemma  4.7.3]{StafBook}, 
$\sbm{1&(\alpha-A|_\Xscr)^{-1}B\\0&1}$ maps $\sbm{\dom{A}\\\Uscr}$ one-to-one onto $\dom{\SmallSysNode}$ for every system node $\SmallSysNode$ and $\alpha\in\res A$, and this allows us to express the domain of $\SmallSysNode$ as
\begin{equation}\label{eq:SSplitResDom}
\begin{aligned}
  \dom{\SmallSysNode}&=\bbm{\dom {A}\\\zero}\dotplus \bbm{(\alpha-A|_{\Xscr})^{-1}B\\1}\Uscr\\
\end{aligned}
\end{equation}
Note in particular that $\sbm{(\alpha-A|_{\Xscr})^{-1}B\\1}$ maps $\Uscr$ into the domain of
$\SmallSysNode$.

\begin{defn}{\cite[Definition 4.7.4]{StafBook}}\label{def:transfun}
The operators $A$ and $B$ in Definition \ref{def:altsysnode} are the \emph{main operator} 
and \emph{control operator} of the system node $\SmallSysNode$, respectively. The \emph{observation operator} $C:\dom A\to\Yscr$ of $\SmallSysNode$ is the operator
\begin{equation}\label{eq:Cdef}
  Cx:=\bbm{\CD}\bbm{x\\0},\quad x\in\dom A,
\end{equation}
and the \emph{transfer function} $\whsmash\Dfrak:\res A \to \Bscr(\Uscr,\Yscr)$ 
of $\SmallSysNode$ is the operator-valued holomorphic function
\begin{equation}\label{eq:transfundef}
  \whsmash\Dfrak(\mu):=\bbm{\CD}\bbm{(\mu-A|_\Xscr)^{-1}B\\1},\quad \mu\in\res A.
\end{equation}

By a \emph{realization} of a given analytic function $\varphi$, we mean a system node $\SmallSysNode$ whose transfer function $\whsmash \Dfrak$ coincides with $\varphi$ on some right-half plane
$$
  \C_\omega^+:=\set{\mu\in\C\mid \re\mu>\omega}\subset\res A\cap\dom{\varphi},\quad\omega\in\R.
$$
\end{defn}

Regarding the last sentence of Definition \ref{def:transfun}, we consider two analytic functions $f$ and $g$ with $\dom f,\dom g\subset\C$ to be \emph{identical} if there exists some complex right-half plane $\C^+_\omega\subset\dom f\cap\dom g$, such that $f$ and $g$ coincide on $\C^+_\omega$. In this paper we can usually take $\omega=0$, so that $\C^+_\omega=\cplus$.

Since $(\alpha-A)^{-1}$ maps $\Xscr$ one-to-one onto $\dom A$, we have that the operator
$\sbm{(\alpha-A)^{-1}&(\alpha-A|_\Xscr)^{-1}B\\0&1}$ maps $\sbm{\Xscr\\\Uscr}$ one-to-one onto $\dom{\SmallSysNode}$  for every $\alpha\in\res A$, cf.\ \eqref{eq:SSplitResDom}. The system node satisfies
\begin{equation}\label{eq:SSplitRes}
\begin{aligned}
  &\SysNode \bbm{(\alpha-A)^{-1}x&(\alpha-A|_\Xscr)^{-1}Bu\\0&u} \\
  &\qquad =\bbm{A(\alpha-A)^{-1}x&\alpha (\alpha-A|_\Xscr)^{-1}Bu\\C(\alpha-A)^{-1}x&\whsmash\Dfrak(\alpha)u}
\end{aligned}
\end{equation} 
for all $\alpha\in\res A$ and $x\in\Xscr$, $u\in\Uscr$. By the closed graph theorem, 
$C(\alpha-A)^{-1}$ is bounded from $\Xscr$ into $\Yscr$, and therefore $C$ maps 
$\dom A$ boundedly into $\Yscr$. Similarly, $\whsmash\Dfrak(\alpha)$ is bounded from $\Uscr$ into $\Yscr$ for all $\alpha\in \res A$. It is part of condition 3 in Definition \ref{def:altsysnode} that $B$ maps $\Uscr$ boundedly into $\Xscr_{-1}$. 

The equation \eqref{eq:SSplitRes} can equivalently be written, still for arbitrary $\alpha\in\res A$:
\begin{equation}\label{eq:SSplitRes2}
\begin{aligned}
  \SysNode &= \bbm{A(\alpha-A)^{-1}&\alpha (\alpha-A|_\Xscr)^{-1}B\\C(\alpha-A)^{-1}
  &\whsmash\Dfrak(\alpha)} \\ 
  &\qquad \times \bbm{(\alpha-A)^{-1}&(\alpha-A|_\Xscr)^{-1}B\\0&1}^{-1}\bigg|_{\dom{\SmallSysNode}} \\
  &= \bbm{A&\alpha (\alpha-A|_\Xscr)^{-1}B\\C&\whsmash\Dfrak(\alpha)} \\
  &\qquad \times \bbm{1&-(\alpha-A|_\Xscr)^{-1}B\\0&1}\bigg|_{\dom{\SmallSysNode}},
\end{aligned}
\end{equation} 
where $\dom{\SmallSysNode}$ is given in \eqref{eq:SSplitResDom}. In particular,
\begin{equation}\label{eq:CDreconstr}
\begin{aligned}
  \bbm{\CD}\bbm{x\\u} &= C\big(x-(\alpha-A|_\Xscr)^{-1}Bu\big)+\whsmash\Dfrak(\alpha)u,\\ \bbm{x\\u} &\in \dom{\SmallSysNode},
\end{aligned}
\end{equation}
for any arbitrary $\alpha\in\res A$.

\begin{rem}\label{rem:reconstruct}
By \cite[Lem.\ 4.7.6]{StafBook}, we can reconstruct a system node $\SmallSysNode$ from its operators $A$, $B$, $C$, and $\whsmash\Dfrak(\alpha)$, for one arbitrary $\alpha\in\res{A}$, in the following way: The space $\Xscr_{-1}$ is obtained as the co-domain of $B$, and we can then extend $A:\dom{A}\to\Xscr$ continuously into $A|_\Xscr:\Xscr\to\Xscr_{-1}$. Then we define $\AB$ via:
$$
\begin{aligned}
    \dom{\bbm{\AB}} &:=\set{\bbm{x\\u}\in\bbm{\Xscr\\\Uscr}\bigmid A|_{\Xscr_{-1}}x+Bu\in\Xscr},\\
    \bbm{\AB}&:=\bbm{A|_\Xscr&B}\Big|_{\dom{\SmallSysNode}},
\end{aligned}
$$ 
and finally we define $\bbm{\CD}$ on $\dom{\bbm{\AB}}=\dom{\bbm{\CD}}$ by \eqref{eq:CDreconstr}.
\end{rem}

We will use the following variants of controllability and observability:

\begin{defn}\label{def:iomapscontrobs}
Let $\SmallSysNode$ be a system node and denote the component of $\res A$ that contains some right-half plane by $\rho_\infty(A)$.

We say that $\SmallSysNode$ is \emph{controllable} if 
$$
  \spn\set{(\mu-A|_\Xscr)^{-1}Bu\mid \mu\in\rho_\infty(A),\,u\in\Uscr}
$$ 
is dense in the state space $\Xscr$. The system node $\SmallSysNode$ is \emph{observable} if
$$
  \bigcap_{\mu\in\rho_\infty(A)} \Ker{C(\mu-A)^{-1}}=\zero.
$$
\end{defn}

As a consequence of \cite[Cor.\ 9.6.2 and 9.6.5]{StafBook}, it suffices to take the linear span or intersection only over a subset $\Omega\subset\rho_\infty(A)$ with a cluster point in $\rho_\infty(A)$ instead of over the whole set $\rho_\infty(A)$.

\begin{lem}\label{lem:contrdense}
Let $\SmallSysNode$ be a controllable system node on $(\Uscr,\Xscr,\Yscr)$ and fix $\alpha\in\res A$ arbitrarily. Assume that $\Omega\subset\rho_\infty(A)$ has a cluster point in $\rho_\infty(A)$. Then the linear span
\begin{equation}\label{eq:densespan}
  \spn\set{(\mu-A|_\Xscr)^{-1}Bu-(\alpha-A|_\Xscr)^{-1}Bu
    \mid \mu\in\Omega,\,u\in\Uscr}
\end{equation}
is a dense subspace of both $\dom{A}$ (with respect to the graph norm of $A$) and of $\Xscr$, and the linear span
\begin{equation}\label{eq:densespan2}
  \spn\set{\bbm{(\mu-A|_\Xscr)^{-1}Bu\\u} \bigmid \mu\in\Omega,\,u\in\Uscr}
\end{equation}
is a dense subspace of $\dom{\SmallSysNode}$ with respect to the graph norm of $\SmallSysNode$.
\end{lem}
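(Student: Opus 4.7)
The plan is to first establish the density of \eqref{eq:densespan} in $\dom A$ via the algebraic identity
\begin{equation*}
  (\alpha - A)\bigl((\mu - A|_\Xscr)^{-1}Bu - (\alpha - A|_\Xscr)^{-1}Bu\bigr) = (\alpha - \mu)(\mu - A|_\Xscr)^{-1}Bu,
\end{equation*}
obtained by writing $\alpha - A|_\Xscr = (\mu - A|_\Xscr) + (\alpha - \mu)$ and applying this to the difference on the left. In particular the right-hand side lies in $\Xscr$, confirming that the difference on the left belongs to $\dom A$. The density in $\Xscr$ will then be automatic, and the density of \eqref{eq:densespan2} in $\dom{\SmallSysNode}$ will follow by combining the first density with the direct-sum decomposition \eqref{eq:SSplitResDom}.

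Abbreviating $v_\mu^u := (\mu - A|_\Xscr)^{-1}Bu$, the operator $\alpha - A$ is an isometric isomorphism from $\dom A$ equipped with the equivalent graph norm $\|(\alpha - A)\cdot\|_\Xscr$ onto $\Xscr$, so density of \eqref{eq:densespan} in $\dom A$ is equivalent to density of its image under $\alpha - A$ in $\Xscr$. By the identity above this image equals $\spn\{v_\mu^u \mid \mu \in \Omega \setminus \{\alpha\},\, u \in \Uscr\}$, since $(\alpha - \mu) \neq 0$ for $\mu \neq \alpha$ while the $\mu = \alpha$ term is zero. Removing a single point does not destroy a cluster point, so $\Omega \setminus \{\alpha\}$ still has a cluster point in $\rho_\infty(A)$, and the cluster-point reduction of controllability noted directly after Definition \ref{def:iomapscontrobs} yields $\Xscr$-density of this span. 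Density of \eqref{eq:densespan} in $\Xscr$ is then automatic because the graph norm on $\dom A$ dominates $\|\cdot\|_\Xscr$ and $\dom A$ is dense in $\Xscr$.

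For the density of \eqref{eq:densespan2}, pick some $\mu_0 \in \Omega$ (nonempty because $\Omega$ has a cluster point) and apply the first claim with $\alpha$ replaced by $\mu_0 \in \res A$, which is permissible since the first claim holds for \emph{any} $\alpha \in \res A$. Given $\sbm{x\\u} \in \dom{\SmallSysNode}$, the splitting \eqref{eq:SSplitResDom} with $\mu_0$ in the role of $\alpha$ writes $x = x_1 + v_{\mu_0}^u$ with $x_1 \in \dom A$. Approximating $x_1$ in the graph norm of $\dom A$ by a combination $\tilde s = \sum_i c_i(v_{\mu_i}^{u_i} - v_{\mu_0}^{u_i})$, the candidate
\begin{equation*}
  t := \bbm{v_{\mu_0}^u \\ u} + \sum_i c_i \Bigl(\bbm{v_{\mu_i}^{u_i} \\ u_i} - \bbm{v_{\mu_0}^{u_i} \\ u_i}\Bigr) = \bbm{v_{\mu_0}^u + \tilde s \\ u}
\end{equation*}
is a finite linear combination of elements of \eqref{eq:densespan2} (since all $\mu_i, \mu_0 \in \Omega$), and the difference $t - \sbm{x\\u} = \sbm{\tilde s - x_1 \\ 0}$ is controlled in the graph norm of $\SmallSysNode$ by a constant multiple of $\|x_1 - \tilde s\|_{\dom A}$, using that $A$ is trivially bounded from $\dom A$ (graph norm) into $\Xscr$ and that $C$ is bounded from $\dom A$ into $\Yscr$ by the closed-graph theorem. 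The principal subtlety is precisely that the first claim must be invoked with $\alpha$ chosen \emph{inside} $\Omega$ (rather than the abstract $\alpha$ fixed in the statement), so that the anchoring term $\sbm{v_{\mu_0}^u\\u}$ genuinely lies in the target span \eqref{eq:densespan2}; once this is recognised, the rest of the argument is routine.
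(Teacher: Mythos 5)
Your proof is correct and follows essentially the same route as the paper's: the resolvent identity reduces density of \eqref{eq:densespan} in $\dom{A}$ to controllability over $\Omega\setminus\set\alpha$, density in $\Xscr$ then comes for free, and the second claim is obtained by splitting $\dom{\SmallSysNode}$ via \eqref{eq:SSplitResDom} and approximating the $\dom{A}$-component. Your one deviation --- anchoring the splitting at a point $\mu_0\in\Omega$ rather than at the given $\alpha$ --- is actually a small improvement rather than a detour: the paper's own approximant contains the term $\sbm{(\alpha-A|_\Xscr)^{-1}Bv_N\\v_N}$, which belongs to the span \eqref{eq:densespan2} only when $\alpha\in\Omega$, so your re-anchoring (legitimate because \eqref{eq:densespan2} does not depend on $\alpha$, while the first claim holds for every $\alpha\in\res A$) is precisely the repair that makes the last step airtight. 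Your direct estimate of the graph norm of $\SmallSysNode$ on $\sbm{\tilde s-x_1\\0}$ via the boundedness of $A$ and $C$ from $\dom{A}$ simply replaces the paper's appeal to the equivalent norm of \cite[Lemma 4.7.3(ix)]{StafBook} and is equally valid.
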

\begin{proof}
Let $\Escr$ denote the linear span in \eqref{eq:densespan}. By \eqref{eq:SSplitResDom}, 
$$
  \bbm{(\mu-A|_\Xscr)^{-1}Bu-(\alpha-A|_\Xscr)^{-1}Bu\\u-u}\in\dom{\SmallSysNode}
$$
for all $\mu\in\rho_\infty(A)$ and $u\in\Uscr$, and therefore
$$
  (\mu-A|_\Xscr)^{-1}Bu-(\alpha-A|_\Xscr)^{-1}Bu\in\dom A
$$
for all $\mu\in\rho_\infty(A)$ and $u\in\Uscr$ by \eqref{eq:Adef}. Hence $\Escr\subset\dom A$.

The resolvent identity gives
$$
  \big((\mu-A|_\Xscr)^{-1}Bu-(\alpha-A|_\Xscr)^{-1}\big)Bu = 
  (\alpha-\mu)(\alpha-A)^{-1}(\mu-A|_\Xscr)^{-1}Bu,
$$
where $(\alpha-A)^{-1}$ is an isomorphism from $\Xscr$ to $\dom A$, and so $\Escr$ is dense in $\dom A$ if and only if
\begin{equation}\label{eq:firstdensespan}
  \spn\set{(\alpha-\mu)(\mu-A|_\Xscr)^{-1}Bu \mid \mu\in\Omega,\,u\in\Uscr}
\end{equation}
is dense in $\Xscr$. It is easy to see that this linear span is the same as
$$
  \spn\set{(\mu-A|_\Xscr)^{-1}Bu \mid \mu\in\Omega\setminus\set\alpha,\,u\in\Uscr},
$$
and this space is dense in $\Xscr$, since $\Omega\setminus\set\alpha$ has a cluster point in $\rho_\infty(A)$ and $\SmallSysNode$ is assumed controllable. We have proved that \eqref{eq:densespan} is dense in $\dom A$. Since $\dom A$ is dense in $\Xscr$, it now follows automatically that \eqref{eq:densespan} is dense in $\Xscr$.

According to \cite[Lemma 4.7.3(ix)]{StafBook}, the following norm is equivalent to the norm on $\dom{\SmallSysNode}$ induced by the graph of $\SmallSysNode$:
$$
  \left\|\bbm{x\\u}\right\|_\alpha:=\left\| \bbm{1&-(\alpha-A|_\Xscr)^{-1}B\\0&1}\bbm{x\\u} \right\|_{\sbm{\dom A\\\Uscr}},
$$
where $\dom A$ is equipped with the graph norm of $A$. Therefore the denseness of \eqref{eq:densespan2} follows if we can show that
\begin{equation}\label{eq:densespan3}
  \bbm{1&-(\alpha-A|_\Xscr)^{-1}B\\0&1}\spn\set{\bbm{(\mu-A|_\Xscr)^{-1}Bu\\u} \bigmid \mu\in\Omega,\,u\in\Uscr}
\end{equation}
is dense in $\sbm{\dom A\\\Uscr}$.

Fix $\sbm{x\\u}\in\sbm{\dom A\\\Uscr}$ arbitrarily. We will show that $\sbm{x\\u}$ can be approximated arbitrarily well by an element of the linear span in \eqref{eq:densespan3}, in the norm of $\sbm{\dom A\\\Uscr}$. By the above, we can approximate $x$ by an element in $\Escr$, say
$$
  \|x-x_N\|_{\dom A}<\varepsilon,\quad \text{with}\quad x_N=\sum_{k=1}^N (\mu_k-A|_\Xscr)^{-1}Bu_k-(\alpha-A|_\Xscr)^{-1}Bu_k.
$$
Setting $v_N:=u-\sum_{k=1}^N u_k$, we obtain
$$
\begin{aligned}
  \bigg\|\bbm{x\\u} &- \bbm{1&-(\alpha-A|_\Xscr)^{-1}B\\0&1}\sum_{k=1}^N\bbm{(\mu_k-A|_\Xscr)^{-1}Bu_k\\u_k} \\
  &\qquad- \bbm{1&-(\alpha-A|_\Xscr)^{-1}B\\0&1}\bbm{(\alpha-A|_\Xscr)^{-1}Bv_N\\v_N}\bigg\|_{\sbm{\dom A\\\Uscr}} \\
  &= \bigg\|\bbm{x-x_N\\0}\bigg\|_{\sbm{\dom A\\\Uscr}}<\varepsilon,
\end{aligned}
$$
and hence the linear span in \eqref{eq:densespan2} is dense in $\dom{\SmallSysNode}$.
\end{proof}

We next recall some properties of (scattering) passive systems, including some very recent developments.

\subsection{Scattering dissipative operators and passive system nodes}

The following is a recent idea from \cite[Def.\ 2.1]{StafMaxDiss}; see also \cite{StWe12I,StWe12II}:

\begin{defn}\label{def:scattdiss}
An operator $\SmallSysNode:\sbm{\Xscr\\\Uscr}\supset\dom{\SmallSysNode}\to\sbm{\Xscr\\\Yscr}$ is called \emph{scattering dissipative} if it
satisfies for all $\sbm{x\\u}\in\dom{\SmallSysNode}$:
\begin{equation}\label{eq:scattdissop}
  \Ipdp{z}{x}_\Xscr + \Ipdp{x}{z}_\Xscr \leq 
  \Ipdp{u}{u}_\Uscr - \Ipdp{y}{y}_\Yscr,\quad \bbm{z\\y}=\SysNode\bbm{x\\u}.
\end{equation}
If such an operator $\SmallSysNode$ has no
proper extension which still satisfies \eqref{eq:scattdissop}, then $\SmallSysNode$ 
is said to be \emph{maximal scattering dissipative}. If \eqref{eq:scattdissop} holds 
with equality then $\SmallSysNode$ is called \emph{scattering isometric}.
\end{defn}

Note that $\SmallSysNode$ is scattering isometric if and only if for all 
$\sbm{x_1\\u_1},\sbm{x_2\\u_2}\in\dom{\SmallSysNode}$:
\begin{equation}\label{eq:scattdissoppolar}
\begin{aligned}
  \Ipdp{z_1}{x_2}_\Xscr + \Ipdp{x_1}{z_2}_\Xscr &= \Ipdp{u_1}{u_2}_\Uscr - \Ipdp{y_1}{y_2}_\Yscr,\\ 
  \bbm{z_k\\y_k} &= \SysNode\bbm{x_k\\u_k},
\end{aligned}
\end{equation}
as can be seen by polarizing \eqref{eq:scattdissop}, i.e., by considering 
$\sbm{x\\u}=\sbm{x_1\\u_1}+\lambda\sbm{x_2\\u_2}$ and letting $\lambda$ vary over $\C$. 

The following definition differs from the standard definition of a passive system 
node, but combining the fact that $\res A$ contains some right-half plane with \cite[Theorem 11.1.5]{StafBook}, see in particular assertion (iii), we obtain that the two definitions are equivalent:

\begin{defn}\label{def:scattpass}
A system node is said to be \emph{passive} if it is a scattering dissipative operator. 
The system node is \emph{energy preserving} if it is scattering isometric.
\end{defn}

The type of passivity in Definition \ref{def:scattpass} is commonly called \emph{scattering passivity}, where the word ``scattering'' refers to the fact that we use the expression $\|u(t)\|^2-\|y(t)\|^2$ 
to measure the power absorbed by the system from its surroundings at time $t\geq0$. 
See the introduction to \cite{StafMTNS02} for more details on this.

\begin{lem}\label{lem:dissmaxpass}
Let $\SmallSysNode$ be a scattering dissipative operator mapping its domain $\dom{\SmallSysNode}\subset\sbm{\Xscr\\\Uscr}$ into $\sbm{\Xscr\\\Yscr}$. Then $\SmallSysNode$ is a system node if and only if it is closed and $\sbm{\sbm{1&0}-\sbm{\AB}\\\sbm{0&\sqrt2}}$ maps $\dom{\SmallSysNode}$ onto a dense subspace of $\sbm{\Xscr\\\Uscr}$. When this is the case, $\SmallSysNode$ is passive.
\end{lem}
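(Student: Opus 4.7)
The plan is to convert the hypothesis into a bijection between Hilbert spaces. Set
\begin{equation*}
T : \dom{\SmallSysNode}\to\bbm{\Xscr\\\Uscr},\qquad T\bbm{x\\u}:=\bbm{x-\bbm{\AB}\bbm{x\\u}\\\sqrt 2\,u}.
\end{equation*}
Writing $z:=\bbm{\AB}\bbm{x\\u}$ and $y:=\bbm{\CD}\bbm{x\\u}$, the identity $\|x-z\|^2=\|x\|^2-2\re\ipdp{x}{z}+\|z\|^2$ combined with the dissipativity bound $2\re\ipdp{z}{x}\leq\|u\|^2-\|y\|^2$ from \eqref{eq:scattdissop} yields the key inequality
\begin{equation*}
\Bignorm{T\bbm{x\\u}}^2\;\geq\;\|x\|^2+\|u\|^2+\|z\|^2+\|y\|^2\;=\;\Bignorm{\bbm{x\\u}}^2+\Bignorm{\SysNode\bbm{x\\u}}^2.
\end{equation*}
Hence $T$ is bounded below by the graph norm of $\SmallSysNode$; since $\SmallSysNode$ is closed, $\dom{\SmallSysNode}$ equipped with the graph norm is a Hilbert space, and $T$ is bounded and bounded-below, hence has closed range. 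Combined with the density hypothesis, $T$ is then a bijection with bounded inverse.

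For necessity, if $\SmallSysNode$ is a scattering-dissipative system node then $1\in\res A$, and the splitting \eqref{eq:SSplitRes2} with $\alpha=1$ exhibits $T$ as already surjective onto $\sbm{\Xscr\\\Uscr}$; closedness is part of the definition of a system node.

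For sufficiency I first extract the main operator $A$ via \eqref{eq:Adef}. The second component of $T$ forces $u=0$, so bijectivity of $T$ on $\sbm{\dom A\\0}$ yields $\range{1-A}=\Xscr$. Dissipativity of $\SmallSysNode$ on $\sbm{\dom A\\0}$ gives $2\re\ipdp{Ax}{x}\leq-\|Cx\|^2\leq 0$, so $A$ is dissipative. To show $A$ is closed, suppose $x_n\to x$ and $Ax_n\to z$ in $\Xscr$; the bound $\|C(x_n-x_m)\|^2\leq -2\re\ipdp{A(x_n-x_m)}{x_n-x_m}$ forces $(Cx_n)$ to be Cauchy, so $\SysNode\bbm{x_n\\0}$ converges and closedness of $\SmallSysNode$ gives $x\in\dom A$ with $Ax=z$. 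For density of $\dom A$, pick $x\in\Xscr$ with $x\perp\dom A$, solve $y_0-Ay_0=x$ with $y_0\in\dom A$, and compute $0=\re\ipdp{x}{y_0}=\|y_0\|^2-\re\ipdp{Ay_0}{y_0}\geq\|y_0\|^2$, forcing $y_0=0$ and $x=0$. By Lumer-Phillips, $A$ generates a $C_0$ contraction semigroup, so $\cplus\subset\res A$ and the rigging $\Xscr_1\subset\Xscr\subset\Xscr_{-1}$ becomes available.

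To finish, for each $u\in\Uscr$ surjectivity of $T$ produces a unique $\bbm{Gu\\u}\in\dom{\SmallSysNode}$ with $\bbm{\AB}\bbm{Gu\\u}=Gu$, and boundedness of $T^{-1}$ makes $G\in\BLO(\Uscr,\Xscr)$. Setting $B:=(1-A|_\Xscr)G\in\BLO(\Uscr,\Xscr_{-1})$, every $\bbm{x\\u}\in\dom{\SmallSysNode}$ splits uniquely as $\bbm{x_0\\0}+\bbm{Gu\\u}$ with $x_0:=x-Gu\in\dom A$; a short calculation then gives $\bbm{\AB}\bbm{x\\u}=Ax_0+Gu=A|_\Xscr x+Bu$. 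This supplies the bounded extension $\bbm{A|_\Xscr & B}:\bbm{\Xscr\\\Uscr}\to\Xscr_{-1}$, and the domain condition of Definition \ref{def:altsysnode} follows by solving $A|_\Xscr x+Bu\in\Xscr$ for $x-Gu\in\dom A$. Thus $\SmallSysNode$ is a system node, and being scattering dissipative it is passive by Definition \ref{def:scattpass}. The main obstacle is the closedness of $A$, since the observation operator is generally unbounded and must be controlled through the dissipativity bound applied to Cauchy differences rather than to single elements.
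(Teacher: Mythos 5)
Your proof is correct, but it follows a genuinely different route from the paper. The paper's proof is essentially a reduction to the theory of maximal scattering dissipative operators in \cite{StafMaxDiss}: it forms the internal Cayley transform $\mathbf T$, observes that dissipativity makes $\mathbf T$ contractive on its domain (which is exactly the range of $\sbm{\sbm{1&0}-\sbm{\AB}\\\sbm{0&\sqrt2}}$ up to scaling), uses closedness of $\SmallSysNode$ to conclude that this dense domain is closed and hence all of $\sbm{\Xscr\\\Uscr}$, deduces maximality of the scattering dissipative operator, and then cites \cite[Thm 2.5]{StafMaxDiss} for the conclusion that a closed maximal scattering dissipative operator is a passive system node. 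You instead verify the four axioms of Definition \ref{def:altsysnode} directly: your coercivity estimate $\|T\sbm{x\\u}\|^2\geq\|\sbm{x\\u}\|^2+\|\SysNode\sbm{x\\u}\|^2$ (which is the same algebraic identity underlying the contractivity of the Cayley transform, read in the opposite direction) turns $T$ into a bijection with bounded inverse, from which you extract surjectivity of $1-A$, and then closedness, dissipativity and density of $\dom A$ feed into Lumer--Phillips; the explicit operator $G=(1-A|_\Xscr)^{-1}B$ then delivers conditions 3 and 4 of the definition. All the individual steps check out, including the slightly delicate closedness of $A$ (controlling the Cauchy property of $Cx_n$ through the dissipativity inequality applied to differences) and the recovery of the domain condition from $A|_\Xscr x+Bu=A|_\Xscr(x-Gu)+Gu$. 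What your approach buys is self-containedness and explicit formulas for $B$ and the splitting of $\dom{\SmallSysNode}$; what the paper's approach buys is brevity and the additional conclusion (used nowhere else in your argument but of independent interest) that $\SmallSysNode$ is \emph{maximal} scattering dissipative. Two small presentational points: in the necessity direction you should justify $1\in\res A$ in one line (dissipativity of $A$ plus the fact that a semigroup generator has some right half-plane in its resolvent set, hence Lumer--Phillips gives $\cplus\subset\res A$), and you implicitly use that $\dom{\SmallSysNode}$ is a linear subspace when forming differences such as $\sbm{x\\u}-\sbm{Gu\\u}$, which is automatic but worth a word.
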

\begin{proof}
We begin with the \emph{if} direction. Assume therefore that $\SmallSysNode$ is a closed scattering dissipative operator and that $\sbm{\sbm{1&0}-\sbm{\AB}\\\sbm{0&\sqrt2}}\dom{\SmallSysNode}$ is dense in $\sbm{\Xscr\\\Uscr}$. Then the so-called internal Cayley transform
$$
  \mathbf T:=\bbm{-1&0\\0&0}+\left(\bbm{\sqrt 2&0\\0&0}+\bbm{0\\\bbm{\CD}}\right)E^{-1},
$$
defined on $\dom {\mathbf T}:=\range{E}$, where
$$
  E=\bbm{1/\sqrt2&0\\0&1}-\bbm{\bbm{\AB}/\sqrt2\\0},\quad \dom E=\dom{\SmallSysNode},
$$
is contractive (on its domain) by Lemma 2.2, Theorem 2.3(i), and the text in between, in \cite{StafMaxDiss}. Moreover, $\dom{\mathbf T}=\sbm{\sbm{1&0}-\sbm{\AB}\\\sbm{0&\sqrt2}}\dom{\SmallSysNode}$, dense in $\sbm{\Xscr\\\Uscr}$ by assumption. By \cite[Thm 2.3]{StafMaxDiss}(iv), it follows from the closedness of $\SmallSysNode$ that $\dom{\mathbf T}$ is closed, and hence $\dom{\mathbf T}=\sbm{\Xscr\\\Uscr}$. This in turn implies that $\mathbf T$ has no proper extensions to a contraction on $\sbm{\Xscr\\\Uscr}$, and therefore $\SmallSysNode$ has no scattering dissipative extension by \cite[Thm 2.3(iii)]{StafMaxDiss}. Hence, $\SmallSysNode$ is maximal scattering dissipative. Theorem 2.5 of \cite{StafMaxDiss} now gives that $\SmallSysNode$ is a passive system node.

Conversely, for the \emph{only-if} direction, assume that $\SmallSysNode$ is a scattering-dissipative system node, i.e., a passive system node according to Definition \ref{def:scattpass}. Then \cite[Thm 2.5]{StafMaxDiss} gives that $\SmallSysNode$ is closed and maximal scattering dissipative, and now \cite[Thm 2.4]{StafMaxDiss} finally yields that $\dom{\mathbf T}=\sbm{\Xscr\\\Uscr}$.
\end{proof}

\begin{lem}\label{lem:cplusres}
For a passive system node with state space $\Xscr$ and main operator $A$, we have 
$\cplus\subset\res A=\res{A|_{\Xscr}}$. 
\end{lem}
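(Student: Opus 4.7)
The plan is to combine the passivity of $\SmallSysNode$ with the $C_0$-semigroup property of $A$ to show that $A$ is densely defined, dissipative, and satisfies $\range{\mu_0-A}=\Xscr$ for some $\mu_0\in\cplus$. By the Lumer--Phillips theorem, $A$ then generates a contraction semigroup, and the inclusion $\cplus\subset\res A$ follows together with the standard resolvent bound. The equality $\res A=\res{A|_\Xscr}$ was recorded in the preliminaries on Gelfand triples and requires no further argument.

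Dissipativity is obtained by specialising the scattering-dissipative inequality \eqref{eq:scattdissop} to $u=0$ and $x\in\dom A$: by \eqref{eq:Adef} and \eqref{eq:Cdef}, $\SysNode\sbm{x\\0}=\sbm{Ax\\Cx}$, so the inequality reads $2\,\re\Ipdp{Ax}{x}_\Xscr\leq -\|Cx\|_\Yscr^2\leq 0$. Combining $\re\Ipdp{Ax}{x}_\Xscr\leq 0$ with the Cauchy--Schwarz inequality yields the resolvent estimate
$$\|(\mu-A)x\|_\Xscr\,\|x\|_\Xscr\geq \re\Ipdp{(\mu-A)x}{x}_\Xscr\geq (\re\mu)\,\|x\|_\Xscr^2$$
for all $\mu\in\cplus$ and $x\in\dom A$, so $\|(\mu-A)x\|_\Xscr\geq(\re\mu)\|x\|_\Xscr$ and $\mu-A$ is injective with closed range throughout $\cplus$.

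The density of $\dom A$ and the existence of one $\mu_0\in\cplus\cap\res A$ both follow from the $C_0$-semigroup property, since $\res A$ must contain some right half-plane $\C_\omega^+$. Invoking Lumer--Phillips then concludes the proof. As an alternative that bypasses Lumer--Phillips, one may argue directly that $\cplus\cap\res A$ is both open in $\cplus$ (by openness of $\res A$) and closed in $\cplus$, using the uniform bound $\|(\mu_n-A)^{-1}\|\leq 1/\re\mu_n$ furnished by the estimate above for any sequence in $\cplus\cap\res A$; connectedness of $\cplus$ then finishes the argument. The passage from the range-closed estimate together with density to surjectivity on all of $\cplus$ is the only non-routine step; once this is in place, everything else reduces to bookkeeping.
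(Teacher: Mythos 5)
Your proof is correct, but it proceeds quite differently from the paper: the paper disposes of this lemma in one line by citing Theorem 11.1.5(viii) of Staffans' book \cite{StafBook} together with the rigging conventions already set up in Section \ref{sec:sysnode}, whereas you give a self-contained argument from the definitions. Your route --- specialising \eqref{eq:scattdissop} to $\sbm{x\\0}$ with $x\in\dom A$ to get $2\,\re\Ipdp{Ax}{x}_\Xscr\le-\|Cx\|_\Yscr^2\le 0$, deducing the lower bound $\|(\mu-A)x\|_\Xscr\ge(\re\mu)\|x\|_\Xscr$ on $\cplus$, and then upgrading the single resolvent point guaranteed by the $C_0$-semigroup property to all of $\cplus$ via Lumer--Phillips or the open-and-closed-in-$\cplus$ argument with the uniform bound $\|(\mu-A)^{-1}\|\le 1/\re\mu$ --- is a standard and complete derivation; the identity $\res A=\res{A|_\Xscr}$ is indeed already recorded in the paper's discussion of Gelfand triples, so nothing further is needed there. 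What your approach buys is transparency: the reader sees exactly which part of passivity (dissipativity of the main operator) drives the conclusion, and as a by-product you obtain the explicit resolvent estimate, which the citation hides. What the paper's approach buys is brevity and consistency with its general policy of leaning on \cite{StafBook} for well-posed-system machinery. One cosmetic remark: your closing claim that the passage from closed range plus density to surjectivity is ``the only non-routine step'' slightly undersells your own setup --- since the semigroup property already hands you a surjectivity point $\mu_0$, that passage is also routine here.
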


This lemma follows from \cite[Theorem 11.1.5(viii)]{StafBook} and the rigging procedure 
described at the beginning of Section \ref{sec:sysnode}. Hence, when discussing controllability and observability of passive systems, we always take $\rho_\infty(A)=\cplus$.

\subsection{Dual system nodes}  \label{sec:dualnode}

If $A$ generates a $C_0$-semigroup $\Afrak$ on the Hilbert space $\Xscr$, 
then $A^*$ generates the $C_0$-semigroup $t\mapsto (\Afrak^t)^*$, according to
\cite[Theorem 3.5.6]{StafBook}. Clearly $\res{A^*}=\set{\overline\mu\in\C\mid \mu\in\res A}$, and we denote the Gelfand triple corresponding to $A^*$ and $\overline\beta\in\res{A^*}$ by $\Xscr_1^d\subset\Xscr\subset\Xscr_{-1}^d$, where $\beta\in\res A$ is used in the rigging $\Xscr_1\subset\Xscr\subset\Xscr_{-1}$. In particular, $\Xscr_1^d=\dom{A^*}$.

This makes it possible to identify the dual of $\Xscr_1=\dom{A}$ with $\Xscr_{-1}^d$ using $\Xscr$ as pivot space:
$$
  \langle x,z\rangle_{\langle\Xscr_1,\Xscr_{-1}^d\rangle} := 
  \Ipdp{x}{z}_\Xscr,\quad x\in\dom A,\, z\in\Xscr.
$$
Similarly, the dual of $\dom{A^*}$ is identified with $\Xscr_{-1}$
using $\Xscr$ as pivot space.

\begin{prop}\label{prop:dualsysnode}
Every system node $\SmallSysNode$ on the triple $(\Uscr,\Xscr,\Yscr)$ of Hilbert-spaces has the following properties:
\begin{enumerate}
\item The adjoint $\SmallSysNode^*$ is a system node on $(\Yscr,\Xscr,\Uscr)$. The main operator of
$\SmallSysNode^*$ is $A^d=A^*$, the control operator is 
$B^d=C^*\in\Bscr(\Yscr,\Xscr_{-1}^d)$, the observation operator is 
$C^d=B^*\in\Bscr(\Xscr_1^d,\Uscr)$, and the transfer function satisfies 
$\whsmash\Dfrak^d(\lambda)=\whsmash\Dfrak(\overline\lambda)^*$ for all 
$\lambda\in\res{A^*}$, where $\whsmash\Dfrak$ is the transfer function of $\SmallSysNode$.

\item The system node $\SmallSysNode$ 
is passive if and only if $\SmallSysNode^*$ is passive.

\item The system node $\SmallSysNode$ is controllable if and only if 
$\SmallSysNode^*$ is observable and vice versa. 
\end{enumerate}
\end{prop}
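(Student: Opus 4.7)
My plan is to establish all three claims via the decomposition \eqref{eq:SSplitRes2} together with the reconstruction principle of Remark \ref{rem:reconstruct}. Fix once and for all some $\alpha\in\res A\cap\cplus$, so that $\overline\alpha\in\res{A^*}$. For claim~1, $A^*$ generates a $C_0$-semigroup by \cite[Thm.~3.5.6]{StafBook}, and using the pivot identification of the dual of $\dom A$ with $\Xscr_{-1}^d$ (and of the dual of $\dom{A^*}$ with $\Xscr_{-1}$) recalled at the start of Section \ref{sec:dualnode}, I would verify that the candidate operators $B^d:=C^*\in\Bscr(\Yscr,\Xscr_{-1}^d)$ and $C^d:=B^*\in\Bscr(\Xscr_1^d,\Uscr)$ are well defined and bounded. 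The heart of claim~1 is then a direct computation of the Hilbert-space adjoint of $\SmallSysNode$: for $\sbm{x\\u}\in\dom{\SmallSysNode}$ and $\sbm{\xi\\\eta}\in\sbm{\dom{A^*}\\\Yscr}$ I would expand
\[
  \Ipdp{\SysNode\bbm{x\\u}}{\bbm{\xi\\\eta}}_{\sbm{\Xscr\\\Yscr}}
\]
using the representation \eqref{eq:SSplitRes2}, move each factor across via the appropriate rigged pairing, and rewrite the result as $\Ipdp{\sbm{x\\u}}{T\sbm{\xi\\\eta}}$, where $T$ has precisely the structure of the system node reconstructed from $A^d$, $B^d$, $C^d$, and $\whsmash\Dfrak^d(\overline\alpha):=\whsmash\Dfrak(\alpha)^*$ via Remark \ref{rem:reconstruct}. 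Density of $\sbm{\dom{A^*}\\\Yscr}$ in $\dom T$ with respect to the graph norm (analogous to the input part of Lemma \ref{lem:contrdense}) then forces $T=\SmallSysNode^*$, and in particular $\SmallSysNode^*$ is itself a system node with the advertised main/control/observation operators and transfer function.

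For claim~2 I would invoke Lemma \ref{lem:dissmaxpass} in both directions. Passivity of $\SmallSysNode$ is equivalent, by that lemma together with \cite[Thm.~2.5]{StafMaxDiss}, to $\SmallSysNode$ being closed and to its internal Cayley transform $\mathbf T$ being a contraction defined on all of $\sbm{\Xscr\\\Uscr}$. The Hilbert-space adjoint $\mathbf T^*$ is then automatically a contraction on the same space, and a direct bookkeeping identifies $\mathbf T^*$, up to the coordinate swap that exchanges the second block when $\Uscr$ is replaced by $\Yscr$, with the internal Cayley transform associated with $\SmallSysNode^*$. Combining this with closedness of $\SmallSysNode^*$ (immediate from closedness of $\SmallSysNode$) and Lemma \ref{lem:dissmaxpass} applied in the opposite direction shows that $\SmallSysNode^*$ is passive. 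The reverse implication then follows by applying the same argument to $\SmallSysNode^{**}=\SmallSysNode$.

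Claim~3 reduces to a clean duality. By Lemma \ref{lem:cplusres} (and its counterpart for $\SmallSysNode^*$) we have $\cplus\subset\rho_\infty(A)\cap\rho_\infty(A^d)$, so $\Omega=\cplus$ is admissible in Definition \ref{def:iomapscontrobs} for both. Using the adjoint identifications from claim~1, for any $\mu\in\cplus$, $u\in\Uscr$ and $x^*\in\Xscr$ one has
\[
  \Ipdp{(\mu-A|_\Xscr)^{-1}Bu}{x^*}_\Xscr = \Ipdp{u}{C^d(\overline\mu-A^d)^{-1}x^*}_\Uscr.
\]
Hence the span appearing in the definition of controllability of $\SmallSysNode$ is dense in $\Xscr$ if and only if $C^d(\overline\mu-A^d)^{-1}x^*=0$ for all $\mu\in\cplus$ forces $x^*=0$, which is precisely observability of $\SmallSysNode^*$; swapping roles gives the converse. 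The step I expect to require the most care is the rigged-duality book-keeping in claim~1, in particular moving the unbounded $A|_\Xscr$ and the $\Xscr_{-1}$-valued $B$ across the pairings so that their adjoints land respectively in $\Bscr(\Xscr_1^d,\Xscr)$ and $\Bscr(\Xscr_1^d,\Uscr)$, rather than in some larger ambient space.
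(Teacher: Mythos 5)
The paper does not actually prove parts 1 and 2: it cites \cite[Lemma 6.2.14]{StafBook} for the first and \cite[Lemma 11.1.4]{StafBook} (via well-posedness) for the second, and only part 3 is argued in the text, by essentially the same resolvent/kernel duality you use. So for parts 1 and 2 you are supplying a direct argument where the paper defers to the literature; your Cayley-transform route to part 2 is self-contained within the paper's own toolkit (Lemma \ref{lem:dissmaxpass} and the correspondence with contractions from \cite{StafMaxDiss}) and is a legitimate alternative to the well-posedness mechanism the paper points to.

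Two points need attention before this can stand as a proof. First, in part 1 your density argument only yields the inclusion $T\subseteq\SmallSysNode^*$: knowing that $\sbm{\dom{A^*}\\\Yscr}$ is a graph-norm core for the candidate node $T$ and that $\SmallSysNode^*$ agrees with $T$ there shows that the closed operator $\SmallSysNode^*$ extends $T$, but it does not exclude that $\dom{\SmallSysNode^*}$ is strictly larger than $\dom{T}$. That reverse inclusion is the real content of \cite[Lemma 6.2.14]{StafBook} and needs a separate argument --- for instance, using that $\sbm{x\\u}\mapsto\sbm{\alpha x-\sbm{\AB}\sbm{x\\u}\\u}$ maps $\dom{\SmallSysNode}$ bijectively onto $\sbm{\Xscr\\\Uscr}$, one substitutes $x=(\alpha-A)^{-1}w+(\alpha-A|_\Xscr)^{-1}Bu$ into the adjoint relation and deduces that every $\sbm{\xi\\\eta}\in\dom{\SmallSysNode^*}$ satisfies $A^*|_{\Xscr}\xi+C^*\eta\in\Xscr$, i.e.\ lies in $\dom{T}$. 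Second, in part 3 you invoke Lemma \ref{lem:cplusres} to take $\Omega=\cplus$, but that lemma applies only to \emph{passive} system nodes, whereas the proposition concerns arbitrary ones; you should instead take $\Omega$ to be a right half-plane $\C^+_\omega\subset\rho_\infty(A)$ (whose conjugate then lies in $\rho_\infty(A^*)$), which the remark following Definition \ref{def:iomapscontrobs} permits. With these two repairs the outline is sound.
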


For a proof of the first statement see \cite[Lemma 6.2.14]{StafBook}. The second statement follows from \cite[Lemma 11.1.4]{StafBook}; recall that passivity implies well-posedness. The third claim follows immediately on combining the first statement with Definition \ref{def:iomapscontrobs}.

\begin{defn}\label{def:dualsysnode}
The (possibly unbounded) adjoint $\sbm{A^d\&B^d\\C^d\&D^d}:=\SmallSysNode^*$ of a system node 
$\SmallSysNode$ is called the \emph{causal dual system node}, or shortly just the \emph{dual}, 
of $\SmallSysNode$.

We say that a system node is \emph{co-energy preserving} if its dual system node is 
energy preserving. A system node that is both energy preserving and co-energy preserving is called \emph{conservative}.
\end{defn}

Clearly a system node $\SmallSysNode$ is conservative if and only if the dual system node $\SmallSysNode^*$ is conservative. Energy preservation is also clearly a necessary condition for conservativity, and the following important result provides a converse:

\begin{thm}\label{thm:msw}
For every energy-preserving system node $\SmallSysNode$, the following hold:
\begin{enumerate}
\item The operator $\sbm{\sbm{1&0}\\\CD}$ maps $\dom{\SmallSysNode}$ into $\dom{\SmallSysNode^*}$ and
\begin{equation}\label{eq:enprescontdual}
  \SysNode^*\bbm{\bbm{1&0}\\\CD}=\bbm{-\AB\\\bbm{0&1}}\quad \text{on}\quad \dom{\SmallSysNode}.
\end{equation}

\item The following conditions are equivalent:
\begin{enumerate}
\item The system node $\SmallSysNode$ is conservative.
\item The operator $\sbm{\sbm{1&0}\\\CD}$ maps $\dom{\SmallSysNode}$ \emph{onto} $\dom{\SmallSysNode^*}$.
\item The range of $\SmallSysNode_c+\sbm{\overline\alpha&0\\0&0}$ is dense in $\sbm{\Hscr_c\\\Yscr}$ for some, or equivalently for all, $\alpha\in\cplus$.
\end{enumerate}
\end{enumerate}
\end{thm}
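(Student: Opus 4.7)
The plan is to derive part 1 directly from the polarized scattering-isometric identity \eqref{eq:scattdissoppolar}, and then to bootstrap both equivalences of part 2 from part 1 via Hilbert-space adjoint duality.

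For part 1, I would take $\sbm{x\\u},\sbm{a\\b}\in\dom{\SmallSysNode}$ with images $\sbm{z\\y}=\SysNode\sbm{x\\u}$ and $\sbm{c\\d}=\SysNode\sbm{a\\b}$, and rearrange \eqref{eq:scattdissoppolar} into
\[
\biggipdp{\SysNode\bbm{a\\b}}{\bbm{x\\y}}_{\sbm{\Xscr\\\Yscr}}=\biggipdp{\bbm{a\\b}}{\bbm{-z\\u}}_{\sbm{\Xscr\\\Uscr}}.
\]
By the very definition of the unbounded Hilbert-space adjoint, this places $\sbm{x\\y}$ in $\dom{\SmallSysNode^*}$ with $\SysNode^*\sbm{x\\y}=\sbm{-z\\u}$, which is exactly \eqref{eq:enprescontdual}.

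For (a)$\Leftrightarrow$(b), observe that by Proposition \ref{prop:dualsysnode} the dual $\SmallSysNode^*$ is always a passive system node, and is energy-preserving precisely under (a). Assuming (a), part 1 applied to $\SmallSysNode^*$ (together with $\SmallSysNode^{**}=\SmallSysNode$) shows that for any $\sbm{p\\q}\in\dom{\SmallSysNode^*}$ with $\SysNode^*\sbm{p\\q}=\sbm{r\\s}$, the element $\sbm{p\\s}$ lies in $\dom{\SmallSysNode}$ and satisfies $\SysNode\sbm{p\\s}=\sbm{-r\\q}$; reading off the second component gives $\sbm{\sbm{1&0}\\\CD}\sbm{p\\s}=\sbm{p\\q}$, yielding (b). Conversely, under (b), given $\sbm{p\\q}\in\dom{\SmallSysNode^*}$ I pick $\sbm{x\\u}\in\dom{\SmallSysNode}$ with $\sbm{x\\y}=\sbm{p\\q}$; by part 1, $\SysNode^*\sbm{p\\q}=\sbm{-z\\u}$, and the energy-preservation identity $2\re\Ipdp{z}{x}_\Xscr=\|u\|_\Uscr^2-\|y\|_\Yscr^2$ for $\SmallSysNode$ reads verbatim as the scattering-isometric identity for $\SmallSysNode^*$ at $\sbm{p\\q}$. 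Together with Proposition \ref{prop:dualsysnode}, this makes $\SmallSysNode^*$ an energy-preserving system node, i.e., (a).

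For (b)$\Leftrightarrow$(c), fix $\alpha\in\cplus\subset\res A$ (Lemma \ref{lem:cplusres}) and put $V:=\SmallSysNode+\bbm{\overline\alpha&0\\0&0}$, densely defined on $\dom{\SmallSysNode}$. A routine computation identifies the adjoint as $V^*=\SmallSysNode^*+\bbm{\alpha&0\\0&0}$, so (c) is equivalent to $V^*$ being injective. Setting $\Rscr:=\sbm{\sbm{1&0}\\\CD}\dom{\SmallSysNode}\subset\dom{\SmallSysNode^*}$ (by part 1), a direct computation from part 1 yields
\[
V^*\bbm{x\\y}=\bbm{(\alpha-A|_\Xscr)x-Bu\\u}\quad\text{for } \sbm{x\\y}\in\Rscr \text{ with } \sbm{x\\u}\in\dom{\SmallSysNode}.
\]
Since $\alpha-A|_\Xscr:\Xscr\to\Xscr_{-1}$ is invertible and $B\in\Bscr(\Uscr,\Xscr_{-1})$, the map $\sbm{x\\u}\mapsto\sbm{(\alpha-A|_\Xscr)x-Bu\\u}$ is a bijection from $\dom{\SmallSysNode}$ onto $\sbm{\Xscr\\\Uscr}$, hence $V^*|_\Rscr$ is a bijection from $\Rscr$ onto $\sbm{\Xscr\\\Uscr}$. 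Under (b), $V^*=V^*|_\Rscr$ is therefore injective, giving (c). Conversely, (c) forces $V^*$ injective; for $\sbm{p\\q}\in\dom{\SmallSysNode^*}$, surjectivity of $V^*|_\Rscr$ furnishes a unique $\sbm{x\\y}\in\Rscr$ with $V^*\sbm{x\\y}=V^*\sbm{p\\q}$, and injectivity of $V^*$ then forces $\sbm{p\\q}=\sbm{x\\y}\in\Rscr$, which is (b). Independence of $\alpha$ in (c) is automatic since (b) is $\alpha$-free. The principal obstacle is this (b)$\Leftrightarrow$(c) step: one must correctly compute $V^*$ and then verify the rigged-space bijectivity of $V^*|_\Rscr$ onto $\sbm{\Xscr\\\Uscr}$, so that the standard dense-range/injectivity duality transfers into the operator-domain equality $\Rscr=\dom{\SmallSysNode^*}$; the remaining parts are clean consequences of the polarization identity.
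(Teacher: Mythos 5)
Your proof is correct, but it is worth noting that the paper itself does not prove Theorem \ref{thm:msw} at all: it simply invokes \cite[Thms 3.2 and 4.2]{MaStWe06} with $R=1_\Uscr$, $P=1_\Xscr$, $J=1_\Yscr$, so your argument is a genuinely different (and self-contained) route. What you do differently is to extract everything from the polarized isometricity identity \eqref{eq:scattdissoppolar} and elementary unbounded-adjoint duality: part 1 is exactly the statement that $\bigipdp{\SysNode\sbm{a\\b}}{\sbm{x\\y}}=\bigipdp{\sbm{a\\b}}{\sbm{-z\\u}}$ for all $\sbm{a\\b}\in\dom{\SmallSysNode}$, which by definition of $\SmallSysNode^*$ gives \eqref{eq:enprescontdual}; the equivalence (a)$\Leftrightarrow$(b) follows by applying part 1 to $\SmallSysNode^*$ (using $\SmallSysNode^{**}=\SmallSysNode$ for the closed densely defined $\SmallSysNode$) in one direction and by reading the energy identity for $\SmallSysNode$ as the isometricity identity for $\SmallSysNode^*$ on the image $\Rscr=\sbm{\sbm{1&0}\\\CD}\dom{\SmallSysNode}$ in the other; and (b)$\Leftrightarrow$(c) reduces to the standard fact $\overline{\range{V}}=\bigl(\Ker{V^*}\bigr)^\perp$ once one checks, as you do, that $V^*\sbm{x\\y}=\sbm{(\alpha-A|_\Xscr)x-Bu\\u}$ on $\Rscr$ and that this restriction is a bijection of $\Rscr$ onto $\sbm{\Xscr\\\Uscr}$ via the invertibility of $\alpha-A|_\Xscr:\Xscr\to\Xscr_{-1}$ (guaranteed by Lemma \ref{lem:cplusres}). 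What the citation to \cite{MaStWe06} buys is generality (the $(R,P,J)$ framework covers impedance and transmission settings simultaneously); what your proof buys is that the reader never has to leave the scattering-passive setting of the present paper, and every step uses only objects already introduced here. Two small remarks: condition (c) in the statement contains a typo ($\SmallSysNode_c$ and $\Hscr_c$ should be $\SmallSysNode$ and $\Xscr$ for a general energy-preserving node), which you silently and correctly repaired; and in the factorization of the bijection $\sbm{x\\u}\mapsto\sbm{(\alpha-A|_\Xscr)x-Bu\\u}$ through $\Rscr$ you should say explicitly (as your argument implicitly requires) that injectivity of the composite forces the parametrization $\sbm{x\\u}\mapsto\sbm{x\\y}$ of $\Rscr$ to be injective, so that $V^*|_\Rscr$ is indeed well-defined as a bijection onto $\sbm{\Xscr\\\Uscr}$.
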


This follows by taking $R=1_U$, $P=1_\Xscr$, and $J=1_\Yscr$ in \cite[Thms 3.2 and 4.2]{MaStWe06}; see also \cite{Kuru10} for coordinate-free formulations of parts of this section. We now finally arrive at the main part of the article: a study of the continuous-time analogue of the controllable energy-preserving model in Theorem \ref{T:iso-real}.

\section{The controllable energy-preserving functional model}\label{sec:enpres}

In this section we present the controllable energy-preserving model 
realization, which uses $\Hscr_c$ as state space. Later, in Section \ref{sec:coenpres}, we show how the results for the observable co-energy-preserving 
functional-model system node can be reduced to the results of this section.

\subsection{Definition and immediate properties}\label{sec:contrintrod}

Let $\varphi\in\Sscr(\cplus;\Uscr,\Yscr)$ where $\Uscr$ and $\Yscr$ are separable Hilbert 
spaces. As before, let $\Hscr_c$ denote the Hilbert space whose reproducing kernel is 
\begin{equation}\label{eq:isomkernel}
K_{c}(\mu, \lambda) = \frac{ 1 - \varphi(\overline{\mu})^{*} 
 \varphi(\overline{\lambda})}{\mu + \overline{\lambda}}
\end{equation}
and let $e_c(\cdot)$ be the point-evaluation mapping on $\Hscr_c$, so that $e_c(\lambda)^*u=K_c(\cdot,\lambda)u$ for all $\lambda\in\cplus$ and $u\in\Uscr$. Introduce the mapping
\begin{equation}\label{eq:SodDefprel}
  \SysNode_c:\bbm{e_c(\overline\lambda)^*u\\u}\mapsto 
  \bbm{\,\lambda e_c(\overline\lambda)^*u\\\varphi(\lambda)u},
  \quad u\in\Uscr,\,\lambda\in\cplus.
\end{equation}

In the following lemma we show that $\SmallSysNode_c$ in \eqref{eq:SodDefprel} can be extended to a closable linear operator
\begin{equation}\label{eq:SodDef}
\begin{aligned}
  \SysNode_c&:\bbm{\Hscr_c\\\Uscr}\supset \Dscr_0\to\bbm{\Hscr_c\\\Yscr}, \quad\text{where}\\
  \cD_0 &: = \spn\set{\bbm{e_c(\overline\lambda)^*u\\u}\bigmid \lambda\in\cplus,\,u\in\Uscr}.
\end{aligned}
\end{equation}

\begin{lem}\label{lem:contrclosable}
The formula \eqref{eq:SodDefprel} extends via linearity and limit-closure to define a scattering-isometric closed linear operator $\SmallSysNode_c$.
\end{lem}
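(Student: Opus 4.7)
The plan is to establish the polarized scattering-isometry identity \eqref{eq:scattdissoppolar} for all pairs of vectors of the form $\sbm{e_c(\overline\lambda)^*u\\u}$ via a direct kernel computation, and then to extract from that single identity everything the lemma requires: that the formula \eqref{eq:SodDefprel} extends unambiguously by linearity to $\cD_0$, that the resulting operator is scattering isometric there, and that it is closable with the closure still scattering isometric.

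The kernel computation is very short. Setting $x_k:=e_c(\overline{\lambda_k})^*u_k$, $z_k:=\lambda_k x_k$, and $y_k:=\varphi(\lambda_k)u_k$ for $k=1,2$, I would begin from
\begin{equation*}
  \Ipdp{z_1}{x_2}_{\Hscr_c}+\Ipdp{x_1}{z_2}_{\Hscr_c}=(\lambda_1+\overline{\lambda_2})\,\Ipdp{e_c(\overline{\lambda_1})^*u_1}{e_c(\overline{\lambda_2})^*u_2}_{\Hscr_c},
\end{equation*}
and rewrite the right-hand side via the reproducing property as $(\lambda_1+\overline{\lambda_2})\Ipdp{K_c(\overline{\lambda_2},\overline{\lambda_1})u_1}{u_2}_\Uscr$. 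Substituting the definition \eqref{eq:isomkernel} for $K_c$ cancels the prefactor $\lambda_1+\overline{\lambda_2}$ and leaves exactly $\Ipdp{u_1}{u_2}_\Uscr-\Ipdp{y_1}{y_2}_\Yscr$, which is \eqref{eq:scattdissoppolar} evaluated on the tuples.

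For well-definedness of $\SmallSysNode_c$ on $\cD_0$, suppose a linear combination $\sum_j c_j\sbm{x_j\\u_j}$ (with $x_j$ of the above form) equals $0$ in $\sbm{\Hscr_c\\\Uscr}$, and denote the candidate image by $\sbm{\tilde z\\\tilde y}:=\sum_j c_j\sbm{z_j\\y_j}$. Inserting the zero sum on both sides of the bilinear identity just proved gives $0=0-\|\tilde y\|_\Yscr^2$, so $\tilde y=0$; pairing the zero sum against an arbitrary kernel element $\sbm{e_c(\overline\mu)^*u\\u}$ then yields $\Ipdp{\tilde z}{e_c(\overline\mu)^*u}_{\Hscr_c}=0$ for every $\mu\in\cplus$ and $u\in\Uscr$, and Remark \ref{rem:kernelsdense} forces $\tilde z=0$. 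This produces a well-defined linear operator $\SmallSysNode_c$ on $\cD_0$, and the same bilinear argument shows it is scattering isometric there.

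Closability is then essentially automatic: if $\sbm{\xi_n\\\eta_n}\in\cD_0$ satisfies $\sbm{\xi_n\\\eta_n}\to 0$ and $\SmallSysNode_c\sbm{\xi_n\\\eta_n}=\sbm{\zeta_n\\\gamma_n}\to\sbm{\zeta\\\gamma}$, then the diagonal form $2\re\Ipdp{\zeta_n}{\xi_n}_{\Hscr_c}=\|\eta_n\|_\Uscr^2-\|\gamma_n\|_\Yscr^2$ forces $\gamma=0$ in the limit, and the polarized form against a kernel element gives $\zeta\perp e_c(\overline\mu)^*u$ for all $\mu,u$, hence $\zeta=0$ by the same density. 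The closure is therefore genuinely an operator, and passing to the limit in \eqref{eq:scattdissoppolar} preserves scattering isometry. The only subtle step throughout is well-definedness on $\cD_0$: the polarized identity is exactly what is needed to split the potential ambiguity into a $\|\cdot\|_\Yscr$-zero statement and an orthogonality statement killed by density of the kernel functions.
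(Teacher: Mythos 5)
Your proposal is correct and follows essentially the same route as the paper: the same sesquilinear kernel identity obtained from $K_c(\overline{\lambda_2},\overline{\lambda_1})=e_c(\overline{\lambda_2})e_c(\overline{\lambda_1})^*$, and the same two-step extraction (the diagonal form kills the $\Yscr$-component, the polarized form against kernel elements plus density kills the $\Hscr_c$-component). The only cosmetic difference is that the paper folds well-definedness and closability into a single limiting argument by allowing the approximating sequence to be constant, whereas you run the two parallel arguments separately.
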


\begin{proof}
By \eqref{eq:isomkernel} and the equality $K_c(\overline\lambda_2,\overline\lambda_1)=e_c(\overline\lambda_2)e_c(\overline\lambda_1)^*$, 
we have for all $\lambda_k\in\cplus$ and $u_k\in\Uscr$, $k=1,2$, that
\begin{equation}\label{eq:isomkernelisom}
\begin{aligned}
  \Ipdp{u_1}{u_2}_\Uscr - \Ipdp{\varphi(\lambda_1)u_1}{\varphi(\lambda_2)u_2}_\Yscr &= 
  (\overline\lambda_2+\lambda_1)\Ipdp{e_c(\overline\lambda_1)^*u_1}{e_c(\overline\lambda_2)^*u_2}_{\Hscr_c} =\\
  \Ipdp{\lambda_1 e_c(\overline\lambda_1)^*u_1}{e_c(\overline\lambda_2)^*u_2}_{\Hscr_c} &+ 
    \Ipdp{e_c(\overline\lambda_1)^*u_1}{\lambda_2 e_c(\overline\lambda_2)^*u_2}_{\Hscr_c}.
\end{aligned}
\end{equation}
If we for $k = 1,2$ set
$$
\begin{aligned}
  \bbm{ x_k \\ u_k}  &:=  \bbm{ e_c(\overline\lambda_k)^*u_k \\ u_k}\quad \text{and}\quad
  \bbm{ z_k \\ y_k } := \bbm{A \& B \\ C \& D }_c \bbm{x_k \\ u_k } = 
    \bbm{ \lambda_k e_c(\overline\lambda_k)^* u_k \\ \varphi(\lambda_k) u_k },
\end{aligned}
$$
then \eqref{eq:isomkernelisom} can be expressed as 
\begin{equation}\label{eq:isomisom}
\begin{aligned}
  \Ipdp{u_1}{u_2}_\Uscr - \Ipdp{y_1}{y_2}_\Yscr &= \Ipdp{z_1}{x_2}_{\Hscr_c} +
    \Ipdp{x_1}{z_2}_{\Hscr_c},\\ 
  \bbm{z_k\\y_k}&=\SysNode_c\bbm{x_k\\u_k},
\end{aligned}
\end{equation}
for all $\sbm{x_k \\ u_k}=\sbm{ e_c(\overline\lambda_k)^*u_k \\ u_k}$, $k=1,2$.
If we formally extend the definition \eqref{eq:SodDefprel} of $\sbm{ A \& B \\ C \& D}_c$ to all of $\cD_0$ by taking linear combinations
(where at this stage $\sbm{A \& B \\ C \& D}_c$ may a priori be ill-defined, so that $\sbm{A \& B \\ C \& D}_c \sbm{ x \\ u}$ depends on the choice of linear combination 
$\sbm{x \\ u} =  \sum_{k=1}^N \sbm{e_c(\overline\lambda_k)^* u_k \\ u_k}$ chosen to represent $\sbm{x \\ u}$), then the identity
\eqref{eq:isomisom} continues to hold for all $\sbm{ x_1 \\ u_1}, \sbm{x_2 \\ u_2}$ in the span $\cD_0$.

We now show that this implies that $\SmallSysNode_c$ in \eqref{eq:SodDef} is well-defined and 
closable. Suppose that $x_n$, $u_n$, $z_n$, and $y_n$ are sequences such that
\begin{equation}   \label{hypothesis}
\begin{aligned}
  \bbm{z_n\\y_n}=\bbm{ A \& B \\ C \& D}_c \bbm{x_n\\u_n}&\to \bbm{z\\y} ~\text{in}~\bbm{\Hscr_c\\\Yscr} \quad \text{and}\\
    \bbm{x_n\\u_n}&\to\bbm{0\\0}  ~\text{in}~\bbm{\Hscr_c\\\Yscr}.  
\end{aligned}
\end{equation}
To establish that $\sbm{ A \& B \\ C \& D}_c$ is closable, we need to show that 
$\sbm{z\\ y}=\sbm{0\\0}$.  The special case where $\sbm{x_n \\ u_n} = \sbm{ 0 \\ 0}$ for all $n$ is exactly what is needed to see that $\sbm{A \& B \\ C \& D}_c$ is well-defined; in this way well-definedness and closability are simultaneously handled in a single argument.

Using \eqref{eq:isomisom} and the continuity of the inner product, the hypothesis  \eqref{hypothesis} implies that
$$
  -\|y\|_\Yscr^2=\Ipdp{0}{0}_\Uscr - \Ipdp{y}{y}_\Yscr = \Ipdp{z}{0}_{\Hscr_c} +
  \Ipdp{0}{z}_{\Hscr_c}=0,
$$
and so $y=0$. Applying \eqref{eq:isomisom} again, we now obtain that for all
$\sbm{x_2\\u_2}\in\dom{\SmallSysNode_c}$:
$$
  0=\Ipdp{0}{y_2}_\Uscr - \Ipdp{0}{u_2}_\Yscr = \Ipdp{z}{x_2}_{\Hscr_c} + \Ipdp{0}{z_2}_{\Hscr_c},
    \quad \sbm{z_2\\y_2}=\SmallSysNode_c\sbm{x_2\\u_2},
$$
so that $z\perp x_2$ for all $\sbm{x_2\\y_2}\in\dom{\SmallSysNode_c}$.
In particular, for every $\lambda\in\cplus$ and $u\in\Uscr$ we have that $\sbm{x_2\\u_2}:=\sbm{e_c(\overline\lambda)^*u\\u}\in\dom{\SmallSysNode_c}$ and 
$$
  0=\Ipdp{z}{e_c(\overline\lambda)^*u}_{\Hscr_c} = \Ipdp{z(\overline\lambda)}{u}_\Uscr,\quad \lambda\in\cplus,\,u\in\Uscr,
$$
and therefore $z(\overline\lambda) = 0$ for all $\lambda\in\cplus$. We conclude that both $z$ and $y$ are zero as needed to complete the proof.
\end{proof}

From now on, we let $\SmallSysNode_c$ denote the \emph{closure} of the linear 
operator determined by \eqref{eq:SodDef}.

\begin{thm}\label{thm:firstmodel}
The operator $\SmallSysNode_c$ is an energy-preserving system node with input 
space $\Uscr$, state space $\Hscr_c$, and output space $\Yscr$. Denoting the 
main and control operators of $\SmallSysNode_c$ by $A_c$ and $B_c$, respectively, we obtain that
\begin{equation}\label{eq:isomeval}
	(\alpha-A_c|_{\Hscr_c})^{-1}B_c=e_c(\overline\alpha)^*,\quad \alpha \in\cplus.
\end{equation}
In addition, $\SmallSysNode_c$ is controllable:
$$
	\cspn\set{(\alpha-A_c|_{\Hscr_c})^{-1}B_cu\mid u\in\Uscr,\,\alpha \in\cplus}=\Hscr_c,
$$
and $\SmallSysNode_c$ realizes $\varphi$: 
$$
  \bbm{C_c\&D_c}\bbm{(\alpha-A_c|_{\Hscr_c})^{-1}B_c\\1}=\varphi(\alpha), \quad\alpha \in\cplus.
$$
\end{thm}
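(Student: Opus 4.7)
The plan is to invoke Lemma \ref{lem:dissmaxpass} to upgrade $\SmallSysNode_c$ from the closed, scattering-isometric operator produced by Lemma \ref{lem:contrclosable} to a passive system node; since $\SmallSysNode_c$ is scattering-isometric, Definition \ref{def:scattpass} then delivers the energy-preservation claim as well. The only hypothesis of Lemma \ref{lem:dissmaxpass} that is not already in hand is density of the range of $\sbm{\sbm{1&0}-\sbm{A\&B_c}\\\sbm{0&\sqrt 2}}$ on $\dom{\SmallSysNode_c}$ in $\sbm{\Hscr_c\\\Uscr}$, and this is the main obstacle. I will check it by working on the core $\cD_0$ from \eqref{eq:SodDef}: on a generating vector $\sbm{e_c(\overline\lambda)^*u\\u}$ the operator produces $\sbm{(1-\lambda)e_c(\overline\lambda)^*u\\\sqrt 2\,u}$. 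If $\sbm{f\\v}\in\sbm{\Hscr_c\\\Uscr}$ is orthogonal to all such vectors, then the reproducing property $\Ipdp{e_c(\overline\lambda)^*u}{f}_{\Hscr_c}=\Ipdp{u}{f(\overline\lambda)}_\Uscr$ and the arbitrariness of $u\in\Uscr$ force
\begin{equation*}
f(\mu)=\frac{\sqrt 2\,v}{\mu-1}\quad\text{for every }\mu\in\cplus.
\end{equation*}
Since $\Hscr_c\subset H^2(\cplus;\Uscr)$ contractively by Theorem \ref{T:operatorrange}, every element of $\Hscr_c$ is holomorphic on all of $\cplus\ni 1$, so the prospective pole at $\mu=1$ forces $v=0$ and then $f=0$.

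With $\SmallSysNode_c$ confirmed as an energy-preserving system node, Lemma \ref{lem:cplusres} yields $\cplus\subset\res{A_c}$. For every $\alpha\in\cplus$ and $u\in\Uscr$, the vector $\sbm{e_c(\overline\alpha)^*u\\u}$ lies in $\dom{\SmallSysNode_c}$, so the decomposition \eqref{eq:SSplitResDom} provides
\begin{equation*}
e_c(\overline\alpha)^*u=x_0+(\alpha-A_c|_{\Hscr_c})^{-1}B_cu\quad\text{with some }x_0\in\dom{A_c}.
\end{equation*}
Applying $\sbm{A\&B_c}$ to both sides via \eqref{eq:SSplitRes} and comparing with the defining identity $\sbm{A\&B_c}\sbm{e_c(\overline\alpha)^*u\\u}=\alpha e_c(\overline\alpha)^*u$ will give $(\alpha-A_c)x_0=0$, forcing $x_0=0$ because $\alpha\in\res{A_c}$; this establishes \eqref{eq:isomeval}. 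Comparing the output components in the same decomposition also yields $\whsmash\Dfrak_c(\alpha)u=\varphi(\alpha)u$, so $\SmallSysNode_c$ realizes $\varphi$. Finally, controllability is immediate from Remark \ref{rem:kernelsdense}: the set $\set{(\alpha-A_c|_{\Hscr_c})^{-1}B_cu\mid\alpha\in\cplus,\,u\in\Uscr}$ coincides with the kernel-function set $\set{K_c(\cdot,\overline\alpha)u\mid\alpha\in\cplus,\,u\in\Uscr}$, whose span is dense in $\Hscr_c$.
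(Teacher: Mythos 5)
Your proposal is correct and follows essentially the same route as the paper: Lemma \ref{lem:dissmaxpass} applied to the closed scattering-isometric operator from Lemma \ref{lem:contrclosable}, with the density of the range checked on the core $\cD_0$ by the same orthogonal-complement computation (your ``pole at $\mu=1$'' observation is just the paper's evaluation of $(\overline\lambda-1)x_2(\overline\lambda)=\sqrt2 u_2$ at $\lambda=1$), followed by the identification \eqref{eq:isomeval} and density of kernel functions via Remark \ref{rem:kernelsdense}. Your derivation of \eqref{eq:isomeval} through the splitting \eqref{eq:SSplitResDom} and injectivity of $\alpha-A_c$ is a harmless variant of the paper's direct rearrangement of $A_c|_{\Hscr_c}e_c(\overline\alpha)^*u+B_cu=\alpha e_c(\overline\alpha)^*u$.
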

\begin{proof}
We use Lemma \ref{lem:dissmaxpass} to prove that $\SmallSysNode_c$ is a passive 
system node. By Lemma \ref{lem:contrclosable}, $\SmallSysNode_c$ is closed, and it suffices to show that the following subspace of the range of $\sbm{\sbm{1&0}- \sbm{A_c \& B_c} \\\sbm{0&1}}$ is 
dense in $\sbm{\Hscr_c\\\Uscr}$:
$$
\begin{aligned}
  \Rscr :&= \spn\set{\bbm{\bbm{1&0}-\bbm{A_c \& B_c}\\\bbm{0&\sqrt2}}\bbm{e_c(\overline\lambda)^*u\\u}\biggmid
  \lambda\in\cplus,\,u\in\Uscr} \\
  &= \spn\set{\bbm{(1-\lambda)e_c(\overline\lambda)^*u\\\sqrt2u}\biggmid \lambda\in\cplus,\,u\in\Uscr}.
\end{aligned}
$$
This space is indeed dense, since
$$
\begin{aligned}
	\bbm{x_2\\u_2}\in \bbm{\Hscr_c\\\Uscr}\ominus \Rscr\quad &\Longleftrightarrow\quad
	\forall\lambda\in\cplus,\,u\in\Uscr:\\ &\qquad\qquad \Ipdp{x_2}{(1-\lambda)e_c(\overline\lambda)^*u}_{\Hscr_c}+\Ipdp{u_2}{\sqrt2u}_\Uscr=0\\
	& \Longleftrightarrow\quad \forall\lambda\in\cplus:~(\overline\lambda-1)x_2(\overline\lambda)=\sqrt2u_2.
\end{aligned}
$$
Choosing $\lambda=1$ yields that $u_2=0$ and hence $x_2(\lambda)=0$ for all 
$\lambda\in\cplus\setminus\set1$. Since $x_2$ is holomorphic and thus continuous, 
also $x_2(1)=0$ as well, and hence $x_2$ is the zero function in $\cH_o$. We have established that $\SmallSysNode_c$ is a passive system node, which is moreover energy preserving due to Definition \ref{def:scattpass} and Lemma \ref{lem:contrclosable}.

By Lemma \ref{lem:cplusres}, we have $\cplus\subset\res{A|_{\Hscr_c}}$. Then \eqref{eq:isomeval} follows from \eqref{eq:SodDef} and Definition \ref{def:altsysnode}.3, since for every $\alpha \in\cplus$:
\begin{equation}\label{eq:isomBexp}
\begin{aligned}
  \bbm{A_c \& B_c} \bbm{e_c(\overline\alpha)^*u \\ u}&= \alpha e_c(\overline\alpha)^*u = 
  A_c|_{\Hscr_c} e_c(\overline\alpha)^*u+B_cu \\
  &\Longleftrightarrow\quad (\alpha-A_c|_{\Hscr_c} ) e_c(\overline\alpha)^*u=B_cu.
\end{aligned}
\end{equation}
In particular, $\SmallSysNode_c$ is controllable because 
$$\cspn\set{e_c(\overline\alpha)^*u \mid u\in\Uscr,\,\alpha\in\cplus}=\Hscr_c$$ 
by Remark \ref{rem:kernelsdense}.  Finally, 
the transfer function of $\SmallSysNode_c$ evaluated at $\alpha\in\cplus$ is
\begin{equation}\label{eq:isomCD}
  \bbm{C_c \& D_c}\bbm{(\alpha-A_c|_{\Hscr_c})^{-1}B_cu\\u} =
  \bbm{C_c \& D_c}\bbm{e_c(\overline\alpha)^*u\\u}=\varphi(\alpha)u,
\end{equation}
for all $u\in\Uscr$.
\end{proof}

The domain of the main operator $A_c$ of $\SmallSysNode_c$ is defined abstractly in \eqref{eq:Adef}, but we do not know how to characterize $\dom{A_c}$ explicitly. The observation operator $C_c$ is defined in \eqref{eq:Cdef}, but we have no explicit formula for the action of $C_c$ on generic elements of $\dom{A_c}$ either. These two shortcomings will cause us significant difficulties later.

\subsection{Uniqueness up to unitary similarity}

We now prove that every controllable energy-preserving realization of $\varphi$ 
is unitarily similar to $\SmallSysNode_c$;  this justifies the terminology {\em canonical functional-model} system node for $\SmallSysNode_c$.

\begin{thm}\label{thm:firstsimilar}
Let $\varphi\in\Sscr(\cplus;\Uscr,\Yscr)$ and let $\SmallSysNode$ be a controllable and energy preserving realization of 
$\varphi$ with state space $\Xscr$. Then the mapping $\Delta:\Hscr_c\to\Xscr$ defined by
\begin{equation}\label{eq:firstevalprime}
	\Delta e_c(\overline \lambda)^*u := (\lambda-A|_{\Xscr})^{-1}Bu,\quad 
	\lambda\in\cplus,\,u\in\Uscr,
\end{equation}
extends by linearity and limit-closure to a unitary operator $\Hscr_c\to\Xscr$.
Moreover, $\Delta$ \emph{intertwines} $\SmallSysNode$ with $\SmallSysNode_c$:
\begin{equation}\label{eq:contrintertw}
\begin{aligned}
  \dom\SmallSysNode &= \bbm{\Delta&0\\0&1_\Uscr}\dom{\SmallSysNode_c}\quad\text{and}\\
  \SysNode\bbm{\Delta&0\\0&1_\Uscr} &= \bbm{\Delta&0\\0&1_\Yscr}\SysNode_c,
\end{aligned}
\end{equation}
so that $\SmallSysNode$ and $\SmallSysNode_c$ are unitarily similar.
\end{thm}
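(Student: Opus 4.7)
The plan is to define $\Delta$ on the dense subspace $\spn\set{e_c(\overline\lambda)^*u \mid \lambda\in\cplus,\,u\in\Uscr}$ of $\Hscr_c$ (dense by Remark \ref{rem:kernelsdense}) by the formula \eqref{eq:firstevalprime}, show that it is isometric there, extend by continuity, and then verify that the extension is surjective and intertwines the two system nodes. The key inner-product computation is the following: using the reproducing-kernel property and the definition \eqref{eq:isomkernel} of $K_c$, one has
\begin{equation*}
\Ipdp{e_c(\overline\lambda_1)^*u_1}{e_c(\overline\lambda_2)^*u_2}_{\Hscr_c}
= \frac{\Ipdp{u_1}{u_2}_\Uscr - \Ipdp{\varphi(\lambda_1)u_1}{\varphi(\lambda_2)u_2}_\Yscr}{\lambda_1+\overline{\lambda_2}},
\end{equation*}
and on the $\Xscr$ side, since $\SmallSysNode$ realizes $\varphi$, we use \eqref{eq:SSplitRes} to see that $\SysNode\sbm{(\lambda_k-A|_\Xscr)^{-1}Bu_k\\u_k}=\sbm{\lambda_k(\lambda_k-A|_\Xscr)^{-1}Bu_k\\\varphi(\lambda_k)u_k}$. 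Then the polarized scattering-isometric identity \eqref{eq:scattdissoppolar} applied to this pair yields exactly the same value for $\Ipdp{(\lambda_1-A|_\Xscr)^{-1}Bu_1}{(\lambda_2-A|_\Xscr)^{-1}Bu_2}_\Xscr$; here one uses that $\lambda_1+\overline{\lambda_2}\neq 0$ since both $\lambda_k\in\cplus$.

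Having verified this equality of Gram matrices, $\Delta$ is well-defined and isometric on the dense span and extends to an isometry $\Hscr_c\to\Xscr$. Its range contains $\spn\set{(\lambda-A|_\Xscr)^{-1}Bu\mid\lambda\in\cplus,\,u\in\Uscr}$, which is dense in $\Xscr$ because $\SmallSysNode$ is controllable (and $\cplus\subset\res A$ by Lemma \ref{lem:cplusres}). Hence $\Delta$ is unitary.

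Next I verify the intertwining on the core. For each $\lambda\in\cplus$ and $u\in\Uscr$, direct computation using \eqref{eq:SodDefprel} and \eqref{eq:firstevalprime} gives
\begin{equation*}
\SysNode\bbm{\Delta & 0 \\ 0 & 1_\Uscr}\bbm{e_c(\overline\lambda)^*u\\u} = \bbm{\lambda(\lambda-A|_\Xscr)^{-1}Bu\\\varphi(\lambda)u} = \bbm{\Delta & 0 \\ 0 & 1_\Yscr}\SysNode_c\bbm{e_c(\overline\lambda)^*u\\u},
\end{equation*}
so the intertwining \eqref{eq:contrintertw} holds on $\cD_0$ (and by linearity on all of $\cD_0$). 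To promote this to a full intertwining of closed operators with matching domains, I invoke Lemma \ref{lem:contrdense} applied with $\Omega=\cplus$ (which has cluster points in $\cplus$): the span in \eqref{eq:densespan2} is a core for the controllable system node $\SmallSysNode$. By construction, $\cD_0$ is a core for $\SmallSysNode_c$ (Lemma \ref{lem:contrclosable} together with the closure taken in \eqref{eq:SodDef}), and $\sbm{\Delta & 0 \\ 0 & 1_\Uscr}$ maps $\cD_0$ bijectively onto this core for $\SmallSysNode$.

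The last step is to close up: since $\sbm{\Delta & 0 \\ 0 & 1_\Uscr}$ and $\sbm{\Delta & 0 \\ 0 & 1_\Yscr}$ are bounded and boundedly invertible, the intertwining on the two cores extends to the closures, yielding both the domain equality $\dom\SmallSysNode = \sbm{\Delta & 0 \\ 0 & 1_\Uscr}\dom\SmallSysNode_c$ and the operator identity in \eqref{eq:contrintertw}. The main technical obstacle I anticipate is precisely this domain-matching step: one must be sure that the core identified by Lemma \ref{lem:contrdense} is the \emph{same} set of vectors that arises from $\cD_0$ under $\sbm{\Delta & 0 \\ 0 & 1_\Uscr}$, but that is immediate from \eqref{eq:firstevalprime}, so the argument should go through cleanly.
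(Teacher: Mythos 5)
Your proposal is correct and follows essentially the same route as the paper: the same Gram-matrix computation via the polarized scattering-isometric identity \eqref{eq:scattdissoppolar} to get a well-defined isometry on the span of kernel functions, controllability for surjectivity, verification of the intertwining on $\cD_0$, and then Lemma \ref{lem:contrdense} to match cores. The only cosmetic difference is that the paper carries out the final closure step by explicit approximating sequences in both directions (into and onto), whereas you invoke the equivalent abstract fact that closed operators agreeing on matching cores under a bounded invertible similarity have unitarily similar closures.
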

\begin{proof}
The key to the proof is the following consequence of \eqref{eq:SSplitResDom} and
\eqref{eq:SSplitRes}: for all $\lambda\in\res {A}$ we have that $\sbm{(\lambda-A|_\Xscr)^{-1}Bu\\u}\in\dom{\SmallSysNode}$ and
\begin{equation}\label{eq:Sisom}
  \SysNode\bbm{(\lambda-A|_\Xscr)^{-1}Bu\\u} = 
    \bbm{\lambda(\lambda-A|_\Xscr)^{-1}Bu\\\varphi(\lambda)u}.
\end{equation}

Since $\SmallSysNode$ is assumed to be energy preserving, it follows from 
Lemma \ref{lem:cplusres} that $\cplus\subset\res{A}$ and that \eqref{eq:scattdissoppolar} 
is satisfied. According to \eqref{eq:firstevalprime}, we have for all $\lambda,\mu\in\cplus$ and all $u,v\in\Uscr$ that
\begin{equation*}\label{eq:Deltaccontr1}
\begin{aligned}
  (\mu+\lambda)\Ipdp{\Delta e_c(\overline\lambda)^*u}{\Delta e_c(\mu)^*v}_{\Xscr} \qquad &= \\ 
  (\mu+\lambda) \Ipdp{(\lambda-A|_\Xscr)^{-1}Bu}
  {(\overline\mu-A|_\Xscr)^{-1}Bv}_{\Xscr} \qquad &= \\
  \Ipdp{\lambda(\lambda-A|_\Xscr)^{-1}Bu}
  {(\overline\mu-A|_\Xscr)^{-1}Bv}_{\Xscr} \qquad &\\
    \qquad + \Ipdp{(\lambda-A|_\Xscr)^{-1}Bu} 
    {\overline\mu(\overline\mu-A|_\Xscr)^{-1}Bv}_{\Xscr}&.
\end{aligned}
\end{equation*}
This is by \eqref{eq:scattdissoppolar} and \eqref{eq:Sisom} equal to
\begin{equation*}\label{eq:Deltaccontr2}
  \Ipdp{u}{v}_\Uscr - \Ipdp{\varphi(\lambda)u}{\varphi(\overline\mu)v}_\Yscr
  = (\mu+\lambda)\Ipdp{e_c(\overline\lambda)^*u}{e_c(\mu)^*v}_{\Hscr_c},
\end{equation*}
where we used \eqref{eq:isomkernelisom} in the last step. We can conclude that
\begin{equation}\label{eq:DeltaUnitPrel}
 \Ipdp{\Delta e_c(\overline\lambda)^*u}{\Delta e_c(\mu)^*v}_{\Xscr} = 
    \Ipdp{ e_c(\overline\lambda)^*u}{ e_c(\mu)^*v}_{\Hscr_c}, \quad \overline\lambda,\mu\in\cplus,\,u,v\in\Uscr.
\end{equation}

Taking linear combinations, we obtain from \eqref{eq:DeltaUnitPrel} that for all $\overline\lambda_k\in\cplus$ and $u_k\in\Uscr$:
\begin{equation}\label{eq:DeltacIsom}
  \left\|\Delta \sum_{k=1}^n e_c(\overline{\lambda_k})^*u_k\right\|_{\Xscr}^2 =
  \left\|\sum_{k=1}^n e_c(\overline{\lambda_k})^*u_k\right\|_{\Hscr_c}^2.
\end{equation}

Denote $\Escr_0 := \spn\set{e_c(\overline\lambda)^*u\mid \overline\lambda \in \cplus,\, u \in \Uscr}$, equipped with the norm of $\Hscr_c$.  Then each $x \in \Escr_0$ can be written as a sum $x
= \sum_{k=1}^n e_c(\overline{\lambda_k})^*u_k$, and \eqref{eq:DeltacIsom} shows that the
value of $\Delta \sum_{k=1}^n e_c(\overline{\lambda_k})^*u_k$ is independent of
the particular linear combination $\sum_{k=1}^n e_c(\overline{\lambda_k})^*u_k$ that is
used to represent $x$.  Thus, $\Delta$, which was originally defined
only for kernel functions $e(\overline\lambda)^*u$ with $\overline\lambda \in
\cplus$ and $u \in \Uscr$, has a unique extension to a linear operator
$\Escr_0 \to \Xscr$, which we still denote by $\Delta$. Due to \eqref{eq:DeltacIsom}, this operator is isometric,
and by \eqref{eq:firstevalprime} the image of $\Escr_0$ under this operator is dense in $\Xscr$, since
$\SmallSysNode$ is assumed to be controllable.  As $\Escr_0$ is dense
in $\Hscr_c$, we may further extend $\Delta$ to a unitary operator
$\Hscr_c \to \Xscr$, which we still denote by $\Delta$.

Now we prove that $\Delta$ intertwines $\SmallSysNode$ with $\SmallSysNode_c$. It follows from \eqref{eq:SSplitResDom} that $\sbm{\Delta&0\\0&1}$ maps $\Dscr_0$, defined in \eqref{eq:SodDef}, into $\dom{\SmallSysNode}$. By \eqref{eq:firstevalprime}, \eqref{eq:Sisom}, and \eqref{eq:SodDefprel}, the following equality holds 
for all $\lambda\in\cplus$ and $u\in\Uscr$:
\begin{equation}\label{eq:Sisom2}
\begin{aligned}
  \SysNode\bbm{\Delta e_c(\overline\lambda)^*u\\u} &= \SysNode\bbm{(\lambda-A|_\Xscr)^{-1}Bu\\u} 
  \\  &= \bbm{\lambda(\lambda-A|_\Xscr)^{-1}Bu\\\varphi(\lambda)u} \\
  &= \bbm{ \lambda \Delta e_c(\overline\lambda)^* u\\\varphi(\lambda)u} = 
  \bbm{\Delta \bbm{A_c \& B_c} \\ C_c \& D_c} \bbm{e_c(\overline\lambda)^* u\\u},
\end{aligned}
\end{equation}
which shows that $\SmallSysNode\sbm{\Delta&0\\0&1_\Uscr}$ and 
$\sbm{\Delta&0\\0&1_\Yscr}\SmallSysNode_c$ coincide on $\Dscr_0$. Furthermore, $\Dscr_0$ 
is dense in $\dom{\SmallSysNode_c}$, equipped with the graph norm, by Lemma \ref{lem:contrdense}.

We next show how it follows from the above that
\begin{equation}\label{eq:enpresintertproof}
\begin{aligned}
  \bbm{\Delta x\\u} &\in \dom{\SmallSysNode}~\text{for all}~\bbm{x\\u}\in\dom{\SmallSysNode_c},\quad\text{and}\\
  &\SysNode\bbm{\Delta x\\u} = \bbm{\Delta \bbm{A_c \& B_c} \\ C_c \& D_c }\bbm{x\\u},\quad \bbm{x\\u}\in\dom{\SmallSysNode_c}.
\end{aligned}
\end{equation}
For every $\sbm{x\\u}\in\dom{\SmallSysNode_c}$ there by the definition 
of $\SmallSysNode_c$ exists a sequence $\sbm{x_n\\u_n}\in \Dscr_0$, such that 
$\sbm{x_n\\u_n}\to\sbm{x\\u}$ in $\sbm{\Hscr_c\\\Uscr}$ and $\SmallSysNode_c\sbm{x_n\\u_n}\to\SmallSysNode_c\sbm{x\\u}$ in $\sbm{\Hscr_c\\\Yscr}$. 
By the continuity of $\Delta$ and the fact that $\SmallSysNode\sbm{\Delta&0\\0&1_\Uscr}$ and 
$\sbm{\Delta&0\\0&1_\Yscr}\SmallSysNode_c$ coincide on $\Dscr_0$, this implies that 
$$
  \bbm{\Delta \bbm{A_c \& B_c} \\ C_c \& D_c }\bbm{x_n\\u_n} =
  \SysNode\bbm{\Delta x_n\\u_n} \to \bbm{\Delta \bbm{A_c \& B_c} \\ C_c \& D_c }\bbm{x\\u}.
$$
Using the continuity of $\Delta$ again, we obtain that $\sbm{\Delta x_n\\u_n}\in\dom 
{\SmallSysNode}$ converges to $\sbm{\Delta x\\u}$ in $\sbm{\Xscr\\\Uscr}$, and so 
by the closedness of $\SmallSysNode$, we have \eqref{eq:enpresintertproof}.

It remains to prove that $\sbm{\Delta&0\\0&1}$ maps $\dom{\SmallSysNode_c}$ \emph{onto} $\dom{\SmallSysNode}$. As a consequence of \eqref{eq:firstevalprime} and Lemma \ref{lem:contrdense}, $\sbm{\Delta&0\\0&1}\Dscr_0$ is dense in $\dom{\SmallSysNode}$ with the graph norm. Hence, for every $\sbm{w\\u}\in\dom{\SmallSysNode}$, we can find a sequence $\sbm{w_n\\u_n}\in\sbm{\Delta&0\\0&1}\Dscr_0$ that converges to $\sbm{w\\u}$ in the graph norm of $\SmallSysNode$. Writing $x_n:=\Delta^{-1}w_n$, we obtain from \eqref{eq:enpresintertproof} and the closedness of $\SmallSysNode_c$ that $\sbm{\Delta^{-1}w_n\\u_n}\to\sbm{\Delta^{-1}w\\u}$ in the graph norm of $\SmallSysNode_c$. (The details are very similar to the preceding paragraph.) Thus, for every $\sbm{w\\u}\in\dom{\SmallSysNode}$, we have $\sbm{x\\u}:=\sbm{\Delta^{-1}w\\u}\in\dom{\SmallSysNode_c}$ and $\sbm{w\\u}=\sbm{\Delta x\\u}$.
\end{proof}

We would like to obtain explicit formulas for the main, control, and observation operators of $\SmallSysNode_c$ acting on generic elements of $\Hscr_c$, and similarly for the adjoint. It turns out that this task is much easier for $\SmallSysNode_c^*$ than for $\SmallSysNode_c$, so we start with the dual. This choice is also motivated by Theorem \ref{thm:msw}.

\subsection{Explicit formulas for the system-node operators of the dual}  \label{S:contexp}

In reproducing-kernel Hilbert spaces, the existence of an explicit formula 
for the action of a given operator on kernel functions usually means that there is an equally explicit formula for the action of the adjoint on a generic functional element of the reproducing kernel Hilbert space.    This phenomenon continues to hold in the present unbounded setting, as illustrated in the following proposition. We refer the reader back to Subsection \ref{sec:dualnode} for the definition of dual system node.

The reader will observe 
that many of the formulas in this section have apparent singularities at some points of the form $0/0$.
Since the functions are holomorphic (or conjugate holomorphic), the singularities are in fact removable
and the formulas continue to hold when one applies holomorphic continuation to evaluate at such exceptional points.

By the general principles explained in Subsection \ref{sec:dualnode}, we know that $\sbm{ A \& B \\ C \& D}_c^*$ is a system node on 
$(\cY, \cX, \cU)$. We now compute this dual system node.

\begin{thm}\label{thm:contrdual}
The dual system node of $\SmallSysNode_c$ is the operator 
$$ 
\bbm{A \& B \\ C \& D}_c^* : \bbm{\Hscr_o\\\Yscr}\supset\dom{\SmallSysNode_c^*}\to\bbm{\Hscr_o\\\Uscr}
$$ 
given by
\begin{align}
  \SysNode_c^* &: \bbm{x\\y}\mapsto\bbm{z\\u}, \quad\text {where}\label{eq:isomDual}\\
   z(\mu) &:= \mu x(\mu)+\wtsmash\varphi(\mu)y-u,\quad \mu\in\cplus,\quad \text{and}\quad~ \label{defz} \\
   u &:= \lim_{\re\eta\to\infty} \eta x(\eta)+\wtsmash\varphi(\eta)y,\quad\text{with domain}\label{defu} \\
   \dom{\SysNode_c^*} &:= \set{\bbm{x\\y}\in\bbm{\Hscr_c\\\Yscr}\bigmid 
    \exists u\in\Uscr:~z\in\Hscr_c~\text{in \eqref{defz}}}. \label{defdom}
\end{align}
For every $\sbm{x\\y}\in\dom{\SmallSysNode_c^*}$, the $u\in\Uscr$ such that 
$z$ in \eqref{defz} lies in $\Hscr_c$ 
is unique and it is given by \eqref{defu}.   
\end{thm}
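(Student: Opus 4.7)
My plan is to define the candidate operator $\Tscr : \sbm{\Hscr_c \\ \Yscr} \supset \dom{\Tscr} \to \sbm{\Hscr_c \\ \Uscr}$ by \eqref{defz}--\eqref{defdom} and prove that $\Tscr = \SmallSysNode_c^*$ by showing both inclusions. First, I would verify well-definedness of $\Tscr$: if two vectors $u_1, u_2 \in \Uscr$ both render $z_1, z_2 \in \Hscr_c$ via \eqref{defz}, then $z_1 - z_2 = u_2 - u_1$ is a constant function in $\Hscr_c$, and Proposition \ref{prop:xtendstozero} forces $u_1 = u_2$. The same proposition yields $z(\mu) \to 0$ in $\Uscr$ as $\re\mu\to\infty$, so rearranging \eqref{defz} gives the limit formula \eqref{defu} for $u$.

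Next, the plan is to establish $\Tscr \subset \SmallSysNode_c^*$ by testing the adjoint pairing on the dense core $\Dscr_0$ from \eqref{eq:SodDef}. For arbitrary $\sbm{x\\y} \in \dom{\Tscr}$ with $\Tscr\sbm{x\\y} = \sbm{z\\u}$ and $\sbm{e_c(\overline\lambda)^*v\\v} \in \Dscr_0$, the action \eqref{eq:SodDefprel} of $\SmallSysNode_c$ on kernel elements together with the reproducing property of $K_c$ yields
$$
\ipd{\SysNode_c \bbm{e_c(\overline\lambda)^*v\\v}}{\bbm{x\\y}}_{\sbm{\Hscr_c\\\Yscr}} = \ipd{v}{\overline\lambda\, x(\overline\lambda) + \varphi(\lambda)^*y}_{\Uscr},
$$
while a parallel calculation gives
$$
\ipd{\bbm{e_c(\overline\lambda)^*v\\v}}{\bbm{z\\u}}_{\sbm{\Hscr_c\\\Uscr}} = \ipd{v}{z(\overline\lambda) + u}_{\Uscr}.
$$
These two agree for every $v\in\Uscr$ exactly when $z(\overline\lambda) = \overline\lambda x(\overline\lambda) + \varphi(\lambda)^*y - u$ for all $\lambda \in \cplus$, which is \eqref{defz} after substituting $\mu = \overline\lambda$ and using $\varphi(\overline\mu)^* = \wtsmash\varphi(\mu)$. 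Since $\SmallSysNode_c$ is by construction the graph-closure of its restriction to $\Dscr_0$, the adjoint identity extends from $\Dscr_0$ to all of $\dom{\SmallSysNode_c}$, so $\sbm{x\\y} \in \dom{\SmallSysNode_c^*}$ and $\SmallSysNode_c^* \sbm{x\\y} = \Tscr\sbm{x\\y}$.

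For the reverse inclusion $\SmallSysNode_c^* \subset \Tscr$, take any $\sbm{x\\y} \in \dom{\SmallSysNode_c^*}$ and set $\sbm{z\\u} := \SmallSysNode_c^*\sbm{x\\y}$, which automatically has $z \in \Hscr_c$. Running the previous reproducing-kernel computation in reverse, the defining adjoint identity tested against every $\sbm{e_c(\overline\lambda)^*v\\v} \in \Dscr_0$ with $v \in \Uscr$ arbitrary forces $z(\overline\lambda) + u = \overline\lambda\, x(\overline\lambda) + \varphi(\lambda)^*y$ for each $\lambda \in \cplus$, i.e., $z(\mu) = \mu x(\mu) + \wtsmash\varphi(\mu) y - u$ for every $\mu\in\cplus$. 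Hence $\sbm{x\\y} \in \dom{\Tscr}$ with $\Tscr\sbm{x\\y} = \SmallSysNode_c^*\sbm{x\\y}$, completing the two-sided containment; the final claim about uniqueness and the limit formula is then the well-definedness step applied to the resulting $z$.

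The main obstacle I anticipate is conceptual rather than computational: the formula \eqref{defz} must be interpreted as a pointwise (and hence holomorphic) identity on all of $\cplus$, not merely a distributional or boundary identity, and separately the $\Hscr_c$-membership of $z$ has to be tracked. The reproducing-kernel structure dissolves this subtlety neatly, because probing with $e_c(\overline\lambda)^*v$ at each $\lambda \in \cplus$ directly produces the identity at the point $\mu = \overline\lambda$, so no limiting or extension argument in $\mu$ is required. A secondary care-point is the density of $\Dscr_0$ in $\dom{\SmallSysNode_c}$ with respect to the graph norm of $\SmallSysNode_c$, but this is immediate from the fact that $\SmallSysNode_c$ was defined in Lemma \ref{lem:contrclosable} precisely as the closure of its densely defined restriction to $\Dscr_0$.
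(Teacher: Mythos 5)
Your proposal is correct and follows essentially the same route as the paper: the paper characterizes $\dom{\SmallSysNode_c^*}$ as the orthogonal complement of the closed span of the elementary graph elements $\sbm{e_c(\overline\lambda)^*v\\v\\\lambda e_c(\overline\lambda)^*v\\\varphi(\lambda)v}$, which is exactly your pairing identity tested on the core $\Dscr_0$, and both arguments then use the reproducing property to turn the orthogonality into the pointwise identity \eqref{defz} and Proposition \ref{prop:xtendstozero} to obtain uniqueness of $u$ and the limit formula \eqref{defu}. Your version merely spells out the two inclusions and the closure step more explicitly.
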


\begin{proof}
We combine the graph characterization
$$
  \bbm{\Hscr_c\\\Uscr\\\Hscr_c\\\Yscr}\ominus \left(\bbm{-\bbm{1&0\\0&1}\\
  \SysNode_c}\dom{\SysNode_c}\right) =
  \bbm{\SysNode_c^*\\\bbm{1&0\\0&1}}\dom{\SysNode_c^*}
$$
of the adjoint of $\SmallSysNode_c$ with the construction of $\SmallSysNode_c$ and thus obtain that $\sbm{x\\y}\in\dom{\SmallSysNode_c^*}$ and $\sbm{z\\u}=\SmallSysNode_c^*\sbm{x\\y}$ if and only if
$$
\begin{aligned}
  \bbm{z\\u\\x\\y} \in &\bbm{\Hscr_c\\\Uscr\\\Hscr_c\\\Yscr}\ominus 
    \cspn\set{\bbm{-e_c(\mu)^*v\\-v\\\overline\mu e_c(\mu)^*v\\\varphi(\overline\mu)v}, \quad v\in\Uscr,\,\mu\in\cplus}\\
  &\quad=\bigcap_{\mu\in\cplus} \Ker{\bbm{-e_c(\mu)&-1&\mu e_c(\mu)&\varphi(\overline\mu)^*}}.
\end{aligned}
$$
Thus a pair $\sbm{x\\y}\in\sbm{\Hscr_c\\\Yscr}$ lies in $\dom{\SmallSysNode_c^*}$ if and only if there exist 
$z\in\Hscr_c$ and $u\in\Uscr$ such that
$-z(\mu)-u+\mu x(\mu)+\wtsmash\varphi(\mu)y=0$ for all $\mu\in\cplus$,
i.e., $z$ is given by \eqref{defz}.
When such a $u$ exists, we have
$$
  \lim_{\re\eta\to\infty} z(\eta) = \eta x(\eta)+\varphi(\overline\eta)^*y-u=0
$$ 
by Proposition \ref{prop:xtendstozero}, and hence $u$ is given by \eqref{defu}.
\end{proof}

With the formulas in Theorem \ref{thm:contrdual} as a starting point, it is possible to compute the main operator $A_c^d = A_c^*$, control operator $B_c^d = C_c^* \in \cB(\cY, \cH^d_{c,-1})$, and observation operator 
 $C_c^d = B_c^* \in \cB(\cH^d_{c,1}, \cU)$, of $\SmallSysNode_c^*$ explicitly.

\begin{prop}   \label{P:dualAC}
The domain $\dom{A_c^*}=\cH^d_{c,1}$ of $A_c^*=A_c^d$ is given by
\begin{equation}   \label{domAc*}
  \dom{A_c^*} = \{ x \in \cH_c \mid \exists u \in \cU:~ \mu \mapsto \mu x(\mu) - u \in \cH_c \}.
\end{equation}
Moreover, when $x \in \dom{A_c^*}$, the associated vector $u$ can be recovered from $x$ using the formula $u = \lim_{\re\eta \to \infty} \eta x(\eta)$, and
\begin{equation}  \label{AC}
  (A_c^* x)(\mu) = \mu x(\mu) - \lim_{\re\eta \to \infty}  \eta x(\eta), \quad
 C_c^d x = B_c^* x = \lim_{\re\eta \to \infty}   \eta x(\eta).
\end{equation}
\end{prop}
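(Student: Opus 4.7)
The plan is to read off all three formulas directly from Theorem~\ref{thm:contrdual} by specializing the formula for $\SmallSysNode_c^*$ to arguments with zero $\Yscr$-component and then invoking the abstract identities \eqref{eq:Adef}, \eqref{eq:Cdef} defining the main and observation operators of any system node, together with the identifications $A_c^d=A_c^*$, $C_c^d=B_c^*$ from Proposition~\ref{prop:dualsysnode}.

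First, I would establish the domain description \eqref{domAc*}. By \eqref{eq:Adef} applied to the system node $\SmallSysNode_c^*$ (which is indeed a system node on $(\Yscr,\Hscr_c,\Uscr)$ by Proposition~\ref{prop:dualsysnode}.1), a vector $x\in\Hscr_c$ belongs to $\dom{A_c^*}=\dom{A_c^d}$ exactly when $\sbm{x\\0}\in\dom{\SmallSysNode_c^*}$. Setting $y=0$ in the characterization \eqref{defdom} of $\dom{\SmallSysNode_c^*}$, together with the formula \eqref{defz} for $z$, yields exactly \eqref{domAc*}. In this rigged-space language this says $\dom{A_c^*}=\Hscr^d_{c,1}$.

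Next I would extract the formula for $A_c^*x$ and the limit formula for $u$. For $x\in\dom{A_c^*}$, the first component of $\SmallSysNode_c^*\sbm{x\\0}=\sbm{z\\u}$ is, by definition of the main operator of a system node, precisely $A_c^*x$; hence by \eqref{defz},
\[
    (A_c^*x)(\mu)=\mu x(\mu)-u,\qquad \mu\in\cplus.
\]
To identify $u$ with the stated limit, I would use that $z\in\Hscr_c$ and apply Proposition~\ref{prop:xtendstozero} (the $\Hscr_c$-version), which gives $z(\eta)\to0$ in $\Uscr$ as $\re\eta\to\infty$. Substituting into $z(\eta)=\eta x(\eta)-u$ produces $u=\lim_{\re\eta\to\infty}\eta x(\eta)$, matching both \eqref{defu} with $y=0$ and the right-hand formulas in \eqref{AC}.

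Finally, for the observation operator, \eqref{eq:Cdef} applied to $\SmallSysNode_c^*$ says that $C_c^d x$ is the second component of $\SmallSysNode_c^*\sbm{x\\0}$, i.e.\ $C_c^d x=u$, which equals $\lim_{\re\eta\to\infty}\eta x(\eta)$ by the previous step. The identification $C_c^d=B_c^*$ (on $\Hscr_{c,1}^d=\dom{A_c^*}$) is exactly Proposition~\ref{prop:dualsysnode}.1. There is no substantive obstacle here beyond taking care that the limit defining $u$ truly exists for every $x\in\dom{A_c^*}$; this is where Proposition~\ref{prop:xtendstozero} is indispensable, since without the decay estimate \eqref{eq:HcZspeed} we would only know that a suitable $u$ exists, not that it is the pointwise limit of $\eta x(\eta)$.
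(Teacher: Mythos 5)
Your proposal is correct and follows essentially the same route as the paper: the paper's proof likewise observes that $x\in\dom{A_c^d}$ iff $\sbm{x\\0}\in\dom{\SmallSysNode_c^*}$ via \eqref{eq:Adef} and \eqref{eq:Cdef}, and then reads everything off from Theorem~\ref{thm:contrdual} with $y=0$. Your extra step re-deriving $u=\lim_{\re\eta\to\infty}\eta x(\eta)$ from Proposition~\ref{prop:xtendstozero} merely repeats what Theorem~\ref{thm:contrdual} already established, so it is harmless redundancy rather than a difference in method.
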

\begin{proof}
By Definition \ref{def:dualsysnode}, \eqref{eq:Adef}, and \eqref{eq:Cdef}, a function $x\in\Hscr_c$ lies in $\dom{A_c^d}$ if and only if $\sbm{x\\0}\in\dom{\SmallSysNode_c^*}$, and in this case 
$$
  \bbm{A_c^dx\\C_c^dx}=\SysNode_c^*\bbm{x\\0}.
$$
Comparing this to Theorem \ref{thm:contrdual} with $y=0$ gives the result.
\end{proof}

To get an explicit description of the $(-1)$-scaled rigged space (also called ``extrapolation space'') $\cH^d_{c,-1}$ we first need a formula for the resolvent of $A_c^*$.

\begin{prop}   \label{P:resolAc*}
Let $A_c^d = A_c^*$ be the main operator and $B_c^d = C_c^*$  be the control operator for  the dual system node 
$\sbm{A_c^d \& B_c^d \\ C_c^d \& D_c^d } = \sbm{ A \& B  \\ C \& D}_c^*$.  Then the resolvent of $A_c^*$ is given by 
\begin{equation}   \label{Ac*res}
\left( ( \overline\alpha - A_c^*)^{-1} x \right)(\mu) = \frac{ x(\mu) - x(\overline\alpha)}{\overline\alpha - \mu}, \quad \alpha, \mu \in {\mathbb C}^+,\, x \in \cH_c.
\end{equation}
Moreover, the following formulas hold:
\begin{align}  
\left(A_c^* (\overline\alpha - A_c^*)^{-1}x\right)(\mu) & = \frac{ \mu x(\mu) - \overline\alpha x(\overline\alpha)}{ \overline\alpha - \mu}, 
  \quad \alpha, \mu \in {\mathbb C}^+, \, x \in \cH_c,  \label{Ac*res1} \\
\left( (\overline\alpha - A_c^*|_{\cH_c})^{-1} C_c^* y \right) (\mu) & = \frac{\wtsmash\varphi(\mu) - \wtsmash\varphi(\overline\alpha)}{\overline\alpha - \mu} y,
  \quad \alpha, \mu \in {\mathbb C}^+,\, y\in\Yscr, \label{Ac*res2} \\
  B_c^* (\overline\alpha - A_c^*)^{-1}&= e_c(\overline\alpha),\quad \alpha\in\cplus.\label{Ac*res3}
\end{align}
\end{prop}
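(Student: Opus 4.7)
The plan is to first establish the resolvent formula \eqref{Ac*res} directly from the characterization of $\dom{A_c^*}$ in Proposition \ref{P:dualAC}, then derive \eqref{Ac*res1} and \eqref{Ac*res3} from it, and finally obtain \eqref{Ac*res2} through a separate argument that exploits the transfer function of the dual system node. The preliminary observation is that, by Lemma \ref{lem:cplusres}, passivity of $\SmallSysNode_c$ gives $\cplus\subset\res{A_c}$; since $\cplus$ is closed under conjugation and $\res{A_c^*}=\{\overline\beta\mid\beta\in\res{A_c}\}$, we also have $\cplus\subset\res{A_c^*}$, so $(\overline\alpha-A_c^*)^{-1}\in\Bscr(\Hscr_c)$ for every $\alpha\in\cplus$.

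Given $x\in\Hscr_c$, set $g:=(\overline\alpha-A_c^*)^{-1}x\in\dom{A_c^*}$. By Proposition \ref{P:dualAC} there exists $u\in\Uscr$ with $(A_c^*g)(\mu)=\mu g(\mu)-u$ for all $\mu\in\cplus$, so the defining equation $(\overline\alpha-A_c^*)g=x$ rearranges pointwise to $(\overline\alpha-\mu)g(\mu)=x(\mu)-u$ on all of $\cplus$. Evaluating at $\mu=\overline\alpha\in\cplus$ forces $u=x(\overline\alpha)$, which then yields \eqref{Ac*res} (the value at $\mu=\overline\alpha$ being interpreted by holomorphic continuation). Formula \eqref{Ac*res1} follows at once by plugging \eqref{Ac*res} into the algebraic identity $A_c^*(\overline\alpha-A_c^*)^{-1}=\overline\alpha(\overline\alpha-A_c^*)^{-1}-I$ and simplifying. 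For \eqref{Ac*res3}, Proposition \ref{P:dualAC} identifies $B_c^*(\overline\alpha-A_c^*)^{-1}x$ with $\lim_{\re\eta\to\infty}\eta g(\eta)$; I will split this as
$$\eta g(\eta)=\frac{\eta\,x(\eta)}{\overline\alpha-\eta}-\frac{\eta\,x(\overline\alpha)}{\overline\alpha-\eta},$$
observe that $\eta/(\overline\alpha-\eta)\to-1$ as $\re\eta\to\infty$, and use the bound $\|x(\eta)\|_\Uscr\le\|x\|_{\Hscr_c}/\sqrt{2\re\eta}$ from Proposition \ref{prop:xtendstozero} to see that the first summand vanishes while the second tends to $x(\overline\alpha)=e_c(\overline\alpha)x$.

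For \eqref{Ac*res2}, I will work in the dual system node. By Proposition \ref{prop:dualsysnode} together with Theorem \ref{thm:firstmodel}, the transfer function of $\SmallSysNode_c^*$ is $\whsmash\Dfrak_c^d(\lambda)=\varphi(\overline\lambda)^*=\wtsmash\varphi(\lambda)$ on $\cplus$. Fix $y\in\Yscr$ and put $f:=(\overline\alpha-A_c^*|_{\Hscr_c})^{-1}C_c^*y\in\Hscr_c$. Then $\sbm{f\\y}\in\dom{\SmallSysNode_c^*}$ by \eqref{eq:SSplitResDom}, and applying \eqref{eq:SSplitRes} to the dual system node with parameter $\overline\alpha$ gives $\SysNode_c^*\sbm{f\\y}=\sbm{\overline\alpha f\\\wtsmash\varphi(\overline\alpha)y}$. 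On the other hand, Theorem \ref{thm:contrdual} expresses this same output pointwise as $z(\mu)=\mu f(\mu)+\wtsmash\varphi(\mu)y-u$ in the first coordinate and $u$ in the second; matching the two descriptions yields $u=\wtsmash\varphi(\overline\alpha)y$ and the functional identity $(\overline\alpha-\mu)f(\mu)=\bigl(\wtsmash\varphi(\mu)-\wtsmash\varphi(\overline\alpha)\bigr)y$, which is \eqref{Ac*res2}. The delicate step I anticipate is the limit calculation for \eqref{Ac*res3}, since the convergence $\re\eta\to\infty$ is not restricted to a fixed direction; however, the uniform $H^2$-type bound of Proposition \ref{prop:xtendstozero} combined with $|\eta|/|\overline\alpha-\eta|\to1$ handles this cleanly.
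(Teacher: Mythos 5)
Your proof is correct and follows essentially the same route as the paper: \eqref{Ac*res} from the explicit description of $A_c^*$ in Proposition \ref{P:dualAC} by evaluating at $\mu=\overline\alpha$, \eqref{Ac*res1} from the algebraic resolvent identity, and \eqref{Ac*res2} by comparing \eqref{eq:SSplitRes} for the dual node with the pointwise formula of Theorem \ref{thm:contrdual}. The only (harmless) variation is \eqref{Ac*res3}, which you verify by a direct limit computation, whereas the paper obtains it by taking adjoints in \eqref{eq:isomeval}; note that your step proving \eqref{Ac*res} already yields $B_c^*g=u=x(\overline\alpha)$, so the limit calculation is not even needed.
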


\begin{proof}
For $\xi \in \dom{A_c^*}$, set  $x = (\overline\alpha - A_c^*) \xi$.  From the formulas \eqref{AC} we see that
\begin{equation*}
x(\mu) = (\overline\alpha - \mu) \xi(\mu) + B_c^* \xi.
\end{equation*}
We conclude that $B_c^* \xi = x(\overline\alpha)$ and $x(\mu) = (\overline\alpha - \mu) \xi(\mu) + x(\overline\alpha)$.  Solving for $\xi$ gives
$\xi(\mu) = \frac{x(\mu) - x(\overline\alpha)}{\overline\alpha - \mu}$ and formula \eqref{Ac*res} follows.

From $A_c^* (\overline\alpha - A_c^*)^{-1} = \overline\alpha (\overline\alpha - A_c^*)^{-1} - 1$, we get
\begin{align*}
  \left(A_c^* (\overline\alpha - A_c^*)^{-1} x \right) (\mu) & = \frac{ \overline\alpha x(\mu) - \overline\alpha x(\overline\alpha)}{\overline\alpha - \mu} - x(\mu) 
   = \frac{\mu x(\mu) - \overline\alpha x(\overline\alpha)}{\overline\alpha - \mu}
 \end{align*}
 and formula \eqref{Ac*res1} follows.
 
 By \eqref{eq:SSplitResDom}, for an arbitrary $y \in \cY$ we can set $x: =  \left(\overline\alpha - A_c^*|_{\cH_c}\right)^{-1} C_c^*y$ in order to get $\sbm{x \\ y} \in \dom{\sbm{ A \& B \\ C \& D}_c^*}$.  If we further set $\sbm{z \\ u}:= \sbm{A \& B \\ C \& D}_c^* \sbm{x \\ y }$, then we obtain from \eqref{eq:SSplitRes} that 
 $$
 z = \overline\alpha x,   \quad u = \wtsmash\varphi(\overline\alpha) y.
 $$
 (Here we use the fact that the transfer function of $\sbm{ A \& B \\ C \& D}_c^*$ is $\wtsmash\varphi$ since the transfer function of $\sbm{A \& B \\ C \& D}_c$ is $\varphi$, as observed in Proposition \ref{prop:dualsysnode}.)  On the other hand, from Theorem \ref{thm:contrdual} we know that $z(\mu) = \mu x(\mu) + \wtsmash\varphi(\mu) y - u$. Combining these, we have for all $y \in \cY$ and $\alpha\in {\mathbb C}^+$ that
$$  
  \overline\alpha x(\mu) = \mu x(\mu) + \wtsmash\varphi(\mu) y - \wtsmash\varphi(\overline\alpha) y 
    \quad \Longrightarrow\quad x(\mu) = \frac{ \wtsmash\varphi(\mu) - \wtsmash\varphi(\overline\alpha)}{ \overline\alpha - \mu} y
$$
and formula \eqref{Ac*res2} follows. Formula \eqref{Ac*res3} follows directly from \eqref{eq:isomeval}.
\end{proof}

We recall that the (-1)-scaled rigged space is defined as the completion of the space $\cH_c$ in the norm
\begin{equation}\label{eq:Hcminus1dnorm}
\| x \| = \left\| (\overline\beta - A_c^*)^{-1} x \right\|_{\cH_c}
\end{equation}
and that $(\overline\beta - A_c^*)^{-1}$ has an extension to a unitary operator from this rigged space onto $\Hscr_c$. For $x \in \cH_c$, we now have the formula \eqref{Ac*res} for the action of the resolvent $(\overline\beta - A_c^*)^{-1}$ on $x$. This suggests that we identify the $(-1)$-rigged space concretely as follows:
\begin{equation}  \label{Hdc-1con}
\cH^d_{c,-1} :=\left\{ x : {\mathbb C}^+ \to \cU \bigmid \mu \mapsto  \frac{ x(\mu) - x(\overline\beta)}{\overline\beta - \mu} \in \cH_c \right\},
\end{equation}
with norm given by
\begin{equation}  \label{Hdc-1con-norm}
\|x\|_{\cH^d_{c,-1}}  := \left\| \mu \mapsto \frac{ x(\mu) - x(\overline\beta)}{\overline\beta - \mu} \right\|_{\cH_c},
\end{equation}
as was also done in \cite{JaZw02, JaZw02b} in a closely related setting.

Once again the norm depends on the choice of $\overline\beta$ but all norms arising in this way are equivalent.
Note that constant functions $x(\mu) = u$ are in $\cH^d_{c,-1}$ with norm zero;  therefore we view the space as consisting of equivalence classes, where two representatives $x$ and $\xi$ of the same equivalence class differ by a constant:  $x(\mu) - \xi(\mu) = v$ for all $\mu \in {\mathbb C}^+$ for some $v \in \cU$.  We denote the equivalence class of $x$ in $\cH^d_{c,-1}$ by $[x]$; and if $[x] = [\xi]$ then we write $x \cong \xi$.   
 Next some properties of this space $\cH^d_{c,-1}$ are summarized:

\begin{thm}   \label{thm:hdc-1concrete}
The space $\cH^d_{c,-1}$ defined in \eqref{Hdc-1con} and \eqref{Hdc-1con-norm} is complete.
\begin{enumerate}
\item The map $\iota : x \mapsto [x]$ embeds $\cH_c$ into  $\cH^d_{c,-1}$ as a dense subspace.
A given element $[x] \in \cH^d_{c,-1}$
is of the form $\iota(z)$ for some $z \in \cH_c$ if and only if the function $\displaystyle{\mu \mapsto \frac{x(\mu) - x(\overline\alpha)}{\overline\alpha - \mu}}$, $\mu\in\cplus$, is not only in $\cH_c$ but in fact is in $\dom{A_c^*} = \cH^d_{c,1} \subset \cH_c$ for some, or equivalently for all, $\alpha\in\cplus$.  When this is the case, the equivalence class representative $z$ for $[x]$ that lies in $\cH_c$, is uniquely determined by the decay condition at infinity:  
\begin{equation}  \label{decay}
\lim_{\re \eta \to \infty}  z(\eta) = 0.
\end{equation}

\item Define an operator $A_c^*|_{\cH_c} : \cH_c \to \cH^d_{c, -1}$ by
\begin{equation}   \label{Ac*-ext}
  A_c^*|_{\cH_c} x:=[\mu \mapsto \mu x(\mu)],\quad x\in\cH_c,\, \mu\in\cplus.
\end{equation}
When $\cH_c$ is identified as a linear sub-manifold of $\cH^d_{c,-1}$, then $A_c^*|_{\cH_c}$ is the continuous extension of $A_c^* : \dom{A_c^*} \to \cH_c$ to an operator $\Hscr_c\to\Hscr_{c,-1}^d$. Its resolvent is given by
\begin{equation}   \label{resA*ext}
\left((\overline\alpha - A^*_c|_{\cH_c})^{-1}[x]\right)(\mu) = \frac{x(\mu) - x(\overline\alpha)}{\overline\alpha - \mu},
  \quad \alpha,\mu\in\cplus,\,[x]\in\Hscr_{c,-1}^d,
\end{equation}
and for $\overline\alpha=\overline\beta$ this is a unitary map from $\cH^d_{c,-1}$ to $\cH_c$. 

\item With $\cH^d_{c, -1}$ identified concretely as in \eqref{Hdc-1con}, the action of $C_c^* : \cY \to \cH^d_{c,-1}$ is given by
\begin{equation}   \label{Cc*con}
C_c^* y:= [ \mu \mapsto \wtsmash\varphi(\mu) y ], \quad y\in\cY,\, \mu\in\cplus.
\end{equation} 
\end{enumerate}
\end{thm}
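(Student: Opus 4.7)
The plan is to identify the space $\cH^d_{c,-1}$ defined concretely by \eqref{Hdc-1con}--\eqref{Hdc-1con-norm} with the abstract $(-1)$-rigging of $A_c^*$, which is the completion of $\cH_c$ under the norm $\|x\|_{-1}=\|(\overline\beta-A_c^*)^{-1}x\|_{\cH_c}$. The resolvent formula \eqref{Ac*res} gives $((\overline\beta-A_c^*)^{-1}x)(\mu)=\frac{x(\mu)-x(\overline\beta)}{\overline\beta-\mu}$, so $\|x\|_{-1}$ is exactly the concrete norm \eqref{Hdc-1con-norm} of $[x]$, and $\iota\colon x\mapsto [x]$ is therefore isometric. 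Injectivity of $\iota$ follows from Proposition \ref{prop:xtendstozero}, which forces the only constant function in $\cH_c$ to be zero.

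I will introduce the auxiliary map $\mathcal R_\beta\colon \cH^d_{c,-1}\to\cH_c$, $[x]\mapsto\bigl(\mu\mapsto\frac{x(\mu)-x(\overline\beta)}{\overline\beta-\mu}\bigr)$, which is well-defined on equivalence classes and isometric by \eqref{Hdc-1con-norm}. Given a Cauchy sequence $[x_n]\subset\cH^d_{c,-1}$, the images $\mathcal R_\beta[x_n]$ are Cauchy in $\cH_c$ with some limit $z$; setting $x(\mu):=(\overline\beta-\mu)z(\mu)$ gives an element $[x]\in\cH^d_{c,-1}$ with $\mathcal R_\beta[x]=z$, so $[x_n]\to[x]$, proving completeness. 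For density of $\iota(\cH_c)$, approximate $\mathcal R_\beta[x]\in\cH_c$ by $\xi_n\in\dom{A_c^*}$ (dense in $\cH_c$) and put $y_n:=(\overline\beta-A_c^*)\xi_n\in\cH_c$; by \eqref{Ac*res}, $\mathcal R_\beta[y_n]=\xi_n$, so $[y_n]=\iota(y_n)\to[x]$.

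For the characterization of $\iota(\cH_c)$: if $[x]=[z]$ with $z\in\cH_c$, then $\mathcal R_\alpha[x]=(\overline\alpha-A_c^*)^{-1}z\in\dom{A_c^*}$ for every $\alpha\in\cplus$ by Proposition \ref{P:resolAc*}. Conversely, if $\xi:=\mathcal R_\alpha[x]\in\dom{A_c^*}$ for some $\alpha$, set $z:=(\overline\alpha-A_c^*)\xi\in\cH_c$; using the formula $(A_c^*\xi)(\mu)=\mu\xi(\mu)-\lim_{\re\eta\to\infty}\eta\xi(\eta)$ from Proposition \ref{P:dualAC}, a direct computation shows that $x-z$ is the constant $x(\overline\alpha)-\lim_{\re\eta\to\infty}\eta\xi(\eta)\in\cU$, so $[x]=[z]\in\iota(\cH_c)$. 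The ``some, or equivalently all'' clause is immediate from these two implications. Uniqueness of the representative $z\in\cH_c$ (and hence the decay condition \eqref{decay}) follows because any two representatives differ by a constant and the only constant in $\cH_c$ is zero.

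For the operator formulas, \eqref{Ac*-ext} is first checked to be well-defined $\cH_c\to\cH^d_{c,-1}$ via the identity $\mathcal R_\beta[\mu\mapsto\mu x(\mu)]=-x+\overline\beta\mathcal R_\beta[x]\in\cH_c$ for $x\in\cH_c$, which also yields continuity; that it extends the abstract $A_c^*$ follows from Proposition \ref{P:dualAC}, since $(A_c^*x)(\mu)$ equals $\mu x(\mu)$ up to a constant when $x\in\dom{A_c^*}$. For the resolvent formula \eqref{resA*ext}, given $[x]\in\cH^d_{c,-1}$ one sets $\xi:=\mathcal R_\alpha[x]$ (which lies in $\cH_c$ for every $\alpha\in\cplus$ by a resolvent-identity argument relating $\mathcal R_\alpha$ to $\mathcal R_\beta$), and then computes in $\cH^d_{c,-1}$
\begin{equation*}
(\overline\alpha-A_c^*|_{\cH_c})\xi=\bigl[\mu\mapsto(\overline\alpha-\mu)\xi(\mu)\bigr]=\bigl[\mu\mapsto x(\mu)-x(\overline\alpha)\bigr]=[x];
\end{equation*}
unitarity at $\alpha=\beta$ is then the isometry established in the first paragraph. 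Finally, formula \eqref{Cc*con} is obtained by applying $(\overline\alpha-A_c^*|_{\cH_c})$ to \eqref{Ac*res2} and carrying out the same cancellation of the $\wtsmash\varphi(\overline\alpha)y$ constant. The main obstacle throughout is the careful bookkeeping modulo constants and verifying that each concrete formula is independent of the chosen representative and in agreement with its abstract counterpart.
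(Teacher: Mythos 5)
Your proposal is correct and follows essentially the same route as the paper: both identify the concrete space with the abstract $(-1)$-rigging through the resolvent formula \eqref{Ac*res}, prove completeness by pulling a Cauchy sequence back through the isometry $[x]\mapsto\bigl(\mu\mapsto\frac{x(\mu)-x(\overline\beta)}{\overline\beta-\mu}\bigr)$, characterize $\iota(\cH_c)$ via membership of that difference quotient in $\dom{A_c^*}$ together with the constancy of $x-(\overline\alpha-A_c^*)\xi$, and extend $A_c^*$ using the identity $A_c^*(\overline\beta-A_c^*)^{-1}=\overline\beta(\overline\beta-A_c^*)^{-1}-1$. The only (harmless) deviations are your more explicit density argument, approximating $\mathcal R_\beta[x]$ by elements of $\dom{A_c^*}$, and your derivation of \eqref{Cc*con} by applying $(\overline\alpha-A_c^*|_{\cH_c})$ to \eqref{Ac*res2}, where the paper instead subtracts $A_c^*|_{\cH_c}x$ from the first row of $\SmallSysNode_c^*\sbm{x\\y}$ using \eqref{defz}; both variants rest on the same underlying facts.
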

\begin{proof}  We first check that $\cH^d_{c,-1}$ defined as in \eqref{Hdc-1con} and \eqref{Hdc-1con-norm} is complete.
Suppose that  $[x_n]$ is a Cauchy sequence in $\cH^d_{c,-1}$.  Then the sequence $z_n := \mu\mapsto\frac{x_n(\mu) - x_n(\overline\beta)}{\overline\beta - \mu}$, $\mu\in\cplus$, is Cauchy in $\cH_c$ and it converges to some $z$ in $\cH_c$.  We solve the equation $\frac{x(\mu) - x(\overline\beta)}{\overline\beta - \mu} = z(\mu)$ to come up with
\begin{equation}\label{eq:alphaAdcalc}
\begin{aligned}
  \frac{x(\mu) - x(\overline\beta)}{\overline\beta - \mu} = z(\mu) \quad &\Longleftrightarrow \quad
    x(\mu) = x(\overline\beta) + (\overline\beta - \mu) z(\mu) \\
  &\Longleftrightarrow\quad x \cong \mu\mapsto(\overline\beta - \mu) z(\mu),\quad\mu\in\cplus;
\end{aligned}
\end{equation}
note that the solution is determined only up to an additive constant. By \eqref{eq:alphaAdcalc}, $[\mu\mapsto(\overline\beta - \mu) z(\mu)]\in\cH^d_{c, -1}$ since $z\in\Hscr_c$, and $[x_n] \to [x]$ in $\Hscr_{c,-1}^d$ by \eqref{Hdc-1con-norm} and the fact that $z_n\to z$ in $\Hscr_c$.

\begin{enumerate}
\item If $z\in\Hscr_c$ then by \eqref{Hdc-1con}, $\mu\to z(\mu)+v\in\Hscr_{c,-1}^d$ for all $v\in\Uscr$, since
\begin{equation}\label{eq:classisHc}
  \mu\mapsto\frac{z(\mu)+v-z(\overline\beta)-v}{\overline\beta-\mu} = (\overline\beta-A_c^*)^{-1}z\in\dom{A_c^*},\,\mu\in\cplus,
\end{equation}
see \eqref{Ac*res}, and hence $\iota(\Hscr_c)\subset \Hscr_{c,-1}^d$. From \eqref{eq:classisHc} it also follows that if $[x]=\iota(z)$ for some $z\in\Hscr$ then $\mu\mapsto\frac{x(\mu)-x(\overline\alpha)}{\overline\alpha-\mu} \in\dom{A_c^*}$ for all $\alpha\in\cplus$. Conversely, if $w:=\mu\mapsto\frac{x(\mu)-x(\overline\alpha)}{\overline\alpha-\mu} \in\dom{A_c^*}$ for some $\alpha\in\cplus$, then by \eqref{AC}:
$$
\begin{aligned}
  \big((\overline\alpha-A_c^*)w\big)(\mu)-x(\mu) 
  &= (\overline\alpha-\mu)w(\mu)+\lim_{\re\eta\to\infty} \eta w(\eta)-x(\mu) \\
  &= x(\alpha)-\lim_{\re\eta\to\infty} \eta w(\eta),
\end{aligned}
$$
which is constant, so that $[x]=\iota\big((\overline\alpha-A_c^*)w\big)$.

It is a consequence of the estimate \eqref{eq:HcZspeed} that functions in $\cH_c$ satisfy the decay condition \eqref{decay}.  As two representatives of the same equivalence class differ by a constant, it is clear that there can be at most one representative of a given equivalence class which satisfies \eqref{decay}.  Thus the decay condition \eqref{decay} picks out the unique representative which is in $\cH_c$ (assuming that the equivalence class is in the image of $\iota$). Apart from the claim that $\iota(\Hscr_c)$ is dense in $\Hscr_{c,-1}^d$, Assertion 1 is proved.
 
\item We next suppose that $x \in \cH_c$ and we wish to verify that $[\mu x(\mu)]$ is in $\cH^d_{c,-1}$.  Thus we must check that the function
$z:\mu \mapsto \frac{ \mu x(\mu) - \overline\beta x(\overline\beta)}{\overline\beta - \mu}$ is in $\cH_c$. But we have already verified that this expression is just the formula for $A_c^*(\overline\beta - A_c^*)^{-1} x$, see formula \eqref{Ac*res1}, and hence $z$ is in $\cH_c$ as wanted. Thus $A_c^*|_{\Hscr_c}$ maps $\Hscr_c$ into $\Hscr_{c,-1}$ and it follows from \eqref{AC} that $A_c^*|_{\Hscr_c}$ is an extension of $A_c^*$. It is straightforward to verify that the formula \eqref{Ac*res} for the resolvent of $A_c^*$ extends to \eqref{resA*ext}; use \eqref{Ac*-ext} and read \eqref{eq:alphaAdcalc} backwards. Combining \eqref{resA*ext} with \eqref{Hdc-1con-norm}, we obtain that $(\overline\beta-A_c|_{\Hscr_c})^{-1}$ is isometric from all of $\Hscr_{c,-1}^d$ into $\Hscr_c$. On the other hand, $(\overline\beta-A_c|_{\Hscr_c})^{-1}$ is onto $\Hscr_c$ by \eqref{eq:alphaAdcalc}: for every $z\in\Hscr_c$, $[\mu\mapsto(\overline\beta-\mu)z(\mu)]\in\Hscr_{c,-1}$ is mapped into $z$ by $(\overline\beta-A_c|_{\Hscr_c})^{-1}$.

By construction, once we fix our choice of $\overline\beta \in {\mathbb C}^+$ to define the norm on the $(-1)$-rigged space, the operator $(\overline\beta - A|_{\cH_c})^{-1}$ is unitary from this rigged space onto $\cH_c$. This makes precise the identification of the completion of $\cH_c$ in the norm \eqref{eq:Hcminus1dnorm} with the concrete version of $\Hscr_{c,-1}^d$ given by \eqref{Hdc-1con}--\eqref{Hdc-1con-norm}. Moreover, $A_c^*|_{\Hscr_c}$ maps $\Hscr_c$ continuously into $\Hscr_{c,-1}^d$, since for all $x\in\Hscr_c$:
$$
\begin{aligned}
  \|A_c^*|_{\Hscr_c}x\|_{\Hscr_{c,-1}^d} &= \|(\overline\beta-A_c^*|_{\Hscr_c})^{-1}A_c^*|_{\Hscr_c}x\|_{\Hscr_c}
  = \|\overline\beta(\overline\beta-A_c^*)^{-1}x-x\|_{\Hscr_c} \\
  &\leq \Big||\overline\beta|\,\|(\overline\beta-A_c^*)^{-1}\|+1\Big|\,\|x\|_{\Hscr_c}.
\end{aligned}
$$
Since $\Hscr_{c,-1}^d$ is a completion of $\iota(\Hscr_c)$, it is clear that $\iota(\Hscr_c)$ is dense in $\Hscr_{c,-1}^d$. Now Assertions 1 and 2 are proved completely.

\item By Definition \ref{def:altsysnode}.3, $C_c^* y = A_c^* \& C_c^* \sbm{x \\ y} - A_c^*|_{\cH_c}x$, where $x$ is any choice of function in $\cH_c$ for which $\sbm{x \\ y}$ is in $\dom{ \sbm{A\& B \\ C \& D}_c^*}$;
here we use the fact that such an $x$ exists for every $y \in \cY$ by \eqref{eq:SSplitResDom}. Using \eqref{defz} together with  \eqref{Ac*-ext}, we see that
\begin{align*}
C_c^* y & =  [\mu\mapsto \mu x(\mu) + \wtsmash\varphi(\mu) y - u - 
  \mu x(\mu)]  = [ \mu\mapsto \wtsmash\varphi(\mu) y ],\quad\mu\in\cplus,
\end{align*}
\end{enumerate}
 
\,and \eqref{Cc*con} follows.
\end{proof}

We point out that $\mu\mapsto(\overline\beta - \mu) z(\mu)$ in \eqref{eq:alphaAdcalc} is the unique representative of $[x]$ with value zero at $\overline\beta$; this will be useful in Subsection \ref{sec:repkerncontr} below. Furthermore, it follows from \eqref{AC} that for all $x\in\dom{A_c^*}$:
$$
  \big((\overline\alpha-A_c^*) x\big)(\mu) = (\overline\alpha-\mu) x(\mu) + \lim_{\re\eta \to \infty} \eta x(\eta),
$$
so that for arbitrary $\alpha\in\cplus$:
$$
  \lim_{\re\eta \to \infty} \eta x(\eta) = \big((\overline\alpha-A_c^*) x\big)(\overline\alpha),\quad x\in\dom{A_c^*}.
$$
We next give another interpretation of this limit.

\begin{rem}\label{rem:leftshift}
Assume that $f$ and its distribution derivative $f'$ both lie in $L^2(\rplus;\Yscr)$. Then their Laplace transforms $\whsmash f$ and $\widehat{f'}$ both lie in $H^2(\cplus;\Yscr)$, and upon combining the general Laplace-transform formula $\widehat{f'}(\mu)=\mu\whsmash f(\mu)-f(0)$ with \eqref{eq:H2Zspeed}, it follows that
\begin{equation}\label{eq:lapliminf}
  \lim_{\re \mu \to \infty} \mu \whsmash f(\mu)=f(0).
\end{equation}
Hence, the limit $\lim_{\re\eta \to \infty} \eta x(\eta)$ equals $\check x(0)$, where $\check x$ is the inverse Laplace transform of $x$. Comparing this to \eqref{AC}, we see that $A_c^d$ is the frequency-domain analogue of spatial derivative, the generator of an incoming left shift.

In fact, $\SmallSysNode_c^*$ is a frequency-domain analogue of the standard output-normalized shift realization of $\wtsmash \varphi$, but with $\Hscr_c$ as state space rather than an isometrically contained subspace of $H^2(\cplus;\Uscr)$; see e.g.\ \cite[Def.\ 9.5.1]{StafBook}. The reason for choosing the state space $\Hscr_c$ is that it makes $\SmallSysNode_c$ energy preserving, provided that we choose the appropriate norm in $\Hscr_c$. The realization $\SmallSysNode_c$ is in general not energy preserving if we use the norm of $H^2(\cplus;\Uscr)$ on the state space; see \cite{ArStKYP} for more details on this. These statements are included only in order to provide the reader with some intuition; we make no use of them here and we give no proofs.
\end{rem}

As a corollary of \eqref{Cc*con} and \eqref{Hdc-1con}, we see that
\begin{equation} \label{phidifquotin}
 \mu \mapsto \frac{\wtsmash\varphi(\mu) - \wtsmash\varphi(\overline\alpha)}{\overline\alpha - \mu}y  \in \cH_c\quad \text{for all}~ \alpha\in {\mathbb C}^+
 \text{ and } y \in \cY.
 \end{equation}
 In fact the formula \eqref{Ac*res2} identifying this expression with $(\overline\alpha - A_c^*|_{\cH_c})^{-1}C_c^* y$ can now be seen as a consequence  of the formula \eqref{Cc*con} combined with \eqref{resA*ext}.

Finally, note that we can recover $\SmallSysNode_c^*$ from $A_c^*$, $C_c^*$, $B_c^*$, and $\wtsmash\varphi$ evaluated at one arbitrary point in $\cplus$, as described in Remark \ref{rem:reconstruct}. Now we return to studying the system node $\SmallSysNode_c$ rather than its dual.

\subsection{More explicit formulas for the controllable model}

In this subsection we obtain more explicit formulas (to the extent possible) for the action of the operators $A_c$, $B_c$, and $C_c$ in $\SmallSysNode_c$. 

Let us say that an expression of the form $(\alpha - A_{c})^{-1} 
e_{c}(\lambda)^{*} u$, where $\alpha,\lambda\in\cplus$ and $u\in\Uscr$, is a {\em regularized kernel function}.  While 
a kernel function $e_{c}(\lambda)^{*} u$ itself may not be in 
$\operatorname{dom} (A_{c})$, it turns out that a regularized kernel function is always 
in $\operatorname{dom} (A_{c})$. More precisely, by the first assertion in the following proposition
\begin{equation}\label{eq:resrangespan}
\begin{aligned}
  \Fscr_0 :&=\spn \set{\frac{e_c(\lambda)^*u-e_c(\overline\alpha)^*u}{\alpha-\overline\lambda}\bigmid
  \lambda\in\cplus,\,u\in\Uscr} \\
  &= (\alpha-A_c)^{-1}\spn \set{e_c(\lambda)^*u\mid \lambda\in\cplus,\,u\in\Uscr},
\end{aligned}
\end{equation}
and by Lemma \ref{lem:contrdense} and \eqref{eq:isomeval} this linear span is a dense subspace of $\dom{A_c}$ in the graph norm for all fixed $\alpha\in\cplus$. In particular, the difference of two kernel functions is in $\dom{A_c}$. (The linear span $\Fscr_0$ can also be viewed as a dense subspace of $\Hscr_c$.)

The first result gives more explicit formulas for the actions of $A_c$ and $C_c$ on regularized kernel functions:

\begin{prop}\label{prop:isomops}
The following statements hold for the system node $\SmallSysNode_c$ in Theorem \ref{thm:firstmodel}:
\begin{enumerate}

\item The action of the resolvent of $A_c$ on kernel functions is given by
\begin{equation}\label{eq:isomres}
  (\alpha-A_c)^{-1}\big(e_c(\lambda)^*u\big) = 
  \frac{e_c(\lambda)^*-e_c(\overline\alpha)^*}{\alpha-\overline\lambda}u,
  \quad \alpha,\lambda\in\cplus,\,u\in\Uscr.
\end{equation}
The formula \eqref{eq:isomres} uniquely determines the action of $(\alpha - A_c)^{-1}$ on the whole space $\cH_c$ by linearity and continuity.

\item The main operator $A_c$ and observation operator $C_c$ of $\SmallSysNode_c$ have 
the following actions on regularized kernel functions:
\begin{align}
  A_c\big( (\alpha-A_c)^{-1} e_c(\lambda)^*u\big) &=
  \frac{\overline\lambda e_c(\lambda)^*-\alpha e_c(\overline\alpha)^*}{\alpha-\overline\lambda}u,
  \quad\text{and} \label{eq:isomAdense}\\
  C_c\big( (\alpha-A_c)^{-1} e_c(\lambda)^*u\big) &=
  \frac{\varphi(\overline\lambda)-\varphi(\alpha)}{\alpha-\overline\lambda}u,
  \quad \alpha,\lambda\in\cplus,\, u\in\Uscr.\label{eq:isomCdense}
\end{align}
Moreover these formulas uniquely determine $A_c$ and $C_c$ on the whole space $\cH_{c,1}=\dom{A_c}$.
\end{enumerate}
\end{prop}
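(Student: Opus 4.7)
The plan is to derive assertion 1 first by exploiting the one fact we already have about the resolvent on a simple class of elements, namely \eqref{eq:isomeval}. That formula can be rewritten as $(\mu-A_c|_{\Hscr_c})e_c(\overline\mu)^*u = B_c u$ for any $\mu\in\cplus$. Substituting $\mu=\overline\lambda$ and $\mu=\alpha$ and subtracting, the $B_c u$ terms cancel and one obtains the identity
$$
  (\alpha - A_c|_{\Hscr_c})\bigl(e_c(\lambda)^*u - e_c(\overline\alpha)^*u\bigr)=(\alpha-\overline\lambda)\,e_c(\lambda)^*u
$$
in $\Hscr_{c,-1}$. Since the right-hand side lies in $\Hscr_c$, the argument on the left must lie in $\dom{A_c}$ by the domain characterization in Definition \ref{def:altsysnode}.4, and dividing by $\alpha-\overline\lambda$ yields \eqref{eq:isomres}. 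The apparent singularity at $\alpha=\overline\lambda$ is removable because both sides are holomorphic in $\alpha,\lambda\in\cplus$, as noted in the preamble to Section \ref{S:contexp}. The fact that \eqref{eq:isomres} determines $(\alpha-A_c)^{-1}$ on all of $\Hscr_c$ is an immediate consequence of the boundedness of the resolvent together with Remark \ref{rem:kernelsdense}, which says that the linear span of kernel functions is dense in $\Hscr_c$.

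For \eqref{eq:isomAdense} in assertion 2 I would simply substitute \eqref{eq:isomres} into the algebraic identity $A_c(\alpha - A_c)^{-1}=\alpha(\alpha-A_c)^{-1}-1_{\Hscr_c}$ and simplify the resulting expression. For \eqref{eq:isomCdense} I would exploit that \emph{both} pairs $\sbm{e_c(\lambda)^*u\\u}$ (with $\mu=\overline\lambda$ in \eqref{eq:SodDefprel}) and $\sbm{e_c(\overline\alpha)^*u\\u}$ (with $\mu=\alpha$) lie in $\dom{\SmallSysNode_c}$ with known images. Subtracting one pair from the other gives $\sbm{e_c(\lambda)^*u - e_c(\overline\alpha)^*u\\ 0}\in\dom{\SmallSysNode_c}$ and
$$
  \SysNode_c\bbm{e_c(\lambda)^*u - e_c(\overline\alpha)^*u\\ 0}=\bbm{\overline\lambda\,e_c(\lambda)^*u-\alpha e_c(\overline\alpha)^*u\\(\varphi(\overline\lambda)-\varphi(\alpha))u}.
$$
The bottom component is, by definition of $C_c$, equal to $C_c\bigl(e_c(\lambda)^*u - e_c(\overline\alpha)^*u\bigr)$, and dividing through by $\alpha-\overline\lambda$ combined with \eqref{eq:isomres} produces \eqref{eq:isomCdense}. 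As an aside, the top component reproduces \eqref{eq:isomAdense} independently.

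For the uniqueness assertion in part 2, the span $\Fscr_0$ of regularized kernel functions in \eqref{eq:resrangespan} is dense in $\dom{A_c}$ with respect to the graph norm; this is precisely the content of Lemma \ref{lem:contrdense} applied to $\SmallSysNode_c$, together with \eqref{eq:isomeval}, as observed right before the proposition statement. Since $A_c$ is closed as the generator of a $C_0$-semigroup, and $C_c$ is bounded from $(\dom{A_c},\|\cdot\|_{\dom{A_c}})$ into $\Yscr$ (as pointed out after \eqref{eq:SSplitRes2}), continuity and closedness force both operators to be uniquely determined by their values on $\Fscr_0$.

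The main obstacle is essentially administrative: one has to keep careful track of when one works with $A_c$ on $\dom{A_c}\subset\Hscr_c$ versus the extended operator $A_c|_{\Hscr_c}\in\Bscr(\Hscr_c,\Hscr_{c,-1})$, and to check that the various identities indeed land in $\Hscr_c$ rather than only in $\Hscr_{c,-1}$. Once this bookkeeping is in place, no further analytic input beyond \eqref{eq:isomeval}, the definition of $\SmallSysNode_c$, and standard density/closedness arguments is required.
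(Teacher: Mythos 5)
Your proposal is correct and follows essentially the same route as the paper: both rest on the observation that the difference $e_c(\lambda)^*u-e_c(\overline\alpha)^*u$ of two kernel functions lies in $\dom{A_c}$ with explicitly computable image under $\sbm{A_c\\C_c}$ (your derivation via \eqref{eq:isomeval} is just a repackaging of the paper's direct subtraction of graph elements of $\SmallSysNode_c$, since \eqref{eq:isomeval} was itself obtained from that graph), followed by the same density and boundedness arguments for uniqueness. The only cosmetic difference is that the paper reads \eqref{eq:isomAdense} off the subtracted graph elements directly, whereas you route it through the identity $A_c(\alpha-A_c)^{-1}=\alpha(\alpha-A_c)^{-1}-1$; both give the same computation.
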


\noindent\emph{Proof.}
\begin{enumerate}
\item We first prove that the resolvent of $A_c$ satisfies \eqref{eq:isomres}. 
For arbitrary $\alpha,\lambda\in\cplus$ and $u\in\Uscr$ we by the definition \eqref{eq:SodDef} 
of $\SmallSysNode_c$ have that $\sbm{e_c(\lambda)^*u\\u},
\sbm{e_c(\overline\alpha)^*u\\u}\in\dom{\SmallSysNode_c}$ and that
\begin{equation}\label{eq:isomACcomp}
\begin{aligned}
  \bbm{e_c(\lambda)^*u\\u}-\bbm{e_c(\overline\alpha)^*u\\u} &=
  \bbm{e_c(\lambda)^*u-e_c(\overline\alpha)^*u\\0}\in\dom{\SmallSysNode_c},\\
  \bbm{A_c\\C_c}\big(e_c(\lambda)^*-e_c(\overline\alpha)^*\big)u &= 
  \SysNode_c\left(\bbm{e_c(\lambda)^*u\\u}-\bbm{e_c(\overline\alpha)^*u\\u}
  \right)\\
  &= \bbm{\overline\lambda e_c(\lambda)^*-\alpha e_c(\overline\alpha)^*\\\varphi(\overline\lambda)
  -\varphi(\alpha)}u,
\end{aligned}
\end{equation}
so that in particular $e_c(\lambda)^*u-e_c(\overline\alpha)^*u\in\dom{A_c}$ and 
$$
  A_c\big(e_c(\lambda)^*u-e_c(\overline\alpha)^*u\big)=\overline\lambda 
  e_c(\lambda)^*u-\alpha e_c(\overline\alpha)^*u.
$$
Now clearly
$$
\begin{aligned}
  (\alpha-A_c)\big(e_c(\lambda)^*u-e_c(\overline\alpha)^*u\big) 
    &= \\ \alpha e_c(\lambda)^*u - \alpha e_c(\overline\alpha)^*u - 
    \overline \lambda e_c(\lambda)^*u + \alpha e_c(\overline\alpha)^*u &= \\
    (\alpha -\overline \lambda) e_c(\lambda)^*u,
\end{aligned}
$$
which shows that the resolvent satisfies \eqref{eq:isomres}.  As the span of kernel functions is dense in $\cH_c$ and $(\alpha - A_c)^{-1}$ is a bounded linear operator on $\cH_o$, we see that \eqref{eq:isomres} uniquely determines $(\alpha - A_c)^{-1}$ on the whole space $\cH_c$.

\item It follows from \eqref{eq:isomACcomp} and \eqref{eq:isomres} that $A_c$ and $C_c$ 
satisfy \eqref{eq:isomAdense} and \eqref{eq:isomCdense}, and therefore the maps $A_c$ and $C_c$ are 
determined by these formulas on the dense subspace $\Fscr_0$ of their domain $\dom{A_c}$. Since $A_c$ and $C_c$ are bounded from $\dom{A_c}$ to $\Hscr_c$ and $\Yscr$, respectively, the claim follows. $\square$
\end{enumerate}

Proposition \ref{prop:isomops} is of a preliminary nature, and now we proceed to search for the actions of operators on generic elements of their domains. The following important result is a consequence of Theorem \ref{thm:msw}.

\begin{thm}\label{thm:enprescontainedadjoint}
The following claims are true:

\begin{enumerate}
\item For arbitrary $\sbm{x\\u}\in\dom{\SmallSysNode_c}$, we can set $y:=\bbm{C_c\&D_c}\sbm{x\\u}$ and obtain
\begin{equation}\label{eq:AcBcAction}
  \bbm{A_c\&B_c}\bbm{x\\u}=\mu\mapsto -\mu x(\mu)-\wtsmash\varphi(\mu)y+u,\quad\mu\in\cplus.
\end{equation}
Here $u$ can be recovered from $\sbm{x\\y}$ using \eqref{defu}. Moreover,
\begin{equation}   \label{Ac-id}
(A_cx)(\mu) = -\mu x(\mu) - \wtsmash \varphi(\mu) C_c x,\quad x\in\dom{A_c},\,\mu\in\cplus.
\end{equation}

\item We have
\begin{equation}\label{eq:contrdomcont}
\begin{aligned}
  \dom{\SmallSysNode_c} &\subset\left\{\bbm{x\\u}\in\bbm{\Hscr_c\\\Uscr} \bigmid \right. \\
   &\qquad \exists y\in\Yscr:~ \mu\mapsto -\mu x(\mu)-\wtsmash\varphi(\mu)y+u\in\Hscr_c\bigg\},
\end{aligned}
\end{equation}
and in particular
\begin{equation}\label{eq:contrdomAcont}
  \dom{A_c}\subset\set{x\in\Hscr_c \bigmid \exists y\in\Yscr:~ \mu\mapsto \mu x(\mu)+\wtsmash\varphi(\mu)y\in\Hscr_c}.
\end{equation}
\end{enumerate}
\end{thm}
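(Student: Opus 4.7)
My plan is to obtain Part 1 by equating two descriptions of $\SmallSysNode_c^*\sbm{x\\y}$: the one forced by energy preservation via Theorem \ref{thm:msw} and the explicit functional formula supplied by Theorem \ref{thm:contrdual}. Part 2 will then drop out as an immediate corollary. The main work is bookkeeping; no genuine obstacle is expected because the two pieces of machinery were set up precisely to be combined in this way.

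For Part 1, fix $\sbm{x\\u}\in\dom{\SmallSysNode_c}$ and put $y:=\bbm{C_c\&D_c}\sbm{x\\u}$. Since $\SmallSysNode_c$ is energy preserving by Theorem \ref{thm:firstmodel}, assertion (1) of Theorem \ref{thm:msw} yields $\sbm{x\\y}\in\dom{\SmallSysNode_c^*}$ together with
$$
  \SysNode_c^*\bbm{x\\y}=\bbm{-\bbm{A_c\&B_c}\bbm{x\\u}\\u}.
$$
On the other hand, Theorem \ref{thm:contrdual} identifies the action of $\SmallSysNode_c^*$ on any element of its domain, so it gives a (unique) $\widetilde u\in\Uscr$ with
$$
  \SysNode_c^*\bbm{x\\y}=\bbm{\mu\mapsto \mu x(\mu)+\wtsmash\varphi(\mu)y-\widetilde u\\\widetilde u},\qquad \widetilde u=\lim_{\re\eta\to\infty}\eta x(\eta)+\wtsmash\varphi(\eta)y.
$$
Comparing the second components forces $\widetilde u=u$, which is precisely the claim that $u$ is recoverable via \eqref{defu}. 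Comparing the first components and negating yields \eqref{eq:AcBcAction}. The identity \eqref{Ac-id} is the specialization $u=0$: indeed $\sbm{x\\0}\in\dom{\SmallSysNode_c}$ is equivalent to $x\in\dom{A_c}$ by \eqref{eq:Adef}, and in that case $y=\bbm{C_c\&D_c}\sbm{x\\0}=C_cx$ by \eqref{eq:Cdef}, so \eqref{eq:AcBcAction} reads $(A_cx)(\mu)=-\mu x(\mu)-\wtsmash\varphi(\mu)C_cx$.

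For Part 2, the inclusion \eqref{eq:contrdomcont} is read off from \eqref{eq:AcBcAction}: for any $\sbm{x\\u}\in\dom{\SmallSysNode_c}$, the choice $y=\bbm{C_c\&D_c}\sbm{x\\u}$ makes $\mu\mapsto-\mu x(\mu)-\wtsmash\varphi(\mu)y+u$ equal to $\bbm{A_c\&B_c}\sbm{x\\u}\in\Hscr_c$, so $\sbm{x\\u}$ belongs to the right-hand side. The inclusion \eqref{eq:contrdomAcont} follows by specializing to $u=0$ (so that $\sbm{x\\0}\in\dom{\SmallSysNode_c}$ exactly when $x\in\dom{A_c}$) and observing that if $\mu\mapsto -\mu x(\mu)-\wtsmash\varphi(\mu)y$ lies in $\Hscr_c$ for some $y\in\Yscr$, then replacing $y$ by $-y$ and negating the function (which preserves membership in $\Hscr_c$) shows $\mu\mapsto \mu x(\mu)+\wtsmash\varphi(\mu)y'\in\Hscr_c$ with $y':=-y$, giving exactly the condition in \eqref{eq:contrdomAcont}.
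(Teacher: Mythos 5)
Your proposal is correct and follows essentially the same route as the paper: combine Theorem \ref{thm:msw}.1 (which places $\sbm{x\\y}$ with $y=\bbm{C_c\&D_c}\sbm{x\\u}$ in $\dom{\SmallSysNode_c^*}$ and identifies $\SmallSysNode_c^*\sbm{x\\y}$ with $\sbm{-\bbm{A_c\&B_c}\sbm{x\\u}\\u}$) with the explicit formula for $\SmallSysNode_c^*$ from Theorem \ref{thm:contrdual}, then specialize to $u=0$ and read off the domain inclusions. The paper's proof is just a terser statement of exactly this argument, and your component-by-component comparison fills in the details it leaves implicit.
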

\begin{proof}
The equation \eqref{eq:AcBcAction} follows from the energy-preserving property of $\SmallSysNode_c$, Theorem \ref{thm:msw}.1, and Theorem \ref{thm:contrdual}. Taking $u=0$ and using the definitions \eqref{eq:Adef} and \eqref{eq:Cdef} of $A_c$ and $C_c$, we obtain \eqref{Ac-id}. By Definition \ref{def:altsysnode}, $\bbm{A_c\&B_c}$ maps $\dom{\SmallSysNode_c}$ into $\Hscr_c$, and combining this with \eqref{eq:AcBcAction}, we get \eqref{eq:contrdomcont}. Taking $u=0$ in \eqref{eq:contrdomcont} together with \eqref{eq:Adef} of $A_c$, we arrive at \eqref{eq:contrdomAcont}.
\end{proof}

\begin{cor}\label{cor:projUY}
For every $u\in\Uscr$, there exist $x\in\Hscr_c$ and $y\in\Yscr$, such that
\begin{equation}\label{eq:funinHc}
  \mu\mapsto -\mu x(\mu)-\wtsmash\varphi(\mu)y+u\in\Hscr_c.
\end{equation}
Also, for every $y\in\Yscr$, there exist $x\in\Hscr_c$ and $u\in\Uscr$, such that \eqref{eq:funinHc} holds. The set of $x\in\Hscr_c$, for which there exist $u\in\Uscr$ and $y\in\Yscr$, such that \eqref{eq:funinHc} holds, is dense in $\Hscr_c$.

Moreover, \eqref{eq:funinHc} holds if and only if $\sbm{x\\y}\in\dom{\SmallSysNode_c^*}$ and $u=\bbm{B_c^*\&D_c^*}\sbm{x\\y}$.
\end{cor}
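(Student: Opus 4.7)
The plan is to reduce the corollary entirely to Theorem \ref{thm:contrdual}. Writing $z(\mu):=\mu x(\mu)+\wtsmash\varphi(\mu)y-u$ as in \eqref{defz}, condition \eqref{eq:funinHc} says precisely that $-z\in\Hscr_c$, hence that $z\in\Hscr_c$. By \eqref{defdom} this is equivalent to $\sbm{x\\y}\in\dom{\SmallSysNode_c^*}$, and in that case Theorem \ref{thm:contrdual} forces $u$ to be the unique $\Uscr$-component of $\SmallSysNode_c^*\sbm{x\\y}$, i.e., $u=\bbm{B_c^*\&D_c^*}\sbm{x\\y}$. This already dispatches the ``moreover'' assertion and reduces the three existence statements to producing sufficiently many elements of $\dom{\SmallSysNode_c^*}$.

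For the first existence claim I would go through the primal side. Given $u\in\Uscr$, fix any $\lambda\in\cplus$ and take $x:=e_c(\overline\lambda)^*u$, so $\sbm{x\\u}\in\dom{\SmallSysNode_c}$ by \eqref{eq:SodDef}. With $y:=\bbm{C_c\&D_c}\sbm{x\\u}=\varphi(\lambda)u$, the energy-preserving property of $\SmallSysNode_c$ (Theorem \ref{thm:firstmodel}) combined with \eqref{eq:enprescontdual} gives $\sbm{x\\y}\in\dom{\SmallSysNode_c^*}$ and identifies the $\Uscr$-component of $\SmallSysNode_c^*\sbm{x\\y}$ as $u$, so the moreover direction supplies \eqref{eq:funinHc}. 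For the second claim I would stay on the dual side: for any $y\in\Yscr$ and any $\alpha\in\cplus\subset\res{A_c^*}$, the resolvent splitting \eqref{eq:SSplitResDom} applied to $\SmallSysNode_c^*$ places $\sbm{(\overline\alpha-A_c^*|_{\Hscr_c})^{-1}C_c^*y\\y}$ in $\dom{\SmallSysNode_c^*}$, and the moreover direction then produces the corresponding $x$ together with $u=\wtsmash\varphi(\overline\alpha)y$.

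For density of the admissible $x$, I would observe that every $x\in\dom{A_c^*}$ satisfies \eqref{eq:funinHc} with $y=0$ and $u=B_c^*x$, since $\sbm{x\\0}\in\dom{\SmallSysNode_c^*}$ whenever $x\in\dom{A_c^*}$ by definition of the main operator. Because $A_c^*$ is the generator of a $C_0$-semigroup (it is the main operator of the system node $\SmallSysNode_c^*$ by Proposition \ref{prop:dualsysnode}), its domain is dense in $\Hscr_c$, which closes the argument. I anticipate no genuine obstacle here: the corollary is essentially a bookkeeping exercise once the moreover equivalence is extracted from Theorem \ref{thm:contrdual}. The only points needing care are the sign convention linking \eqref{defz} to \eqref{eq:funinHc}, and the observation that on the dual side $u$ is determined by $\sbm{x\\y}$ rather than being freely prescribable, which is precisely what forces the detour through the primal side in the first existence claim.
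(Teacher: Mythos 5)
Your proposal is correct and follows essentially the same route as the paper: the ``moreover'' equivalence is read off from Theorem \ref{thm:contrdual}, the first existence claim comes from $\sbm{e_c(\overline\lambda)^*u\\u}\in\dom{\SmallSysNode_c}$ pushed to the dual side via the energy-preserving property (the paper packages this as Theorem \ref{thm:enprescontainedadjoint}.2, which is itself Theorem \ref{thm:msw}.1 plus Theorem \ref{thm:contrdual}), and the second claim and the density both come from \eqref{eq:SSplitResDom} applied to the system node $\SmallSysNode_c^*$ together with density of $\dom{A_c^*}$. No gaps.
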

\begin{proof}
By \eqref{eq:SodDefprel}, for all $u\in\Uscr$, we have $\sbm{e_c(\overline\lambda)^*u\\u}\in\dom{\SmallSysNode_c}$ for every $\lambda\in\cplus$, and by Theorem \ref{thm:enprescontainedadjoint}.2, this pair $\sbm{x\\u}$ satisfies \eqref{eq:funinHc} with $y:=\bbm{C_c\&D_c}\sbm{x\\u}=\varphi(\lambda)u$. The condition \eqref{eq:funinHc} is equivalent to $\sbm{x\\y}\in\dom{\SmallSysNode_c^*}$ and $u=\bbm{B_c^*\&D_c^*}\sbm{x\\y}$ by Theorem \ref{thm:contrdual}. Now the proof is completed by combining \eqref{eq:SSplitResDom} with the fact that $\SmallSysNode_c^*$ is a system node with input space $\Yscr$. (Recall that $\dom{A^*}$ is dense in $\Xscr$ for a system node with main operator $A^*$ and state space $\Xscr$.)
\end{proof}

The inclusion \eqref{eq:contrdomcont} is in general not an equality and hence \eqref{eq:funinHc} does not imply $\sbm{x\\u}\in\dom{\SmallSysNode_c}$; this brings us some difficulties later. If we know $A_c$, including its domain, then it will soon turn out that we can calculate $C_cx$ for generic $x\in\dom{A_c}$. Then the following continuous-time version of Theorem \ref{T:iso-real} gives a description of $\SmallSysNode_c$, including its domain:

\begin{thm}\label{thm:contrsemiexplicit}
A pair $\sbm{x\\u}\in\sbm{\Hscr_c\\\Uscr}$ lies in $\dom{\SmallSysNode_c}$ if and only if for some, or equivalently for all, $\lambda\in\cplus$, the function $x-e_c(\lambda)^*u$ lies in $\dom{A_c}$. When this is the case, for an arbitrary fixed $\lambda\in\cplus$, the action of $\SmallSysNode_c$ is
\begin{equation}\label{eq:contrrealgen}
\begin{aligned}
  \SysNode_c\bbm{x\\u} &= \bbm{\mu\mapsto-\mu x(\mu)
    -\varphi(\overline\mu)^*\gamma_\lambda+\big(1-\varphi(\overline\mu)^*\varphi(\overline\lambda)\big)u
    \\ \gamma_\lambda + \varphi(\overline\lambda)u},\\
  \gamma_\lambda &= C_c\big(x-e_c(\lambda)^*u\big).
\end{aligned}
\end{equation}
\end{thm}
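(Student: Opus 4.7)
The plan is to exploit the decomposition \eqref{eq:SSplitResDom} specialized to $\SmallSysNode_c$ and then use the explicit description of $\bbm{A_c\&B_c}$ already available in Theorem \ref{thm:enprescontainedadjoint}. To be concrete, I would fix an arbitrary $\lambda\in\cplus$ and apply \eqref{eq:SSplitResDom} with $\alpha:=\overline\lambda$; by \eqref{eq:isomeval} the right-hand side becomes $(\overline\lambda-A_c|_{\Hscr_c})^{-1}B_cu=e_c(\lambda)^*u$, so
$$
\dom{\SmallSysNode_c}=\bbm{\dom{A_c}\\0}\dotplus\bbm{e_c(\lambda)^*\\1}\Uscr.
$$
This means $\sbm{x\\u}\in\dom{\SmallSysNode_c}$ if and only if $\xi:=x-e_c(\lambda)^*u\in\dom{A_c}$. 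Since $\dom{\SmallSysNode_c}$ itself is independent of $\lambda$, the equivalence is valid simultaneously for some and for every $\lambda\in\cplus$.

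For the action of $\SmallSysNode_c$ I would write
$$
\bbm{x\\u}=\bbm{\xi\\0}+\bbm{e_c(\lambda)^*u\\u}
$$
and exploit linearity. The first summand is mapped to $\sbm{A_c\xi\\C_c\xi}$ by the very definitions \eqref{eq:Adef} and \eqref{eq:Cdef}. For the second summand, note that \eqref{eq:SodDefprel} gives $\SmallSysNode_c\sbm{e_c(\overline\mu)^*u\\u}=\sbm{\mu e_c(\overline\mu)^*u\\\varphi(\mu)u}$ for every $\mu\in\cplus$; taking $\mu=\overline\lambda$ yields
$$
\SysNode_c\bbm{e_c(\lambda)^*u\\u}=\bbm{\overline\lambda\, e_c(\lambda)^*u\\\varphi(\overline\lambda)u}.
$$
This immediately gives the output component $y=C_c\xi+\varphi(\overline\lambda)u=\gamma_\lambda+\varphi(\overline\lambda)u$, as claimed.

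For the state component, rather than directly combining $A_c\xi+\overline\lambda\,e_c(\lambda)^*u$ and unwinding the kernel, the cleanest route is to apply \eqref{eq:AcBcAction} directly to $\sbm{x\\u}\in\dom{\SmallSysNode_c}$ with the $y$ just computed. This gives
$$
\bbm{A_c\&B_c}\bbm{x\\u}=\mu\mapsto -\mu x(\mu)-\varphi(\overline\mu)^*\bigl(\gamma_\lambda+\varphi(\overline\lambda)u\bigr)+u,
$$
and a one-line rearrangement produces exactly $-\mu x(\mu)-\varphi(\overline\mu)^*\gamma_\lambda+\bigl(1-\varphi(\overline\mu)^*\varphi(\overline\lambda)\bigr)u$, matching \eqref{eq:contrrealgen}.

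The proof is essentially routine once the decomposition is set up, so there is no real obstacle. The only minor point requiring care is bookkeeping between $\lambda$ and $\overline\lambda$: the kernel $K_c(\cdot,\lambda)$ reproduces values at $\overline\lambda$, and \eqref{eq:isomeval} pairs $(\overline\lambda-A_c|_{\Hscr_c})^{-1}B_c$ with $e_c(\lambda)^*$, so one must make sure the $\lambda$ appearing in the statement of the theorem is the one corresponding to the point at which the resolvent is evaluated. It is also worth remarking that the independence of the formula from the choice of $\lambda$ is automatic, because the left-hand side $\SmallSysNode_c\sbm{x\\u}$ does not depend on $\lambda$.
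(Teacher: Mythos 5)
Your proposal is correct and follows essentially the same route as the paper: the domain claim via \eqref{eq:SSplitResDom} combined with \eqref{eq:isomeval}, the output component via the decomposition $\sbm{x\\u}=\sbm{\xi\\0}+\sbm{e_c(\lambda)^*u\\u}$ (which is just \eqref{eq:CDreconstr} made explicit), and the state component via \eqref{eq:AcBcAction} from Theorem \ref{thm:enprescontainedadjoint}.1. The $\lambda$ versus $\overline\lambda$ bookkeeping is handled correctly throughout.
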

\begin{proof}
By \eqref{eq:SSplitResDom} we have that $x-(\overline\lambda-A_c|_{\Hscr_c})^{-1}B_cu\in\dom{A_c}$ if and only if $\sbm{x\\u}\in\dom{\SmallSysNode_c}$ for arbitrary $\lambda\in\cplus$, and this should be combined with \eqref{eq:isomeval} to verify the claim on the domains. It follows from \eqref{eq:CDreconstr} and \eqref{eq:isomeval} that $\bbm{C_c\&D_c}\sbm{x\\u}=\gamma_\lambda+\varphi(\overline\lambda)u$, and together with Theorem \ref{thm:enprescontainedadjoint}.1, this implies \eqref{eq:contrrealgen}.
\end{proof}

In the sequel, the following familiar bounded operator from $\Hscr_c$ into $\Yscr$ plays such an important role that we give it a special notation:

\begin{defn}
We denote
\begin{equation}\label{eq:taucdef}
  \tau_{c,\alpha}:=C_c(\alpha-A_c)^{-1},\quad\alpha\in\cplus.
\end{equation}
\end{defn}

 From \eqref{Ac*res2} it then follows that for all $y\in\Yscr$:
 \begin{equation}\label{taualphacadj}
  (\tau_{c,\alpha} )^*  y = (\overline\alpha - A_c^*|_{\cH_c})^{-1}C_c^*y = \mu \mapsto \frac{ \wtsmash\varphi(\mu) - \wtsmash\varphi(\overline\alpha)}{\overline\alpha - \mu},\quad\mu\in\cplus.
  \end{equation}
Hence, if $x$ happens to lie in $\dom A$, then taking $u=0$ in \eqref{eq:contrrealgen} yields (note the absence of $\lambda$ from the right-hand side)
$$
  \gamma_\lambda = C_c x= \tau_{c,\alpha}(\alpha-A_c)x
$$
for $\alpha\in\cplus$, and moreover, $\gamma_\lambda$ is the unique element in $\Yscr$, such that
$$
     \langle \gamma_\lambda, y \rangle_{\cY} = \left\langle (\alpha-A_c)x,
      \mu \mapsto \frac{ \wtsmash\varphi(\mu) - \wtsmash\varphi(\overline\alpha)}{\overline\alpha - \mu}y 
     \right\rangle_{\Hscr_c} \text{ for all } y \in \cY.
$$
This should be compared to the definition of $\wtsmash g(0)$ in \eqref{tildeg(0)}.

In fact, the arbitrary parameter $\lambda$ in Theorem \ref{thm:contrsemiexplicit} is only used for calculating $y=\bbm{C_c\&D_c}\sbm{x\\u}$, which is obviously independent of $\lambda$, and the formula for $\bbm{A_c\&B_c}\sbm{x\\u}$ only uses $y$; compare \eqref{eq:contrrealgen} to \eqref{eq:AcBcAction}. In Theorem \ref{T:iso-real} there is no need for any arbitrary parameter $w$, because there $\sbm{x\\0}\in\dom{\sbm{\mathrm A_c&\mathrm B_c\\\mathrm C_c&\mathrm D_c}}$ for all $x\in\mathrm H_c$, since $\sbm{\mathrm A_c&\mathrm B_c\\\mathrm C_c&\mathrm D_c}$ is bounded.

\begin{rem}\label{rem:removecircle}
The use of $\tau_{c,\alpha}$ thus allows us to calculate $C_c$ on generic elements of $\dom{A_c}$ using $C_c=\tau_{c,\alpha}(\alpha-A_c)$ and \eqref{taualphacadj}, assuming that we have an explicit formula for $A_c$, but note that the formula \eqref{Ac-id} gives $A_c$ in terms of $C_c$. This circle definition can be corrected if the condition $\cH_c \cap \set{\wtsmash \varphi (\cdot)y\mid y\in \cY} = \{0\}$ holds. Indeed, under this assumption, the function $\wtsmash\varphi(\cdot)y$, such that $\mu\mapsto \mu x(\mu)+\wtsmash\varphi(\mu)y\in\Hscr_c$, is uniquely determined by $x$. (Such a $y$ exists for every $x\in\dom{A_c}$ by \eqref{eq:contrdomAcont}.) Note that $y$ will in general not be uniquely determined, only the function $\wtsmash\varphi(\cdot)y$; see Lemma \ref{lem:domequivdeluxe} and Theorem \ref{thm:conschar} below for more details on this.
\end{rem}

With the help of $\tau_{c,\alpha} $, we have an explicit formula for the resolvent operator $(\alpha - A_c)^{-1}$, $\alpha \in {\mathbb C}^+$, on generic elements of $\Hscr_c$:

\begin{cor}\label{cor:ContrRes}
The resolvent operator $(\alpha - A_c)^{-1}$ acting on arbitrary functions in $\cH_c$ is given explicitly by
  \begin{equation}   \label{Acresol}
  \left(  (\alpha - A_c)^{-1} x \right)(\mu) = \frac{ x(\mu) - \wtsmash\varphi(\mu) \tau_{c,\alpha} x}{ \alpha + \mu},
    \quad x\in\Hscr_c,\,\alpha,\mu\in\cplus.
\end{equation}
\end{cor}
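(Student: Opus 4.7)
The strategy is to use the explicit formula \eqref{Ac-id} for the main operator $A_c$ and solve algebraically for the resolvent.

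First I set $w := (\alpha - A_c)^{-1} x$ for an arbitrary $x \in \Hscr_c$ and $\alpha \in \cplus$. By definition of the resolvent, $w$ lies in $\dom{A_c}$ and satisfies
$$
  A_c w = \alpha w - x.
$$
Next I apply the formula \eqref{Ac-id} from Theorem \ref{thm:enprescontainedadjoint}, which gives the pointwise identity $(A_c w)(\mu) = -\mu w(\mu) - \wtsmash\varphi(\mu)\,C_c w$ for every $\mu \in \cplus$. Equating the two expressions for $A_c w$ pointwise yields
$$
  \alpha w(\mu) - x(\mu) = -\mu w(\mu) - \wtsmash\varphi(\mu)\,C_c w,\qquad \mu\in\cplus,
$$
and hence $(\alpha + \mu)\,w(\mu) = x(\mu) - \wtsmash\varphi(\mu)\,C_c w$. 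Since $\alpha,\mu\in\cplus$ we have $\re(\alpha+\mu)>0$, so division by $\alpha+\mu$ is legitimate.

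Finally I identify $C_c w = C_c(\alpha - A_c)^{-1}x = \tau_{c,\alpha}\,x$ by the definition \eqref{eq:taucdef} of $\tau_{c,\alpha}$, which transforms the previous display into precisely the desired formula \eqref{Acresol}. There is no serious obstacle here: the only subtle point is that the formula \eqref{Ac-id} was established for $x \in \dom{A_c}$, but this is exactly what $w$ satisfies, so it applies directly. The result is then an immediate algebraic rearrangement.
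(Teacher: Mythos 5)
Your proof is correct and follows essentially the same route as the paper: both arguments set $w=(\alpha-A_c)^{-1}x$, invoke the identity \eqref{Ac-id} together with the definition \eqref{eq:taucdef} of $\tau_{c,\alpha}$, and rearrange algebraically. The paper merely runs the computation in the reverse direction (verifying that the claimed formula is equivalent to \eqref{Ac-id}), which is an inessential difference.
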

\begin{proof}
Setting $z:=(\alpha-A_c)^{-1}x$ in \eqref{Acresol} and using \eqref{eq:taucdef}, we obtain the equivalent condition
$$
  (\alpha + \mu)z(\mu) = \big((\alpha-A_c)z\big)(\mu) - \wtsmash\varphi(\mu) C_cz,
    \quad z\in\dom{A_c},\,\mu\in\cplus,
$$
which is true by \eqref{Ac-id}.
\end{proof}

Using \eqref{Acresol} involves calculating $\tau_{c,\alpha} x$ for generic $x\in\Hscr_c$, but we have no formula for this except for in the case when $x$ is a kernel function. One way to calculate $\tau_{c,\alpha} x$ is to use \eqref{taualphacadj} and calculate
$$
  \Ipdp{\tau_{c,\alpha} x}{\gamma}_\Yscr=\Ipdp{x}{\mu\mapsto\frac{\wtsmash\varphi(\mu)-\wtsmash\varphi(\overline\alpha)}
    {\overline\alpha-\mu}\gamma}_{\Hscr_c}, \quad \gamma\in\Yscr.
$$

From the domain of a system node $\SmallSysNode$, the domain of $A_c$ is constructed using \eqref{eq:Adef}. Conversely, if we know $\dom{A_c}$, then we can recover $\dom{\SmallSysNode}$ using \eqref{eq:SSplitResDom} as in the proof of Theorem \ref{thm:contrsemiexplicit}. In particular, the following result shows that we have equality in \eqref{eq:contrdomcont} if and only if we have equality in \eqref{eq:contrdomAcont}.

\begin{lem}\label{lem:domequivdeluxe}
The following claims are true:
\begin{enumerate}
\item The condition
\begin{equation}\label{eq:contrdomconteq}
\begin{aligned}
  \dom{\SmallSysNode_c} &= \bigg\{\bbm{x\\u}\in\bbm{\Hscr_c\\\Uscr} \bigmid \\
    &\quad\exists y\in\Yscr:~ \mu\mapsto -\mu x(\mu)-\wtsmash\varphi(\mu)y+u\in\Hscr_c\bigg\},
\end{aligned}
\end{equation}
holds if and only if
\begin{equation}\label{eq:contrdomAconteq}
  \dom{A_c}=\set{x\in\Hscr_c \mid \exists y\in\Yscr:~ \mu\mapsto \mu x(\mu)+\wtsmash\varphi(\mu)y\in\Hscr_c}.
\end{equation}

\item It holds that
 \begin{equation}   \label{assume1c}
 \cH_c \cap \set{\wtsmash \varphi (\cdot)y\mid y\in \cY} = \{0\}
 \end{equation}
if and only if for some (or equivalently for all) $\alpha\in\cplus$:
\begin{equation}\label{eq:contrconsconseq}
  \dom{A_c}\cap\set{\mu\mapsto\frac{\wtsmash\varphi(\mu)y}{\alpha+\mu}\bigmid y\in\Yscr} = \zero.
\end{equation}

\item The conditions \eqref{eq:contrdomconteq}--\eqref{eq:contrconsconseq} hold if and only if for some (or equivalently for all) $\alpha\in\cplus$:
\begin{equation}\label{eq:contrconscond}
  \Hscr_c\cap\set{\mu\mapsto\frac{\wtsmash\varphi(\mu)y}{\alpha+\mu} \bigmid y\in\Yscr} = \zero.
\end{equation}
\end{enumerate}
\end{lem}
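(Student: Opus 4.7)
The plan is to prove the three parts in sequence, exploiting the decomposition \eqref{eq:SSplitResDom}, the identity \eqref{eq:isomeval}, the resolvent formula \eqref{Acresol}, and the identity \eqref{Ac-id} throughout. For Part 1, the key observation is that, writing $x_0 := x - e_c(\overline\alpha)^*u$, the identity \eqref{eq:SSplitResDom} combined with \eqref{eq:isomeval} shows that $\sbm{x\\u}\in\dom{\SmallSysNode_c}$ if and only if $x_0\in\dom{A_c}$. The implication \eqref{eq:contrdomconteq} $\Rightarrow$ \eqref{eq:contrdomAconteq} then reduces to the case $u=0$. For the converse, assuming \eqref{eq:contrdomAconteq} and $-\mu x(\mu) - \wtsmash\varphi(\mu) y + u \in \Hscr_c$, an arithmetic manipulation using $(e_c(\overline\alpha)^*u)(\mu) = (u - \wtsmash\varphi(\mu)\varphi(\alpha)u)/(\alpha+\mu)$ shows that $\mu\mapsto\mu x_0(\mu) + \wtsmash\varphi(\mu)(y - \varphi(\alpha)u)$ differs from the assumed element of $\Hscr_c$ only by the scalar multiple $\alpha K_c(\cdot,\overline\alpha)u$ of a kernel function; hence $x_0 \in \dom{A_c}$ by \eqref{eq:contrdomAconteq}.

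For Part 2, the forward direction \eqref{assume1c} $\Rightarrow$ \eqref{eq:contrconsconseq} starts from a putative $x(\mu) = \wtsmash\varphi(\mu)y/(\alpha+\mu) \in\dom{A_c}$. Rewriting $\mu x(\mu) = \wtsmash\varphi(\mu)y - \alpha x(\mu)$ and using \eqref{Ac-id} yields $\wtsmash\varphi(\cdot)(y + C_c x) = (\alpha - A_c)x \in \Hscr_c$; hypothesis \eqref{assume1c} forces this to vanish, and resubstituting into \eqref{Ac-id} gives $A_c x = \alpha x$. Since $\alpha\in\cplus\subset\res{A_c}$ by Lemma \ref{lem:cplusres}, this entails $x=0$. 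The converse is simpler: if $\wtsmash\varphi(\cdot)y \in \Hscr_c$, the resolvent formula \eqref{Acresol} identifies $(\alpha - A_c)^{-1}(\wtsmash\varphi(\cdot)y)$ as a function of the form $\mu\mapsto\wtsmash\varphi(\mu)\tilde y/(\alpha+\mu) \in \dom{A_c}$, and \eqref{eq:contrconsconseq} forces it to vanish; invertibility of $(\alpha - A_c)^{-1}$ yields $\wtsmash\varphi(\cdot)y=0$. Since both arguments work for any $\alpha$, the ``some/all'' phrasing of \eqref{eq:contrconsconseq} is justified, and a parallel observation will apply in Part 3.

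For Part 3, the substantive implication from \eqref{eq:contrconscond} to \eqref{eq:contrdomconteq}--\eqref{eq:contrconsconseq} is a resolvent argument: by Part 1 it suffices to verify \eqref{eq:contrdomAconteq}. Given $x\in\Hscr_c$ with $\mu x + \wtsmash\varphi y \in\Hscr_c$, set $f := (\alpha + \mu)x + \wtsmash\varphi y \in \Hscr_c$ and $w := (\alpha - A_c)^{-1}f \in \dom{A_c}$; expanding $(\alpha - A_c)w = f$ via \eqref{Ac-id} yields $(\alpha+\mu)(w - x) = \wtsmash\varphi(y - C_c w)$, so $w - x \in \Hscr_c$ has the form forbidden by \eqref{eq:contrconscond} and must vanish, giving $x = w \in\dom{A_c}$. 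The remaining implications \eqref{eq:contrconscond} $\Rightarrow$ \eqref{assume1c}, \eqref{eq:contrconsconseq} are immediate: the former via the resolvent argument already used in Part 2, the latter via $\dom{A_c}\subset\Hscr_c$. Conversely, if all of \eqref{eq:contrdomconteq}--\eqref{eq:contrconsconseq} hold and $x(\mu) = \wtsmash\varphi(\mu)y/(\alpha+\mu) \in\Hscr_c$, then $\mu x(\mu) + \wtsmash\varphi(\mu)(-y) = -\alpha x(\mu) \in \Hscr_c$, so $x\in\dom{A_c}$ by \eqref{eq:contrdomAconteq}, and \eqref{eq:contrconsconseq} finally forces $x=0$.

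The principal obstacle throughout is that only the inclusion \eqref{eq:contrdomAcont} is known a priori, so the condition ``$\mu x + \wtsmash\varphi y \in \Hscr_c$'' does \emph{not} automatically place $x$ in $\dom{A_c}$, and the $\Yscr$-vector $y$ is determined by $x$ only modulo $\{y:\wtsmash\varphi(\cdot)y = 0\}$. The resolvent-based constructions above are designed precisely to circumvent this: in each case they manufacture a candidate element of $\dom{A_c}$ with the predicted functional form, after which the appropriate hypothesis (\eqref{assume1c}, \eqref{eq:contrconsconseq}, or \eqref{eq:contrconscond}) is invoked to identify this candidate with the original element.
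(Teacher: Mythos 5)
Your proposal is correct and follows essentially the same route as the paper's proof: Part 1 via the decomposition $x_0=x-e_c(\overline\alpha)^*u$ and the observation that the two sides differ by the kernel function $\alpha K_c(\cdot,\overline\alpha)u$, Part 2 via the identity \eqref{Ac-id} and the resolvent formula \eqref{Acresol}, and Part 3 by comparing $x$ with $(\alpha-A_c)^{-1}\big((\alpha+\cdot)x+\wtsmash\varphi(\cdot)y\big)$ and invoking \eqref{eq:contrconscond}. The only cosmetic difference is in the final converse of Part 3, where you compute $\mu x(\mu)+\wtsmash\varphi(\mu)(-y)=-\alpha x(\mu)$ directly instead of the paper's choice $\gamma=-\eta$; the substance is identical.
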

\begin{proof}
\begin{enumerate}
\item By Theorem \ref{thm:enprescontainedadjoint}, the domains are included in the sets on the right-hand sides of \eqref{eq:contrdomconteq} and \eqref{eq:contrdomAconteq}, so we only need to show that the converse inclusions are equivalent. First assume \eqref{eq:contrdomconteq} and let $x\in\Hscr_c$ and $y\in\Yscr$ be such that
\begin{equation}\label{eq:funindomAc}
  \mu\mapsto \mu x(\mu)+\wtsmash\varphi(\mu)y\in\Hscr_c.
\end{equation}
The working assumption \eqref{eq:contrdomconteq} then implies that $\sbm{x\\0}\in\dom{\SmallSysNode_c}$, and according to \eqref{eq:Adef}, this precisely means that $x\in\dom{A_c}$.

Now assume \eqref{eq:contrdomAconteq} and \eqref{eq:funinHc}. Since $\alpha K_c(\cdot,\overline\alpha)u\in\Hscr_c$, we also have
$$
\begin{aligned}
  -\mu x(\mu)-\wtsmash\varphi(\mu)y+u-\alpha K_c(\mu,\overline\alpha)u =\qquad \\
  -\mu x(\mu) + \mu\frac{1-\wtsmash\varphi(\mu)\varphi(\alpha)}{\mu+\alpha}u-\wtsmash\varphi(\mu)\big( y-\varphi(\alpha)u \big)
\end{aligned}
$$
in $\Hscr_c$ as a function of $\mu\in\cplus$. Hence,
\begin{equation}\label{eq:domAchartemp}
  \exists \gamma\in\Yscr:~ \mu\mapsto -\mu x(\mu)
    +\mu\frac{1-\wtsmash\varphi(\mu)\varphi(\alpha)}{\mu+\alpha}u-\wtsmash\varphi(\mu)\gamma\in\Hscr_c,
\end{equation}
since one can simply take $\gamma:=y-\varphi(\alpha)u$. The statement \eqref{eq:domAchartemp} is by \eqref{eq:contrdomAconteq} equivalent to
$x-e_c(\overline\alpha)^*u\in\dom{A_c}$, and according to the first assertion in Theorem \ref{thm:contrsemiexplicit}, this is equivalent to $\sbm{x\\u}\in\dom{\SmallSysNode_c}$.

\item First assume \eqref{assume1c} and suppose that $z:=\mu\mapsto\frac{\wtsmash\varphi(\mu)y}{\alpha+\mu}\in\dom{A_c}$ for some arbitrary $y\in\Yscr$ and $\alpha\in\cplus$. Use \eqref{Ac-id} and\eqref{eq:taucdef} to calculate
$$
\begin{aligned}
  \big((\alpha-A_c)z\big)(\mu) &= (\alpha+\mu) z(\mu) + \wtsmash \varphi(\mu)C_c z \\
  &= \wtsmash \varphi(\mu)y + \wtsmash \varphi(\mu) C_c z \in \Hscr_c\cap\wtsmash\varphi(\cdot)\Yscr.
\end{aligned}
$$
By \eqref{assume1c} this quantity is 0, and since $\alpha-A_c$ is injective, we have $z=0$, and it follows that \eqref{eq:contrconsconseq} holds for all $\alpha\in\cplus$. 

Conversely, assume that \eqref{eq:contrconsconseq} holds for some $\alpha\in\cplus$ and suppose that $x:=\wtsmash\varphi(\cdot)y$ is in $\Hscr_c$ for some $y\in\Yscr$. Use \eqref{Acresol} to calculate
$$
\begin{aligned}
  \left(  (\alpha - A_c)^{-1} x \right)(\mu) &= \frac{ x(\mu) - \wtsmash\varphi(\mu) \tau_{c,\alpha}\,x}{ \alpha + \mu} \\
  &= \frac{ \wtsmash\varphi(\mu)}{\alpha + \mu} \big(y- \tau_{c,\alpha}\,x\big)
  \in \dom{A_c}\cap\frac{\wtsmash\varphi(\cdot)}{\alpha+\cdot}\Yscr.
\end{aligned}
$$
By \eqref{eq:contrconsconseq} this quantity is 0, and hence also $x=0$, which proves \eqref{assume1c}.

\item First assume that \eqref{eq:contrconscond} is satisfied. Then trivially \eqref{eq:contrconsconseq} holds, since $\dom{A_c}\subset\Hscr_c$, and we next prove that \eqref{eq:contrdomAconteq} is satisfied too.  Suppose that $x\in\Hscr_c$ and $y\in\Yscr$ are such that \eqref{eq:funindomAc} holds. Then for every $\alpha\in\cplus$ it also holds that
$$
\begin{aligned}
  z: &= \mu\mapsto (\alpha+\mu) x(\mu)+\wtsmash\varphi(\mu)y\in\Hscr_c \quad\text{and thus}\\
  x &= \mu\mapsto\frac{z(\mu)-\wtsmash\varphi(\mu)y}{\alpha+\mu}\in\Hscr_c.
\end{aligned}
$$
On the other hand we have
$$
  (\alpha-A_c)^{-1}z = \mu\mapsto\frac{z(\mu)-\wtsmash\varphi(\mu)\tau_{c,\alpha} z}{\alpha+\mu}\in\dom{A_c}\subset\Hscr_c,
$$
and these two together imply that $\mu\mapsto \frac{\wtsmash\varphi(\mu)(y-\tau_{c,\alpha} z)}{\alpha+\mu}\in\Hscr_c$. The working assumption \eqref{eq:contrconscond} then gives that $\frac{\wtsmash\varphi(\mu)(y-\tau_{c,\alpha} z)}{\alpha+\mu}=0$ for all $\mu\in\cplus$, i.e., $\wtsmash\varphi(\cdot)y=\wtsmash\varphi(\cdot)\tau_{c,\alpha} z$, and this implies that $x=(\alpha-A_c)^{-1}z\in\dom{A_c}$.

Finally, with the objective of showing \eqref{eq:contrconscond}, we assume that $\alpha\in\cplus$ is such that \eqref{eq:contrdomAconteq} and \eqref{eq:contrconsconseq} hold. Then we pick an $x:=\mu\mapsto \frac{\wtsmash\varphi(\mu)\eta}{\alpha+\mu}\in\Hscr_c$, so that also $\alpha x\in\Hscr_c$, and thus by \eqref{eq:contrdomAconteq}:
$$
\begin{aligned}
  x\in\dom{A_c} \quad&\Longleftrightarrow\quad \exists \gamma\in\Yscr:~
  \mu\mapsto \mu\frac{\wtsmash\varphi(\mu)\eta}{\alpha+\mu}+\wtsmash\varphi(\mu)\gamma\in\Hscr_c \\
  &\Longleftrightarrow\quad \exists \gamma\in\Yscr:~
  \mu\mapsto \mu\frac{\wtsmash\varphi(\mu)\eta}{\alpha+\mu}+\alpha\frac{\wtsmash\varphi(\mu)\eta}{\alpha+\mu}+\wtsmash\varphi(\mu)\gamma\in\Hscr_c.
\end{aligned}
$$
\end{enumerate}

This is seen to be true by simply choosing $\gamma:=-\eta$. Thus every $x\in\Hscr_c$ 

of the form $x(\mu)=\frac{\wtsmash\varphi(\mu)y}{\alpha+\mu}$ also lies in $\dom{A_c}$, and \eqref{eq:contrconsconseq} finally gives 

the desired result.
\end{proof}

Theorem \ref{T:iso-real} includes a characterization of the case where controllable isometric realization is in fact unitary. The corresponding situation in the present paper is that $\SmallSysNode_c$ is not only energy preserving, but even conservative, so that also $\SmallSysNode_c^*$ is also energy preserving. We have the following characterizations of this case:

\begin{thm}\label{thm:conschar}
The following conditions are equivalent:
\begin{enumerate}
\item The system node $\SmallSysNode_c$ is conservative.
\item The condition \eqref{eq:contrdomconteq} holds together with the following implication:
\begin{equation}\label{eq:transfnotinHc}
  \wtsmash\varphi(\cdot)y\in\Hscr_c\quad\Longrightarrow\quad y=0.
\end{equation}
\item The condition \eqref{eq:contrconscond} holds for some (or equivalently for all) $\alpha\in\cplus$ and
\begin{equation}\label{eq:transfnotinHc2}
  \wtsmash\varphi(\mu)y=0~\text{for all}~\mu\in\cplus\quad\Longrightarrow\quad y=0.
\end{equation}
\item The function $1 - \widetilde \varphi(\cdot)^* \widetilde \varphi(\cdot)$ has maximal factorable minorant in the right half-plane sense equal to 0, i.e.,
if $a : {\mathbb C}^+ \to \cB(\cY, \cY')$ is holomorphic with
$a(\mu)^* a(\mu) \le 1 - \widetilde \varphi(\mu)^* \widetilde \varphi(\mu)$ for almost all $\mu$ on the imaginary line $i {\mathbb R}$, then $a = 0$.
\end{enumerate}
\end{thm}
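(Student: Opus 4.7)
My plan is to establish the chain (1)$\Leftrightarrow$(2)$\Leftrightarrow$(3) directly from the concrete formulas already derived for $\SmallSysNode_c$ and $\SmallSysNode_c^*$, and then to deduce (1)$\Leftrightarrow$(4) from the corresponding discrete-time characterization in Theorem \ref{thm:discrconschar} through a Cayley transform. Conditions (1)--(3) are operator-theoretic statements about the functional model, whereas (4) is intrinsically about the non-tangential boundary values of $\wtsmash\varphi$ on the imaginary line, so the cleanest bridge goes through the disk via Section \ref{sec:recovering}.

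For (1)$\Leftrightarrow$(2) the central tool is Theorem \ref{thm:msw}.2: since $\SmallSysNode_c$ is energy preserving, it is conservative iff $\sbm{\sbm{1&0}\\\CD}$ maps $\dom{\SmallSysNode_c}$ onto $\dom{\SmallSysNode_c^*}$. Theorem \ref{thm:contrdual} parametrises $\dom{\SmallSysNode_c^*}$ as those $\sbm{x\\y}$ for which a unique $u\in\Uscr$ makes $\mu\mapsto\mu x(\mu)+\wtsmash\varphi(\mu)y-u$ lie in $\Hscr_c$. Surjectivity then splits into two independent requirements: first, this $u$ must land $\sbm{x\\u}$ inside $\dom{\SmallSysNode_c}$, which is exactly the missing $\supset$ inclusion in \eqref{eq:contrdomcont} and thereby promotes it to the equality \eqref{eq:contrdomconteq}; second, the output $y':=\bbm{C_c\&D_c}\sbm{x\\u}$ must agree with $y$. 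Comparing \eqref{eq:AcBcAction} for $y$ and $y'$ shows that $\wtsmash\varphi(\cdot)(y-y')\in\Hscr_c$, so $y=y'$ is forced precisely by \eqref{eq:transfnotinHc}. For the reverse implication, testing surjectivity at $\sbm{0\\y}$ with $\wtsmash\varphi(\cdot)y\in\Hscr_c$ produces a $u'\in\Uscr$ such that the constant $u'$ lies in $\Hscr_c$, whence $u'=0$ by Proposition \ref{prop:xtendstozero} and consequently $y=\bbm{C_c\&D_c}\sbm{0\\0}=0$.

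The equivalence (2)$\Leftrightarrow$(3) is a formal repackaging via Lemma \ref{lem:domequivdeluxe}. The key observation is that \eqref{eq:transfnotinHc} decomposes as the conjunction of \eqref{assume1c} and \eqref{eq:transfnotinHc2}: if $\wtsmash\varphi(\cdot)y\in\Hscr_c$, then \eqref{assume1c} forces $\wtsmash\varphi(\cdot)y\equiv0$, upon which \eqref{eq:transfnotinHc2} yields $y=0$; the converse is immediate. Lemma \ref{lem:domequivdeluxe}.2 identifies \eqref{assume1c} with \eqref{eq:contrconsconseq}, and Lemma \ref{lem:domequivdeluxe}.3 identifies the pair (\eqref{eq:contrdomconteq}, \eqref{eq:contrconsconseq}) with \eqref{eq:contrconscond}, turning (2) into (3).

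For (1)$\Leftrightarrow$(4) I would transport the problem to the disk. Fix a M\"obius map $C:\D\to\cplus$ and set $\phi:=\varphi\circ C\in\Sscr(\D;\Uscr,\Yscr)$; the boundary values of $1-\wtsmash\varphi(\cdot)^*\wtsmash\varphi(\cdot)$ on $i\R$ pull back to those of $1-\wtsmash\phi(\cdot)^*\wtsmash\phi(\cdot)$ on $\T$, and the existence of a non-zero factorable minorant is preserved by this change of variables, so (4) for $\varphi$ matches the third clause of the second half of Theorem \ref{thm:discrconschar} applied to $\phi$. The unitary intertwinement $\Xi_\alpha$ to be constructed in Section \ref{sec:recovering} identifies $\SmallSysNode_c$ with the classical controllable isometric discrete-time canonical model $\bU_c$ for $\phi$, and hence the conservativity of the two models agrees; combining this with the disk equivalence in Theorem \ref{thm:discrconschar} closes the loop. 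The main obstacle is precisely this last step, since one must verify both that the Cayley transform preserves the factorable-minorant property and that $\Xi_\alpha$ faithfully carries over the conservativity structure; both tasks depend on material from Section \ref{sec:recovering} that has not yet been presented, and an alternative would be to reprove the Nikolski\u\i--Vasyunin-type argument directly in continuous time by exploiting the operator-range description $\Hscr_c=\range{(1-M_{\wtsmash\varphi}M_{\wtsmash\varphi}^*)^{1/2}}$ from Theorem \ref{T:operatorrange}.
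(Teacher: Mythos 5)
Your proposal is correct, and for the equivalence of (1), (2), (3) it takes a genuinely different (and arguably more streamlined) route than the paper for one leg of the cycle. The paper proves $3\Rightarrow 2\Rightarrow 1\Rightarrow 3$, where the step $1\Rightarrow 3$ uses criterion (c) of Theorem \ref{thm:msw}.2: conservativity forces the range of $\SmallSysNode_c+\sbm{\overline\alpha&0\\0&0}$ to be dense, the span $\Gscr$ of the vectors $\sbm{(\overline\alpha+\mu)e_c(\overline\mu)^*u\\\varphi(\mu)u}$ is shown to be dense in that range, and an explicit computation of $\sbm{\Hscr_c\\\Yscr}\ominus\Gscr$ then yields \eqref{eq:contrconscond} and \eqref{eq:transfnotinHc2}. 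You instead prove $1\Rightarrow 2$ directly from the surjectivity criterion (b) of Theorem \ref{thm:msw}.2: testing surjectivity at a general $\sbm{x\\y}\in\dom{\SmallSysNode_c^*}$ and comparing the two expressions $\mu\mapsto-\mu x(\mu)-\wtsmash\varphi(\mu)y+u$ coming from \eqref{defdom} and from \eqref{eq:AcBcAction} leaves a constant function in $\Hscr_c$, which vanishes by Proposition \ref{prop:xtendstozero}; testing at $\sbm{0\\y}$ with $\wtsmash\varphi(\cdot)y\in\Hscr_c$ gives \eqref{eq:transfnotinHc} by the same constant-function trick. This keeps the whole argument at the level of the already-established formulas (Theorem \ref{thm:contrdual}, Theorem \ref{thm:enprescontainedadjoint}, Corollary \ref{cor:projUY}) and avoids re-examining the generating set of the graph of $\SmallSysNode_c$, at the cost of having to supply the $2\Rightarrow 3$ direction yourself; your reduction of that direction to Lemma \ref{lem:domequivdeluxe} via the decomposition of \eqref{eq:transfnotinHc} into \eqref{assume1c} plus \eqref{eq:transfnotinHc2} is exactly right (and is the observation recorded in Remark \ref{rem:conservativity}.1). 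Your $2\Rightarrow 1$ coincides with the paper's. For (4), both you and the paper defer to the Cayley transform and the discrete-time Theorem \ref{thm:discrconschar}; you are candid that the invariance of the maximal-factorable-minorant condition under the change of variables and the unitary similarity with $\mathbf U_c$ rest on Section \ref{sec:recovering}, which is precisely the level of detail the paper itself provides.
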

\begin{proof}
This proof is heavily based on Theorem \ref{thm:msw}. We first prove that \emph{3.\ implies 2.}: Assume \eqref{eq:contrconscond} and \eqref{eq:transfnotinHc2}. Then \eqref{eq:contrdomconteq} and \eqref{assume1c} hold by Lemma \ref{lem:domequivdeluxe}.3. Hence, if $\wtsmash\varphi(\cdot)y\in\Hscr_c$, then $\wtsmash\varphi(\mu)y=0$, for all $\mu\in\cplus$, which by \eqref{eq:transfnotinHc2} implies $y=0$.

We next show that \emph{2.\ implies 1.} Assume that \eqref{eq:contrdomconteq} and \eqref{eq:transfnotinHc} hold. By Theorem \ref{thm:msw}.2, we need to show that $\sbm{\sbm{1&0}\\\sbm{C_c\&D_c}}$ maps $\dom{\SmallSysNode_c}$ onto $\dom{\SmallSysNode_c^*}$. Hence fix $\sbm{x\\y}\in\dom{\SmallSysNode_c^*}$ arbitrarily and let $u$ be the unique element in $\Uscr$ for which \eqref{eq:funinHc} holds; see \eqref{defdom}. Then $\sbm{x\\u}\in\dom{\SmallSysNode_c}$ by \eqref{eq:contrdomconteq} and we may define $\eta:=\bbm{C_c\&D_c}\sbm{x\\u}$. It follows from Theorem \ref{thm:enprescontainedadjoint}.1 that \eqref{eq:funinHc} holds also with $y$ replaced by $\eta$, and hence $\wtsmash\varphi(\cdot)(y-\eta)\in\Hscr_c$. Now the implication \eqref{eq:transfnotinHc} gives that $y=\eta=\bbm{C_c\&D_c}\sbm{x\\u}$.

Finally, we prove that \emph{1.\ implies 3.}: By Theorem \ref{thm:msw}.2, conservativity of $\SmallSysNode_c$ implies that $\Rscr:=\range{\SmallSysNode_c+\sbm{\overline\alpha&0\\0&0}}$ is dense in $\sbm{\Hscr_c\\\Yscr}$ for some, or equivalently for all, $\alpha\in\cplus$. From the construction of $\SmallSysNode_c$ it follows that
$$
  \Gscr:=\spn\set{\bbm{(\overline\alpha+\mu) e_c(\overline\mu)^*u\\\varphi(\mu)u}\bigmid \mu\in\cplus,\,u\in\Uscr}
$$ 
is dense in $\Rscr$, because by the construction of $\SmallSysNode_c$, the graph of $\SmallSysNode_c+\sbm{\overline\alpha&0\\0&0}$ is the closure of 
$$
  \spn\set{\bbm{(\overline\alpha+\mu) e_c(\overline\mu)^*u\\\varphi(\mu)u\\e_c(\overline\mu)^*u\\u}\bigmid \mu\in\cplus,\,u\in\Uscr}
  \quad\text{in}\quad \bbm{\Hscr_c\\\Yscr\\\Hscr_c\\\Uscr}.
$$
Therefore $\Gscr$ is also dense in $\sbm{\Hscr_c\\\Yscr}$. 

Now note that we have 
$$
\begin{aligned}
  \bbm{z\\y}\in\bbm{\Hscr_c\\\Yscr}\ominus \Gscr \quad &\Longleftrightarrow\\ 
  \Ipdp{\bbm{(\overline\alpha+\mu)e_c(\overline\mu)^*u\\\varphi(\mu)u}}{\bbm{z\\y}}_{\sbm{\Hscr_c\\\Yscr}}=0\quad \text{for all}~\mu\in\cplus,\,u\in\Uscr \quad
  &\Longleftrightarrow\\
  \quad z\in\Hscr_c\quad\text{and}\quad z(\overline\mu)=-\frac{\wtsmash\varphi(\overline\mu)y}{\alpha+\overline\mu},\quad\mu\in\cplus.\quad&
\end{aligned}
$$
Thus, if $z:=\mu\mapsto\frac{\wtsmash\varphi(\mu)y}{\alpha+\mu}\in\Hscr_c$ then we can denote this function by $-z$ and get that $\sbm{z\\y}\in\sbm{\Hscr_c\\\Yscr}\ominus\Gscr$, which then by the above implies that $z=0$ and $y=0$.

The equality of conservativity and assertion four is reduced to the corresponding discrete result in Theorem \ref{thm:discrconschar} using the Cayley transform described in Section \ref{sec:recovering}.
\end{proof}

\begin{rem}\label{rem:conservativity}
We can make the following interesting observations:
\begin{enumerate}
\item The condition \eqref{eq:transfnotinHc} implies \eqref{assume1c}. Together with \eqref{eq:transfnotinHc2},  \eqref{assume1c} also implies \eqref{eq:transfnotinHc}. Note that \eqref{eq:transfnotinHc2} can also be written  $\cap_{\mu\in\cplus}\Ker{\wtsmash\varphi(\mu)}=\zero$. Implication \eqref{eq:transfnotinHc} holds, e.g., if $\wtsmash\varphi$ is bounded away from zero on $i\R$. Indeed, in this case $\wtsmash\varphi(\cdot) y$ is not even in $L^2(i\mathbb R, \cU)$ for any nonzero $y \in \cY$, and {\it a fortiori}, $\wtsmash\varphi(\cdot) y\not\in\Hscr_c$. In Example \ref{ex:constantc} below the implication \eqref{eq:transfnotinHc} is false but \eqref{assume1c} is true.

\item Note that \eqref{eq:contrconscond} is true if $\wtsmash \varphi$ is inner: In this case $\Hscr_c=H^2({\mathbb C}^+ ; \cU ) \ominus M_{\wtsmash \varphi} H^2({\mathbb C}^+; \cY)$ isometrically by Corollary \ref{cor:inner} and the function $\mu\mapsto\frac{\wtsmash\varphi(\mu)y}{\alpha+\mu}$, $\mu\in\cplus$, is $M_{\wtsmash\varphi}$ applied to the kernel function $e(\overline\alpha)^*y$ in $H^2(\cplus;\Yscr)$, cf.\ Lemma \ref{lem:MphiProps}.2. Hence, for every co-inner $\varphi$, the model $\SmallSysNode_c$ is conservative.

\item On the other extreme of the situation in 2., if $\|\varphi\|_{H^\infty}=\delta<1$, then $\Hscr_c$ is simply a re-normed version of $H^2(\cplus;\Uscr)$ by \eqref{lifted norm}, since $\sqrt{1-\delta^2}\leq (1-M_{\wtsmash\varphi}M_{\wtsmash\varphi}^*)^{1/2}\leq 1$ and $Qx=x$, because $1-M_{\wtsmash\varphi}M_{\wtsmash\varphi}^*$ is injective. Then the intersection in \eqref{eq:contrconscond} is all of $\set{\mu\mapsto\frac{\wtsmash\varphi(\mu)y}{\alpha+\mu} \bigmid y\in\Yscr,\, \mu\in\cplus}$ and $\Hscr_c$ is not conservative unless $\varphi(\mu)=0$ for all $\mu\in\cplus$. If $\varphi$ is identically zero, then condition \eqref{eq:transfnotinHc2} is violated and hence $\SmallSysNode_c$ is not conservative in this case either. Thus, $\SmallSysNode_c$ can be conservative only if $\|\varphi\|_{H^\infty(\cplus;\Uscr,\Yscr)}=1$.
\end{enumerate}
\end{rem}

In the rest of this subsection, we assume that \eqref{eq:contrconscond} holds, which is true e.g.\ if $\SmallSysNode_c$ is conservative. In this case we can proceed to identify $\Hscr_{c,-1}$ concretely and calculate $A_c|_{\Hscr_c}$ and $B_c$ explicitly. In addition to \eqref{eq:contrconscond}, we make critical use of the characterization \eqref{eq:contrdomAconteq} of $\dom{A_c}$.

Since $\dom{A_c}$ is given by \eqref{eq:contrdomAconteq} and $\Hscr_{c,-1}=(\beta-A_c|_{\Hscr_c})\Hscr_c$, the formula \eqref{Acresol} for the resolvent of $A_c$ suggests the following concrete identification of the extrapolation space:
\begin{equation}   \label{Hc-1}
 \cH_{c,-1}  = \left\{ x : {\mathbb C}^+ \to \cY \bigmid \exists y\in \cY :~ \mu \mapsto \frac{x(\mu) +  \wtsmash \varphi(\mu) y}{ \beta + \mu} \in \cH_c\right\}.
\end{equation}
The assumption \eqref{assume1c} guarantees us that the function $\wtsmash\varphi(\cdot)y$ in \eqref{Hc-1} is uniquely determined by $x$ (whenever at least one such function exists). Note that the choice of $y$ is is in general not unique. Now we set
\begin{equation}   \label{Hc-1norm}
  \| x \|_{\cH_{c,-1}} = \left\| \mu\mapsto \frac{ x(\mu) + \wtsmash \varphi(\mu) y}{\beta + \mu} \right\|_{\cH_c},\quad x\in\cH_{c,-1}.
\end{equation}

We have $\wtsmash\varphi(\cdot)\gamma\in\Hscr_{c,-1}$ with zero norm for all $\gamma\in\Yscr$; simply choose $y:=-\gamma$ in \eqref{Hc-1} and \eqref{Hc-1norm}. Conversely, if $\|x\|_{\Hscr_{c,-1}}=0$, then $\mu\mapsto\frac{x(\mu)+\wtsmash\varphi(\mu)y}{\beta+\mu}=0$ for all $\mu\in\cplus$, i.e., $x(\mu)=-\wtsmash\varphi(\mu)y$ for all $\mu\in\cplus$. Hence, the elements of $\cH_{c,-1}$ are equivalence classes of functions modulo the subspace $\wtsmash \varphi (\cdot) \cY$. These equivalence classes are denoted as $[x]$, where $x$ is any particular representative of the equivalence class. We summarize the properties of the space $\cH_{c,-1}$ as follows:

\begin{thm}\label{T:Hc-1concrete}  
Assume that \eqref{eq:contrconscond} holds. Then the space $\cH_{c,-1}$  given by \eqref{Hc-1} and \eqref{Hc-1norm} is complete and the following claims are true:
\begin{enumerate}
\item The map $\iota  : x \mapsto [x]$ embeds $\cH_c$ into $\cH_{c,-1}$ as a dense subspace. A given element $[z] \in \cH_{c,-1}$ is of the form $\iota (x)$ for some $x \in \cH_c$ if and only if there is a choice of $y$ in $\Yscr$ so that the function $ \displaystyle{\mu \mapsto \frac{z(\mu) + \wtsmash\varphi(\mu) y}{ \alpha + \mu}}$ is not only in $\cH_c$ but also in $\cH_{c,1} = \dom{A_c}$ for some (or equivalently for every) $\alpha\in\cplus$. 

\item Define an operator $A_c|_{\cX_c} : \cH_c \to \cH_{c,-1}$ by
\begin{equation}\label{eq:AcExt}
  A_c|_{\cH_c} x:=  [\mu \mapsto -\mu x(\mu)],\quad x\in\Hscr_c,\,\mu\in\cplus.
\end{equation}
When $\cH_c$ is identified as a linear sub-manifold of $\cH_{c,-1}$ via the embedding map $\iota $ above, then $A_c|_{\cX_c}$ is the unique extension of $A_c : \dom{A_c} \to \cH_c$ to a continuous operator $\Hscr_c\to\Hscr_{c,-1}$. Moreover 
\begin{equation}\label{eq:AcExtRes}
  \big((\beta - A_c|_{\cH_{c}})^{-1}[x]\big)(\mu)=\frac{ x(\mu) + \wtsmash \varphi(\mu) y}{\beta + \mu},\quad [x]\in\Hscr_{c,-1},\,\mu\in\cplus,
\end{equation}
where the condition $\mu\mapsto\frac{ x(\mu) + \wtsmash \varphi(\mu) y}{\beta + \mu}\in\Hscr_c$ uniquely determines $\wtsmash \varphi(\cdot) y$, is unitary from $\cH_{c,-1}$ to $\cH_c$.

\item The action of $B_c  : \cU \to \cH_{c,-1}$ is given by
$$
  B_c u := [\mu\mapsto u],\quad u\in\Uscr,\,\mu\in\cplus.
$$
\end{enumerate}
\end{thm}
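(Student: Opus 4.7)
The plan is to mirror the structure of the proof of Theorem \ref{thm:hdc-1concrete}, using the hypothesis \eqref{eq:contrconscond} in the role that was played there by the $\cH_c$-decay condition. The first technical task will be to check that the norm \eqref{Hc-1norm} is well-defined on the equivalence classes modulo $\wtsmash\varphi(\cdot)\Yscr$: if two vectors $y_1,y_2\in\Yscr$ both realize the condition \eqref{Hc-1} for the same representative $x$, then their difference satisfies $\mu\mapsto\wtsmash\varphi(\mu)(y_1-y_2)/(\beta+\mu)\in\cH_c$, so \eqref{eq:contrconscond} forces $\wtsmash\varphi(\cdot)(y_1-y_2)=0$ on $\cplus$, which makes the norm unambiguous. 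Completeness of $\cH_{c,-1}$ then follows by the standard Cauchy argument used in the proof of Theorem \ref{thm:hdc-1concrete}: representatives $z_n(\mu):=(x_n(\mu)+\wtsmash\varphi(\mu)y_n)/(\beta+\mu)$ form a Cauchy sequence in $\cH_c$ converging to some $z\in\cH_c$, and then $x(\mu):=(\beta+\mu)z(\mu)$ provides a representative of a limit class.

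To establish the embedding $\iota:\cH_c\to\cH_{c,-1}$ in Assertion 1, I would apply Corollary \ref{cor:ContrRes} with the choice $y:=-\tau_{c,\beta}x$: the function $\mu\mapsto(x(\mu)-\wtsmash\varphi(\mu)\tau_{c,\beta}x)/(\beta+\mu)$ coincides with $((\beta-A_c)^{-1}x)(\mu)$ and so lies in $\dom{A_c}\subset\cH_c$, which immediately yields both $\iota(x)\in\cH_{c,-1}$ and the identity $\|\iota(x)\|_{\cH_{c,-1}}=\|(\beta-A_c)^{-1}x\|_{\cH_c}$. Density of $\iota(\cH_c)$ in $\cH_{c,-1}$ will be a direct consequence of the completion construction. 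The characterization of $\iota(\cH_c)$ inside $\cH_{c,-1}$ is then read off from the resolvent formula \eqref{eq:AcExtRes} (established in Assertion 2): since $(\beta-A_c|_{\cH_c})^{-1}$ will restrict on $\cH_c$ to $(\beta-A_c)^{-1}:\cH_c\to\dom{A_c}$, a class $[z]\in\cH_{c,-1}$ lies in $\iota(\cH_c)$ precisely when $(\beta-A_c|_{\cH_c})^{-1}[z]\in\dom{A_c}$, which is the stated condition.

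Assertion 2 will be the main obstacle. The heart of the matter is verifying that $A_c|_{\cH_c}x:=[\mu\mapsto-\mu x(\mu)]$ actually lands in $\cH_{c,-1}$ for every $x\in\cH_c$, and the natural strategy is the rewriting
\begin{equation*}
  \frac{-\mu x(\mu)-\beta\wtsmash\varphi(\mu)\tau_{c,\beta}x}{\beta+\mu}=-x(\mu)+\beta\,\frac{x(\mu)-\wtsmash\varphi(\mu)\tau_{c,\beta}x}{\beta+\mu}=\bigl(\beta(\beta-A_c)^{-1}x-x\bigr)(\mu),
\end{equation*}
which lies in $\cH_c$ by Corollary \ref{cor:ContrRes}; consequently $y:=-\beta\tau_{c,\beta}x$ realizes the defining condition \eqref{Hc-1} for this class, and the same computation simultaneously confirms the special instance $(\beta-A_c|_{\cH_c})^{-1}A_c|_{\cH_c}x=\beta(\beta-A_c)^{-1}x-x$ of the extension formula. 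The general formula \eqref{eq:AcExtRes} is then just a rewriting of \eqref{Hc-1} and \eqref{Hc-1norm}, and the associated operator is norm-preserving by construction; surjectivity onto $\cH_c$ is automatic since $\iota(\cH_c)$ is already in its range, so it is unitary. That $A_c|_{\cH_c}$ genuinely extends $A_c$ is obtained by invoking \eqref{Ac-id}: for $x\in\dom{A_c}$ the pointwise identity $-\mu x(\mu)=(A_cx)(\mu)+\wtsmash\varphi(\mu)C_cx$ shows that $[\mu\mapsto-\mu x(\mu)]=\iota(A_cx)$ in $\cH_{c,-1}$, because $\wtsmash\varphi(\cdot)C_cx$ is a zero-norm representative. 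Continuity of $A_c|_{\cH_c}:\cH_c\to\cH_{c,-1}$ is then a free consequence of unitarity of the resolvent extension.

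For Assertion 3, I would apply Remark \ref{rem:reconstruct}, which gives $B_cu=\bbm{A_c\&B_c}\sbm{x\\u}-A_c|_{\cH_c}x$ for any $\sbm{x\\u}\in\dom{\SmallSysNode_c}$; such pairs are abundant by \eqref{eq:SodDefprel}. Setting $y:=\bbm{C_c\&D_c}\sbm{x\\u}$ and substituting the explicit formula \eqref{eq:AcBcAction} yields
\begin{equation*}
  B_cu=[\mu\mapsto-\mu x(\mu)-\wtsmash\varphi(\mu)y+u]-[\mu\mapsto-\mu x(\mu)]=[\mu\mapsto u],
\end{equation*}
since $\wtsmash\varphi(\cdot)y$ represents the zero class in $\cH_{c,-1}$. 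The conceptual subtlety that pervades the whole argument is keeping careful track of equivalence-class representatives; the hypothesis \eqref{eq:contrconscond}, via Lemma \ref{lem:domequivdeluxe}, is exactly what guarantees that every choice made along the way is coherent.
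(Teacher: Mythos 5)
Your proposal follows essentially the same route as the paper's proof: completeness via the Cauchy argument from Theorem \ref{thm:hdc-1concrete}, the embedding via Corollary \ref{cor:ContrRes} with $y=-\tau_{c,\beta}x$, the extension property of $A_c|_{\cH_c}$ via \eqref{Ac-id}, and unitarity of the resolvent read off from \eqref{Hc-1norm}. Your variants are all workable: exhibiting $y=-\beta\tau_{c,\beta}x$ directly for $[\mu\mapsto-\mu x(\mu)]$ is equivalent to the paper's linearity trick with $[\beta x-z]$ and $y=0$, and deriving Assertion 3 from $B_cu=\bbm{A_c\&B_c}\sbm{x\\u}-A_c|_{\cH_c}x$ together with \eqref{eq:AcBcAction} is a legitimate alternative to the paper's computation $B_cu=(\alpha-A_c|_{\cH_c})e_c(\overline\alpha)^*u$.

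Two points need patching. First, your justification of density of $\iota(\cH_c)$ as ``a direct consequence of the completion construction'' does not apply: $\cH_{c,-1}$ is \emph{defined} concretely by \eqref{Hc-1}--\eqref{Hc-1norm}, not as an abstract completion, so density must be derived --- e.g.\ from the fact that the isometry $(\beta-A_c|_{\cH_c})^{-1}$ carries $\iota(\cH_c)$ onto $\dom{A_c}$, which is dense in $\cH_c$ (this also makes your surjectivity argument honest: isometry with dense range from a complete space is onto). Second, your characterization of $\iota(\cH_c)$ via $(\beta-A_c|_{\cH_c})^{-1}[z]\in\dom{A_c}$ only settles the case $\alpha=\beta$ of the ``for some, or equivalently for all, $\alpha$'' claim; to close this you should run the paper's computation at general $\alpha$, namely that $g:=\mu\mapsto\frac{z(\mu)+\wtsmash\varphi(\mu)y}{\alpha+\mu}\in\dom{A_c}$ forces $(\alpha-A_c)g\cong z$ by \eqref{Ac-id}, and conversely that $[z]=\iota(x)$ gives the membership at every $\alpha$ via \eqref{Acresol}. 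Both repairs use only tools you already invoke, so the argument is sound once these are filled in.
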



\noindent\emph{Proof.} Completeness of $\Hscr_{c,-1}$ is proved precisely the same way as completeness of $\Hscr_{c,-1}^d$ was proved in Theorem \ref{thm:hdc-1concrete}: For a Cauchy sequence $[x_n]$ in $\Hscr_{c,-1}$, denote the limit of the Cauchy sequence $z_n:=\mu\mapsto \frac{ x_n(\mu) + \wtsmash \varphi(\mu) y_n}{\beta + \mu}$ in $\Hscr_c$ by $z$. Then $[x_n]\to[x]$ in $\Hscr_{c,-1}$, where $x(\mu)=(\beta+\mu)z(\mu)$, $\mu\in\cplus$.
\begin{enumerate}
\item It is clear that $\iota(\Hscr_c)\subset\Hscr_{c,-1}$, since by \eqref{Acresol}, 
$$
  \mu\mapsto\frac{x(\mu)+\wtsmash\varphi(\mu)y}{\beta+\mu}=(\beta-A_c)^{-1}x\in\Hscr_{c,1}\subset\Hscr_c
$$ 
with $y=-\tau_{c,\alpha} x$. We prove denseness of $\Hscr_c$ in $\Hscr_{c,-1}$ in the proof of assertion two.

Next assume that the function $g(\mu):=\frac{z(\mu)+\wtsmash\varphi(\mu)y}{\alpha+\mu}$ lies in $\dom{A_c}$ for some $\alpha\in\cplus$. We need to prove that $x:=(\alpha-A_c)g\in\Hscr_c$ has the property $[z] = \iota(x)$. We may use formula \eqref{Ac-id} to compute
\begin{align*}
\big( (\alpha - A_c) g \big) (\mu) & = (\alpha+\mu)g(\mu)+\wtsmash\varphi(\mu)C_cg \\
&= z(\mu) + \wtsmash\varphi(\mu) y + \wtsmash\varphi(\mu) C_c g \\
& = z(\mu) + \wtsmash\varphi(\mu) \big( y + C_c g \big) = x(\mu),
\end{align*}
and we can conclude that $[z] = [x]$, where $x\in\cH_c$. 

The converse implication is seen as follows. Assume that $[z]=[x]$ with $x\in\Hscr_c$ and let $\alpha\in\cplus$ be arbitrary. Then $z(\mu)=x(\mu)+\wtsmash\varphi(\mu)\gamma$ for some $\gamma\in\Yscr$, and by \eqref{Acresol} it holds that
$$
  \big((\alpha-A_c)^{-1}x\big)(\mu) = \frac{x(\mu)-\wtsmash\varphi(\mu)\tau_{c,\alpha} x}{\alpha+\mu}
    = \frac{z(\mu)-\wtsmash\varphi(\mu)(\tau_{c,\alpha} x+\gamma)}{\alpha+\mu}.
$$
Choosing $y:=-\tau_{c,\alpha} x-\gamma$, we thus have that $\mu\mapsto\frac{z(\mu)+\wtsmash\varphi(\mu)y}{\alpha+\mu}\in\dom{A_c}$ for every $\alpha\in\cplus$.

\item If $x \in \cH_c$ and $z(\mu) = -\mu x(\mu)$, then $[\beta x - z ] \in \cH_{c,-1}$ since
\begin{equation}\label{eq:invobsres}
  \mu\mapsto\frac{ \beta x(\mu) - z(\mu)}{ \beta + \mu} =  x\in \cH_c;
\end{equation}
take $y=0$ in \eqref{Hc-1}. Moreover, $\beta x\in\Hscr_c\subset \Xscr_{c,-1}$, and it follows that $[z]=[\mu\mapsto -\mu x(\mu)]\in \Hscr_{c,-1}$. This shows that \eqref{eq:AcExt} defines an operator from all of $\Hscr_c$ into $\Hscr_{c,-1}$. If it happens that $x\in\dom{A_c}$ then $A_c|_{\Hscr_c}x = [A_cx]$ by \eqref{Ac-id}, and hence $A_c|_{\Hscr_c}$ is an extension of $A_c$. 

The operator in \eqref{eq:AcExtRes} maps $\Hscr_{c,-1}$ into $\Hscr_c$ and it equals $(\beta-A_c|_{\Hscr_c})^{-1}$, because \eqref{eq:AcExt} gives
$$
  \big[(\beta-A_c|_{\Hscr_c})^{-1}[x]\big]=\left[ \mu\mapsto\frac{x(\mu)}{\beta+\mu} \right],\quad\mu\in\cplus,
$$
and as $[x]\in\Hscr_{c,-1}$, there by \eqref{Hc-1} exists a $y\in\Yscr$ such that $\mu\mapsto\frac{x(\mu)+\wtsmash\varphi(\mu)y}{\beta+\mu}\in\Hscr_c$. By \eqref{Hc-1}, \eqref{Hc-1norm}, and \eqref{eq:AcExtRes}, $(\beta-A_c|_{\Hscr_c})^{-1}$ maps $\Hscr_{c,-1}$ isometrically into $\Hscr_c$. On the other hand, in the completeness proof, we showed how one for an arbitrary $z\in\Hscr_c$ can give an $[x]\in\Hscr_{c,-1}$, such that $(\beta-A_c|_{\Hscr_c})^{-1}[x]=z$, and thus $(\beta-A_c|_{\Hscr_c})^{-1}$ is onto $\Hscr_c$.

\item Combining \eqref{eq:isomeval} with the formula for $A_c|_{\Hscr_c}$, we see that for arbitrary $\alpha\in\cplus$:
\begin{align*}
B_c u & = (\alpha - A_c|_{\cH_c}) e_c(\overline\alpha)^* u \\
& =\left[ \mu \mapsto (\alpha + \mu) \frac{1 - \wtsmash\varphi(\mu) \varphi(\alpha)}{ \mu + \alpha} u \right]=[\mu\mapsto u],\quad u\in\Uscr,\,\mu\in\cplus. \tag*{\qed}
\end{align*}
\end{enumerate}

It follows from Assertion 3 in Theorem \ref{T:Hc-1concrete} that all constant functions are in $\Hscr_{c,-1}$, but here they have non-zero norm in general. This can actually be seen directly in \eqref{Hc-1}, by choosing $y:=\varphi(\beta)u$; then the function in \eqref{Hc-1} is $e_c(\overline\beta)^*u$, and by \eqref{Hc-1norm} $\|[u]\|_{\Hscr_{c,-1}}=\left\|e_c(\overline\beta)^*u\right\|_{\Hscr_c}\neq0$, unless $\Hscr_c=\zero$. Again we can recover $\SmallSysNode_c$ from $A_c$, $B_c$, $C_c$, and $\varphi$ evaluated at some point in $\cplus$. 

So far we only know the resolvent of $A_c|_{\Hscr_c}$ at the single point $\beta$ corresponding to the rigging. Based on \eqref{Acresol} and \eqref{eq:AcExtRes}, it seems plausible to guess that for other $\alpha\in\cplus$ the resolvent at $\alpha$ would be
\begin{equation}\label{eq:AcExtRes2}
  \big((\alpha - A_c|_{\cH_{c}})^{-1}[x]\big)(\mu)=\frac{ x(\mu) + \wtsmash \varphi(\mu) \gamma}{\alpha + \mu},
    \quad [x]\in\Hscr_{c,-1},\,\alpha,\mu\in\cplus,
\end{equation}
for some $\gamma\in\Yscr$, which depends on $[x]$ and $\alpha$.

\begin{prop}
Assume that \eqref{eq:contrconscond} holds. For $[x]\in\Hscr_{c,-1}$, let $y\in\Yscr$ be such that $\mu\mapsto\frac{x(\mu)+\wtsmash\varphi(\mu)y}{\beta+\mu}\in\Hscr_c$. Then \eqref{eq:AcExtRes2} holds with the choice 
$$
  \gamma:=y+(\beta-\alpha)\tau_{c,\alpha} \frac{x(\cdot)+\wtsmash\varphi(\cdot)y}{\beta+\cdot}.
$$
\end{prop}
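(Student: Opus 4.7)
The plan is to use the standard resolvent identity to lift the formula for the resolvent of $A_c|_{\Hscr_c}$ at the single rigging point $\beta$, recorded in \eqref{eq:AcExtRes}, to an arbitrary point $\alpha\in\cplus$. Concretely, I would first verify that
$$
  (\alpha - A_c|_{\Hscr_c})^{-1} = (\beta - A_c|_{\Hscr_c})^{-1} + (\beta-\alpha)(\alpha - A_c)^{-1}(\beta - A_c|_{\Hscr_c})^{-1},
$$
interpreted as an equality of bounded operators $\Hscr_{c,-1}\to\Hscr_c$; this reduces, after left-multiplication by $(\alpha - A_c|_{\Hscr_c}):\Hscr_c\to\Hscr_{c,-1}$, to a telescoping identity.

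Applying both sides to $[x]\in\Hscr_{c,-1}$, the first summand is the function $z := (\beta - A_c|_{\Hscr_c})^{-1}[x] \in\Hscr_c$, given explicitly by \eqref{eq:AcExtRes} as $z(\mu) = (x(\mu)+\wtsmash\varphi(\mu)y)/(\beta+\mu)$. Since $z$ already lies in $\Hscr_c$, the second summand can be evaluated via the explicit resolvent formula \eqref{Acresol}, which yields
$$
  \big((\alpha - A_c)^{-1}z\big)(\mu) = \frac{z(\mu) - \wtsmash\varphi(\mu)\tau_{c,\alpha}z}{\alpha+\mu}.
$$
Placing the two summands over the common denominator $\alpha+\mu$ and invoking the algebraic identity $(\alpha+\mu)z(\mu) + (\beta-\alpha)z(\mu) = (\beta+\mu)z(\mu) = x(\mu)+\wtsmash\varphi(\mu)y$ produces a representation of the form $\big((\alpha - A_c|_{\Hscr_c})^{-1}[x]\big)(\mu) = (x(\mu)+\wtsmash\varphi(\mu)\gamma)/(\alpha+\mu)$, which is \eqref{eq:AcExtRes2}, with $\gamma$ of the stated shape.

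The main technical point I expect to navigate is the mixed setting of the resolvent identity: $(\alpha - A_c|_{\Hscr_c})^{-1}$ maps the larger space $\Hscr_{c,-1}$ into $\Hscr_c$, while $(\alpha - A_c)^{-1}$ acts on $\Hscr_c$ alone, and the composition in the identity has to be interpreted through the embedding $\iota:\Hscr_c\hookrightarrow\Hscr_{c,-1}$, on which the two resolvents agree. One can side-step this bookkeeping entirely by a direct verification instead: set $w(\mu) := (x(\mu)+\wtsmash\varphi(\mu)\gamma)/(\alpha+\mu)$ for the candidate $\gamma$, observe that $w = z+(\beta-\alpha)(\alpha - A_c)^{-1}z$ belongs to $\Hscr_c$, and apply \eqref{eq:AcExt} to compute $(\alpha - A_c|_{\Hscr_c})w = [\mu\mapsto (\alpha+\mu)w(\mu)] = [x + \wtsmash\varphi(\cdot)\gamma] = [x]$ in $\Hscr_{c,-1}$, where the final equality uses that every function of the form $\wtsmash\varphi(\cdot)\eta$ represents the zero equivalence class in $\Hscr_{c,-1}$.
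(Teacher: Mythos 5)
Your route is the same one the paper takes: apply the resolvent identity
$(\alpha - A_c|_{\Hscr_c})^{-1}=(\beta - A_c|_{\Hscr_c})^{-1}+(\beta-\alpha)(\alpha - A_c)^{-1}(\beta - A_c|_{\Hscr_c})^{-1}$
to $[x]$, evaluate the first term by \eqref{eq:AcExtRes} and the second by \eqref{Acresol}, and combine. The plan is sound, but the one step you do not actually carry out --- ``with $\gamma$ of the stated shape'' --- is exactly where the trouble sits. Writing $z(\mu):=\frac{x(\mu)+\wtsmash\varphi(\mu)y}{\beta+\mu}$ and using $(\alpha+\mu)+(\beta-\alpha)=\beta+\mu$, the combination gives
\begin{align*}
 z(\mu)+(\beta-\alpha)\,\frac{z(\mu)-\wtsmash\varphi(\mu)\,\tau_{c,\alpha}z}{\alpha+\mu}
 &=\frac{(\beta+\mu)z(\mu)-(\beta-\alpha)\,\wtsmash\varphi(\mu)\,\tau_{c,\alpha}z}{\alpha+\mu}\\
 &=\frac{x(\mu)+\wtsmash\varphi(\mu)\bigl(y-(\beta-\alpha)\,\tau_{c,\alpha}z\bigr)}{\alpha+\mu},
\end{align*}
i.e.\ $\gamma=y-(\beta-\alpha)\tau_{c,\alpha}z$, the \emph{opposite} sign on the correction term from the one in the statement. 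The two candidate functions differ pointwise by $\mu\mapsto 2(\beta-\alpha)\wtsmash\varphi(\mu)\,\tau_{c,\alpha}z/(\alpha+\mu)$, so unless $\wtsmash\varphi(\cdot)\,\tau_{c,\alpha}z\equiv 0$ only one of them can equal $(\alpha-A_c|_{\Hscr_c})^{-1}[x]$. Since every ingredient you use (\eqref{eq:AcExtRes}, \eqref{Acresol}, the resolvent identity) is forced, you should conclude that the sign in the displayed $\gamma$ is a slip in the statement rather than hunt for an error in your algebra. (For what it is worth, the paper's own proof arrives at the same expression \eqref{eq:AcExtRes3} and then makes the identical sign slip in the ``straightforward simplifications'': the difference of \eqref{eq:AcExtRes3} and \eqref{eq:AcExtRes2} is $\frac{\wtsmash\varphi(\mu)}{\alpha+\mu}\bigl(y-\gamma-(\beta-\alpha)\tau_{c,\alpha}z\bigr)$, not the expression \eqref{eq:AcExtResDiff} with $+(\beta-\alpha)$.)

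The same caveat undermines your fallback ``direct verification'' as written. The identity $w=z+(\beta-\alpha)(\alpha-A_c)^{-1}z$ holds for $w(\mu)=\frac{x(\mu)+\wtsmash\varphi(\mu)\gamma}{\alpha+\mu}$ only with the corrected $\gamma$; with the $\gamma$ as printed, $w$ differs from $z+(\beta-\alpha)(\alpha-A_c)^{-1}z$ by the function above, which under \eqref{eq:contrconscond} does not lie in $\Hscr_c$ unless it vanishes, so the step ``observe that $w\in\Hscr_c$'' would already fail. Once the sign is corrected, both your resolvent-identity argument and your direct verification go through; the mixed-space bookkeeping you were worried about is handled exactly as you describe, since $(\alpha-A_c)^{-1}$ and $(\alpha-A_c|_{\Hscr_c})^{-1}$ agree on $\iota(\Hscr_c)$ and $\alpha-A_c|_{\Hscr_c}$ is injective on $\Hscr_c$.
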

\begin{proof}
We can use the resolvent formula
$$
  (\alpha - A_c|_{\cH_{c}})^{-1}=(\beta - A_c|_{\cH_{c}})^{-1}+(\beta-\alpha)(\alpha - A_c)^{-1}(\beta - A_c|_{\cH_{c}})^{-1}
$$
together with \eqref{Acresol} and \eqref{eq:AcExtRes} to calculate (for $\mu\in\cplus$):
\begin{equation}\label{eq:AcExtRes3}
\begin{aligned}
  \big((\alpha - A_c|_{\cH_{c}})^{-1}[x]\big)(\mu) &= 
  \frac{x(\mu)+\wtsmash\varphi(\mu)y}{\beta+\mu} \\
    &\quad + (\beta-\alpha) \frac{\frac{x(\mu)+\wtsmash\varphi(\mu)y}{\beta+\mu}-\wtsmash\varphi(\mu)\tau_{c,\alpha}\frac{x(\cdot)+\wtsmash\varphi(\cdot)y}{\beta+\cdot}}{\alpha+\mu}.
\end{aligned}
\end{equation}
Straightforward simplifications show that \eqref{eq:AcExtRes3} minus \eqref{eq:AcExtRes2} equals
\begin{equation}\label{eq:AcExtResDiff}
  \frac{\wtsmash\varphi(\mu)}{\alpha+\mu}\left( y-\gamma+(\beta-\alpha)\tau_{c,\alpha} \frac{x(\cdot)+\wtsmash\varphi(\cdot)y}{\beta+\cdot} \right),
\end{equation}
which proves the claim.
\end{proof}

We illustrate the discussion in this subsection using the case of a constant $\varphi$.

\begin{ex}\label{ex:constantc}
Assume that $\varphi(\mu)=D_c$ for all $\mu\in\cplus$, so that $\varphi\in\Sscr(\cplus;\Uscr,\Yscr)$ if and only if $\|D_c\|\leq1$. Moreover, it follows from \eqref{taualphacadj} that $\tau_{c,\alpha}=0$ and $C_c=0$, and Corollary \ref{cor:ContrRes} then yields that $(\alpha-A_c)^{-1}x = \mu\mapsto x(\mu)/(\alpha+\mu)$, $\mu\in\cplus$, and by Theorem \ref{thm:enprescontainedadjoint}, $(A_cx)(\mu)= -\mu x(\mu)$, $\mu\in\cplus$, for $x\in\dom{A_c}$, where
$$
\begin{aligned}
  \dom{A_c} &= (\alpha-A_c)^{-1}\Hscr_c = \set{\mu\mapsto \frac{z(\mu)}{\alpha+\mu}\bigmid z\in\Hscr_c,\,\mu\in\cplus}.
\end{aligned}
$$ 
Here $\alpha\in\cplus$ is arbitrary, and by Definition \ref{def:altsysnode}, we also have
$$
  \dom{A_c}=\set{x\in\Hscr\mid A_cx\in\Hscr_c} = \set{x\in\Hscr_c\mid \mu\mapsto \mu x(\mu)\in\Hscr_c},
$$
so that \eqref{eq:contrdomAconteq} holds with the additional simplification that we only need to consider $y=0$.

Now Theorem \ref{thm:contrsemiexplicit} gives (for some arbitrary $\alpha\in\cplus$)
\begin{equation}\label{eq:constcontreal}
\begin{aligned}
  \dom{\SmallSysNode_c} &= \\\set{\bbm{x\\u}\in\bbm{\Hscr_c\\\Uscr} \biggmid \frac{(\alpha+\mu)x(\mu)-(1-D_c^*D_c)u}{\alpha+\mu} \in \dom{A_c} } &= \\
  \set{\bbm{x\\u}\in\bbm{\Hscr_c\\\Uscr}\bigmid \mu\mapsto -\mu x(\mu)+(1-D_c^*D_c)u\in\Hscr_c}, \quad&\text{and}\\
  \SysNode_c\bbm{x\\u} = \bbm{\mu\mapsto -\mu x(\mu)+(1-D_c^*D_c)u\in\Hscr_c\\D_cu}.
\end{aligned}
\end{equation}
Note that the arbitrary parameter $\alpha\in\cplus$ in Theorem \ref{thm:contrsemiexplicit} does not appear here. 

In \eqref{eq:constcontreal} the state part is purely for energy accounting, since the output is independent of the current state. If it happens that $D_c$ is isometric, then the energy is preserved without any state needing to absorb energy, and indeed $\Hscr_c=\zero$ as can easily be seen from the reproducing kernel $K_c$ of $\Hscr_c$. In this case the realization consists only of the static operator $D_c$. If $D_c$ is not isometric, then the function $B_cu = \mu\mapsto (1-D_c^*D_c)u$, $\mu\in\cplus$, never lies in $\Hscr_c$ unless it is zero. Thus $B_c$ is \emph{strictly unbounded} (in the terminology of \cite{MaSt06}), and it is interesting that both $A_c$ and $B_c$ are unbounded even though the transfer function $\varphi$ is rational, even constant.

We make the following observations on the dual system node $\SmallSysNode_c^*$: Due to Theorem \ref{thm:contrdual}, the first equality holds in
\begin{equation}\label{eq:constdomadj}
\begin{aligned}
  \dom{\SmallSysNode_c^*} &=\bigg\{\bbm{x\\y}\in\bbm{\Hscr_c\\\Yscr} \bigmid \\
    & \qquad \exists u\in\Uscr:~ \mu\mapsto \mu x(\mu)+D_c^*y-u\in\Hscr_c \bigg\} \\
   &= \bbm{\dom{A_c^*}\\\Yscr},
\end{aligned}
\end{equation}
where as usual
$$
  \dom{A_c^*}=\set{x\in\Hscr_c \mid \exists u\in\Uscr:~ \mu\mapsto \mu x(\mu) -u\in\Hscr_c }.
$$
The second equality in \eqref{eq:constdomadj} is seen as follows: for all $x\in\Hscr_c$ and $y\in\Yscr$, it holds that
$$
  \exists u\in\Uscr:~ \mu\mapsto \mu x(\mu)+D_c^*y-u\in\Hscr_c \quad\Longleftrightarrow\quad 
  \exists v\in\Uscr:~ \mu\mapsto \mu x(\mu)-v\in\Hscr_c,
$$
where we can always take $v:=u-D_c^*y$ and $u:=D_c^*y+v$, depending on in which direction we traverse the equivalence. It is a rare convenience that the domain of a system node decomposes in this way; it is for instance not the case for $\SmallSysNode_c$ itself.

The fact that $C_c^*=0$ can be seen in \eqref{Cc*con} as 
$$
  \|C_c^*y\|_{\Hscr_{c,-1}^d}=\left\|[\mu\mapsto D_c^*y]\right\|_{\Hscr_{c,-1}^d}=0.
$$
A consequence of $C_c^*=0$ is that $\bbm{A_c^*\&C_c^*}=\bbm{A_c^*&0}$, which agrees also with \eqref{defz}--\eqref{defu}, since $\lim_{\eta\to\infty}\wtsmash\varphi(\eta)y=\wtsmash\varphi(\mu)y$, $\mu\in\cplus$, and these terms cancel in \eqref{defz}.

Due to Proposition \ref{prop:xtendstozero}, we have that $\wtsmash\varphi(\cdot)y=\mu\mapsto D_c^*y\in\Hscr_c$ only if $D_c^*y=0$, and thus \eqref{assume1c} holds. The implication \eqref{eq:transfnotinHc}, on the other hand, holds if and only if $D_c^*$ is injective, i.e., $D_c$ has range dense in $\Yscr$. By Theorem \ref{thm:conschar}, $D_c$ has dense range and \eqref{eq:contrconscond} holds if and only if $\SmallSysNode_c$ in \eqref{eq:constcontreal} is conservative. We next construct an example where \eqref{eq:contrconscond} does not hold although we already established that \eqref{assume1c} holds; see Lemma \ref{lem:domequivdeluxe}.2.

Let $D_c:\Uscr\to\Yscr$ be an arbitrary contraction with dense, but not closed, range and let $\SmallSysNode_c$ denote the controllable model for $\varphi(\mu)=D_c$, $\mu\in\cplus$. Choose some $y\in\Yscr$ with $y\not\in\range{D_c}$ and let $x\in\Hscr_c$ and $u\in\Uscr$ be such that \eqref{eq:funinHc} holds. Then $\sbm{x\\y}\in\dom{\SmallSysNode_c^*}$ by Theorem \ref{thm:contrdual}, but for all $\sbm{x\\u}\in\dom{\SmallSysNode_c}$, $\bbm{C_c\&D_c}\sbm{x\\u}=D_cu\neq y$ by construction, and thus $\SmallSysNode_c$ is not conservative according to Theorem \ref{thm:msw}.2. Since \eqref{eq:transfnotinHc2} holds, the condition \eqref{eq:contrconscond} must be false.
\end{ex}

\subsection{Reproducing kernels of the rigged spaces}\label{sec:repkerncontr}
 
We finish our study of the controllable realization by calculating the reproducing kernels associated to the rigged spaces. In this subsection we do not make any additional assumptions, such as \eqref{eq:contrconscond}, unless otherwise indicated.

Recall that the state space $\cH_c$ for the system node $\sbm{ A \& B \\ C \& D}_c$ is a reproducing kernel Hilbert space with reproducing kernel $K_c$ as in \eqref{eq:isomkernel}.  By construction the 1-scaled rigged space $\cH^d_{c,1}$ also consists of $\cU$-valued functions and it is not difficult to see that the point-evaluation map $e^d_{c,1}(\mu)$ is bounded in $\cH^d_{c,1}$-norm and hence $\cH^d_{c,1}$ is also a reproducing kernel Hilbert space. The same is true for $\Hscr_{c,1}$.

Technically the $(-1)$-scaled rigged spaces $\cH^d_{c,-1}$ and $\Hscr_{c,-1}$ are not reproducing kernel Hilbert spaces since they consist of equivalence classes of functions rather than of functions.  However, while point-evaluation is not well-defined on $\cH^d_{c,-1}$, the map $[z] \mapsto z(\mu) - z(\overline\alpha)$ is well-defined for all fixed $\alpha\in\cplus$; this amounts to evaluating the unique member $z$  of the equivalence class 
 $[z]$ normalized to satisfy $z(\overline\alpha) = 0$. Here it is most convenient to choose $\overline\alpha=\overline\beta$, the parameter used to define the $\cH^d_{c,1}$ and $\cH^d_{c,-1}$ norms. 

If we define $e^d_{c,-1}(\mu)  : \cH^d_{c,-1} \to \cU$ by
 $$
   e^d_{c,-1}(\mu) [z] := z(\mu) - z(\overline\beta),\quad [z]\in\Hscr_{c,-1}^d,\, \mu\in\cplus,
 $$
then $e^d_{c,-1}(\mu)$ is also bounded for every $\mu\in\cplus$. More precisely, $\|e^d_{c,-1}\| \leq |\overline\beta-\mu|\,\|e_c(\mu)\|$, since we by \eqref{resA*ext} have
$$
\begin{aligned}
  \left\| e_{c,-1}^d(\mu)[z] \right\|_\Uscr &= \|z(\mu)-z(\overline\beta)\|_\Uscr 
  = \left\| e_c(\mu)(\overline\beta-\mu)\big((\overline\beta-A_c^*|_{\Hscr_c})^{-1}[z]\big) \right\|_\Uscr \\
  &\leq \|e_c(\mu)\| \, |\overline\beta-\mu|\left\|[z]\right\|_{\Hscr_{c,-1}^d},\quad [z]\in\Hscr_{c,-1}^d.
\end{aligned}
$$
We may then consider the function
 $$
 K^d_{c,-1}(\mu, \lambda) =  e^d_{c,-1}(\mu) \big( e^d_{c,-1}(\lambda) \big)^*,\quad\mu,\lambda\in\cplus,
 $$
 to be the reproducing kernel for $\cH^d_{c,-1}$.

Suppose that $x \in \cH_{c,-1}$.  With assumption \eqref{eq:contrconscond} in force, there 
is a unique choice $y_{x}$ of vector $y$ in $\cY$ so that $\mu 
\mapsto \frac{x(\mu) + \widetilde \varphi(\mu) y_x }{ \beta + \mu} \in 
\cH_{c}$.  The space $\cH_{c,-1}$ is defined as equivalence classes 
whereby $[x] = [x']$ means that $ x - x' =   \mu \mapsto \widetilde 
\varphi(\mu)y$, for some choice of $y\in\Yscr$.  To define a point evaluation $e_{c,-1}(\lambda)$ on 
$\cH_{c,-1}$, we have to choose a canonical representative of each 
equivalence class.  We choose as our canonical representative the function 
$\mu \mapsto x(\mu) + \widetilde \varphi(\mu) y_{x}.$  Thus we define 
$e_{c,-1}(\lambda) \colon \cH_{c,-1} \to \cU$ by
$$
  e_{c,-1}(\lambda)  \colon [x] \mapsto x(\lambda) + \widetilde 
  \varphi(\lambda) y_{x},\quad \lambda\in\cplus,
$$
and consider $K_{c,-1}(\mu,\lambda):=e_{c,-1}(\mu)\big(e_{c,-1}(\lambda)\big)^*$, $\mu,\lambda\in\cplus$, to be the reproducing kernel of $\Hscr_{c,-1}$.

\begin{prop}  \label{prop:repkerform-c}
We have the following formulas for the kernel functions associated with the Hilbert spaces $\cH^d_{c,\pm 1}$ and $\cH_{c,\pm1}$ (for $\mu,\lambda\in\cplus$):
 \begin{align}
 K_c(\mu, \lambda)  & = B_c^* (\mu - A_c^*)^{-1} (\overline{\lambda} - A_c|_{\cH_c})^{-1} B_c, \label{kerfunct-Hc} \\ 
 K^d_{c,1}(\mu, \lambda)  &= B_c^* (\mu - A_c^*)^{-1} (\overline\beta - A_c^*)^{-1} \notag \\&\qquad\times (\beta - A_c)^{-1}(\overline{\lambda} - A_c|_{\cH_c})^{-1} B_c, \label{kerfunct-Hc1d} 
\end{align}
\begin{align}
 K^d_{c,-1}(\mu, \lambda)  & = (\overline\beta - \mu) (\beta
    - \overline{\lambda})  B_{c}^{*} (\mu - A_{c}^{*})^{-1} 
    (\overline{\lambda} - A_{c}|_{\cH_{c}})^{-1} B_{c}, \label{kerfunct-Hcminus1d} \\
 K_{c,1}(\mu, \lambda)  &= B_{c}^{*} (\mu - A_{c}^*)^{-1} (\beta - A_{c})^{-1}  \notag \\
 &\qquad \times(\overline{\beta} - A_{c}^{*})^{-1} 
    (\overline{\lambda} - A_{c}|_{\cH_{c}})^{-1} B_{c}, \label{kerfunct-Hc1}\\
 K_{c,-1}(\mu, \lambda)  & = (\beta + \mu)(\overline\beta
    + \overline{\lambda})  B_{c}^{*} (\mu - A_{c}^{*})^{-1} 
    (\overline{\lambda} - A_{c}|_{\cH_{c}})^{-1} B_{c}. \label{kerfunct-Hcminus1}
  \end{align}
The above state-space formulas correspond to the following purely function-theoretic formulas (for $\mu,\lambda\in\cplus$):
 \begin{align}
 K_c(\mu, \lambda)  & = \frac{1 - \varphi(\overline\mu)^* \varphi(\overline\lambda)}{ \mu + \overline{\lambda}}, \notag \\ 
K^d_{c,1}(\mu, \lambda)  &= \frac{K_c(\mu,\lambda) - K_c(\mu,\overline\beta)}{(\overline\beta-\mu)(\beta - \overline{\lambda})}-\frac{K_c(\overline\beta,\lambda) - K_c(\overline\beta,\overline\beta)}{(\overline\beta-\mu)(\beta - \overline{\lambda})} \label{kerfunct-Hc1d2}, \\ 
 K^d_{c,-1}(\mu, \lambda)  & = (\overline\beta - \mu)(\beta - \overline{\lambda})K_c(\mu, \lambda),
  \label{kerfunct-Hcminus1d2} \\
 K_{c,1}(\mu,\lambda) & = \frac{\kappa_c(\mu,\lambda)-\varphi(\overline\mu)^*\tau_{c,\beta}\,\big(\kappa_c(\cdot,\lambda)\big)}{\beta+\mu},\quad\text{where} \label{kerfunct-Hc12}\\
  &\qquad \kappa_c(\mu,\lambda) := \frac{K_c(\mu,\lambda)-K_c(\overline\beta,\lambda)}{\overline\beta-\mu}, \notag \\
K_{c,-1}(\mu, \lambda)  & = (\beta + \mu) (\overline\beta
    + \overline{\lambda})  K_c(\mu,\lambda). \label{kerfunct-Hcminus12}
  \end{align}
  Here the point $\beta \in {\mathbb C}^+$ appearing in the formulas must be chosen to be the same $\beta$ which was used in the rigging $\Hscr_{c,1}\subset\Hscr_c\subset \Hscr_{c,-1}$, so that $\overline\beta$ corresponds to the rigging $\Hscr^d_{c,1}\subset\Hscr_c\subset \Hscr^d_{c,-1}$.
The formulas \eqref{kerfunct-Hcminus1} and \eqref{kerfunct-Hcminus12} depend on Theorem \ref{T:Hc-1concrete} and hence they are established only for the case when \eqref{eq:contrconscond} holds.
 \end{prop}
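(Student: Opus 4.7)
The backbone of all the formulas is the observation that, by \eqref{eq:isomeval} and its adjoint,
$$
  e_c(\lambda)^* = (\overline\lambda - A_c|_{\Hscr_c})^{-1} B_c \quad\text{and}\quad e_c(\mu) = B_c^* (\mu - A_c^*)^{-1},\qquad \mu,\lambda\in\cplus;
$$
the second identity is \eqref{Ac*res3}. The formula \eqref{kerfunct-Hc} for $K_c$ is then immediate since $K_c(\mu,\lambda) = e_c(\mu) e_c(\lambda)^*$ by Theorem~\ref{T:RK}. The plan for the other four kernels is uniform: for each rigged space I identify an explicit unitary operator onto $\Hscr_c$ built from a resolvent of $A_c$ or $A_c^*$, use this unitary to reduce the kernel calculation in the rigged space to an inner-product calculation in $\Hscr_c$, and finally substitute concrete resolvent formulas to pass to the function-theoretic versions.

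More precisely, for $\Hscr_{c,1}^d$ I would use that $\overline\beta-A_c^*\colon\Hscr_{c,1}^d\to\Hscr_c$ is unitary; a short Riesz-representation argument (comparing $\langle x_\mu,z\rangle_{\Hscr_{c,1}^d}=\langle(\overline\beta-A_c^*)x_\mu,(\overline\beta-A_c^*)z\rangle_{\Hscr_c}$ with $\langle u,z(\mu)\rangle_\Uscr=\langle e_c(\mu)^*u,z\rangle_{\Hscr_c}$ and substituting $w=(\overline\beta-A_c^*)z$) shows
$$
  (e_{c,1}^d(\mu))^* = (\overline\beta-A_c^*)^{-1}(\beta-A_c)^{-1}e_c(\mu)^*,
$$
from which \eqref{kerfunct-Hc1d} follows by composing with $e_c(\mu) = B_c^*(\mu-A_c^*)^{-1}$. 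The analogous computation for $\Hscr_{c,1}$, using the unitary $\beta-A_c\colon\Hscr_{c,1}\to\Hscr_c$, gives $(e_{c,1}(\mu))^* = (\beta-A_c)^{-1}(\overline\beta-A_c^*)^{-1}e_c(\mu)^*$ and hence \eqref{kerfunct-Hc1}. For the $(-1)$-scaled spaces I will instead use the explicit formulas, valid on the whole of $\Hscr_{c,-1}^d$ and (under \eqref{eq:contrconscond}) on $\Hscr_{c,-1}$,
$$
  e_{c,-1}^d(\mu)=(\overline\beta-\mu)\,e_c(\mu)\,(\overline\beta-A_c^*|_{\Hscr_c})^{-1},\quad e_{c,-1}(\mu)=(\beta+\mu)\,e_c(\mu)\,(\beta-A_c|_{\Hscr_c})^{-1},
$$
which follow directly by comparing the definitions of these evaluation maps with \eqref{resA*ext} and \eqref{eq:AcExtRes}; the adjoints involve the unitary inverses, which cancel with the resolvents in the expression for the kernel and produce the clean formulas \eqref{kerfunct-Hcminus1d} and \eqref{kerfunct-Hcminus1}.

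Passing to the function-theoretic formulas is then a matter of inserting the explicit resolvent identities. Formulas \eqref{kerfunct-Hcminus1d2} and \eqref{kerfunct-Hcminus12} are read off immediately. For \eqref{kerfunct-Hc1d2} I would apply \eqref{eq:isomres} with $\alpha=\beta$ to $e_c(\lambda)^*u=K_c(\cdot,\lambda)u$ to obtain $(\beta-A_c)^{-1}e_c(\lambda)^*u$ as a difference quotient in the second variable, then apply \eqref{Ac*res} to form the analogous difference quotient in the first variable, and finally evaluate at $\mu$. For \eqref{kerfunct-Hc12}, the same scheme works but now the outer resolvent $(\beta-A_c)^{-1}$ must be computed from \eqref{Acresol}, which is where the term $\varphi(\overline\mu)^*\tau_{c,\beta}\,\kappa_c(\cdot,\lambda)$ enters; note that this last step requires the assumption \eqref{eq:contrconscond} only insofar as the concrete identification of $\Hscr_{c,-1}$ in Theorem~\ref{T:Hc-1concrete} is needed to write down $e_{c,-1}(\mu)$. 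The main technical obstacle I anticipate is keeping the bookkeeping of adjoints straight across the four different riggings (making sure one is taking the adjoint in the correct inner product and not accidentally confusing $(\beta-A_c)^{-*}$ with $(\overline\beta-A_c^*)^{-1}$), and verifying that the operators we compose land in the correct scales; once this is set up, the manipulations themselves are purely formal applications of the identities established earlier in the section.
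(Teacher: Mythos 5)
Your proposal is correct and follows essentially the same route as the paper: reduce each kernel to $K_c(\mu,\lambda)=e_c(\mu)e_c(\lambda)^*$ via the unitary resolvent maps between the rigged spaces and $\Hscr_c$, identify the adjoints of the point-evaluation maps as resolvents composed with $e_c(\lambda)^*$, and then substitute the explicit resolvent formulas \eqref{eq:isomres}, \eqref{Ac*res}, \eqref{Acresol}, \eqref{resA*ext}, and \eqref{eq:AcExtRes} to pass to the function-theoretic versions. Your factorizations $e^d_{c,-1}(\mu)=(\overline\beta-\mu)e_c(\mu)(\overline\beta-A_c^*|_{\Hscr_c})^{-1}$ and $e_{c,-1}(\mu)=(\beta+\mu)e_c(\mu)(\beta-A_c|_{\Hscr_c})^{-1}$, with adjoints cancelling against the unitary resolvents, give exactly the paper's formulas \eqref{kerfunct-Hcminus1d2} and \eqref{kerfunct-Hcminus12} (the paper obtains the same adjoint identities through chains of inner-product computations), and your observation that \eqref{eq:contrconscond} is needed only for the $\Hscr_{c,-1}$ case matches the paper's caveat.
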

 
 \begin{proof}  

To see \eqref{kerfunct-Hc}, combine part 3 of Theorem \ref{T:RK} with formula \eqref{eq:isomeval}:
 $$
 K_c(\mu, \lambda) = e_c(\mu) e_c(\lambda)^* =\left( B_c^* (\mu - A_c^*|_{\cH_c})^{-1} \right) \left( (\overline{\lambda} - A_c|_{\cH_c})^{-1} B_c\right).
 $$
 
As for \eqref{kerfunct-Hc1d},  we use that $(\overline\beta - A_c^*)^{-1}$ is a unitary transformation from $\cH_c$ to $\cH^d_{c,1}$.  Hence any $f \in \cH^d_{c,1}$ has the form $f = (\overline\beta - A_c^*)^{-1} g$ for a unique $g \in \cH_c$, and for $\lambda \in {\mathbb C}_+$ we can compute
\begin{align*}
\langle f(\lambda), u \rangle_\cU & = \langle \left( (\overline\beta - A_c^*)^{-1} g \right)(\lambda), u \rangle_\cU \\
 &= \langle (\overline\beta - A_c^*)^{-1} g, e_c(\lambda)^* u \rangle_{\cH_c} \\
 &= \langle g, (\beta - A_c)^{-1} e_c(\lambda)^* u \rangle_{\cH_c} \\
 &= \langle f, (\overline\beta-A_c^*)^{-1}(\beta - A_c)^{-1} e_c(\lambda)^* u \rangle_{\cH_{c,1}^d}.
\end{align*}
Combining this with the fact that the point-evaluation operator in $\Hscr_{c,1}^d$ is the restriction of $e_c(\cdot)$ to $\Hscr_{c,1}^d$, we obtain \eqref{kerfunct-Hc1d}. In order to get \eqref{kerfunct-Hc1d2}, we continue calculating
 $$
\begin{aligned}
 e_c(\mu)(\overline\beta-A_c^*)^{-1}(\beta - A_c)^{-1} e_c(\lambda)^* u &= e_c(\mu)(\overline\beta - A_c^*)^{-1}\frac{e_c(\lambda)^* - e_c(\overline\beta)^*}{\beta - \overline{\lambda}} u \\
  & =\frac{\frac{K_c(\mu,\lambda) - K_c(\mu,\overline\beta)}{\beta - \overline{\lambda}}-\frac{K_c(\overline\beta,\lambda) - K_c(\overline\beta,\overline\beta)}{\beta - \overline{\lambda}}}{\overline\beta-\mu} u,
\end{aligned}
$$
where we used \eqref{eq:isomres} and \eqref{Ac*res} in the first and second equalities, respectively.

The formula \eqref{kerfunct-Hc1} follows immediately on replacing $\overline\beta-A_c^*$ by $\beta-A_c$ in \eqref{kerfunct-Hc1d}, since $(\beta-A_c)^{-1}$ is the appropriate unitary operator from $\Hscr_c$ to $\Hscr_{c,1}$ instead of $(\overline\beta-A_c^*)^{-1}$. Using \eqref{Ac*res} and \eqref{Acresol}, we obtain \eqref{kerfunct-Hc12}. 

In order to establish \eqref{kerfunct-Hcminus1d},  we use that $(\overline\beta - A_c^*|_{\Hscr_c})$ is a unitary transformation from $\cH_c$  to $\cH^d_{c,-1}$.  Thus any $[f] \in \cH^d_{c,-1}$ has the form $[f] = (\overline\beta - A_c^*|_{\Hscr_c}) g$ with $g \in \cH_c$.  Furthermore, from \eqref{eq:alphaAdcalc} we have that the unique representative $f$ for the equivalence class $[f]$ satisfying $f(\overline\beta) = 0$ is given by $f(\mu) = (\overline\beta - \mu) g(\mu)$.  Hence, we have, for $\lambda \in {\mathbb C}^+$,
\begin{equation}\label{eq:Hc-1dkerncalc}
\begin{aligned}
\langle f(\lambda), u \rangle_\cU & = \langle (\overline\beta - \lambda) g(\lambda), u \rangle_\cU \\
& = \langle g, (\beta - \overline{\lambda}) e_c(\lambda)^* u \rangle_{\cH_c} \\
& =  \langle f, (\beta - \overline{\lambda}) (\overline\beta - A_c^*|_{\Hscr_c}) e_c(\lambda)^* u \rangle_{\cH^d_{c,-1}}.
\end{aligned}
\end{equation}

This proves that $\big(e^d_{c,-1}(\lambda)\big)^*u=(\beta - \overline{\lambda}) (\overline\beta - A_c^*|_{\Hscr_c}) e_c(\lambda)^* u$, and combining this with \eqref{Ac*-ext} we obtain \eqref{kerfunct-Hcminus1d2}, again choosing the representative with value zero at $\overline\beta$. Now \eqref{kerfunct-Hcminus1d} follows from \eqref{kerfunct-Hcminus1d2} and \eqref{kerfunct-Hc}. In order to obtain \eqref{kerfunct-Hcminus12} and \eqref{kerfunct-Hcminus1} we compute
\begin{align*}
    \langle [x], e_{c,-1}(\lambda)^* u \rangle_{\cH_{c,-1}} & =
    \langle e_{c,-1}(\lambda) [x], u \rangle_{\cU} = 
    \langle x(\lambda) + \widetilde \varphi(\lambda) y_{x}, u \rangle_{\cU} \\
    &= \left\langle \frac{ x(\lambda) + \widetilde \varphi(\lambda) y_{x}}{ \beta + \lambda}, (\overline{\beta} + \overline{\lambda}) u \right \rangle_{\cU},
\end{align*}
and using \eqref{Hc-1norm}, we further obtain that this equals
\begin{align*}
    \left \langle \mu \mapsto \frac{ x(\mu) + \widetilde \varphi(\mu) 
    y_{x}}{\beta + \mu}, \mu \mapsto K_{c}(\mu, \lambda) 
    (\overline{\beta} + \overline{\lambda}) u \right\rangle_{\cH_{c}} & = \\
    \left\langle \mu \mapsto x(\mu) + \widetilde \varphi(\mu) y_{x},\,
    \mu \mapsto (\beta + \mu) (\overline{\beta} + \overline{\lambda}) 
    K_{c}(\mu, \lambda) \right\rangle_{\cH_{c,-1}}.
\end{align*}
We conclude that 
$$
  e_{c,-1}(\lambda)^* = [\mu\mapsto(\beta + \mu) 
(\overline{\beta} + \overline{\lambda}) K_{c}(\mu, \lambda)],
$$
and since $\mu\mapsto(\overline{\beta} + \overline{\lambda}) K_{c}(\mu, \lambda)\in\Hscr_c$, we have $K_{c,-1}(\mu,\lambda)=(\beta + \mu) (\overline{\beta} + \overline{\lambda}) K_{c}(\mu, \lambda)$.
\end{proof}

The following result follows from \eqref{kerfunct-Hc1d}, \eqref{kerfunct-Hc1}, \eqref{Hdc-1con-norm}, and \eqref{Hc-1norm}. Note that the linear span of kernel functions in $\Hscr_c$ is embedded as dense subspaces of $\Hscr_{c,-1}$ and $\Hscr_{c,-1}^d$. 

\begin{cor}\label{cor:kerneldual}
For all $\mu,\lambda\in\cplus$ and $u,v\in\Uscr$, we have
$$
\begin{aligned}
  \Ipdp{K_{c,1}^d(\mu,\lambda) u}{v}_\Uscr&=\Ipdp{K_c(\cdot,\lambda)u}{K_c(\cdot,\mu)v}_{\Hscr_{c,-1}}\quad\text{and} \\
  \Ipdp{K_{c,1}(\mu,\lambda) u}{v}_\Uscr&=\Ipdp{K_c(\cdot,\lambda)u}{K_c(\cdot,\mu)v}_{\Hscr_{c,-1}^d}.
\end{aligned}
$$
\end{cor}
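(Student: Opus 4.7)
The claim is that two different presentations of the same bilinear form agree: the left-hand sides use state-space formulas for reproducing kernels on the $(+1)$-rigged spaces, while the right-hand sides use the inner products on the $(-1)$-rigged spaces evaluated on elements of $\Hscr_c$ (embedded via $\iota$). My plan is to compute each right-hand side using the norm definitions \eqref{Hc-1norm} and \eqref{Hdc-1con-norm}, and then match the outcome against the state-space formulas \eqref{kerfunct-Hc1d} and \eqref{kerfunct-Hc1}.

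First, by \eqref{eq:isomeval} every kernel function has the state-space representation $K_c(\cdot,\lambda)u = (\overline\lambda - A_c|_{\Hscr_c})^{-1} B_c u \in \Hscr_c$. From the rigging construction (Theorem \ref{T:Hc-1concrete} and \eqref{Hc-1norm}), the operator $(\beta-A_c|_{\Hscr_c})^{-1}$ extends to a unitary map $\Hscr_{c,-1} \to \Hscr_c$ whose restriction to the dense subspace $\iota(\Hscr_c)$ acts as $(\beta-A_c)^{-1}$. Polarization yields for all $f,g \in \Hscr_c$ that
$$
\Ipdp{f}{g}_{\Hscr_{c,-1}} = \Ipdp{(\beta-A_c)^{-1}f}{(\beta-A_c)^{-1}g}_{\Hscr_c},
$$
and analogously, using \eqref{Hdc-1con-norm} together with \eqref{resA*ext}, that
$$
\Ipdp{f}{g}_{\Hscr_{c,-1}^d} = \Ipdp{(\overline\beta-A_c^*)^{-1}f}{(\overline\beta-A_c^*)^{-1}g}_{\Hscr_c}.
$$

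Substituting $f = (\overline\lambda - A_c|_{\Hscr_c})^{-1}B_c u$ and $g = (\overline\mu - A_c|_{\Hscr_c})^{-1}B_c v$ into the first identity and transferring the right-hand factor to the left by taking adjoints in $\Hscr_c$ yields
$$
\Ipdp{K_c(\cdot,\lambda)u}{K_c(\cdot,\mu)v}_{\Hscr_{c,-1}} = \Ipdp{B_c^*(\mu-A_c^*)^{-1}(\overline\beta-A_c^*)^{-1}(\beta-A_c)^{-1}(\overline\lambda-A_c|_{\Hscr_c})^{-1}B_c u}{v}_{\Uscr},
$$
and the operator acting on $u$ on the right is precisely $K_{c,1}^d(\mu,\lambda)$ by \eqref{kerfunct-Hc1d}. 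The second identity follows from the same manipulation using the $\Hscr_{c,-1}^d$ inner product formula, with \eqref{kerfunct-Hc1} providing the matching state-space expression for $K_{c,1}(\mu,\lambda)$.

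The only point requiring care—and already resolved in the proofs of Theorems \ref{thm:hdc-1concrete} and \ref{T:Hc-1concrete}—is that on the dense embedded subspace $\iota(\Hscr_c)$ of $\Hscr_{c,-1}$ (respectively $\Hscr_{c,-1}^d$) the resolvent at $\beta$ (respectively $\overline\beta$) of the extended operator coincides with $(\beta-A_c)^{-1}$ (respectively $(\overline\beta-A_c^*)^{-1}$). Once this identification is made, the corollary is a direct adjoint computation with no further obstacles; no assumption like \eqref{eq:contrconscond} is needed since the inner products on the rigged spaces are defined abstractly via the riggings.
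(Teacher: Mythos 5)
Your proof is correct and follows exactly the route the paper intends: the paper's one-line justification cites \eqref{kerfunct-Hc1d}, \eqref{kerfunct-Hc1}, \eqref{Hc-1norm}, and \eqref{Hdc-1con-norm}, and you have simply written out the polarization of the rigged-space norms on $\iota(\Hscr_c)$ followed by the adjoint computation that reproduces the state-space kernel formulas. Your closing remark that the abstract rigging suffices (so that \eqref{eq:contrconscond} is not needed) is also accurate and consistent with the paper's unqualified statement of the corollary.
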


We now turn our attention to the observable functional model.

\section{The observable co-energy-preserving model} \label{sec:coenpres}

In this section we present the observable co-energy-preserving model realization of a given $\varphi\in\cS({\mathbb C}^+; \cU, \cY)$ which uses the reproducing kernel Hilbert space $\cH_o$ as state space.  
We have already seen in Section \ref{sec:statespaces} that $K_o(\mu \lambda) = \frac{ 1 -\varphi(\mu) \varphi(\lambda)^*}{ \mu + \overline{\lambda}}$ is a positive kernel with associated reproducing kernel Hilbert space denoted as $\cH_o$.  Rather than defining the system node 
$\left[ \begin{smallmatrix} A \& B \\ C \& D \end{smallmatrix}\right]_o$ directly, it is more natural to first define its adjoint $\left[ \begin{smallmatrix} A \& B \\ C \& D \end{smallmatrix}\right]_o^*$.  The adjoint is first defined via the mapping
\begin{equation}   \label{mapping-o}
\SysNode_o^* : \bbm{e_o(\overline\lambda)^* y \\ y } \mapsto \bbm{\lambda e_o(\overline\lambda)^* y \\ \wtsmash\varphi(\lambda) y }, \quad y \in \cY,\, \lambda \in {\mathbb C}^+,
\end{equation}
cf.\ \eqref{eq:SodDefprel}. One can then mimic the proof of Lemma \ref{lem:contrclosable}  to see that this formula is well defined and can be extended uniquely to a well-defined closed operator  $\SmallSysNode_o^*$.  One can then mimic the whole development of Section \ref{sec:enpres} to arrive at the sought-after results for the observable co-energy-preserving case here.

A logically more efficient (if not as intuitively appealing) procedure is to reduce the results for the observable co-energy-preserving case to those of Section \ref{sec:enpres} for the controllable energy-preserving case by the following duality trick.
As was noted in Proposition \ref{prop:dualsysnode}, if $\varphi(\mu)$ is the transfer function of the system node $\sbm{ A \& B \\ C \& C }$, then 
$\sbm{ A^d \& B^d \\ C^d \& D^d }: = \sbm{ A \& B \\ C\& D}^*$ is a system node with transfer function equal to $\wtsmash \varphi(\mu) = \varphi(\overline{\mu})^*$.  Observe that the transformation $\varphi \mapsto \wtsmash \varphi$ maps the Schur class $\cS({\mathbb C}^+; \cU, \cY)$ bijectively to the Schur class $\cS({\mathbb C}^+; \cY, \cU)$.  Given a Schur-class function $\varphi \in \cS({\mathbb C}^+; \cU, \cY)$, let $\sbm{ A \& B \\ C \& D}_c^\sim$ be the controllable energy-preserving canonical functional-model system node constructed as in Section \ref{sec:enpres} but associated with $\wtsmash \varphi$ rather than with $\varphi$.  Then 
its adjoint
$$
   \left( \SysNode_c^\sim \right)^* =: \bbm{ \wtsmash A_c^d \& \wtsmash B_c^d \\ \wtsmash C_c^d \& \wtsmash D_c^d }
$$
is also a system node which has transfer function $(\wtsmash \varphi)^\sim = \varphi$.  Since $\SmallSysNode_c^\sim$ is controllable and energy-preserving by construction, as was observed in Theorem \ref{thm:firstmodel}, it follows that $\sbm{ \wtsmash A^d \& \wtsmash B_c^d \\ \wtsmash C_d^d \& \wtsmash D^d_c}$
is observable and co-energy-preserving.  One can then define the associated {\em observable, co-energy-preserving canonical functional-model system node associated with $\varphi$}  by
$$
  \SysNode_o = \bbm{ \wtsmash A^d_c \& \wtsmash B_c^d \\ \wtsmash C^d_c \& \wtsmash D^d_c }.
$$

Note that every result concerning the controllable energy-preserving canonical functional-model system node $\sbm{ A \& B \\ C \& D}_c$ obtained in Section \ref{sec:enpres} carries over to a corresponding result for $\sbm{ A \& B \\ C \& D}_o$:  simply apply the result from Section \ref{sec:enpres} but with $\wtsmash \varphi$ in place of $\varphi$ and then express the result in terms of operators associated with the adjoint system node $\sbm{ A \& B \\ C \& D }_o = \left( \sbm{ A \& B \\ C \& D}_c^\sim \right)^*$ rather than with $\sbm{ A \& B \\ C \& D}_c^\sim$ itself. In this section we state the most interesting results but leave all of the proofs to the reader.  The reader is invited to supply the proofs by either of the two routes sketched above.

The following result is the observable, co-energy-preserving analogue of Lemma \ref{lem:contrclosable}, Theorem \ref{thm:firstmodel}, and Theorem \ref{thm:contrdual} combined.

\begin{thm}\label{thm:coisom}  
Suppose that we are given a function $\varphi \in \cS({\mathbb C}_+; \cU, \cY)$ and define $\cH_o = \cH(K_o)$ as above.
\begin{enumerate}
\item The mapping  \eqref{mapping-o} which was
defined initially only on elements of the form $\sbm{e_o(\lambda)^* y \\ y } \in \sbm{ \cH_o \\ \cY}$ extends by linearity and limit-closure uniquely to a closed linear operator
\begin{equation}   \label{coisomSdef*}
 \SysNode_o^* : \bbm{ \cH_o \\ \cY } \supset \dom{  \SmallSysNode_o^*} \to \bbm{ \cH_o \\ \cU}
 \end{equation}
 which is a controllable, energy-preserving system node having transfer function equal to $\wtsmash \varphi(\mu)= \varphi(\overline{\mu})^*$, $\mu \in {\mathbb C}^+$.
 
 \item The adjoint of the system node $\SmallSysNode_o^*$ given by \eqref{coisomSdef*}, namely  
\begin{equation}\label{coisomSdef'}
 \SysNode_o  :\bbm{\Hscr_o\\\Uscr}\supset\dom{ \SysNode_o} \to \bbm{\Hscr_o\\\Yscr},
\end{equation}
 is an observable, co-energy-preserving system node with transfer function equal to $\varphi$.  

  \item The system node \eqref{coisomSdef'} can be characterized more directly as follows:
 \begin{align}
  \SysNode_o  &: \bbm{x\\u}\mapsto\bbm{z\\y},\quad\text{where} \label{eq:coisomSdef} \\
  z(\mu) &:= \mu x(\mu)+\varphi(\mu)u-y,\quad \mu\in\cplus,\quad\text{and} \label{defz'} \\
  y  &:= \lim_{\re\eta\to\infty} \eta x(\eta)+\varphi(\eta)u,\quad\text{defined on} \label{defy'} \\
  \dom{ \SysNode_o }  &:= \left\{  \bbm{x\\u}\in\bbm{\Hscr_o\\\Uscr}\bigmid 
  \exists y\in\Yscr:~z\in\Hscr_o ~\text{in \eqref{defz'}}\right\}. \notag
  \end{align}
  For every $\sbm{ x \\ u } \in \dom{ \SmallSysNode_o}$, the $y \in \cY$ such that $z$ given in \eqref{defz'} lies in $\cH_o$ is unique and it is given by \eqref{defy'}.

 \item The kernel functions $K_o(\cdot,\lambda)=e_o(\lambda)^*$, $\lambda \in {\mathbb C}^+$, for the space $\cH_o$ are given by
$$
   e_o(\lambda)^* = (\overline{\lambda} - A_o^*|_{\cH_o})^{-1} C_o^*, \quad \lambda\in\cplus,
$$
 \end{enumerate}
\end{thm}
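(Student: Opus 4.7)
The proof strategy is to reduce everything to the results of Section \ref{sec:enpres} via duality, using the observation that if the construction of Theorem \ref{thm:firstmodel} is applied to $\wtsmash\varphi\in\Sscr(\cplus;\Yscr,\Uscr)$ in place of $\varphi$, then the associated controllable-energy-preserving kernel becomes
\[
K_c^{\sim}(\mu,\lambda)=\frac{1-\wtsmash\varphi(\overline\mu)^{*}\wtsmash\varphi(\overline\lambda)}{\mu+\overline\lambda}=\frac{1-\varphi(\mu)\varphi(\lambda)^{*}}{\mu+\overline\lambda}=K_o(\mu,\lambda),
\]
so the state space of the resulting system node $\SmallSysNode_c^{\sim}$ is exactly $\Hscr_o=\Hscr(K_o)$, with point evaluation $e_o$. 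Moreover, substituting $\wtsmash\varphi$ for $\varphi$ and swapping $\Uscr\leftrightarrow\Yscr$ in \eqref{eq:SodDefprel} produces the formula \eqref{mapping-o} verbatim. Hence Lemma \ref{lem:contrclosable} and Theorem \ref{thm:firstmodel} applied to $\wtsmash\varphi$ directly yield Assertion 1: after extension by linearity and limit-closure, \eqref{mapping-o} defines a closed, controllable, energy-preserving system node $\SmallSysNode_o^{*}:=\SmallSysNode_c^{\sim}$ on $(\Yscr,\Hscr_o,\Uscr)$ whose transfer function is $\wtsmash\varphi$.

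For Assertion 2, the plan is to set $\SmallSysNode_o:=(\SmallSysNode_c^{\sim})^{*}$. By Proposition \ref{prop:dualsysnode}, $\SmallSysNode_o$ is then a system node on $(\Uscr,\Hscr_o,\Yscr)$ with transfer function $(\wtsmash\varphi)^{\sim}=\varphi$. By Definition \ref{def:dualsysnode}, the energy preservation of $\SmallSysNode_o^{*}$ is precisely the co-energy preservation of $\SmallSysNode_o$, and Proposition \ref{prop:dualsysnode}.3 converts the controllability of $\SmallSysNode_o^{*}$ into observability of $\SmallSysNode_o$.

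For Assertion 3, I would invoke Theorem \ref{thm:contrdual} applied to $\SmallSysNode_c^{\sim}$. In that theorem, the dual is described in terms of the function appearing with a tilde, which after swapping is $\wtsmash{\wtsmash\varphi}=\varphi$; the formulas \eqref{eq:isomDual}--\eqref{defdom} then translate term-by-term into \eqref{eq:coisomSdef}--\eqref{defy'}, with the roles of $\Uscr$ and $\Yscr$ interchanged. The uniqueness of $y\in\Yscr$ in \eqref{defz'} and its recovery via the limit \eqref{defy'} follow from the corresponding statement in Theorem \ref{thm:contrdual}, whose proof uses Proposition \ref{prop:xtendstozero}; the latter applies equally well to $\Hscr_o$.

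For Assertion 4, the plan is to apply formula \eqref{eq:isomeval} in Theorem \ref{thm:firstmodel} to $\SmallSysNode_c^{\sim}$: denoting its main and control operators by $A_c^{\sim}$ and $B_c^{\sim}$, we obtain $(\alpha-A_c^{\sim}|_{\Hscr_o})^{-1}B_c^{\sim}=e_o(\overline\alpha)^{*}$ for all $\alpha\in\cplus$. By Proposition \ref{prop:dualsysnode} applied to $\SmallSysNode_o=(\SmallSysNode_c^{\sim})^{*}$, we have $A_o^{*}=A_c^{\sim}$ and $C_o^{*}=B_c^{\sim}$ (together with the unique continuous extensions between the appropriate rigged spaces). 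Substituting $\lambda:=\overline\alpha$ gives $e_o(\lambda)^{*}=(\overline\lambda-A_o^{*}|_{\Hscr_o})^{-1}C_o^{*}$ as claimed. The main obstacle is not a deep one but rather the bookkeeping: throughout the argument one must keep straight which adjoints refer to the Hilbert-space adjoint of a possibly unbounded operator between Hilbert spaces and which refer to the unique continuous extensions between the Gelfand triples $\Hscr_{o,1}\subset\Hscr_o\subset\Hscr_{o,-1}$ and $\Hscr_{o,1}^{d}\subset\Hscr_o\subset\Hscr_{o,-1}^{d}$, so that identities such as $A_o^{*}=A_c^{\sim}$ and $C_o^{*}=B_c^{\sim}$ are interpreted consistently with Subsection \ref{sec:dualnode}.
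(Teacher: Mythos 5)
Your proposal is correct and follows essentially the same route as the paper: Section \ref{sec:coenpres} explicitly sketches exactly this duality trick (apply the Section \ref{sec:enpres} construction to $\wtsmash\varphi$, observe that the resulting controllable kernel equals $K_o$ so the state space is $\Hscr_o$ and \eqref{eq:SodDefprel} becomes \eqref{mapping-o}, then set $\SmallSysNode_o:=(\SmallSysNode_c^{\sim})^{*}$ and translate Lemma \ref{lem:contrclosable}, Theorem \ref{thm:firstmodel}, Theorem \ref{thm:contrdual}, and \eqref{eq:isomeval} via Proposition \ref{prop:dualsysnode}), leaving the details to the reader. You have filled in those details accurately, including the bookkeeping of which operators are Hilbert-space adjoints and which are extensions to the rigged spaces.
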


Remark \ref{rem:leftshift} applies with minor modifications to $\SmallSysNode_o$. The following result (the analogue of Theorem \ref{thm:firstsimilar}) explains the canonical  property for the functional-model system node $\sbm{ A \& B \\ C \& D}_o$.

\begin{thm}   \label{thm:secondsimilar}
Let $\sbm{ A \& B \\ C \& D}$ be an observable and co-energy-preserving realization of $\varphi$ with state space $\cX$.  Define the operator $\Delta:\Hscr_o\to\Xscr$ as the unique bounded linear extension of the mapping
$$
  \Delta : e_o(\lambda)^* y \mapsto \left(\overline{\lambda}-A^*|_\cX\right)^{-1} C^* y, \quad \lambda \in {\mathbb C}^+, \, y \in \cY.
 $$
Then $\Delta$ is unitary from $\cH_o$ to $\cX$, the operator $\sbm{\Delta&0\\0&1_\Uscr}$ maps $\dom{\SmallSysNode_o}$ one-to-one onto $\dom{\SmallSysNode}$, and
 $$
  \bbm{A \& B \\ C \& D} \bbm{ \Delta & 0 \\ 0 & 1_\cU} = \bbm{ \Delta & 0 \\ 0 & 1_\cY} \SysNode_o.
 $$
 \end{thm}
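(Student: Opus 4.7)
The plan is to reduce the statement to Theorem~\ref{thm:firstsimilar} by passing to adjoints. First, by Proposition~\ref{prop:dualsysnode} and Definition~\ref{def:dualsysnode}, the dual $\SmallSysNode^{*}$ of the given observable and co-energy-preserving realization $\SmallSysNode$ of $\varphi$ is a controllable and energy-preserving system node on the triple $(\Yscr,\Xscr,\Uscr)$, and its transfer function equals $\wtsmash\varphi$. The construction of $\SmallSysNode_o$ at the start of Section~\ref{sec:coenpres} identifies $\SmallSysNode_o^{*}$ with the controllable energy-preserving canonical functional-model system node associated with $\wtsmash\varphi$; its state space coincides with $\Hscr_o$ because the controllable kernel produced from \eqref{eq:isomkernel} with $\wtsmash\varphi$ in place of $\varphi$ is
\begin{equation*}
  \frac{1-\wtsmash\varphi(\overline\mu)^{*}\wtsmash\varphi(\overline\lambda)}{\mu+\overline\lambda}
  =\frac{1-\varphi(\mu)\varphi(\lambda)^{*}}{\mu+\overline\lambda}=K_o(\mu,\lambda).
\end{equation*}

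Next, I would apply Theorem~\ref{thm:firstsimilar} to the pair $(\SmallSysNode^{*},\SmallSysNode_o^{*})$. Since Proposition~\ref{prop:dualsysnode} identifies the control operator of $\SmallSysNode^{*}$ as $C^{*}$, Theorem~\ref{thm:firstsimilar} produces a unitary $\Delta:\Hscr_o\to\Xscr$ determined by
\begin{equation*}
  \Delta e_o(\overline\lambda)^{*}y=(\lambda-A^{*}|_{\Xscr})^{-1}C^{*}y,\quad \lambda\in\cplus,\,y\in\Yscr,
\end{equation*}
together with the intertwinement
\begin{equation*}
  \dom{\SmallSysNode^{*}}=\bbm{\Delta & 0 \\ 0 & 1_\Yscr}\dom{\SmallSysNode_o^{*}},\qquad
  \SysNode^{*}\bbm{\Delta & 0 \\ 0 & 1_\Yscr}=\bbm{\Delta & 0 \\ 0 & 1_\Uscr}\SysNode_o^{*}.
\end{equation*}
Renaming the dummy variable $\lambda\leftrightarrow\overline\lambda$ in the display for $\Delta$ shows that it agrees with the formula asserted in Theorem~\ref{thm:secondsimilar}.

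The final step, which requires the most care, is to dualize the intertwinement back to a relation between $\SmallSysNode$ and $\SmallSysNode_o$. Since both system nodes are closed and densely defined, their biadjoints recover themselves. I would recast the displayed intertwinement as the equality
\begin{equation*}
  \mathrm{gr}(\SmallSysNode^{*})=\left(\bbm{\Delta & 0 \\ 0 & 1_\Yscr}\oplus\bbm{\Delta & 0 \\ 0 & 1_\Uscr}\right)\mathrm{gr}(\SmallSysNode_o^{*})
\end{equation*}
of graphs and then invoke the standard pairing characterization of adjoint graphs: a pair $\left(\sbm{a\\b},\sbm{c\\d}\right)$ belongs to $\mathrm{gr}(\SmallSysNode)$ precisely when $\left(\sbm{\Delta^{*}a\\b},\sbm{\Delta^{*}c\\d}\right)$ belongs to $\mathrm{gr}(\SmallSysNode_o)$, the unitarity of $\Delta$ ensuring that $\Delta^{*}=\Delta^{-1}$ supplies the inverse shift. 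This immediately yields
\begin{equation*}
  \dom{\SmallSysNode}=\bbm{\Delta & 0 \\ 0 & 1_\Uscr}\dom{\SmallSysNode_o},\qquad
  \SysNode\bbm{\Delta & 0 \\ 0 & 1_\Uscr}=\bbm{\Delta & 0 \\ 0 & 1_\Yscr}\SysNode_o,
\end{equation*}
completing the proof.
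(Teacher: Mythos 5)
Your proposal is correct and follows exactly the route the paper takes: the paper's entire proof is the remark that one applies Theorem~\ref{thm:firstsimilar} to $\SmallSysNode^{*}$ and $\SmallSysNode_o^{*}$. You have merely written out the details the paper leaves implicit (the identification of the controllable kernel for $\wtsmash\varphi$ with $K_o$, the relabelling $\lambda\leftrightarrow\overline\lambda$, and the passage back through adjoints via the graph characterization), and these details are all sound.
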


The proof is simply an application of Theorem \ref{thm:firstsimilar} to $\SmallSysNode^*$ and $\SmallSysNode_o^*$.

\subsection{Explicit formulas for the observable model}

The following result provides formulas involving the resolvent of $\sbm{ A \& B \\ C \& D}_o$.

\begin{thm}   \label{T:resolA}
The main operator $A_o$ of $\SmallSysNode_o$ is given explicitly by
$$
  (A_ox)(\mu)=\mu x(\mu)-\lim_{\re\eta\to\infty} \eta x(\eta),\quad\mu\in\cplus,
$$
for $x$ in $\dom{A_o}=\set{x\in\Hscr_o\mid\exists y\in\Yscr:~\mu\mapsto \mu x(\mu)-y\in\Hscr_o}$, and the observation operator is
$$
  C_ox=\lim_{\re\eta\to\infty} \eta x(\eta),\quad x\in\dom{A_o}.
$$

The resolvent of $A_o$ is given by 
\begin{equation}   \label{Aores}
  \left( ( \alpha - A_o)^{-1} x \right)(\mu) = \frac{ x(\mu) - x(\alpha)}{\alpha - \mu}, \quad \alpha, \mu \in {\mathbb C}^+,\, x \in \cH_o.
\end{equation}
Denoting the control operator of $\SmallSysNode_o$ by $B_o$, we have the following formulas:
\begin{align}  
\left(A_o(\alpha - A_o)^{-1}x\right)(\mu) & = \frac{ \mu x(\mu) - \alpha x(\alpha)}{ \alpha - \mu}, \quad \alpha, \mu \in {\mathbb C}^+, \, x \in \cH_o,  \notag \\
\left( (\alpha - A_o|_{\cH_o})^{-1} B_o u \right) (\mu) & = \frac{\varphi(\mu) - \varphi(\alpha)}{\alpha - \mu} u, 
\quad \alpha, \mu \in {\mathbb C}^+, \, u \in \cU,  \label{Aores2}\\
C_o(\alpha - A_o)^{-1}x &= x(\alpha),\quad \alpha \in\cplus,\, x\in\Hscr_o \notag.
\end{align}
\end{thm}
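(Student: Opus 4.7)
The plan is to exploit the identification $\SmallSysNode_o = (\SmallSysNode_c^\sim)^*$, where $\SmallSysNode_c^\sim$ denotes the controllable energy-preserving functional-model system node built as in Section \ref{sec:enpres} but starting from $\wtsmash\varphi \in \Sscr(\cplus;\Yscr,\Uscr)$ in place of $\varphi$. Since $(\wtsmash\varphi)^\sim = \varphi$, the reproducing kernel of the state space of $\SmallSysNode_c^\sim$ is
$$
  K_c^\sim(\mu,\lambda) = \frac{1_\Yscr - \varphi(\mu)\varphi(\lambda)^*}{\mu + \overline\lambda} = K_o(\mu,\lambda),
$$
so that the state space of $\SmallSysNode_c^\sim$ is precisely $\Hscr_o$ and its point-evaluation operator is $e_o(\cdot)$; this identification is implicit already in Theorem \ref{thm:coisom}. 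Via Proposition \ref{prop:dualsysnode}, the main, control, and observation operators of $\SmallSysNode_o$ then correspond to those of the dual $(\SmallSysNode_c^\sim)^*$, namely $A_o = (A_c^\sim)^*$, $B_o = (C_c^\sim)^*$, and $C_o = (B_c^\sim)^*$.

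Next I would apply Proposition \ref{P:dualAC} to $\SmallSysNode_c^\sim$, reading its statement with $\Yscr$ playing the role of the input space $\Uscr$. This immediately yields the description of $\dom{A_o}$, the formula $(A_o x)(\mu) = \mu x(\mu) - \lim_{\re\eta\to\infty}\eta x(\eta)$, and the identity $C_o x = \lim_{\re\eta\to\infty}\eta x(\eta)$. For the resolvent formulas I would apply Proposition \ref{P:resolAc*} to $\SmallSysNode_c^\sim$: because $\alpha \in \cplus$ if and only if $\overline\alpha \in \cplus$, the identity \eqref{Ac*res}, after relabeling $\overline\alpha \mapsto \alpha$, gives \eqref{Aores} verbatim. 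The identities \eqref{Ac*res1}--\eqref{Ac*res3} translate similarly: the first becomes the displayed formula for $A_o(\alpha - A_o)^{-1}x$; the second, upon recalling $\wtsmash{(\wtsmash\varphi)} = \varphi$, becomes \eqref{Aores2}; and the third becomes the identity $C_o(\alpha - A_o)^{-1}x = x(\alpha)$.

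The main obstacle is purely bookkeeping: one must be careful with the swapped roles of $\Uscr$ and $\Yscr$, and with the placement of complex conjugates on the spectral parameter, when transcribing statements from Section \ref{sec:enpres} (written for a controllable model of $\varphi$) into the setting of a controllable model of $\wtsmash\varphi$. There is no genuine analytic content beyond what has already been established in Subsection \ref{S:contexp}; if one prefers to avoid duality entirely, the resolvent formula \eqref{Aores} can be proved directly by setting $\xi(\mu) := \frac{x(\mu) - x(\alpha)}{\alpha - \mu}$, verifying $\xi \in \dom{A_o}$ from the explicit description of that domain in Theorem \ref{thm:coisom}, and then checking $(\alpha - A_o)\xi = x$ by direct computation, after which the remaining identities follow from standard resolvent algebra together with \eqref{eq:SSplitRes}.
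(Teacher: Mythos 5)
Your proposal is correct and follows exactly the route the paper itself prescribes: Section \ref{sec:coenpres} leaves the proof to the reader but explicitly recommends reducing to the controllable case by applying the results of Section \ref{sec:enpres} to $\wtsmash\varphi$ and then dualizing via $\SmallSysNode_o = \bigl(\SmallSysNode_c^\sim\bigr)^*$, which is precisely your transcription of Propositions \ref{P:dualAC} and \ref{P:resolAc*} with the roles of $\Uscr$ and $\Yscr$ swapped and $\overline\alpha$ relabeled as $\alpha$. Your bookkeeping of these substitutions is accurate throughout.
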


Just as in Subsection \ref{S:contexp} for the controllable, energy-preserving case,  the formula \eqref{Aores} for the resolvent of $A_o$ suggests a way to concretely identify the $(-1)$-scaled rigged space $\cH_{o,-1}$ defined abstractly as the completion of the space $\cH_o$ in the norm
$$
  \| x\| = \| (\beta - A_o)^{-1}\|_{\cH_o}.
$$
Namely we define
\begin{equation}   \label{Ho-1con}
\cH_{o,-1} = \left\{ x : {\mathbb C}^+ \to \cY \mid \mu \mapsto \frac{ x(\mu) - x(\beta)}{\beta - \mu} \in \cH_o \right\}
\end{equation}
with norm given by
\begin{equation}  \label{Ho-1con-norm}
\| x \|_{\cH_{o,-1}} = \left\| \mu \mapsto \frac{ x(\mu) - x(\beta)}{\beta - \mu} \right\|_{\cH_o}.
\end{equation}
We emphasize again that the $\cH_{o,-1}$ norm (and inner product) depends on the choice of $\beta \in {\mathbb C}_+$; different choices of $\beta$ give different norms although all such norms are equivalent. The elements of $\Hscr_{o,-1}$ are equivalence classes of functions modulo constant terms.
We have the following analogue of Theorem \ref{thm:hdc-1concrete}:

\begin{thm}\label{T:Ho-1concrete}
Let the space $\cH_{o,-1}$ be given by \eqref{Ho-1con} and \eqref{Ho-1con-norm}.
\begin{enumerate}
\item The map $\iota : x \mapsto [x]$ embeds $\cH_o$ into  $\cH_{o,-1}$ as a dense subspace.  A given element $[z] \in \cH_{o,-1}$
is of the form $\iota(x)$ for some $x \in \cH_o$ if and only if the function $\displaystyle{\mu \mapsto \frac{z(\mu) - z(\beta)}{\beta - \mu}}$, $\mu\in\cplus$, is not only in $\cH_o$ but in fact is in $\dom{A_o} = \cH_{o,1} \subset \cH_o$.  When this is the case, the equivalence class representative $x$ for $[z]$, for which $x \in \cH_o$, is uniquely determined by the decay condition at infinity:  
$$
\lim_{\re \eta \to \infty}  x(\eta) = 0.
$$

\item Define an operator $A_o|_{\cH_o} : \cH_o \to \cH_{o, -1}$ by
$$
  A_o|_{\cH_o}x := [\mu \mapsto  \mu x(\mu)],\quad x\in\Hscr_o,\,\mu\in\cplus.
$$
When $\cH_o$ is identified as a linear sub-manifold of $\cH_{o,-1}$, then $A_o|_{\cH_o}$ is the unique extension of $A_o : \dom{A_o} \to \cH_o$ to an operator in $\Bscr(\Hscr_o;\Hscr_{o,-1})$.
Moreover, $(\beta - A_o|_{\cH_o})^{-1}$ is a unitary map from $\cH_{o,-1}$ to $\cH_o$. 

\item With $\cH_{o, -1}$ identified concretely as in \eqref{Ho-1con}, the action of $B_o : \cU \to \cH_{o,-1}$ is given by
$$
  B_ou := [ \mu\mapsto \varphi(\mu)  u ],\quad u\in\Uscr,\,\mu\in\cplus.
$$
\end{enumerate}
\end{thm}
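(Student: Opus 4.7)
\textbf{Proof plan for Theorem \ref{T:Ho-1concrete}.} The cleanest route is to exploit the duality trick introduced just above Theorem \ref{thm:coisom}: since $\SmallSysNode_o = \bigl(\SmallSysNode_c^\sim\bigr)^*$, where $\SmallSysNode_c^\sim$ is the controllable energy-preserving functional-model associated with $\wtsmash\varphi$, the rigged space $\Hscr_{o,-1}$ coincides with the dual rigged space $\wtsmash\Hscr_{c,-1}^d$ associated to $\SmallSysNode_c^\sim$. I plan to apply Theorem \ref{thm:hdc-1concrete} with $\varphi$ replaced by $\wtsmash\varphi$; under this substitution the state space $\Hscr_c^{\wtsmash\varphi}$ coincides with $\Hscr_o$ (since the kernel $K_c$ built from $\wtsmash\varphi$ is exactly $K_o$), the main operator $(A_c^\sim)^* = A_o$, and the control operator $(C_c^\sim)^* = B_o$. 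The only bookkeeping subtlety is that the rigging parameter $\overline\beta$ in Theorem \ref{thm:hdc-1concrete} becomes $\beta$ in the present statement, which is harmless because all choices in $\cplus$ yield equivalent norms.

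A direct proof is also available that mirrors Theorem \ref{thm:hdc-1concrete} step by step. I would first establish completeness: given a Cauchy sequence $[x_n]$ in $\Hscr_{o,-1}$, the sequence $z_n(\mu):=\frac{x_n(\mu)-x_n(\beta)}{\beta-\mu}$ is Cauchy in $\Hscr_o$ with limit $z$, and then $x(\mu):=(\beta-\mu)z(\mu)$ supplies a representative of a limiting equivalence class via the identity $\frac{x(\mu)-x(\beta)}{\beta-\mu}=z(\mu)$ (here $x(\beta)=0$).

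For Assertion 1, the embedding $\iota:\Hscr_o\to\Hscr_{o,-1}$ is well defined because for $x\in\Hscr_o$ the function $\mu\mapsto\frac{x(\mu)-x(\beta)}{\beta-\mu}$ equals $(\beta-A_o)^{-1}x$ by \eqref{Aores}, hence lies in $\dom{A_o}\subset\Hscr_o$. Density follows once the rest of the theorem is in place, since $\Hscr_{o,-1}$ is by construction the completion of $\iota(\Hscr_o)$. For the characterization of membership in $\iota(\Hscr_o)$, if $g(\mu):=\frac{z(\mu)-z(\beta)}{\beta-\mu}$ lies in $\dom{A_o}$, I would set $x:=(\beta-A_o)g$ and use the explicit formula $A_o x = \mu\mapsto\mu x(\mu)-C_o x$ from Theorem \ref{T:resolA} to verify $[x]=[z]$; conversely, if $[z]=\iota(x)$ then $z$ and $x$ differ by a constant, so $g=(\beta-A_o)^{-1}x\in\dom{A_o}$. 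The decay statement $\lim_{\re\eta\to\infty}x(\eta)=0$ then picks out the unique representative in $\Hscr_o$ via Proposition \ref{prop:xtendstozero}, since two representatives differ only by a constant and every element of $\Hscr_o$ decays at $\infty$.

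For Assertion 2, I would observe that for $x\in\Hscr_o$ the function $\mu\mapsto\beta x(\mu)-\mu x(\mu)=(\beta-\mu)x(\mu)$ lies in $\Hscr_o$ (it equals $\beta x - [\mu\mapsto\mu x(\mu)]$ up to a constant representative), confirming $[\mu\mapsto\mu x(\mu)]\in\Hscr_{o,-1}$; hence $A_o|_{\Hscr_o}$ is a well-defined operator from $\Hscr_o$ into $\Hscr_{o,-1}$. For $x\in\dom{A_o}$ the formula in Theorem \ref{T:resolA} shows $A_o|_{\Hscr_o}x=[A_ox]=\iota(A_o x)$, so $A_o|_{\Hscr_o}$ is the continuous extension; continuity from $\Hscr_o$ to $\Hscr_{o,-1}$ follows from the resolvent estimate, exactly as in the proof of Theorem \ref{thm:hdc-1concrete}. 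Unitarity of $(\beta-A_o|_{\Hscr_o})^{-1}:\Hscr_{o,-1}\to\Hscr_o$ is then immediate from the definition of the norm \eqref{Ho-1con-norm} together with the completeness argument (which shows surjectivity). For Assertion 3, using \eqref{eq:isomeval} translated to the observable case gives $(\alpha-A_o|_{\Hscr_o})^{-1}B_o u = e_o(\alpha)^*\varphi(\cdot)$-style kernel, more precisely \eqref{Aores2}; applying $(\alpha-A_o|_{\Hscr_o})$ to the representative in \eqref{Aores2} and simplifying yields $B_o u = [\mu\mapsto\varphi(\mu)u]$, completing the proof. The main technical obstacle is keeping track of equivalence-class representatives when passing from bounded resolvents on $\Hscr_o$ to the concrete formulas on $\Hscr_{o,-1}$; this is handled by always normalizing via the decay condition or via the value at $\beta$, as appropriate.
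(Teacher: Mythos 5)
Your proposal is correct and follows precisely the two routes the paper itself prescribes for Section 5 (where it leaves these proofs to the reader): the duality reduction $\SmallSysNode_o=\bigl(\SmallSysNode_c^\sim\bigr)^*$ applied to Theorem \ref{thm:hdc-1concrete} with $\wtsmash\varphi$ in place of $\varphi$ and rigging parameter $\overline\beta$, and a direct argument mirroring that proof. The only slip is in your Assertion 2: the function $\mu\mapsto(\beta-\mu)x(\mu)$ does not in general lie in $\Hscr_o$; what you need (and what your parenthetical shows you intend) is that its normalized difference quotient $\frac{(\beta-\mu)x(\mu)-0}{\beta-\mu}=x(\mu)$ lies in $\Hscr_o$, so that $[\mu\mapsto(\beta-\mu)x(\mu)]\in\Hscr_{o,-1}$, exactly as in the paper's proof of the analogous step in Theorem \ref{T:Hc-1concrete}.
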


We shall now present formulas for the action of the operators of \emph{the dual of} $ \SmallSysNode_o $  on generic functions in their domains. For $\alpha \in {\mathbb C}^+$ we define $\tau_{o,\alpha} : \cH_o \to \cU$ by
$$
  \tau_{o,\alpha} := B_o^* (\alpha - A_o^*)^{-1}, \quad \alpha\in\cplus,
$$
and it follows from \eqref{Aores2} that 
$$
  (\tau_{o,\alpha})^* u=  \mu\mapsto  \frac{ \varphi(\mu) - \varphi(\overline\alpha)}{\overline\alpha-\mu} u,\quad u\in\Uscr,\,\mu\in\cplus.
$$
The map $\tau_{o,\alpha}$ enters into the explicit formula for the resolvent of  $A_o^*$ acting on generic elements of $\Hscr_o$, as described in the following analogue of Corollary \ref{cor:ContrRes} and Theorem \ref{thm:enprescontainedadjoint}:

\begin{thm}\label{thm:Aores}
The following claims are true:
\begin{enumerate}
\item For arbitrary $\sbm{x\\y}\in\dom{\SmallSysNode_o^*}$, if we set $u:=\bbm{B_o^*\&D_o^*}\sbm{x\\y}$, then we get
$$
  \bbm{A_o^*\&C_o^*}\bbm{x\\y}=\mu\mapsto -\mu x(\mu)-\varphi(\mu)u+y,\quad\mu\in\cplus.
$$
It follows that
$$
\begin{aligned}
  \dom{\SmallSysNode_o^*}&\subset\bigg\{\bbm{x\\y}\in\bbm{\Hscr_c\\\Yscr} \bigmid \\
   &\qquad \exists u\in\Uscr:~ \mu\mapsto -\mu x(\mu)-\varphi(\mu)u+y\in\Hscr_o\bigg\}.
\end{aligned}
$$

\item For $x \in \dom{A_o^*}$ the function $A_o^* x \in \cH_o$ satisfies the identity
$$
\left( A_o^* x\right) (\mu) = -\mu x(\mu) - \varphi(\mu)  B_o^* x,\quad \mu\in\cplus,
$$
and in particular, 
\begin{equation}  \label{domAo*}
 \dom{A_o^*} \subset   \{ x \in \cH_o \mid \exists u \in \cU :~   \mu \mapsto \mu x(\mu) +  \varphi(\mu) u \in \cH_o\}.  
\end{equation}

\item If we know $A_o^*$, then we can characterize $\dom{\SmallSysNode_o^*}$ and $\bbm{B_c^*\&D_c^*}$ as follows, for an arbitrary $\lambda\in\cplus$:
$$
\begin{aligned}
  \dom{\SmallSysNode_o^*} &= \set{\bbm{x\\y}\in\bbm{\Hscr_o\\\Yscr}\bigmid x-e_o(\overline\lambda)^*y\in\dom{A_o^*}}, \\
  \bbm{B_c^*\&D_c^*}\bbm{x\\y} &= \tau_{o,\lambda}(\lambda-A_o^*)\big(x-e_o(\lambda)^*y\big)+\wtsmash\varphi(\overline\lambda);
\end{aligned}
$$
neither of these two depends on the choice of $\lambda\in\cplus$.

\item We have the following formula for the resolvent of $A_o^*$:
\begin{equation}   \label{Ao*res}
\big( (\overline\alpha - A_o^*)^{-1} x \big)(\mu) = \frac{ x(\mu) - \varphi(\mu) \, \tau_{o,\overline\alpha}\, x}{\overline\alpha + \mu},
  \quad \alpha,\mu \in {\mathbb C}^+,\,x \in \cH_o,
\end{equation}
and the action of this resolvent on kernel functions of $\Hscr_o$ is
\begin{equation}   \label{Ao*res2}
(\overline\alpha - A_o^*)^{-1} e_o(\lambda)^*y = 
  \frac{e_o(\lambda)^*-e_o(\alpha)^*}{\overline\alpha-\overline\lambda}y,
  \quad \alpha,\lambda \in {\mathbb C}^+,\,y \in \cY.
\end{equation}
\end{enumerate}
\end{thm}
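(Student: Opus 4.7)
The approach is to exploit the duality relationship established in the opening of Section \ref{sec:coenpres}: $\SmallSysNode_o^* = \SmallSysNode_c^\sim$, where $\SmallSysNode_c^\sim$ is the controllable energy-preserving functional-model system node of Section \ref{sec:enpres} built from $\wtsmash\varphi\in\Sscr(\cplus;\Yscr,\Uscr)$ instead of from $\varphi$. Since that construction produces the kernel $K_c^\sim(\mu,\lambda)=(1-\varphi(\mu)\varphi(\lambda)^*)/(\mu+\overline\lambda)=K_o(\mu,\lambda)$, the state space $\Hscr_c^\sim$ equals $\Hscr_o$ and $e_c^\sim(\lambda)=e_o(\lambda)$. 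Under this identification the main, control, and observation operators of $\SmallSysNode_c^\sim$ become $A_o^*$, $C_o^*$, and $B_o^*$ respectively, its transfer function is $\wtsmash\varphi$, and $\tau_{c,\alpha}^\sim = B_o^*(\alpha-A_o^*)^{-1}=\tau_{o,\alpha}$. Each claim then follows by transcribing a result from Section \ref{sec:enpres}.

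For Claim 1 the plan is to apply Theorem \ref{thm:enprescontainedadjoint}.1 to $\SmallSysNode_c^\sim$: swapping the roles of $\Uscr$ and $\Yscr$ and replacing $\wtsmash\varphi$ by $\wtsmash{\wtsmash\varphi}=\varphi$ in the formula $\bbm{A_c\&B_c}\sbm{x\\u}=\mu\mapsto -\mu x(\mu)-\wtsmash\varphi(\mu)y+u$ yields exactly the stated identity for $\bbm{A_o^*\&C_o^*}\sbm{x\\y}$. Setting $y=0$ (so that $u=B_o^*x$) specializes this to Claim 2, which is just \eqref{Ac-id} transcribed, together with the inclusion \eqref{domAo*}.

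Claim 3 follows from Theorem \ref{thm:contrsemiexplicit} applied to $\SmallSysNode_c^\sim$: the abstract splitting \eqref{eq:SSplitResDom} gives $\sbm{x\\y}\in\dom{\SmallSysNode_o^*}$ iff $x-(\lambda-A_o^*|_{\Hscr_o})^{-1}C_o^* y \in\dom{A_o^*}$, and Theorem \ref{thm:coisom}.4 supplies $(\lambda-A_o^*|_{\Hscr_o})^{-1}C_o^* = e_o(\overline\lambda)^*$, producing the conjugated argument in the domain condition. The explicit output formula arises from the reconstruction identity \eqref{eq:CDreconstr} applied to $\SmallSysNode_o^*$, together with $B_o^*=\tau_{o,\lambda}(\lambda-A_o^*)$ on $\dom{A_o^*}$ and transfer function $\wtsmash\varphi$. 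Finally, Claim 4 consists of two translations: \eqref{Ao*res} is Corollary \ref{cor:ContrRes} for $\SmallSysNode_c^\sim$ after substituting $\alpha\mapsto\overline\alpha$ (which reflects the relation $\res{A_o^*}=\{\overline\alpha\mid\alpha\in\res{A_o}\}$), and \eqref{Ao*res2} is Proposition \ref{prop:isomops}.1 for $\SmallSysNode_c^\sim$ after the same substitution. The only real obstacle is clerical: one must keep careful track of which of $\varphi$ versus $\wtsmash\varphi$ and which of $\alpha$ versus $\overline\alpha$ appears after each application of duality, but no genuinely new analysis is needed beyond checking the kernel identification $K_c^\sim = K_o$.
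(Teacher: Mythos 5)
Your proposal is correct and is precisely the route the paper itself prescribes: Section \ref{sec:coenpres} states that all proofs there are left to the reader and are to be obtained by applying the Section \ref{sec:enpres} results to $\wtsmash\varphi$ and reading them off for $\SmallSysNode_o=\bigl(\SmallSysNode_c^\sim\bigr)^*$, which is exactly what you do, including the key kernel identification $K_c^\sim=K_o$ and the dictionary $A_c^\sim=A_o^*$, $B_c^\sim=C_o^*$, $C_c^\sim=B_o^*$, $\tau_{c,\alpha}^\sim=\tau_{o,\alpha}$. Your sourcing of each claim (Theorem \ref{thm:enprescontainedadjoint}.1, \eqref{Ac-id}, Theorem \ref{thm:contrsemiexplicit} with \eqref{eq:CDreconstr}, Corollary \ref{cor:ContrRes}, and Proposition \ref{prop:isomops}.1, with the relabeling $\alpha\mapsto\overline\alpha$) is accurate.
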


The formula \eqref{Ao*res2} is useful when calculating the reproducing kernel of $\Hscr_{o,1}$. 

Similar to Lemma \ref{lem:domequivdeluxe}, with an added assumption, it is possible to strengthen the containment in \eqref{domAo*} to an equality. Moreover, we obtain a characterization of when the observable energy-preserving realization is in fact conservative, cf.\ the last assertion of Theorem \ref{T:coiso-real}.

\begin{thm}\label{thm:assume2} The following two conditions are equivalent:
\begin{enumerate}
\item For some (or equivalently for every) $\alpha\in\cplus$, the state space $\cH_o$ has the property
\begin{equation}   \label{assume2}
  \cH_o\cap \set{\mu\mapsto \frac{\varphi(\mu)u}{\alpha+\mu}\bigmid u\in\Uscr} = \{0\}.
 \end{equation}
\item We have both
\begin{equation}\label{eq:schurnotinHo}
 \Hscr_o \cap \set{\varphi(\cdot)u\mid u\in\Uscr}=\zero\quad\text{and}
\end{equation}
\begin{equation}  \label{Ao*idassume2}
\dom{A_o^*} = \{ x \in \cH_o \mid \exists u \in \cU:~ \mu \mapsto \mu x(\mu) +  \varphi(\mu) u \in \cH_o\}.
\end{equation}
\end{enumerate}
The conditions \eqref{assume2}--\eqref{Ao*idassume2} hold together with the implication $\varphi(\cdot)u=0\Rightarrow u=0$ if and only if $\SmallSysNode_o$ is conservative. This is in turn true if and only if $1 - \varphi(\cdot)^* \varphi(\cdot)$ has maximal factorable minorant in the right half-plane sense equal to 0.

When \eqref{eq:schurnotinHo} holds, a given $x \in \cH_o$ in $\dom{A_o^*}$ determines the function $\varphi(\cdot)u$ appearing in \eqref{domAo*} uniquely through the formula
\begin{equation}\label{eq:schurnotinHoCons}
   \varphi(\mu)u =  \varphi(\mu) B_o^*x,\quad \mu\in\cplus.
\end{equation}
If $\SmallSysNode_o$ is conservative, then $x\in\dom{A_o^*}$ further determines the vector $u\in\Uscr$ uniquely in \eqref{eq:schurnotinHoCons}.
\end{thm}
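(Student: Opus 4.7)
The strategy is to reduce the entire statement to the previously established controllable energy-preserving analogues in Lemma \ref{lem:domequivdeluxe} and Theorem \ref{thm:conschar}, via the duality $\SmallSysNode_o = (\SmallSysNode_c^\sim)^*$ introduced at the opening of Section \ref{sec:coenpres}, where $\SmallSysNode_c^\sim$ denotes the controllable energy-preserving functional model associated with $\wtsmash\varphi \in \Sscr(\cplus;\Yscr,\Uscr)$. Under this correspondence the kernels match via $K_c^\sim = K_o$ (since $\wtsmash{\wtsmash\varphi}=\varphi$), so the state spaces coincide, $\Hscr_c^\sim=\Hscr_o$; the constituent operators satisfy $A_c^\sim=A_o^*$, $B_c^\sim=C_o^*$, $C_c^\sim=B_o^*$; and the transfer function of $\SmallSysNode_c^\sim$ is $\wtsmash\varphi$.

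With $\varphi$ replaced by $\wtsmash\varphi$ throughout Lemma \ref{lem:domequivdeluxe}, condition \eqref{eq:contrconscond} becomes exactly \eqref{assume2}, condition \eqref{assume1c} becomes \eqref{eq:schurnotinHo}, and the domain characterization \eqref{eq:contrdomAconteq} for $\dom{A_c^\sim}=\dom{A_o^*}$ becomes \eqref{Ao*idassume2}. Combining parts 1, 2, and 3 of Lemma \ref{lem:domequivdeluxe} one sees that \eqref{eq:contrconscond} is equivalent to the conjunction of \eqref{eq:contrdomAconteq} and \eqref{assume1c}; translating back through the duality yields the claimed equivalence of enumerated conditions (1) and (2).

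The conservativity characterization is handled in the same spirit. Since the class of conservative system nodes is closed under adjunction, $\SmallSysNode_o$ is conservative if and only if $\SmallSysNode_o^* = \SmallSysNode_c^\sim$ is conservative. Theorem \ref{thm:conschar} applied to $\SmallSysNode_c^\sim$ states that this last condition is equivalent to \eqref{eq:contrconscond} combined with \eqref{eq:transfnotinHc2}; under the duality these become \eqref{assume2} and the implication $\varphi(\cdot)u=0\Rightarrow u=0$, giving precisely the stated characterization. The maximal-factorable-minorant equivalence transfers verbatim from Theorem \ref{thm:conschar}.4, since the minorant condition on $1-\wtsmash{\wtsmash\varphi}^*\wtsmash{\wtsmash\varphi}=1-\varphi^*\varphi$ is exactly the condition demanded here.

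Finally, the uniqueness assertions follow quickly from Theorem \ref{thm:Aores}.2. Given $x\in\dom{A_o^*}$ and any $u\in\Uscr$ with $\mu\mapsto\mu x(\mu)+\varphi(\mu)u\in\Hscr_o$, subtracting the element $\mu\mapsto\mu x(\mu)+\varphi(\mu)B_o^* x = -A_o^*x \in \Hscr_o$ yields $\varphi(\cdot)(u-B_o^*x)\in\Hscr_o$, which by \eqref{eq:schurnotinHo} vanishes identically and establishes \eqref{eq:schurnotinHoCons}. In the conservative case the extra implication $\varphi(\cdot)v=0\Rightarrow v=0$ then forces $u=B_o^*x$, proving uniqueness of the vector $u$ itself. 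The only genuine obstacle throughout is bookkeeping: tracking the complex conjugations introduced by the transformation $\varphi\mapsto\wtsmash\varphi$ carefully enough that the $\alpha$-dependence in \eqref{assume2} matches the one in \eqref{eq:contrconscond} without a stray conjugate appearing.
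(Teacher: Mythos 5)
Your proposal is correct and is precisely the route the paper itself prescribes: Section \ref{sec:coenpres} explicitly leaves the proofs to the reader and recommends reducing every statement about $\SmallSysNode_o$ to the corresponding one about the controllable model for $\wtsmash\varphi$ via $\SmallSysNode_o=(\SmallSysNode_c^\sim)^*$, which is exactly how you obtain conditions (1)--(2) from Lemma \ref{lem:domequivdeluxe}, the conservativity and maximal-factorable-minorant claims from Theorem \ref{thm:conschar}, and the uniqueness assertions from Theorem \ref{thm:Aores}.2 together with \eqref{eq:schurnotinHo}. The dictionary you set up ($\Hscr_c^\sim=\Hscr_o$, $A_c^\sim=A_o^*$, $C_c^\sim=B_o^*$, and \eqref{eq:contrconscond}, \eqref{assume1c}, \eqref{eq:contrdomAconteq} translating to \eqref{assume2}, \eqref{eq:schurnotinHo}, \eqref{Ao*idassume2}) is accurate.
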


Recall that $\cH^d_{o,-1}$ is defined to be the completion of $\cH_o$ in the norm $\|x \| = \left\| (\overline\beta -A_o^*)^{-1} x \right\|$.
With assumption \eqref{assume2} in force,  Theorem \ref{thm:assume2} assures us that $\dom{ A_o^* }$ is given by \eqref{Ao*idassume2}.  
Then  formula \eqref{Ao*res} suggests the following concrete identification of the $(-1)$-rigged space:
\begin{equation}   \label{Hod-1}
 \cH^d_{o,-1}  := \left\{ x : {\mathbb C}^+ \to \cY \bigmid \exists u\in \cU:~ \mu \mapsto \frac{x(\mu) + \varphi(\mu) u}{ \overline\beta + \mu} \in \cH_o\right\}.
\end{equation}
Now the assumption \eqref{assume2} guarantees us that the choice of $\varphi(\cdot)u$ in \eqref{Hod-1} is uniquely determined by $x$ (whenever at least one suitable $u\in\Uscr$ exists).
For $x \in \cH^d_{o,-1}$ we set
\begin{equation}   \label{Hod-1norm}
  \| x \|_{\cH^d_{o,-1}} := \left\| \mu\mapsto \frac{ x(\mu) + \varphi(\mu) u}{\overline\beta + \mu} \right\|_{\cH_o}.
\end{equation}
Thus elements of $\cH^d_{o,-1}$ are equivalence classes of functions modulo the subspace $\varphi (\cdot)\, \cU$. These equivalence classes are denoted as $[x]$ where $x$ is any particular representative of the equivalence class.   We summarize the properties of the space $\cH^d_{o,-1}$ as follows; see also Theorem \ref{T:Hc-1concrete}:

\begin{prop}\label{P:Hdo-1concrete}  
Assume that \eqref{assume2} holds and let the space $\cH^d_{o,-1}$  be given by \eqref{Hod-1}--\eqref{Hod-1norm} as above.
Then:
\begin{enumerate}
\item The map $\iota : x \mapsto [x]$ embeds $\cH_o$ into $\cH^d_{o,-1}$ as a dense subspace. A given element $[z] \in \cH^d_{o,-1}$ is of the form $\iota(x)$ for some $x \in \cH_o$ if and only if there is a choice of $u$ so that the function
$\displaystyle{\mu \mapsto \frac{z(\mu) + \varphi(\mu) u}{ \overline\beta + \mu}}$, $\mu\in\cplus$, is not only in $\cH_o$ but also in $\cH^d_{o,1} = \dom{A_o^*}$. This choice of $\varphi(\cdot)u$ is then unique.

\item Define an operator $A_o^*|_{\cH_o} : \cH_o \to \cH^d_{o,-1}$ by
$$
  A_o^*|_{\cH_o}x := [\mu\mapsto -\mu x(\mu)],\quad x\in\Hscr_o,\,\mu\in\cplus.
$$
When $\cH_o$ is identified as a linear sub-manifold of $\cH^d_{o,-1}$ as in statement 1, then $A_o^*|_{\cH_o}$ is the unique extension of $A_o^* : \dom{A_o^*} \to \cH_o$ to an operator in $\Bscr(\Hscr_o;\Hscr_{o,-1}^d)$. The resolvent of $A_o^*|_{\Hscr_o}$ is given by 
$$
  \big((\overline\alpha - A_o^*|_{\cH_{c}})^{-1}[x]\big)(\mu)=\frac{ x(\mu) + \varphi(\mu) u}{\overline\alpha + \mu},\quad [x]\in\Hscr_{o,-1}^d,\,\alpha,\mu\in\cplus,
$$
where the condition $\mu\mapsto\frac{ x(\mu) + \varphi(\mu) u}{\overline\alpha + \mu}\in\Hscr_o$ uniquely determines $\varphi(\cdot) u$. Moreover, $(\overline\beta - A_o^*|_{\cH_{c}})^{-1}$ is unitary from $\cH_{c,-1}$ to $\cH_c$.

\item The action of $C_o^* : \cY \to \cH^d_{o,-1}$ is given by
$$
  C_o^*y := [ \mu \mapsto y],\quad y\in\Yscr,\,\mu\in\cplus.
$$
\end{enumerate}
\end{prop}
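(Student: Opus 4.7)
The plan is to obtain Proposition \ref{P:Hdo-1concrete} from Theorem \ref{T:Hc-1concrete} via the duality reduction described at the beginning of Section \ref{sec:coenpres}, rather than repeating the completeness/embedding/extension arguments. Recall that by construction $\SysNode_o = (\SysNode_c^\sim)^*$, where $\SysNode_c^\sim$ is the controllable energy-preserving model of $\wtsmash\varphi\in\Sscr(\cplus;\Yscr,\Uscr)$. Consequently $\SysNode_o^* = \SysNode_c^\sim$, and so $A_o^* = A_c^\sim$, $B_o^* = C_c^\sim$, $C_o^* = B_c^\sim$.

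First I would identify the state space of $\SysNode_c^\sim$ with $\Hscr_o$. Applying the definition of the kernel $K_c$ to $\wtsmash\varphi$ in place of $\varphi$ and using $\wtsmash{\wtsmash\varphi}=\varphi$ gives
\begin{equation*}
K_c^\sim(\mu,\lambda) = \frac{1-\wtsmash{\wtsmash\varphi}(\mu)\,\wtsmash{\wtsmash\varphi}(\lambda)^*}{\mu+\overline\lambda} = \frac{1-\varphi(\mu)\varphi(\lambda)^*}{\mu+\overline\lambda} = K_o(\mu,\lambda),
\end{equation*}
so $\Hscr(K_c^\sim)=\Hscr_o$. By the convention in the symbol list, $\Hscr_{o,\pm 1}^d$ is the rigging associated to $A_o^*$ using $\overline\beta$; under the identification $A_o^*=A_c^\sim$ this is exactly the rigging for $A_c^\sim$ on $\Hscr_o$ with rigging parameter $\overline\beta$.

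Next I would verify that the hypothesis \eqref{assume2} matches the hypothesis \eqref{eq:contrconscond} needed to apply Theorem \ref{T:Hc-1concrete} to $\SysNode_c^\sim$. Indeed, rewriting \eqref{eq:contrconscond} for the model $\SysNode_c^\sim$ of $\wtsmash\varphi$ replaces $\wtsmash\varphi$ by $\wtsmash{\wtsmash\varphi}=\varphi$ and replaces $\Hscr_c$ by $\Hscr_o$, yielding exactly \eqref{assume2} (with $\overline\beta$ in place of $\beta$, but we know from Section \ref{sec:sysnode} that the choice of rigging parameter in \eqref{assume2} is immaterial). With this, Theorem \ref{T:Hc-1concrete} applies to $\SysNode_c^\sim$ using the rigging parameter $\overline\beta$, and the concretely defined space on the right-hand side of \eqref{Hod-1} is precisely the corresponding $(-1)$-scaled rigged space of $A_c^\sim$. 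Assertion 1 of Proposition \ref{P:Hdo-1concrete} is then a direct transcription of assertion 1 in Theorem \ref{T:Hc-1concrete} (using $\Hscr_{c,1}^\sim=\dom{A_c^\sim}=\dom{A_o^*}=\Hscr_{o,1}^d$); assertion 2 is a direct transcription of assertion 2 of Theorem \ref{T:Hc-1concrete}, giving the formula $A_o^*|_{\Hscr_o}x=[\mu\mapsto -\mu x(\mu)]$ together with the resolvent formula and the unitarity of $(\overline\beta-A_o^*|_{\Hscr_o})^{-1}$; and assertion 3 translates from $B_c^\sim u=[\mu\mapsto u]$ (assertion 3 of Theorem \ref{T:Hc-1concrete} applied to $\SysNode_c^\sim$), which under $C_o^*=B_c^\sim$ reads $C_o^*y=[\mu\mapsto y]$ for $y\in\Yscr$.

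The main obstacle is bookkeeping rather than mathematics: one must track the input/output space swap (the input space of $\SysNode_c^\sim$ is $\Yscr$, matching the domain $\Yscr$ of $C_o^*$), the $\beta\leftrightarrow\overline\beta$ swap dictated by the convention for rigged spaces of adjoints, and the fact that under these swaps $\wtsmash{\wtsmash\varphi}=\varphi$ restores the original symbol $\varphi$ appearing on the right-hand sides of \eqref{Hod-1}, \eqref{Hod-1norm}, and the formulas in assertions 2 and 3. Once these identifications are made explicit, no further analytic work beyond what is already carried out in Theorem \ref{T:Hc-1concrete} is needed.
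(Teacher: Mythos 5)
Your proposal is correct and is precisely the route the paper itself prescribes: Section \ref{sec:coenpres} explicitly leaves these proofs to the reader and recommends obtaining them by applying the results of Section \ref{sec:enpres} to $\wtsmash\varphi$ and reading them off for $\SmallSysNode_o=(\SmallSysNode_c^\sim)^*$, which is exactly the duality reduction you carry out (including the key identifications $K_c^\sim=K_o$, $\eqref{eq:contrconscond}\leftrightarrow\eqref{assume2}$, and the $\beta\leftrightarrow\overline\beta$ and $C_o^*=B_c^\sim$ bookkeeping). No gaps.
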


We next present a collection of reproducing-kernel formulas. This is the dual version of Proposition \ref{prop:repkerform-c} and Corollary \ref{cor:kerneldual}. Again, $\Hscr_{o,-1}$ and $\Hscr_{o,-1}^d$ are not spaces of functions, but we can identify them with the Hilbert spaces with reproducing kernels
$$
\begin{aligned}
  K_{o,-1}(\mu,\lambda) &= e_{o,-1}(\mu)\big(e_{o,-1}(\lambda)\big)^*\quad\text{and}\\
  K_{o,-1}^d(\mu,\lambda) &= e_{o,-1}^d(\mu)\big(e_{o,-1}^d(\lambda)\big)^*,
\end{aligned}
$$
respectively, where
$$
\begin{aligned}
  e_{o,-1}(\mu)[z] &:= z(\mu)-z(\beta),\quad [z]\in\Hscr_{o,-1},\,\mu\in\cplus,\quad\text{and}\\
  e_{o,-1}^d(\mu) &:(\overline\beta-A_o^*|_{\Hscr_o})x \mapsto (\overline\beta+\mu)x(\mu),\quad x\in\Hscr_o,\,\mu\in\cplus,
\end{aligned}
$$
are bounded operators that point-evaluate well-chosen representatives of the equivalence classes in $\Hscr_{o,-1}$ and $\Hscr_{o,-1}^d$.

\begin{prop}  \label{prop:repkerform-o}
We have the following formulas for the kernel functions
associated with the reproducing kernel Hilbert spaces $\cH_o$, 
 $\cH_{o,\pm1}$, and $\cH^d_{o,\pm1}$ (with $\mu,\lambda\in\cplus$):
 \begin{align}
  K_o(\mu,\lambda)  & = C_o (\mu - A_o)^{-1} (\overline\lambda - A_o^*|_{\cH_o})^{-1} C_o^* 
  = \frac{1 - \varphi(\mu) \varphi(\lambda)^*}{\mu + \overline\lambda}, \notag \\
  K_{o,1}(\mu,\lambda)  & = C_o (\mu - A_c)^{-1} (\beta - A_o)^{-1} (\overline{\beta} - A_o^*)^{-1}
   (\overline\lambda - A_o^*|_{\cH_o})^{-1} C_o^* \notag \\
  &= \frac{K_o(\mu,\lambda)-K_o(\mu,\beta)}{(\beta-\mu)(\overline\beta-\overline\lambda)} 
  -\frac{K_o(\beta,\lambda)-K_o(\beta,\beta)}{(\beta-\mu)(\overline\beta-\overline\lambda)} \notag \\
   K_{o,-1}(\mu,\lambda)  & = (\beta - \mu) (\overline\beta - \overline\lambda) C_o (\mu - A_o)^{-1} (\overline\lambda - A_o^*|_{\cH_o})^{-1} C_o^* \notag \\
  &= (\beta - \mu) (\overline\beta - \overline\lambda) K_o(\mu,\lambda), \notag \\
  K_{o,1}^d(\mu,\lambda) & = C_o (\mu - A_c)^{-1} (\overline\beta - A_o^*)^{-1} (\beta - A_o)^{-1}
   (\overline\lambda - A_o^*|_{\cH_o})^{-1} C_o^* \notag \\
  &= \frac{\kappa_o(\mu,\lambda)-\varphi(\mu)\tau_{o,\overline\beta}\,\big(\kappa_o(\cdot,\lambda)\big)}{\overline\beta+\mu},\quad\text{where} \notag \\
  &\qquad \kappa_o(\mu,\lambda) := \frac{K_o(\mu,\lambda)-K_o(\beta,\lambda)}{\beta-\mu}, \notag \\
  K_{o,-1}^d(\mu,\lambda) & = (\overline\beta + \mu) (\beta + \overline\lambda) C_o (\mu - A_o)^{-1} (\overline\lambda - A_o^*|_{\cH_o})^{-1} C_o^* \label{eq:Kod-1kernel} \\
  &= (\overline\beta + \mu) (\beta + \overline\lambda) K_o(\mu,\lambda). \label{eq:Kod-1kernel2}
\end{align}
Here $\beta$ is the parameter used in the construction of the rigging $\dom{A_o}\subset\Hscr_o\subset\Hscr_{o,-1}$ as usual. The formulas \eqref{eq:Kod-1kernel} and \eqref{eq:Kod-1kernel2} have only been established under the assumption \eqref{assume2}.

Moreover, for all $\mu,\lambda\in\cplus$ and $y,v\in\Yscr$, we have
$$
\begin{aligned}
  \Ipdp{K_{o,1}(\mu,\lambda) y}{v}_\Yscr&=\Ipdp{K_o(\cdot,\lambda)y}{K_o(\cdot,\mu)v}_{\Hscr_{c,-1}^d}\quad\text{and} \\
  \Ipdp{K_{o,1}^d(\mu,\lambda) y}{}_\Yscr&=\Ipdp{K_o(\cdot,\lambda)y}{K_o(\cdot,\mu)v}_{\Hscr_{c,-1}}.
\end{aligned}
$$
 \end{prop}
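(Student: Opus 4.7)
The plan is to invoke the duality procedure laid out just before the statement: the observable co-energy-preserving node $\SmallSysNode_o$ attached to $\varphi \in \Sscr(\cplus;\Uscr,\Yscr)$ coincides with $(\SmallSysNode_c^\sim)^*$, where $\SmallSysNode_c^\sim$ is the controllable energy-preserving model associated to $\wtsmash\varphi \in \Sscr(\cplus;\Yscr,\Uscr)$. Since $(\wtsmash\varphi)^\sim = \varphi$, the reproducing kernel of the state space of $\SmallSysNode_c^\sim$ equals $K_o$, so that state space is $\Hscr_o$ itself. Under the adjoint relation, the main, control and observation operators of $\SmallSysNode_o$ are $A_o = (A_c^\sim)^*$, $B_o = (C_c^\sim)^*$ and $C_o = (B_c^\sim)^*$, and the rigged scales interchange: $\Hscr_{o,\pm 1}$ plays the role of $\Hscr^{\sim,d}_{c,\pm 1}$ and $\Hscr^d_{o,\pm 1}$ that of $\Hscr^\sim_{c,\pm 1}$, with rigging parameter $\beta$ interchanged with $\overline\beta$. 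All five kernel identities together with the two final inner-product identities then follow by applying Proposition \ref{prop:repkerform-c} and Corollary \ref{cor:kerneldual} to $\wtsmash\varphi$ and translating through this dictionary.

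For a self-contained direct derivation paralleling the proof of Proposition \ref{prop:repkerform-c}, I would first verify the $K_o$-formula by combining Theorem \ref{thm:coisom}.4, which gives $e_o(\lambda)^* = (\overline\lambda - A_o^*|_{\Hscr_o})^{-1}C_o^*$, with its adjoint $e_o(\mu) = C_o(\mu - A_o)^{-1}$ (well-defined because $(\mu-A_o)^{-1}$ sends $\Hscr_o$ into $\dom{A_o}$, where $C_o$ is bounded), and using the factorization $K_o(\mu,\lambda)=e_o(\mu)e_o(\lambda)^*$ from part 3 of Theorem \ref{T:RK}. The closed-form right-hand side is then just the definition \eqref{o/ckernels}. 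For $K_{o,1}$ and $K^d_{o,1}$ I would use that $(\beta - A_o)^{-1}:\Hscr_o \to \Hscr_{o,1}$ and $(\overline\beta - A_o^*)^{-1}:\Hscr_o \to \Hscr^d_{o,1}$ are unitary: writing a generic $f \in \Hscr_{o,1}$ as $f = (\beta - A_o)^{-1}g$ and transporting $e_o(\lambda)^*y$ back through the inverse isometry gives the operator-theoretic formula, and the scalar form then follows from the explicit resolvent identities \eqref{Aores2} and \eqref{Ao*res2} exactly as \eqref{kerfunct-Hc1d2} was extracted in the controllable case.

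The $(-1)$-level formulas are obtained using the concrete identifications of Theorem \ref{T:Ho-1concrete} and Proposition \ref{P:Hdo-1concrete}. For $K_{o,-1}$ I would exploit that $(\beta - A_o|_{\Hscr_o})^{-1}$ is unitary from $\Hscr_{o,-1}$ onto $\Hscr_o$, pick the canonical representative of $[f]\in\Hscr_{o,-1}$ vanishing at $\beta$ (namely $f(\mu)=(\beta-\mu)g(\mu)$ with $g=(\beta-A_o|_{\Hscr_o})^{-1}[f]$), and mirror the derivation in \eqref{eq:Hc-1dkerncalc}. For $K^d_{o,-1}$ I would invoke assumption \eqref{assume2} together with Proposition \ref{P:Hdo-1concrete} to select the canonical representative $\mu\mapsto x(\mu)+\varphi(\mu)u_x$ of each equivalence class, with $u_x$ uniquely determined by $x$; the remainder is the exact dual of the computation of $K_{c,-1}$ in the proof of Proposition \ref{prop:repkerform-c}. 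Finally, the two inner-product identities at the bottom follow from the preceding kernel formulas together with Remark \ref{rem:kernelsdense} and the density of $\iota(\Hscr_o)$ in both $\Hscr_{o,-1}$ and $\Hscr^d_{o,-1}$, by transporting the reproducing property across the rigged levels through the appropriate unitary resolvent operators.

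The main technical obstacle is the careful bookkeeping demanded by the $(-1)$-scaled spaces, which are genuine quotient spaces: one must consistently select canonical representatives (vanishing at $\beta$ for $\Hscr_{o,-1}$, of the form $x+\varphi(\cdot)u_x$ modulo $\varphi(\cdot)\Uscr$ for $\Hscr^d_{o,-1}$) and check that the induced point-evaluations are well defined and bounded. In particular, the formula for $K^d_{o,-1}$ and the corresponding $\Hscr_{o,-1}$ inner-product identity rest essentially on \eqref{assume2}, which plays here precisely the role that \eqref{eq:contrconscond} played in the controllable case.
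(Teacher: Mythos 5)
Your proposal is correct and follows exactly the route the paper itself prescribes: the paper gives no written proof of Proposition \ref{prop:repkerform-o} but explicitly invites the reader to obtain it either by dualizing Proposition \ref{prop:repkerform-c} and Corollary \ref{cor:kerneldual} applied to $\wtsmash\varphi$ (with $A_o=(A_c^\sim)^*$, $C_o=(B_c^\sim)^*$, $B_o=(C_c^\sim)^*$ and $\beta\leftrightarrow\overline\beta$), or by mimicking the controllable-case computation directly, and you carry out both with the correct dictionary, including the observation that \eqref{assume2} is the analogue of \eqref{eq:contrconscond} needed for the concrete $\Hscr_{o,-1}^d$ formulas. The only nitpick is that the abstract inner-product identities at the end hold without \eqref{assume2} (only the concrete function-space identification of $\Hscr_{o,-1}^d$ needs it), but this does not affect the argument.
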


This completes our study of the observable functional model.

\section{Recovering the classical de Branges-Rovnyak models}\label{sec:recovering}

In this section we use the so-called internal Cayley transformation to recover the original de Branges-Rovnyak models \eqref{eq:deBobs} and \eqref{eq:deBcontr}. This Cayley transformation is described in detail in \cite[\S 12.3]{StafBook}; here we only include the small fragments of the theory that we need.

\subsection{The internal Cayley transformation}
Following \cite[Thm 12.3.6]{StafBook}, the \emph{Cayley transform with parameter $\alpha\in\res A$} of the system node $\SmallSysNode$ is the bounded operator $\sbm{\dA&\dB\\\dC&\dD}$ from $\sbm{\Xscr\\\Uscr}$ into $\sbm{\Xscr\\\Yscr}$ defined by
\begin{equation}\label{eq:cayley}
\begin{aligned}
  \dA &:= (\overline\alpha+A)(\alpha-A)^{-1},\quad \dB := \sqrt{2\re\alpha} \, (\alpha-A|_\Xscr)^{-1}B,\\
  \dC &:= \sqrt{2\re\alpha} \, C(\alpha-A)^{-1},\quad \text{and}\quad \dD := \whsmash\Dfrak(\alpha),
\end{aligned}
\end{equation}
where $A$, $A|_\Xscr$, and $B$ are as in Definition \ref{def:altsysnode}, and $C$ and $\whsmash\Dfrak$ are given by \eqref{eq:Cdef} and \eqref{eq:transfundef}, respectively.

We interpret the bounded operator $\sbm{\dA&\dB\\\dC&\dD}$ described by \eqref{eq:cayley} as the connecting operator of a discrete-time system with the same input space $\Uscr$, state space $\Xscr$, and output space $\Yscr$ as $\SmallSysNode$:
\begin{equation}\label{eq:discrete}
  \bbm{x(k+1)\\y(k)}=\bbm{\dA&\dB\\\dC&\dD}\bbm{x(k)\\u(k)},\quad k\in\zplus.
\end{equation}
As in the introduction, the transfer function of the system \eqref{eq:discrete} is \begin{equation}\label{eq:CaylTransf}
	\whsmash\dD(z)=z\dC(1-z\dA)^{-1}\dB+\dD.
\end{equation}
The reader should be warned that the transfer function of a discrete-time system is defined as $\dC(z-\dA)^{-1}\dB+\dD$ in \cite{StafBook}, but the results can be translated from one setting to the other by interchanging $z$ and $1/z$. 

Recall that a discrete-time system with input space $\Uscr$, state space $\Xscr$, and output space $\Yscr$ is (scattering) passive, energy preserving, or co-energy preserving, if and only if the connecting operator is contractive, isometric, or co-isometric, respectively, from $\sbm{\Xscr\\\Uscr}$ to $\sbm{\Xscr\\\Yscr}$; see e.g.\ \cite[\S5]{StafMTNS02}.

We use the linear fractional transformation  
\begin{equation}\label{eq:CpDmap}
  z_\alpha(\mu):=\frac{\alpha-\mu}{\overline\alpha+\mu},\quad\mu\in\cplus
  \quad\Longleftrightarrow\quad
  \mu_\alpha(z)=\frac{\alpha-\overline\alpha z}{1+z},\quad z\in\D,
\end{equation}
also referred to as a Cayley transformation, to map $\cplus$ one-to-one onto $\D$. In the sequel, we often abbreviate $z_\alpha(\cdot)=z(\cdot)$ and $\mu_\alpha(\cdot)=\mu(\cdot)$. By combining the well-known resolvent identity
$$
  (\mu-A|_\Xscr)^{-1} - (\alpha-A|_\Xscr)^{-1} = 
    (\alpha-\mu)(\alpha-A)^{-1}(\mu-A|_\Xscr)^{-1},\quad \mu,\alpha\in\res A,
$$
with the definition \eqref{eq:transfundef} of the transfer function $\whsmash\Dfrak$, one can verify that the transfer function in \eqref{eq:CaylTransf} satisfies
\begin{equation}\label{eq:CpDmapTransf}
  \whsmash\dD(z)=\whsmash\Dfrak\big(\mu_\alpha(z)\big),\quad \frac1z\in\res \dA,
\end{equation}
if $\SmallSysNode$ and $\sbm{\dA&\dB\\\dC&\dD}$ are related by \eqref{eq:cayley}.

\begin{rem}\label{rem:cayleydual}
According to \cite[Thms 3.1 and 3.2]{StafMTNS02}, the Cayley transform $\sbm{\dA&\dB\\\dC&\dD}$ is a contraction (isometric) for some/for all $\alpha\in\cplus$ if and only if the original system $\SmallSysNode$ is passive (energy preserving). Moreover, $\sbm{\dA&\dB\\\dC&\dD}$ is controllable (observable) if and only if $\SmallSysNode$ is controllable (observable).  The convention here that the transfer function has the form $z \dC (1 - z\dA)^{-1}\dB+\dD$ rather than $\dC(z - \dA)^{-1}\dB+\dD$ has no influence on these general facts; see also \cite[Sect.\ 12.3]{StafBook}.

It is easy to show that the adjoint of $\sbm{\dA&\dB\\\dC&\dD}$ in \eqref{eq:cayley} is the Cayley transform of the dual of $\SmallSysNode$ with parameter $\overline\alpha\in\res{A^*}$ along lines similar to the proof of \cite[Lem.\ 6.2.14]{StafBook}. Hence $\sbm{\dA&\dB\\\dC&\dD}$ is a co-isometry for some/for all $\alpha\in\cplus$ if and only if $\SmallSysNode$ is a co-energy-preserving system node.
\end{rem}

Intertwinement of discrete-time systems is defined in self-evident analogy to the continuous-time case; see \eqref{eq:oDiscrInt} below.

\subsection{The observable co-energy-preserving models}

It follows immediately from Theorem \ref{T:resolA} and \eqref{eq:cayley} that the internal Cayley transform with parameter $\alpha\in\cplus$ of the observable co-energy preserving model $\SmallSysNode_o$ for the Schur function $\varphi$ on $\cplus$ is the operator $\sbm{\dA_o&\dB_o\\\dC_o&\dD_o}$, where

\begin{equation}\label{eq:obscayleyfirst}
 \begin{aligned}
  (\dA_o x)(\mu) &= \frac{\overline\alpha+\mu}{\alpha-\mu}\, x(\mu) - \frac{2\re\alpha}{\alpha-\mu}\, x(\alpha), \quad x\in\Hscr_o,\,\mu\in\cplus, \\
  (\dB_o u)(\mu) &= \sqrt{2\re\alpha}\,\frac{\varphi(\mu)-\varphi(\alpha)}{\alpha - \mu}u, \quad u\in\Uscr,\,\mu\in\cplus,\\  
  \dC_o x &= \sqrt{2\re\alpha}\,x(\alpha),\quad x\in\Hscr_o, \quad \text{and}\\
  \dD_o u &= \varphi(\alpha) u, \quad u\in\Uscr.
\end{aligned}
\end{equation}

The system \eqref{eq:obscayleyfirst} is observable and isometric, and by \eqref{eq:CpDmapTransf} the transfer function $\phi_\alpha$ of \eqref{eq:obscayleyfirst} satisfies 
\begin{equation}\label{eq:CpDmapTransf2inv}
  \phi_\alpha(z)=\varphi\big(\mu_\alpha(z)\big),\quad z\in\D.
\end{equation}
We denote the Hilbert space with reproducing kernel 
\begin{equation}\label{eq:KoAlpha}
  \mathrm K_{o,\alpha}(z,w):=\frac{1-\phi_\alpha(z)\phi_\alpha(w)^*}{1-z\overline w},\quad z,w\in\D,
\end{equation}
by $\mathrm H_{o,\alpha}$. By assertion 4 of Theorem \ref{T:coiso-real}, the operator $\sbm{\dA_o&\dB_o\\\dC_o&\dD_o}$ must be unitarily similar to the de corresponding Branges-Rovnyak discrete-time model realization $\sbm{\mathrm A_o&\mathrm B_o\\\mathrm C_o&\mathrm D_o}$ in \eqref{eq:deBobs}, constructed using the transfer function $\phi_\alpha\in\cS(\D;\cU, \cY)$ in \eqref{eq:CpDmapTransf2inv}. The following result describes this unitary similarity: 

\begin{prop}\label{prop:obsdbrrecover}
For arbitrary $\varphi\in\Sscr(\cplus;\Uscr,\Yscr)$ and $\alpha\in\cplus$, the following claims are true:
\begin{enumerate}
\item Let $\SmallSysNode_o$ be the canonical observable co-energy-preserving model for $\varphi\in\Sscr(\cplus;\Uscr,\Yscr)$. Then $\sbm{\dA_o&\dB_o\\\dC_o&\dD_o}$ in \eqref{eq:obscayleyfirst} is the Cayley
transform with parameter $\alpha$ of $\SmallSysNode_o$. 

\item The following linear operator maps $\mathrm H_{o,\alpha}$ unitarily onto $\Hscr_o$:
\begin{equation}\label{eq:XioDef}
  (\Xi_\alpha \, \xi)(\mu) := \frac{\sqrt{2\re\alpha}}{\overline\alpha+\mu} \,\xi\big(z_\alpha(\mu)\big),
    \quad \xi\in\mathrm H_o,\,\alpha,\mu\in\cplus,
\end{equation}
where $\alpha$ is the same in \eqref{eq:CpDmap} and \eqref{eq:XioDef}. The inverse is
\begin{equation}\label{eq:XioInv}
  \big((\Xi_\alpha)^{-1} \, \zeta\big)(z) = \frac{\sqrt{2\re\alpha}}{1+z} \,\zeta\big(\mu_\alpha(z)\big),
    \quad \alpha\in\cplus,\,\zeta\in\Hscr_o,\,z\in\D.
\end{equation}

\item Let $\sbm{\mathrm A_o&\mathrm B_o\\\mathrm C_o&\mathrm D_o}$ be 
the de Branges-Rovnyak model\ realization in \eqref{eq:deBobs} of $\phi_\alpha$. The 
operator $\Xi_\alpha$ intertwines $\sbm{\dA_o&\dB_o\\\dC_o&\dD_o}$ and 
$\sbm{\mathrm A_o&\mathrm B_o\\\mathrm C_o&\mathrm D_o}$:
\begin{equation}\label{eq:oDiscrInt}
  \bbm{\dA_o\Xi_\alpha&\dB_o\\\dC_o\Xi_\alpha&\dD_o} 
    = \bbm{\Xi_\alpha\mathrm A_o&\Xi_\alpha \mathrm B_o\\\mathrm C_o&\mathrm D_o}.
\end{equation}
\end{enumerate}
\end{prop}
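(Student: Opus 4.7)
For part 1, I would verify each of the four formulas in \eqref{eq:obscayleyfirst} by plugging the explicit resolvent formulas of Theorem \ref{T:resolA} into the definitions \eqref{eq:cayley}. The formulas for $\dB_o$, $\dC_o$, $\dD_o$ are immediate from \eqref{Aores2}, $C_o(\alpha - A_o)^{-1} x = x(\alpha)$, and $\whsmash\Dfrak_o(\alpha) = \varphi(\alpha)$, respectively. For $\dA_o$ I would use the algebraic identity
\begin{equation*}
(\overline\alpha + A_o)(\alpha - A_o)^{-1} = -1 + 2\re\alpha \cdot (\alpha - A_o)^{-1},
\end{equation*}
combined with \eqref{Aores} to get $(\dA_o x)(\mu) = -x(\mu) + \frac{2\re\alpha}{\alpha - \mu}(x(\mu) - x(\alpha))$, which simplifies to the claimed expression. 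Boundedness presents no issue, since $\cplus \subset \res{A_o}$ by Lemma \ref{lem:cplusres} (the system node $\SmallSysNode_o$ being co-energy-preserving, hence passive).

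For part 2, the key computation is to show that $\Xi_\alpha$ sends a scalar multiple of the kernel $\mathrm K_{o,\alpha}(\cdot,w)y$ to a scalar multiple of $K_o(\cdot, \mu_\alpha(w))y$. Setting $\lambda = \mu_\alpha(w)$ and using $\phi_\alpha(\cdot) = \varphi\big(\mu_\alpha(\cdot)\big)$, a direct computation with \eqref{eq:CpDmap} yields the identity
\begin{equation*}
  1 - z_\alpha(\mu)\,\overline{z_\alpha(\lambda)} = \frac{2\re\alpha \cdot (\mu + \overline\lambda)}{(\overline\alpha + \mu)(\alpha + \overline\lambda)},
\end{equation*}
from which $\mathrm K_{o,\alpha}(z_\alpha(\mu), z_\alpha(\lambda)) = \frac{(\overline\alpha + \mu)(\alpha + \overline\lambda)}{2\re\alpha}\, K_o(\mu,\lambda)$ follows; consequently $\Xi_\alpha\big(\mathrm K_{o,\alpha}(\cdot,w)y\big) = \frac{\alpha + \overline\lambda}{\sqrt{2\re\alpha}} K_o(\cdot,\lambda)y$. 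Taking inner products of two such expressions on each side via the reproducing property reduces both to the same scalar multiple of $K_o(\lambda',\lambda)$, proving isometry on the dense span of kernel functions in $\mathrm H_{o,\alpha}$. Since the image contains the dense span of all kernel functions of $\Hscr_o$ (varying $\lambda \in \cplus$), $\Xi_\alpha$ is unitary. The inverse formula \eqref{eq:XioInv} follows by a direct substitution showing $\Xi_\alpha(\Xi_\alpha)^{-1}\zeta = \zeta$, since $z_\alpha\big(\mu_\alpha(z)\big) = z$ and the scalar factor collapses to $1$.

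For part 3, I would verify the four intertwinement identities componentwise on generic elements. The identities $\dC_o \Xi_\alpha = \mathrm C_o$ and $\dD_o = \mathrm D_o$ follow immediately from $z_\alpha(\alpha) = 0$ and $\phi_\alpha(0) = \varphi(\alpha)$; the identity $\dB_o = \Xi_\alpha \mathrm B_o$ follows by substituting $z = z_\alpha(\mu)$ into the formula for $\mathrm B_o$ and simplifying using $z_\alpha(\mu) = (\alpha-\mu)/(\overline\alpha + \mu)$. The identity $\dA_o \Xi_\alpha = \Xi_\alpha \mathrm A_o$ is the most involved: writing $f(z_\alpha(\mu)) = \frac{\overline\alpha+\mu}{\sqrt{2\re\alpha}}\, \Xi_\alpha f(\mu)$ and $f(0) = \sqrt{2\re\alpha}\,\Xi_\alpha f(\alpha)$ (using $\mathrm C_o = \dC_o\Xi_\alpha$) and plugging into $\mathrm A_o f(z) = (f(z) - f(0))/z$, a short algebraic simplification produces exactly the formula for $\dA_o$ in \eqref{eq:obscayleyfirst} applied to $\Xi_\alpha f$.

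The main obstacle will be bookkeeping rather than conceptual difficulty: keeping track of the Cayley change of variables, the conjugations, and the various $\sqrt{2\re\alpha}$ factors through the intertwinement verification in part 3. As a consistency check, one may alternatively invoke the uniqueness clause in Theorem \ref{T:coiso-real}.4, noting that both $\sbm{\dA_o&\dB_o\\\dC_o&\dD_o}$ (via Remark \ref{rem:cayleydual} applied to the observable co-energy-preserving $\SmallSysNode_o$) and $\sbm{\mathrm A_o&\mathrm B_o\\\mathrm C_o&\mathrm D_o}$ are observable co-isometric realizations of $\phi_\alpha$, so a unique unitary similarity exists; one then only needs to confirm that $\Xi_\alpha$ is that operator by checking a single intertwinement, e.g.\ $\dC_o \Xi_\alpha = \mathrm C_o$.
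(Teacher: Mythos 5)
Your proposal is correct and follows essentially the same route as the paper: part 1 by substituting the resolvent formulas of Theorem \ref{T:resolA} into \eqref{eq:cayley} (which the paper leaves to the reader), part 2 via the kernel identity $\mathrm K_{o,\alpha}\big(z_\alpha(\mu),z_\alpha(\lambda)\big) = \frac{(\overline\alpha+\mu)(\alpha+\overline\lambda)}{2\re\alpha}K_o(\mu,\lambda)$ and isometry on the dense span of kernel functions, and part 3 by componentwise verification. One small caveat on your optional shortcut at the end: checking only $\dC_o\Xi_\alpha=\mathrm C_o$ does not identify $\Xi_\alpha$ with the intertwining unitary of Theorem \ref{T:coiso-real}.4 (many unitaries satisfy that single relation); you would need $\dC_o\dA_o^n\Xi_\alpha=\mathrm C_o\mathrm A_o^n$ for all $n$ and then invoke observability of $(\dC_o,\dA_o)$, so the full componentwise verification you carry out in the main argument is the right one.
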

\begin{proof}
We leave it to the reader to verify assertion 1 as described above \eqref{eq:obscayleyfirst}. In 
order to prove assertion 2, we for notational reasons first relate $w$ to $\mu$ as $z$ is related to 
$\lambda$ in \eqref{eq:CpDmap}: 
\begin{equation}\label{eq:CpDmap2}
  w_\alpha(\lambda) := \frac{\alpha-\lambda}{\overline\alpha+\lambda},\quad\lambda\in\cplus 
    \quad \Longleftrightarrow \quad
    \lambda_\alpha(w) = \frac{\alpha-\overline\alpha w}{1+w},\quad w\in\D.
\end{equation}

The key to the unitarity of $\Xi_\alpha$ is the following relationship between the reproducing kernels of $\Hscr_o$ and $\mathrm H_{o,\alpha}$:
\begin{equation}\label{eq:ocaylkern}
  \mathrm K_o\big(z(\mu),w(\lambda)\big) = \frac{(\overline\alpha+\mu)(\alpha+\overline\lambda)}{2\re\alpha} K_o(\mu,\lambda),\quad\mu,\lambda\in\cplus,
\end{equation}
which follows from the fact that
$$
\begin{aligned}
  \frac{1-\phi_\alpha\big(z(\mu)\big)\phi_\alpha\big(w(\lambda)\big)^*}{1-z(\mu)\overline {w(\lambda)}}
  &=\frac{1-\varphi(\mu)\varphi(\lambda)^*}{1-\frac{\alpha-\mu}{\overline\alpha+\mu}
    {\frac{\overline\alpha-\overline\lambda}{\alpha+\overline\lambda}}} \\
  &= \frac{(\overline\alpha+\mu)(\alpha+\overline\lambda)}{2\re\alpha} 
    \frac{1-\varphi(\mu)\varphi(\lambda)^*}{\mu+\overline\lambda}.
\end{aligned}
$$
Combining \eqref{eq:ocaylkern} with \eqref{eq:XioDef} gives that the action of $\Xi_\alpha$ on kernel functions $\mathrm e_o$ in $\mathrm H_{o,\alpha}$ is
\begin{equation}\label{eq:XioKern}
  \Big(\Xi_\alpha \mathrm e_o\big(w(\lambda)\big)^*y\Big)(\mu) = \frac{\alpha+\overline\lambda}{\sqrt{2\re\alpha}} e_o(\lambda)^*y,\quad 
  \lambda\in\cplus,\,y\in\Yscr.
\end{equation}
It now follows that $\Xi_\alpha$ is isometric, since (using \eqref{eq:ocaylkern} in the second equality)
$$
\begin{aligned}
  \Ipdp{\Xi_\alpha \mathrm e_o\big(w(\lambda)\big)^*y}{\Xi_\alpha \mathrm e_o\big(z(\mu)\big)^*\gamma}_{\Hscr_o} &= 
    \Ipdp{\frac{\alpha+\overline\lambda}{\sqrt{2\re\alpha}} e_o(\lambda)^*y}
      {\frac{\alpha+\overline\mu}{\sqrt{2\re\alpha}} e_o(\mu)^*\gamma}_{\Hscr_o}\\ 
   &= \Ipdp{\mathrm K_o\big(z(\mu),w(\lambda)\big)y}{\gamma}_\Yscr \\
   &= \Ipdp{\mathrm e_o\big(w(\lambda)\big)^*y}{\mathrm e_o\big(z(\mu)\big)^*\gamma}_{\mathrm H_o}.
\end{aligned}
$$
The equation \eqref{eq:XioKern} moreover implies that the range of $\Xi_\alpha$ contains 
the dense subspace $\spn\set{e_o(\lambda)^*y\mid \lambda\in\cplus,\,y\in\Yscr}$ of $\Hscr_o$. We conclude that $\Xi_\alpha$ is unitary as claimed. Formula \eqref{eq:XioInv} follows from \eqref{eq:XioDef} by denoting the right-hand side of \eqref{eq:XioDef} by $\zeta(\mu)$, changing the variable from $\mu\in\cplus$ to $z\in\D$ using \eqref{eq:CpDmap}, and solving for $\xi(z)$.

The following calculations use \eqref{eq:deBobs} and prove \eqref{eq:oDiscrInt}:
$$
\begin{aligned}
 (\Xi_\alpha\mathrm B_o u)(\mu) &= 
    \frac{\sqrt{2\re\alpha}}{\overline\alpha+\mu}\,
    \frac{\phi\big(z(\mu)\big)-\phi(0)}{z(\mu)}u
    = \sqrt{2\re\alpha}\, \frac{\varphi(\mu)-\varphi(\alpha)}{\alpha-\mu}u \\
    &= (\dB_o u)(\mu), \\
 (\Xi_\alpha \mathrm A_o \xi)(\mu) &= \frac{\sqrt{2\re\alpha}}{\overline\alpha+\mu}\,
    \frac{\xi\big(z(\mu)\big)-\xi(0)}{z(\mu)} 
    = \sqrt{2\re\alpha}\, \frac{\xi\big(z(\mu)\big)-\xi(0)}{\alpha-\mu},
\end{aligned}
$$

$$
\begin{aligned}
 (\dA_o\Xi_\alpha\xi)(\mu) &= \frac{\overline\alpha+\mu}{\alpha-\mu}\,
    \frac{\sqrt{2\re\alpha}}{\overline\alpha+\mu}\, \xi\big(z(\mu)\big)
    -\frac{2\re\alpha}{\alpha-\mu}\,\frac{\sqrt{2\re\alpha}}{\overline\alpha+\alpha}
    \xi\big(z(\alpha)\big)\\
  &= \sqrt{2\re\alpha} \, \frac{\xi\big(z(\mu)\big)-\xi(0)}{\alpha-\mu}
    = (\Xi_\alpha \mathrm A_o \xi)(\mu), \quad \mathrm{and} \\
  \dC_o\Xi_\alpha\xi &= \sqrt{2\re\alpha}\, (\Xi_\alpha\xi)(\alpha) 
    = \sqrt{2\re\alpha}\, \frac{\sqrt{2\re\alpha}}{\overline\alpha+\alpha}\, \xi\big(z(\alpha)\big)
    = \xi(0) = \mathrm C_o\xi, 
\end{aligned}
$$
valid for all $\xi\in \mathrm H_o$, $\mu\in\cplus$, and $u\in\Uscr$.
\end{proof}

Note the interesting fact that
$$
  K_{o,-1}^d(\mu,\lambda)=(2\re\beta)\,\mathrm K_o\big(z_\beta(\mu),w_\beta(\lambda)\big),\quad\mu,\lambda\in\cplus,
$$
cf.\ \eqref{eq:Kod-1kernel} and \eqref{eq:ocaylkern}. We have no explanation for this coincidence.

The operator $\Xi_\alpha$ is called the inverse Cayley transform in \cite{StafBook}. It is the frequency-domain analogue of the inverse Laguerre transform; see \cite[Thm 12.3.1 and Def.\ 12.3.2]{StafBook}. This unitary mapping can be used for transferring knowledge from the very well-known disk setting to the half-plane setting. For instance, by \eqref{eq:XioDef}, the condition \eqref{assume2} holds if and only if the only function in $\mathrm H_{o,\alpha}$ of the form $\phi_\alpha(\cdot)u$ is the zero function; also note that by \eqref{eq:CpDmapTransf2inv} we have $\varphi(\mu)u=0$ for all $\mu\in\cplus$ if and only if $\phi_\alpha(z)u=0$ for all $z\in\D$. Thus, the conditions \eqref{assume2} and $\varphi(\cdot)u=0\Rightarrow u=0$ hold if and only if the conditions \eqref{Uounitary1} and \eqref{Uounitary2} hold with $\phi=\phi_\alpha$. By the last assertion of Theorem \ref{T:coiso-real}, this is the case if and only if the corresponding observable co-isometric realization $\mathbf U_o$ is unitary, which by Remark \ref{rem:cayleydual} is equivalent to $\SmallSysNode_o$ being conservative. This provides an alternative proof of the statement that \eqref{assume2} and $\varphi(\cdot)u=0\Rightarrow u=0$ hold together if and only if $\SmallSysNode_o$ is conservative; see Theorem \ref{thm:assume2}. It is moreover easy to see that the internal Cayley transformation can be used to convert the statements (3) in Theorem \ref{thm:discrconschar} to statement (4) in Theorem \ref{thm:conschar} and the corresponding statement in Theorem \ref{thm:assume2}.

\subsection{The controllable energy-preserving models}

  We have seen in \eqref{eq:discrDBobs} and \eqref{eq:discrDBcontr} that the model space ${\rm H}_c = \cH({\rm K}_c)$ over ${\mathbb D}$ arises in the same way as ${\rm H}_o = \cH({\rm K}_o)$ but with $\wtsmash \phi(z) = \phi(\overline{z})^*$ in place of $\phi$.  Similarly for the models over ${\mathbb C}^+$, the model space $\cH_c = \cH(K_c)$ arises in the same way as $\cH_o = \cH(K_o)$ but with $\wtsmash \varphi(\mu) = \varphi(\overline{\mu})^*$ in place of $\varphi(\mu)$; see \eqref{o/ckernels}.  
If we assume that $\phi_\alpha$ and $\varphi$ are related according to \eqref{eq:CpDmapTransf2inv}, then we see that
\begin{equation}\label{eq:tildetransfun}
  \wtsmash \varphi(\mu) = \varphi(\overline{\mu})^* = \phi\big(z_\alpha(\overline\mu)\big)^*
 = \phi\left( \overline{z_{\overline\alpha}(\mu)}\right)^* = \wtsmash \phi\big(z_{\overline\alpha}(\mu)\big).
\end{equation}
This suggests that the appropriate mapping of $\cplus$ onto $\D$ for the energy-preserving setting should be $\mu\mapsto z_{\overline\alpha}(\mu)$ rather than $\mu\mapsto z_\alpha(\mu)$. Indeed, defining
\begin{equation}\label{eq:KcAlpha}
  \mathrm K_{c,\overline\alpha}(z,w):=\frac{1-\phi_{\overline\alpha}(\overline z)^*\phi_{\overline\alpha}(\overline w)}{1-z\overline w},\quad z,w\in\D,
\end{equation}
we obtain
\begin{equation}\label{eq:ccaylkern}
  \mathrm K_{c,\overline\alpha}\big(z_{\overline\alpha}(\mu),w_{\overline\alpha}(\lambda)\big) = \frac{(\alpha+\mu)(\overline\alpha+\overline\lambda)}{2\re\alpha} K_c(\mu,\lambda),\quad \mu,\lambda\in\cplus,
\end{equation}
and this leads to the following unitary similarity result for the discrete-time controllable realizations:

\begin{prop}\label{prop:contrdbrrecover}
Let $\varphi\in\Sscr(\cplus;\Uscr,\Yscr)$ and $\SmallSysNode_c$ be the controllable energy-preserving realization of $\varphi$ given in \eqref{eq:SodDef}. For arbitrary $\alpha\in\cplus$, the following claims are true:
\begin{enumerate}
\item The Cayley transform with parameter $\alpha$ of the canonical controllable energy-preserving model $\SmallSysNode_c$ is the isometry $\sbm{\dA_c&\dB_c\\\dC_c&\dD_c}:\sbm{\Hscr_c\\\Uscr}\to\sbm{\Hscr_c\\\Yscr}$ given by
\begin{align}
  (\dA_cx)(\mu) &= \frac{\overline\alpha-\mu}{\alpha+\mu}\,x(\mu)-\frac{2\re\alpha}{\alpha+\mu}\,\wtsmash\varphi(\mu)
    \tau_{c,\alpha} x,\quad x\in\Hscr_c,\,\mu\in\cplus, \notag \\
  \dB_c u &= \sqrt{2\re\alpha}\,e_c(\overline\alpha)^* u, \quad u\in\Uscr, \notag \\
  \dC_c x &= \sqrt{2\re\alpha}\,\tau_{c,\alpha} x,\quad x\in\Hscr_c,\quad\text{and} \label{eq:contrcayley}\\
  \dD_c u &= \varphi(\alpha) u,\quad u\in\Uscr. \notag
\end{align}

\item Let $\mathrm H_{c,\alpha}$ be the Hilbert space with reproducing kernel \eqref{eq:KcAlpha}. Then $\Xi_{\overline\alpha}$ in \eqref{eq:XioDef} is unitary from $\mathrm H_{c,\alpha}$ to $\Hscr_c$.

\item The operator $\Xi_{\overline\alpha}$ intertwines $\sbm{\dA_c&\dB_c\\\dC_c&\dD_c}$ in \eqref{eq:contrcayley} and $\sbm{\mathrm A_c&\mathrm B_c\\\mathrm C_c&\mathrm D_c}$ in \eqref{eq:deBcontr}:
\begin{equation}\label{eq:discrcontrint}
  \bbm{\dA_c\Xi_{\overline\alpha}&\dB_c\\\dC_c\Xi_{\overline\alpha}&\dD_c} =
  \bbm{\Xi_{\overline\alpha}\mathrm A_c&\Xi_{\overline\alpha}\mathrm B_c\\\mathrm C_c&\mathrm D_c}.
\end{equation}
\end{enumerate}
\end{prop}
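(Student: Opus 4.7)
The plan is to prove the three assertions in order, treating assertion 3 as the technical heart of the argument. For assertion 1, I would substitute the formulas for $A_c$, $B_c$, $C_c$, and $\whsmash\Dfrak(\alpha)=\varphi(\alpha)$ directly into the Cayley transform definition \eqref{eq:cayley}. The expression for $\dB_c$ is immediate from \eqref{eq:isomeval}, $\dC_c$ is $\sqrt{2\re\alpha}$ times the definition \eqref{eq:taucdef} of $\tau_{c,\alpha}$, and $\dD_c=\varphi(\alpha)$ by Theorem \ref{thm:firstmodel}. For $\dA_c$, the algebraic identity $(\overline\alpha+A_c)(\alpha-A_c)^{-1}=-1+2\re\alpha\cdot(\alpha-A_c)^{-1}$ combined with the explicit resolvent formula \eqref{Acresol} yields the stated expression after the simplification $2\re\alpha-\alpha-\mu=\overline\alpha-\mu$.

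For assertion 2, the strategy mirrors the proof of assertion 2 of Proposition \ref{prop:obsdbrrecover}. The crux is the kernel transformation identity \eqref{eq:ccaylkern}, which I would verify by direct computation from the definitions of $\mathrm K_{c,\overline\alpha}$ in \eqref{eq:KcAlpha} and $K_c$ in \eqref{eq:isomkernel}, using the relation between $\phi_{\overline\alpha}$ and $\wtsmash\varphi$ coming from \eqref{eq:tildetransfun} adapted to the parameter $\overline\alpha$. This identity shows that $\Xi_{\overline\alpha}$ sends kernel functions of $\mathrm H_{c,\alpha}$ to scalar multiples of kernel functions of $\Hscr_c$, whence isometricity on the span of kernel functions follows by polarization; surjectivity onto a dense subspace of $\Hscr_c$ (via Remark \ref{rem:kernelsdense}) is immediate from the same formula, and the extension to a unitary operator between the completions is routine.

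For assertion 3, I would verify the four intertwinement identities encoded in \eqref{eq:discrcontrint}. The equalities $\dD_c=\mathrm D_c$ and $\Xi_{\overline\alpha}\mathrm B_c=\dB_c$ reduce to direct substitutions, after recognizing $\dB_c u$ as $\sqrt{2\re\alpha}$ times the kernel function $e_c(\overline\alpha)^*u$ via the identity $\wtsmash\varphi(\overline\alpha)^*=\varphi(\alpha)$. The main obstacle, and the hard part of the whole proposition, will be proving $\dC_c\Xi_{\overline\alpha}\xi=\mathrm C_c\xi=\wtsmash\xi(0)$, because $\wtsmash\xi(0)$ is only characterized implicitly through the testing formula \eqref{tildeg(0)}. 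The key step will be to show, via a change of variable through the M\"obius map combined with the adjoint formula \eqref{taualphacadj} for $(\tau_{c,\alpha})^*$, that $\Xi_{\overline\alpha}$ sends the test function $z\mapsto(\phi_{\overline\alpha}(\overline z)^*-\phi_{\overline\alpha}(0)^*)y/z$ appearing on the right-hand side of \eqref{tildeg(0)} to the element $\sqrt{2\re\alpha}(\tau_{c,\alpha})^*y$ of $\Hscr_c$. The unitarity of $\Xi_{\overline\alpha}$ from assertion 2 then converts the defining identity \eqref{tildeg(0)} for $\wtsmash\xi(0)$ directly into $\wtsmash\xi(0)=\sqrt{2\re\alpha}\tau_{c,\alpha}\Xi_{\overline\alpha}\xi=\dC_c\Xi_{\overline\alpha}\xi$. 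Once this is in place, the remaining intertwinement $\dA_c\Xi_{\overline\alpha}=\Xi_{\overline\alpha}\mathrm A_c$ will follow by substituting the newly-established formula for $\wtsmash\xi(0)$ into $\mathrm A_c\xi=z\mapsto z\xi(z)-\phi_{\overline\alpha}(\overline z)^*\wtsmash\xi(0)$ and checking that the two resulting terms match the expression for $\dA_c\Xi_{\overline\alpha}\xi$ from assertion 1 term by term.
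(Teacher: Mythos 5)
Your proposal is correct and follows essentially the same route as the paper's proof: assertion 1 by substituting the explicit resolvent formula \eqref{Acresol} and \eqref{eq:isomeval}, \eqref{eq:taucdef} into \eqref{eq:cayley}; assertion 2 via the kernel identity \eqref{eq:ccaylkern} and density of kernel functions; and assertion 3 by checking the four block identities, with the $\mathrm C_c$ identity obtained exactly as you describe, by transporting the test function of \eqref{tildeg(0)} through $\Xi_{\overline\alpha}$ onto $\sqrt{2\re\alpha}\,(\tau_{c,\alpha})^*y$ and invoking unitarity. The only cosmetic difference is that for $\dA_c$ you use the identity $(\overline\alpha+A_c)(\alpha-A_c)^{-1}=2\re\alpha\,(\alpha-A_c)^{-1}-1$ where the paper routes through \eqref{Ac-id}; these are trivially equivalent.
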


\noindent\emph{Proof.} The formula for $\dB_c$ follows from \eqref{eq:cayley} and \eqref{eq:isomeval}, and the formula for $\dC_c$ from \eqref{eq:cayley} combined with \eqref{eq:taucdef}. By \eqref{Ac-id} and \eqref{eq:taucdef}, we have
$$
  \big(A_c(\alpha-A_c)^{-1}x\big)(\mu) = -\mu \big((\alpha-A_c)^{-1}x\big)(\mu) 
    - \wtsmash \varphi(\mu) \tau_{c,\alpha} x,\quad x\in\Hscr_c,\,\mu\in\cplus,
$$
and combining this with \eqref{Acresol} and \eqref{eq:cayley} gives the formula for $\dA_c$. Due to \eqref{eq:ccaylkern} we have
\begin{equation}\label{eq:XicKern}
  \Xi_{\overline\alpha} \, \mathrm e_c\big(w_{\overline\alpha}(\lambda)\big)^*u = \frac{\overline\alpha+\overline\lambda}{\sqrt{2\re\alpha}} e_c(\lambda)^*u,\quad \lambda\in\cplus,\,u\in\Uscr,
\end{equation}
and the unitarity of $\Xi_{\overline\alpha}$ follows from the argument in the proof of Proposition \ref{prop:obsdbrrecover}.2.

We have that $\dB_c=\Xi_{\overline\alpha}\mathrm B_c$, since by Theorem \ref{T:iso-real} and \eqref{eq:XicKern}, it for all $u\in\Uscr$ holds that
$$
\begin{aligned}
  \Xi_{\overline\alpha} \mathrm B_c u = \Xi_{\overline\alpha} \mathrm e_c(0)^* u 
    = \Xi_{\overline\alpha} \mathrm e_c\big(w_{\overline\alpha}(\overline\alpha)\big)^* u
    = \frac{\overline\alpha+\alpha}{\sqrt{2\re\alpha}}\, e_c(\overline\alpha)^* u = \dB_c u.
\end{aligned}
$$
Moreover $\mathrm C_c=\dC_c\Xi_{\overline\alpha}$, because by Theorem \ref{T:iso-real}, \eqref{eq:tildetransfun}, the unitarity of $\Xi_{\overline\alpha}$, and \eqref{taualphacadj}, we have the following equalities, valid for all $x\in\mathrm H_{c,\alpha}$ and $y\in\Yscr$:
$$
\begin{aligned}
  \Ipdp{\mathrm C_cx}{y}_\Yscr &= \Ipdp{x}{z\mapsto\frac{\wtsmash\phi(z)-\wtsmash\phi(0)}{z}y}_{\mathrm H_c} \\
  &=\Ipdp{\Xi_{\overline\alpha} x}{\Xi_{\overline\alpha}\left(z\mapsto\frac{\wtsmash\phi(z)-\wtsmash\phi(0)}{z}\right)y}_{\Hscr_c} \\
  &= \Ipdp{\Xi_{\overline\alpha} x}{\left(\mu\mapsto \frac{\sqrt{2\re\alpha}}{\alpha+\mu}
    \frac{\wtsmash\phi\big(z_{\overline\alpha}(\mu)\big)-\wtsmash\phi\big(z_{\overline\alpha}(\overline\alpha)\big)}{z_{\overline\alpha}(\mu)}\right)y}_{\Hscr_c} \\
  &= \Ipdp{\Xi_{\overline\alpha} x}{\left(\mu\mapsto \sqrt{2\re\alpha}\,
    \frac{\wtsmash\varphi(\mu)-\wtsmash\varphi(\overline\alpha)}{\overline\alpha-\mu}\right)y}_{\Hscr_c} \\
  &= \Ipdp{\sqrt{2\re\alpha}\,\tau_{c,\alpha}\,\Xi_{\overline\alpha}\, x}{y}_\Yscr.
\end{aligned}
$$

Finally, by Theorem \ref{T:iso-real}, \eqref{eq:contrcayley}, and \eqref{eq:tildetransfun}, it holds for $x\in\Hscr_c$ and $\mu\in\cplus$ that
\begin{align*}
  (\dA_c\Xi_{\overline\alpha} x)(\mu) &= \frac{\overline\alpha-\mu}{\alpha+\mu}\,\frac{\sqrt{2\re\alpha}}{\alpha+\mu}\,x\big(z_{\overline\alpha}(\mu)\big)
    - \frac{2\re\alpha}{\alpha+\mu}\,\wtsmash\varphi(\mu)\,\tau_{c,\alpha}\, \Xi_{\overline\alpha}\, x \\
  &= \frac{\sqrt{2\re\alpha}}{\alpha+\mu}\Big(z_{\overline\alpha}(\mu)\, x\big(z_{\overline\alpha}(\mu)\big)
    - \wtsmash\phi\big(z_{\overline\alpha}(\mu)\big)\,\mathrm C_c\, x\Big) \\
  &= (\Xi_{\overline\alpha}\mathrm A_c x)(\mu) \tag*{\qed}.
\end{align*}

\section{Final remarks}\label{sec:conc}

We have developed a realization theory for arbitrary $\varphi\in\Sscr(\cplus;\Uscr,\Yscr)$ that is completely analogous to the classical case worked out by de Branges and Rovnyak on the complex unit disk $\D$. The same general principles carry over from the discrete case, but unboundedness of most of the involved operators makes it more complicated to work out the details. By avoiding linear fractional transformations, we obtain more insight into intricacies specific to continuous-time systems, such as the Hilbert space riggings $\cH_{c,1}\subset \cH_c\subset\cH_{c,-1}$, $\cH_{o,1}\subset \cH_o\subset\cH_{o,-1}$, $\cH^{d}_{c,1}\subset \cH_c\subset\cH^{d}_{c,-1}$, and $\cH^{d}_{o,1}\subset \cH_o\subset\cH^{d}_{o,-1}$. 

Formulas for the canonical models $\SmallSysNode_c$ and $\SmallSysNode_o$, as well as their component operators, are summarized in the following tables:

\kern 5mm\noindent
{\bf Formulas related to $\SmallSysNode_o$}.  
\Symb{\cH_o}{$\Hscr(K_o)$, the state space of the observable model}
\Symb{\cH_{o,1}}{$\set{x\in\Hscr_o\mid\exists y\in\Yscr:~ \mu\mapsto \mu x(\mu)-y\in\Hscr_o}$, the domain of $A_o$}
\Symb{\cH_{o,-1}}{$\displaystyle{\set{z\mid \mu\mapsto\frac{ z(\mu) - z(\alpha)}{\alpha - \mu}\in\Hscr_o}}$, equivalence classes modulo constants}
\Symb{A_o}{$x\mapsto \big(\mu\mapsto \mu x(\mu)-\lim_{\re\eta\to\infty} \eta x(\eta)\big)$, maps $\Hscr_{o,1}$ boundedly into $\Hscr_o$}
\Symb{A_o|_{\cH_o}}{$x\mapsto[\mu\mapsto \mu x(\mu)]$, element of $\Bscr(\Hscr_o;\Hscr_{o,-1})$}
\Symb{B_o}{$u\mapsto [\mu\mapsto \varphi(\mu)u]$, operator in $\Bscr(\Uscr,\Hscr_{o,-1})$}
\Symb{C_o}{$x\mapsto\lim_{\re\eta\to\infty} \eta x(\eta)$, element of $\Bscr(\Hscr_{o,1};\Yscr)$}
\Symb{(\alpha-A_o|_{\cH_o})^{-1}}{$x\mapsto\left(\displaystyle{\mu\mapsto\frac{ x(\mu) - x(\alpha)}{\alpha - \mu}}\right)$, operator in $\Bscr(\Hscr_{o,-1};\Hscr_o)$}
\Symb{\dom{\SmallSysNode_o}}{$\left\{\sbm{x\\u}\in\sbm{\Hscr_o\\\Uscr}\bigmid\exists y\in\Yscr:~\mu\mapsto \mu x(\mu)+\varphi(\mu)u-y\in \cH_o \right\}$}
\Symb{\SmallSysNode_o}{$\bbm{x\\u}\mapsto\bbm{ \mu \mapsto \mu x(\mu)+\varphi(\mu)u-y\\y}$, where $x$ and $u$ determine $y$ via $y=\lim_{\re\eta\to\infty} \eta x(\eta)+\varphi(\eta)u$}

\kern 5mm\noindent
{\bf Formulas related to $\SmallSysNode_c$}.  
\Symb{\cH_c}{$\Hscr(K_c)$, the state space of the controllable model}
\Symb{(\alpha-A_c)^{-1}}{$x\mapsto \displaystyle{\frac{ x(\mu) - \wtsmash\varphi(\mu) \tau_{c,\alpha}\,x}{ \alpha + \mu}}$, element of $\Bscr(\Hscr_c;\Hscr_{c,1})$}
\Symb{A_c}{$x\mapsto \big(\mu\mapsto -\mu x(\mu) - \wtsmash \varphi(\mu) C_c x\big)$, $x\in\dom{A_c}$}
\Symb{C_c}{$\tau_{c,\alpha}(\alpha-A_c)$}
\Symb{\SmallSysNode_c}{$\bbm{x\\u}\mapsto\bbm{\mu\mapsto -\mu x(\mu)-\varphi(\overline\mu)^*\gamma_\lambda +\big(1-\varphi(\overline\mu)^*\varphi(\overline\lambda)\big)u\\\gamma_\lambda+\varphi(\overline\lambda)u}$, \\ where
$\gamma_\lambda=C_c\big(x-e_c(\lambda)^*u\big)$, for arbitrary $\lambda\in\cplus$}

\kern 3mm\noindent 
The following formulas are valid only under the assumption \eqref{eq:contrconscond}:
\Symb{\cH_{c,1}}{$\set{x\in\Hscr_c\mid\exists y\in\Yscr:~ \mu\mapsto \mu x(\mu)+\wtsmash\varphi(\mu)y\in\Hscr_c}$; this is the domain of $A_c$}
\Symb{\cH_{c,-1}}{$\left\{ x : {\mathbb C}^+ \to \cY \bigmid \exists y\in \cY :~ \mu \mapsto \displaystyle{\frac{x(\mu) +  \wtsmash \varphi(\mu) y}{ \beta + \mu}} \in \cH_c\right\}$, consists of equivalence classes modulo the subspace $\wtsmash \varphi(\cdot)\Yscr\subset\Hscr_{c,-1}$}
\Symb{\dom{\SmallSysNode_c}}{$\set{\sbm{x\\u}\in\sbm{\Hscr_c\\\Uscr} \bigmid\exists y\in\Yscr: \mu\mapsto -\mu x(\mu)-\wtsmash\varphi(\mu)y+u\in\Hscr_c}$}
\Symb{A_c|_{\cH_c}}{$[\mu\mapsto -\mu x(\mu)]$, lies in $\Bscr(\Hscr_c;\Hscr_{c,-1})$}
\Symb{B_c}{$u\mapsto[\mu\mapsto u]$, lies in $\Bscr(\Uscr,\Hscr_{c,-1})$}

Note that the reason for making the assumption \eqref{eq:contrconscond} is that it allows us to characterize the spaces $\Hscr_{c,\pm1}$ and $\dom{\SmallSysNode_c}$. The formulas for $A_c$ and $C_c$ form a circle definition. This can be avoided in case $\wtsmash\varphi(\cdot)\cap\Hscr_c=\zero$, since the $A_c$ be defined without using $C_c$; see Remark \ref{rem:removecircle}.

We next describe how to derive Theorems \ref{T:coenpres-real} and \ref{T:enpres-real} from \cite{ArKuStCanon}
by using the same method as was used in \cite{ArSt2Canon} to derive
Theorems 1.2 and 1.3, replacing the unit disk by the right half-plane. The multiplication operator $M_\varphi$ induced by a Schur function
$\varphi \in \cS({\mathbb C}^+;\cU, \cY)$ defines a contraction from
$H^2({\mathbb C}^+;\cU)$ into $H^2({\mathbb C}^+;\cU)$.  The graph of
this operator is a maximal nonnegative subspace $\widehat\Wfrak$ of
the Kre\u\i n space $H^2({\mathbb C}^+;\cW)$, where $\cW = \cU
\boxplus -\cY$ (i.e., the Kre\u\i n space $\cW$ is the orthogonal sum
of $\cU$ and the anti-space $-\cY$ of $\cY$).  This subspace is
invariant under multiplication by the function $\lambda \mapsto
e^{-\lambda}$.  The inverse Laplace transform maps $\widehat\Wfrak$
onto a maximal nonnegative right-shift invariant subspace $\Wfrak_+$
of the Kre\u\i n space $L^2({\mathbb R}^+;\cW)$, which using the
terminology of \cite{ArKuStCanon} is called a (time domain) passive
future behavior in $\cW$.  

In \cite{ArKuStCanon} three different
canonical state/signal realizations of $\Wfrak_+$ are constructed, one
which is controllable and energy preserving, another which is
observable and co-energy preserving, and a third which is simple and
conservative.  These three realizations are given in
the time domain setting, but they can be mapped into frequency domain
realizations by arguing as in \cite[Section 9]{ArSt2Canon}, with
the unit disk $\mathbb D$ replaced by the right half-plane ${\mathbb
C}^+$.  From these frequency domain realizations one can recover the
input/state/output realizations in Theorems \ref{T:coenpres-real} and \ref{T:enpres-real} (as well as an
additional simple conservative one) by using the fundamental
decomposition $\cW = \cU \boxplus -\cY$ of $\cW$ to get
input/state/output representations of scattering type of the canonical
state/signal representations in \cite{ArSt2Canon}, as was done in \cite[Section 10]{ArSt2Canon} in the discrete-time setting.

Finally, we mention that a planned project for the future is to develop a canonical-model of a conservative closely-connected (or simple) system node realization of a Schur-class function over ${\mathbb C}^+$.

 
\def\cprime{$'$} \def\cprime{$'$}

\end{document}